\theoremstyle{plain}
\newtheorem{theorem}[subsection]{Theorem}
\newtheorem{proposition}[subsection]{Proposition}
\newtheorem{lemma}[subsection]{Lemma}
\newtheorem{corollary}[subsection]{Corollary}
\newtheorem*{theorema}{Theorem A}
\newtheorem*{theoremb}{Theorem B}
\newtheorem*{theoremc}{Theorem C}
\theoremstyle{definition} 
\newtheorem{definition}[subsection]{Definition} 
\newtheorem{example}[subsection]{Example}
\theoremstyle{remark}
\newtheorem{remark}[subsection]{Remark}
\newtheorem{recall}[subsection]{Recall}
\newtheorem{observation}[subsection]{Observation}
\newtheorem{notation}[subsection]{Notation}
\let\c@subsection\c@equation
\numberwithin{equation}{section}
\newcommand{\lra}{\longrightarrow}
\newcommand{\Cat}{\mathbf{Cat}}
\newcommand{\qcat}{\mathbf{qCat}}
\newcommand{\twocat}{\mathbf{2}\text{-}\mathbf{Cat}}
\newcommand{\twoqcat}{\mathbf{2}\text{-}\mathbf{qCat}}
\newcommand{\Set}{\mathbf{Set}}
\newcommand{\Hom}{\mathrm{Hom}}
\newcommand{\bHom}{\mathbf{Hom}}
\newcommand{\Ho}{\operatorname{Ho}}
\newcommand{\ho}{\operatorname{ho}}
\newcommand{\Bicat}{\operatorname{\mathbf{Bicat}}}
\newcommand{\st}{\mathrm{\mathbf{st}}}
\newcommand{\PCat}{\mathbf{PCat}}
\newcommand{\cd}[2][]{\vcenter{\hbox{\xymatrix#1{#2}}}}
\newcommand{\mydtwocell}[3][0.5]{\ar@{}[#2] \ar@{=>}?(#1)+/u  0.2cm/;?(#1)+/d 0.2cm/^{#3}}
\newcommand{\mydbldtwocell}[4][0.5]{\ar@{}[#2] \ar@{=>}?(#1)+/u  0.2cm/;?(#1)+/d 0.2cm/^{#3}_{#4}}
\newcommand{\myltwocell}[3][0.5]{\ar@{}[#2] \ar@{=>}?(#1)+/r 0.2cm/;?(#1)+/l 0.2cm/_{#3}}
\newcommand{\mydrtwocell}[3][0.5]{\ar@{}[#2] \ar@{=>}?(#1)+/ul  0.2cm/;?(#1)+/dr 0.2cm/^{#3}}
\newcommand{\fatpullbackcorner}[1][dr]{\save*!/#1-1.75pc/#1:(-1,1)@^{|-}\restore}
\newcommand{\fatpushoutcorner}[1][dr]{\save*!/#1+1.75pc/#1:(1,-1)@^{|-}\restore}
\def\matrixobject@{%
  \edef \next@{={\DirectionfromtheDirection@ }}%
  \expandafter \toks@ \next@ \plainxy@
  \let\xy@@ix@=\xyq@@toksix@
  \xyFN@ \OBJECT@}
\let\xy@entry@@norm=\entry@@norm
\def\entry@@norm@patched{%
  \let\object@=\matrixobject@
  \xy@entry@@norm }
\newcommand{\hdash}{\rotatebox[origin=c]{90}{$\vdash$}}
\newcommand{\veq}{\rotatebox[origin=c]{90}{$=$}}
\title{A homotopy coherent cellular nerve for bicategories}
\author{Alexander Campbell}
\address{Centre of Australian Category Theory \\ Macquarie University \\ NSW 2109 \\ Australia}
\email{acampbell@msri.org}
\urladdr{http://web.science.mq.edu.au/~alexc/}
\subjclass[2020]{18N10, 18N20, 18N40, 18N55, 18N60, 18N65}
\date{12 April 2020}
\begin{document}

\begin{abstract}
The subject of this paper is a nerve construction for bicategories introduced by Leinster, which defines a fully faithful functor from the category of bicategories and normal pseudofunctors to the category of presheaves over Joyal's category $\Theta_2$. We prove that the nerve of a bicategory is a $2$-quasi-category (a model for $(\infty,2)$-categories due to Ara), and moreover that the nerve functor restricts to the right part of a Quillen equivalence between Lack's model structure for bicategories and a Bousfield localisation of Ara's model structure for $2$-quasi-categories. We deduce that Lack's model structure for bicategories is Quillen equivalent to Rezk's model structure for $(2,2)$-$\Theta$-spaces on the category of simplicial presheaves over $\Theta_2$. 

To this end, we construct the homotopy bicategory of a $2$-quasi-category, and prove that a morphism of $2$-quasi-categories is an equivalence if and only if it is essentially surjective on objects and fully faithful. We also prove a Quillen equivalence between Ara's model structure for $2$-quasi-categories and the Hirschowitz--Simpson--Pellissier model structure for quasi-category-enriched Segal categories, from which we deduce a few more results about $2$-quasi-categories, including a conjecture of Ara concerning weak equivalences of $2$-categories.
\end{abstract}

\maketitle

\tableofcontents

\section{Introduction} \label{secintro}
A fundamental fact of the theory of quasi-categories \cite{MR1935979,joyalbarcelona,MR2522659} is that Grothen\-dieck's nerve construction (introduced in \cite{grothendiecknerve}) defines a fully faithful functor 
\begin{equation} \label{grnerve}
N \colon \Cat \longrightarrow \qcat
\end{equation} from the category of categories to the category of quasi-categories. It is by means of this functor that quasi-categories (a model for $(\infty,1)$-categories) can be understood as a generalisation of categories, and moreover that quasi-category theory can be understood as a generalisation of category theory.

One dimension higher, analogous constructions sending $2$-categories (or more generally bicategories) to $(\infty,2)$-categories have been studied for several of the models for $(\infty,2)$-categories, by means of which these models can be understood as generalisations of $2$-categories.  
 For example:\ change of base along the nerve functor (\ref{grnerve}) defines a fully faithful functor $N_{\ast} \colon \twocat \lra \qcat\text{-}\Cat$ from the category of $2$-categories and $2$-functors to the category of quasi-category-enriched categories (which plays an important role in the program of Riehl and Verity, see for instance \cite{MR3415698}); the Roberts--Street--Duskin nerve with the equivalence marking defines a fully faithful functor from the category $\Bicat$ of bicategories and normal pseudofunctors to the category of (weak, saturated) $2$-complicial sets \cite{MR1421811,MR1897816,duskinsimp2,MR1973518,MR2498786,orcomplicialnerve}; the Lack--Paoli $2$-nerve (followed by change of base along the nerve functor (\ref{grnerve})) defines a fully faithful functor from $\Bicat$ to the category of (Reedy fibrant) quasi-category-enriched Segal categories \cite{MR2366560}.
 
Yet missing from this list of examples is the model for $(\infty,2)$-categories most similar to the quasi-category model for $(\infty,1)$-categories:\ Ara's $2$-quasi-categories \cite{MR3350089}, defined as the fibrant objects of a model structure on the category  $[\Theta_2^\mathrm{op},\Set]$  of presheaves over Joyal's category $\Theta_2$ (called \emph{$\Theta_2$-sets}, or \emph{$2$-cellular sets}). And at first blush, the results of \cite[\S7]{MR3350089} seem cause for concern; for there \emph{is} a canonical fully faithful nerve functor  $N_s \colon \twocat \lra [\Theta_2^\mathrm{op},\Set]$ (studied for instance in \cite{MR1916373,MR2331244,MR2925893}) -- which we call the \emph{strict \textup{(}$2$-cellular\textup{)} nerve} functor (see \S\ref{strnobs} below) --
 but Ara shows in \cite[Corollary 7.11]{MR3350089} that this functor does \emph{not} send every $2$-category to a $2$-quasi-category. (Indeed, this negative result prompted Ara to change the name for these fibrant objects from ``quasi-$2$-category'' to ``$2$-quasi-category'', since, as Ara says, ``strict $n$-categories should be quasi-$n$-categories''  \cite[Remark 5.21]{MR3350089}.)

The subject of this paper is an alternative $\Theta_2$-set-valued nerve construction for $2$-categories (indeed, for bicategories) introduced by Leinster \cite[Definition J]{MR1883478}, which defines a fully faithful functor $N \colon \Bicat \lra [\Theta_2^\mathrm{op},\Set]$ from the category of bicategories and normal pseudofunctors to the category of $\Theta_2$-sets, and which can be understood as a ``homotopy coherent''  variant of the strict $2$-cellular nerve construction. 
The first main goal of this paper is to show that this \emph{coherent nerve} construction succeeds where the strict nerve failed:
\begin{theorema} 
The coherent nerve of a bicategory is a $2$-quasi-category.
\end{theorema}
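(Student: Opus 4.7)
The plan is to verify directly that $N(B)$ has the right lifting property against the generating trivial cofibrations of Ara's model structure on $[\Theta_2^{\mathrm{op}}, \Set]$. For each object $\theta \in \Theta_2$, the value $N(B)(\theta)$ is, by Leinster's definition, the set of normal pseudofunctors from a certain free bicategory $F\theta$ (the ``pasting diagram'' bicategory associated to $\theta$) into $B$. By the Yoneda lemma, a lifting problem against a representable-indexed horn inclusion translates into an extension problem for normal pseudofunctors out of a sub-bicategory of $F\theta$, which is the shape in which I would attack it.

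The generating trivial cofibrations of Ara's model structure split into two types: inner horn inclusions in $\Theta_2$, which test Segal-type conditions for composability of pasting diagrams, and equivalence/completeness maps, which test saturation under equivalences. For the first type, the core step is to show that every partial normal pseudofunctor defined on the boundary of an inner horn extends to a normal pseudofunctor on the full cell; this should follow from the horizontal and vertical composition in $B$, together with its associator and unitor coherence $2$-cells. Crucially, the \emph{pseudo} character of the functors allows filling horns whose strict analogues would require equalities that in general only hold up to invertible $2$-cells -- this is precisely what distinguishes Leinster's coherent nerve from the strict $2$-cellular nerve $N_s$ responsible for Ara's negative example \cite[Corollary 7.11]{MR3350089}. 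For the second type, the required filling should follow from the bicategorical fact that every equivalence in $B$ can be refined to an adjoint equivalence, supplying the coherent inverse data needed.

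I expect the main obstacle to be a careful combinatorial bookkeeping of horn inclusions in $\Theta_2$, which are substantially more intricate than their simplicial counterparts. The natural strategy is to decompose each such inclusion along the ``grid'' structure of the objects $[n](q_1,\dots,q_n)$: first handle the $1$-dimensional aspect (composition of $1$-cells), which should reduce to the quasi-categorical lifts available in the Grothendieck nerve of the underlying $1$-truncated structure of $B$; then handle the $2$-dimensional aspect (composition of $2$-cells inside a fixed string of $1$-cells), using that each hom-category of $B$ is an ordinary category and hence that its nerve is already a quasi-category. The strict unitality enjoyed by \emph{normal} pseudofunctors is essential for handling cells involving identities, while the full pseudofunctoriality is essential for the genuine coherent fillings in non-degenerate cases.
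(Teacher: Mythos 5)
Your proposal has a genuine gap at its very first step: it presupposes an explicit set of generating trivial cofibrations for Ara's model structure of the form ``inner horn inclusions plus equivalence/completeness maps'', and no such description is available in the framework the statement lives in. Ara's model structure is a Cisinski model structure, defined by generating a localiser from the spine inclusions and $j_2 \colon J_2 \lra \Theta_2[1;0]$; what one can extract from Cisinski's machinery is a characterisation of the fibrant objects by lifting against Leibniz products of the boundary inclusions, the spine inclusions and $s_1$ with powers of $\delta_J \colon \partial J \lra J$ (see Remark \ref{elcatex}), not against a horn-and-completeness package. A genuinely horn-theoretic description of $2$-quasi-categories is a separate, substantial theorem (Maehara's), and even granting it, the actual content of your argument --- that every partial normal pseudofunctor on such a horn extends, ``which should follow from'' the associators, unitors and adjoint-equivalence refinement --- is exactly the hard combinatorial work, and it is deferred rather than carried out. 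A second, smaller issue: a map from a horn (a non-representable subobject, or worse a Leibniz product with $J^k$) into $NB$ is not simply a normal pseudofunctor out of a sub-bicategory of the free $2$-category on a pasting diagram; under the adjunction $\tau_b \dashv N$ it corresponds to a strict morphism out of a colimit of normal-pseudofunctor classifiers, and identifying that datum and then extending it coherently is nontrivial bookkeeping that your sketch does not address.

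The paper deliberately avoids this bare-hands route. It proves Theorem A indirectly: first it shows (Theorem \ref{ffthm}) that $N$ is fully faithful and determined by its $\Theta_b$-truncation, then that $N$ preserves trivial fibrations (Proposition \ref{trivfibprop}), so $\tau_b$ preserves cofibrations; next it uses a recognition principle for left Quillen functors out of Ara's model structure coming from Cisinski's theory of localisers (Proposition \ref{welemma}), reducing everything to checking that $\tau_b$ sends the projections $J \times \Theta_2[n;\bm{m}] \lra \Theta_2[n;\bm{m}]$, the spine inclusions, and $j_2$ to biequivalences (Proposition \ref{weprop}). This yields a Quillen adjunction $\tau_b \dashv N$ between Lack's model structure and Ara's (Theorem \ref{firstqadjthm}); since every bicategory is fibrant in Lack's model structure, $NB$ is fibrant, i.e.\ a $2$-quasi-category. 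If you want to pursue a direct lifting argument you would need, at minimum, to start from the lifting characterisation of Remark \ref{elcatex} (or Maehara's inner horns) and actually construct the fillers in $NB$; as written, your proof establishes nothing beyond a plan whose decisive steps are the unproved assertions.
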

To prove Theorem A, we prove moreover that the coherent nerve functor restricts to the right part of a Quillen adjunction
\begin{equation} \label{introadj}
\xymatrix{
\Bicat_\mathrm{s} \ar@<-1.5ex>[rr]^-{\hdash}_-N && \ar@<-1.5ex>[ll]_-{\tau_b} [\Theta_2^\mathrm{op},\Set]}
\end{equation}
between Lack's model structure for bicategories (on the category of bicategories and strict morphisms) \cite{MR2138540} and Ara's model structure for $2$-quasi-categories. We also show that each component of the counit of this adjunction is a weak equivalence, and that Lack's model structure is right-induced from Ara's model structure along the right adjoint of this adjunction. 

The second main goal of this paper is to give an intrinsic characterisation of the $2$-quasi-categories that are equivalent (i.e.\ weakly equivalent in Ara's model structure for $2$-quasi-categories) to the coherent nerve of a bicategory. We say that a $2$-quasi-category $X$ is \emph{$2$-truncated} if for each pair of objects $x,y \in X$, the hom-quasi-category $\Hom_X(x,y)$ is equivalent to the nerve of a category. 
\begin{theoremb}
A $2$-quasi-category is equivalent to the coherent nerve of a bicategory if and only if it is $2$-truncated.
\end{theoremb}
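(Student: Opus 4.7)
The plan is to prove both directions by reducing to the characterization, promised in the abstract, of equivalences of $2$-quasi-categories as the morphisms that are essentially surjective on objects and fully faithful (in the sense that each induced morphism on hom-quasi-categories is an equivalence), combined with the behaviour of the coherent nerve on hom-quasi-categories.

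For the forward direction, I would first show that for any bicategory $B$ the hom-quasi-category $\Hom_{NB}(x,y)$ is naturally isomorphic to the Grothendieck nerve $N\bigl(B(x,y)\bigr)$ of the $1$-category $B(x,y)$. This should fall out of Leinster's definition of the coherent nerve together with the fact that the cells of $\Theta_2$ whose source and target are a fixed pair of objects are precisely the cells of the form $[1]([n])$, which form a copy of $\Delta$. Since equivalences of $2$-quasi-categories induce equivalences on hom-quasi-categories, and being equivalent to the nerve of a category is invariant under equivalence of quasi-categories, it then follows that any $2$-quasi-category equivalent to $NB$ is $2$-truncated.

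For the backward direction, let $X$ be a $2$-truncated $2$-quasi-category. The natural candidate bicategory is the homotopy bicategory $\ho X$ whose construction is promised in the abstract; its hom-categories are (up to equivalence) the homotopy categories $\ho \Hom_X(x,y)$ of the hom-quasi-categories of $X$. I would construct a comparison morphism $X \to N(\ho X)$ (either as the unit of $\tau_b \dashv N$ after identifying $\tau_b X \cong \ho X$, or directly using the universal property of $\ho$) and apply the intrinsic characterization of equivalences. Essential surjectivity on objects is immediate, as $\ho$ and $N$ both preserve the set of objects. For fully faithfulness, the induced map on hom-quasi-categories is, by the computation above, the canonical comparison
\[
\Hom_X(x,y) \lra N\bigl(\ho \Hom_X(x,y)\bigr),
\]
which, by the standard characterization of nerves of categories among quasi-categories, is an equivalence of quasi-categories precisely when $\Hom_X(x,y)$ is equivalent to the nerve of a category --- which is exactly the $2$-truncatedness hypothesis.

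The main obstacle will be the identification $\Hom_{NB}(x,y) \cong N\bigl(B(x,y)\bigr)$, and more generally making precise the statement that the homotopy bicategory $\ho X$ computes the hom-categories of the $2$-quasi-category $X$ as the ordinary homotopy categories of the hom-quasi-categories. This requires relating the global left adjoint $\tau_b$ (and its relationship to $\ho$) to the ordinary $1$-categorical homotopy-category functor on each pair of objects, which in turn amounts to a careful analysis of how $\tau_b$ acts on the $\Delta$-shaped slice of $\Theta_2$ sitting over each pair of objects. Once this comparison is in place, both directions of Theorem B reduce to routine applications of the characterization of equivalences in Ara's model structure and of the classical characterization of nerves of categories among quasi-categories.
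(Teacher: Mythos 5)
Your proposal matches the paper's own proof: the paper constructs the homotopy bicategory $\Ho(X)$ with hom-categories $\ho(\Hom_X(x,y))$, shows that the unit $X \to N(\Ho X)$ of the adjunction $\Ho \dashv N$ is bijective on objects and is given on hom-quasi-categories by the canonical map $\Hom_X(x,y) \to N(\ho \Hom_X(x,y))$, and concludes via the essentially-surjective-plus-fully-faithful characterisation of equivalences together with the identification $\Hom_{NB}(x,y) \cong N(B(x,y))$ --- exactly the two reductions you describe. One small caution: your first suggested route, via an identification $\tau_b X \cong \Ho X$, is not available (for a general $2$-quasi-category these bicategories are at best biequivalent, and establishing that independently would be circular), but your second option --- constructing the comparison from the universal property of $\Ho$ as a partial left adjoint to $N$ --- is precisely what the paper does.
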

To prove Theorem B, we prove that a morphism of $2$-quasi-categories is an equivalence (i.e.\ a weak equivalence in Ara's model structure for $2$-quasi-categories) if and only if it is essentially surjective on objects and an equivalence on hom-quasi-categories. Furthermore, we construct the \emph{homotopy bicategory} $\Ho(X)$ of a $2$-quasi-category $X$, which we show defines the left adjoint of an adjunction 
\begin{equation*} 
\xymatrix{
\Bicat \ar@<-1.5ex>[rr]^-{\hdash}_-N && \ar@<-1.5ex>[ll]_-{\Ho} \twoqcat
}
\end{equation*}
between the categories of bicategories and $2$-quasi-categories. We  prove that,  for each $2$-quasi-category $X$, the unit morphism $X \lra N(\Ho(X))$ is bijective on objects, and is an equivalence on hom-quasi-categories if and only if $X$ is $2$-truncated.

Using Theorem B, we prove that the adjunction \textup{(\ref{introadj})} is moreover a Quillen equivalence between Lack's model structure for bicategories and the model structure for $2$-truncated $2$-quasi-categories, which we construct as the Bousfield localisation of Ara's model structure for $2$-quasi-categories with respect to the boundary inclusion $\partial\Theta_2[1;3] \lra \Theta_2[1;3]$. Furthermore, we show that the composite of this adjunction with an adjunction due to Ara
\begin{equation} \label{introadjcomp}
\xymatrix{
\Bicat_\mathrm{s} \ar@<-1.5ex>[rr]^-{\hdash}_-N && \ar@<-1.5ex>[ll]_-{\tau_b} [\Theta_2^\mathrm{op},\Set] \ar@<-1.5ex>[rr]^-{\hdash}_-{t^!} && \ar@<-1.5ex>[ll]_-{t_!} [(\Theta_2\times\Delta)^\mathrm{op},\Set]
}
\end{equation}
is a Quillen equivalence between Lack's model structure for bicategories and Rezk's model structure for $(2,2)$-$\Theta$-spaces \cite{MR2578310}. We also prove that the coherent nerve functor defines a triequivalence between the  tricategories (in fact, strict $\Bicat$-enriched categories) of bicategories and $2$-truncated $2$-quasi-categories.

The third and final main goal of this paper is to prove that the coherent nerve $NA$ of a $2$-category $A$ is a fibrant replacement of its strict nerve $N_sA$ in the model structure for $2$-quasi-categories. 
\begin{theoremc}
For every $2$-category $A$, the canonical inclusion $N_sA \lra NA$ is a weak equivalence in the model structure for $2$-quasi-categories. 
\end{theoremc}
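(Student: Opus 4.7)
The plan is to show $i_A \colon N_s A \to N A$ is a weak equivalence in Ara's model structure by exhibiting $NA$ as a fibrant replacement of $N_s A$. First, observe that $i_A$ is a monomorphism of $\Theta_2$-sets, hence a cofibration, while $NA$ is already fibrant by Theorem A. Factorise $i_A$ as a trivial cofibration $N_s A \hookrightarrow X$ followed by a fibration $X \twoheadrightarrow NA$; since $NA$ is fibrant, so is $X$, meaning $X$ is itself a $2$-quasi-category.

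By two-out-of-three it suffices to show the fibration $X \to NA$ is an equivalence of $2$-quasi-categories. Appealing to the intrinsic characterisation of such equivalences---essentially surjective on objects, and an equivalence on hom-quasi-categories (established earlier in the paper in the course of proving Theorem B)---we reduce to two computations. For the first, no generating trivial cofibration in Ara's model structure adds a $0$-cell, so $N_s A$, $X$, and $NA$ all share the same $0$-cell set $\mathrm{ob}(A)$, forcing bijectivity on objects. For the second, one identifies both $\Hom_{N_s A}(x,y)$ and $\Hom_{NA}(x,y)$ with the Grothendieck nerve $N(A(x,y))$ of the hom-category of $A$, which is already a quasi-category; the induced comparison $\Hom_{X}(x,y) \to \Hom_{NA}(x,y)$ then needs to be checked an equivalence in the Joyal model structure.

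The main obstacle will be the hom-quasi-category step: verifying that hom-quasi-categories in Ara's model structure behave well under fibrant replacement, so that the identification $\Hom_{N_s A}(x,y) \cong N(A(x,y))$ survives the passage from $N_s A$ to $X$. The cleanest route is likely via the Quillen equivalence---established earlier in the paper---between Ara's model structure and the Hirschowitz--Simpson--Pellissier model structure for quasi-category-enriched Segal categories, where the hom-quasi-categories are manifest. Under the appropriate adjoint of that equivalence, both $N_s A$ and $NA$ correspond to Segal categories whose hom-quasi-categories are (equivalent to) $N(A(x,y))$, so transporting back this weak equivalence through the Quillen equivalence yields the desired weak equivalence $i_A$. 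A variant of the same argument, replacing the HSP model structure with the Lack--Paoli $2$-nerve construction, may give a more combinatorial proof.
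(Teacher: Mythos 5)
Your closing suggestion is, in essence, the paper's actual proof, and it makes the earlier scaffolding unnecessary. The paper argues directly: $d^*(N_sA)$ is the levelwise nerve of $A$ (Recollection \ref{stnrecall}, Example \ref{levelwiseex}) and $d^*(NA)$ is a Joyal-enriched Segal category because $NA$ is a $2$-quasi-category (Theorem \ref{segalthm}); the induced map $d^*(N_sA) \lra d^*(NA)$ is bijective on objects and an \emph{isomorphism} on hom-quasi-categories (both homs being $N(A(x,y))$), hence an equivalence of Joyal-enriched Segal categories, hence a weak equivalence in the Hirschowitz--Simpson--Pellissier model structure, whose weak equivalences between (not necessarily fibrant) Joyal-enriched Segal categories are exactly these DK-type equivalences. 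Your phrase ``transporting back through the Quillen equivalence'' needs to be made precise: the point is that every $\Theta_2$-set is cofibrant, so the left Quillen equivalence $d^*$ of Theorem \ref{qe10} reflects weak equivalences between cofibrant objects; this is Corollary \ref{createcor}, and it immediately gives the theorem.

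Two caveats about the route you set up first. The claim that no generating trivial cofibration in Ara's model structure adds a $0$-cell is false: for instance $\{0\} \lra J$ (Notation \ref{freeliving}) is a trivial cofibration, since $J \lra \Theta_2[0]$ is a trivial fibration; so the intermediate object $X$ in your factorisation need not have object set $\mathrm{ob}(A)$. (This is harmless for Theorem \ref{fundythm}, since $X \lra NA$ is surjective on objects anyway, the bijection $N_sA \lra NA$ on objects factoring through it---but bijectivity is not available and not needed.) More importantly, the obstacle you flag is genuine: since $N_sA$ is not fibrant, nothing guarantees that the trivial cofibration $N_sA \lra X$ induces equivalences on hom-quasi-categories---that is essentially the content of the theorem itself---so the detour through $X$ cannot be completed by itself and should simply be dropped in favour of the argument via $d^*$.
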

To prove Theorem C, we prove a Quillen equivalence
\begin{equation} \label{introadjd}
\xymatrix{
\PCat \ar@<-1.5ex>[rr]^-{\hdash}_-{d_*} && \ar@<-1.5ex>[ll]_-{d^*} [\Theta_2^\mathrm{op},\Set]
}
\end{equation}
between the Hirschowitz--Simpson--Pellissier model structure for quasi-category-enriched Segal categories \cite{MR2883823} and Ara's model structure for $2$-quasi-categories. 
Using this Quillen equivalence, we also prove Ara's conjecture \cite[\S7]{MR3350089} that a $2$-functor is a biequivalence if and only if it is sent by the strict nerve functor $N_s \colon \twocat \lra [\Theta_2^\mathrm{op},\Set]$ to a weak equivalence in the model structure for $2$-quasi-categories. Finally, we use this Quillen equivalence, together with a Quillen equivalence established by Joyal and Tierney \cite{MR2342834}, to give an explicit construction of the  ``locally Kan'' $2$-quasi-category associated to each quasi-category.

The structure of this paper is as follows. Following a brief recollection of the category $\Theta_2$ and of $\Theta_2$-sets in \S\ref{theta2sec}, in \S\ref{nervefunsec} we recall Leinster's nerve construction and prove (Theorem \ref{ffthm}) that it defines a fully faithful functor $N \colon \Bicat \lra [\Theta_2^\mathrm{op},\Set]$,
and moreover that the nerve of a bicategory is determined by its restriction to a tractable subcategory $\Theta_b$ of $\Theta_2$. In \S\ref{modstrsec}, we recall Lack's model category of bicategories and Ara's model structure for $2$-quasi-categories, and prove a recognition principle (Proposition \ref{welemma}) for left Quillen functors from the latter. We apply these results in \S\ref{mainthmsec} to prove (Theorem \ref{firstqadjthm}) that the adjunction (\ref{introadj}) is a Quillen adjunction 
between Lack's model structure for bicategories and Ara's model structure for $2$-quasi-categories. We then deduce Theorem A as Corollary \ref{fibcor}.

In \S\ref{hobicatsec}, we construct the homotopy bicategory of a $2$-quasi-category, and prove (Theorem \ref{honadjthm}) that this defines a left adjoint $\Ho \colon \twoqcat \lra \Bicat$ to the coherent nerve functor; we also prove (Theorem \ref{segalthm}) that the underlying bisimplicial set of a $2$-quasi-category is a quasi-category-enriched Segal category. In \S\ref{fundythmsec}, we prove (Theorem \ref{fundythm}) that a morphism of $2$-quasi-categories is an equivalence if and only if it is essentially surjective on objects and fully faithful, and thereby deduce Theorem B as Theorem \ref{2truncchar}. We apply these results in \S\ref{vssect}, where we prove (Theorems \ref{firstqequiv} and \ref{lackrezkequiv}) the Quillen equivalences (\ref{introadj}) and (\ref{introadjcomp}) between Lack's model structure for bicategories, the model structure for $2$-truncated $2$-quasi-categories, and Rezk's model structure for $(2,2)$-$\Theta$-spaces, and also in \S\ref{triequivsect}, where we prove (Theorem \ref{adjtriequivthm}) a triequivalence between bicategories and $2$-truncated $2$-quasi-categories.

In \S\ref{araconsec}, we prove (Theorem \ref{qe10}) the Quillen equivalence (\ref{introadjd}) between the Hirschowitz--Simpson--Pellissier model structure for quasi-category-enriched Segal categories and Ara's model structure for $2$-quasi-categories, from which we deduce Theorem C as Theorem \ref{fibreplthm}, and prove  Ara's conjecture (Theorem \ref{araconjthm}). In  \S\ref{sec1to2}, we construct (Theorem \ref{finalfinalthm}) an explicit fibrant replacement of the $\Theta_2$-set associated to each quasi-category in the model structure for $2$-quasi-categories.

\subsection*{Acknowledgements} 
This paper is based on talks given in 2018--2019  
 at the Australian Category Seminar, whose members the author thanks for being an intelligent and stimulating audience. Additional thanks are due to Richard Garner for his guidance. The author gratefully acknowledges the support of Australian Research Council Discovery Grant DP160101519 and Future Fellowship FT160100393.

\section{Joyal's category $\Theta_2$ and $\Theta_2$-sets} \label{theta2sec}

We begin with a recollection of Joyal's category $\Theta_2$, which can be understood as a two-dimensional analogue of the simplex category $\Delta$. This category admits many equivalent definitions (see for example \cite{joyaltheta,MR1787588,MR1825434,MR1916373,MR2331244,MR2299807,MR2770071,MR2925893}); for our purposes it will be most convenient to identify $\Theta_2$ with the full subcategory of the category $\twocat$ of (strict) $2$-categories and (strict) $2$-functors consisting of the $2$-categories freely generated by certain $2$-graphs, sometimes known as the \emph{$2$-dimensional globular pasting diagrams}.

\subsection{$2$-graphs} \label{2graphs}
A  \emph{$2$-graph} (or \emph{$2$-globular set}) is defined to be  a presheaf over the 2-dimensional globe category $\mathbb{G}_2$, which admits the following presentation:
\begin{equation*}
\mathbb{G}_2 =
\big\langle
\xymatrix{
0 \ar@<1ex>[r]^-{\sigma} \ar@<-1ex>[r]_-{\tau} & 1 \ar@<1ex>[r]^-{\sigma} \ar@<-1ex>[r]_-{\tau} & 2
}
\,  \big | \, \,
\sigma\sigma = \tau\sigma,  \sigma\tau = \tau\tau
\big\rangle.
\end{equation*}
Thus a $2$-graph $X$ consists of a set $X_0$ of objects, a set $X_1$ of arrows, and a set $X_2$ of $2$-cells, together with source and target functions
\begin{equation*}
\cd{
X_2 \ar@<1ex>[r]^-s \ar@<-1ex>[r]_-t & X_1 \ar@<1ex>[r]^-s \ar@<-1ex>[r]_-t & X_0
}
\end{equation*}
satisfying the globularity conditions $ss = st$ and $ts = tt$. Equivalently, a $2$-graph $X$ may be defined as a \emph{graph-enriched graph}, and as such is determined by its set $X_0$ of objects and, for each pair of objects $x,y \in X_0$, the \emph{hom-graph} $X(x,y)$ whose objects and arrows are the arrows and $2$-cells of $X$ with source object $x$ and target object $y$.

Every $2$-category $A$ has an underlying $2$-graph $UA$ made up of its objects, morphisms, and $2$-cells. This defines the right adjoint of a monadic adjunction  
\begin{equation} \label{fuadj}
\cd{
\twocat \ar@<-1.5ex>[rr]^-{\hdash}_-U && \ar@<-1.5ex>[ll]_-F [\mathbb{G}_2^\mathrm{op},\Set]
}
\end{equation}
between the category of $2$-categories  and the category of $2$-graphs,
whose left adjoint sends a $2$-graph $X$ to the $2$-category $FX$ freely generated by $X$. For an explicit construction of the $2$-category freely generated by a $2$-graph, see \cite[Appendix F]{MR2094071}.

\subsection{$2$-dimensional globular pasting diagrams}
To fix notation, for each integer $n \geq 0$, let $(n)$ denote the generating graph of the free category $[n] = \{0 < \cdots < n\}$. (Note that we also write $\mathbf{1}$ and $\mathbf{2}$ for the categories $[0]$ and $[1]$.)

For each integer $n \geq 0$ and each string of integers $m_1,\ldots,m_n \geq 0$, let $(n;\bm{m}) = (n;m_1,\ldots,m_n)$ denote the $2$-graph -- the \emph{$2$-dimensional globular pasting diagram} of ``length'' $n$ and ``widths'' $m_1,\ldots,m_n$ -- whose set of objects is $\{0,\ldots,n\}$, and whose nonempty hom-graphs are the graphs $(n;\bm{m})(i-1,i) = (m_i)$ for each $1 \leq i \leq n$. (See Figure \ref{globs} for an illustration of some of these $2$-graphs.) Note that the $2$-graphs $(0)$, $(1;0)$, and $(1;1)$ are precisely the representable $2$-graphs, i.e.\ those in the image of the Yoneda embedding $\mathbb{G}_2 \lra [\mathbb{G}_2^\mathrm{op},\Set]$.

Let $[n;\bm{m}] = [n;m_1,\ldots,m_n]$ denote the $2$-category freely generated by the $2$-graph \linebreak$(n;m_1,\ldots,m_n)$. By construction, this $2$-category 
has the set of objects $\{0,\ldots,n\}$, and its nonempty hom-categories are given by the cartesian products
\begin{equation*}
[n;\bm{m}](i,j) = [m_{i+1}] \times \cdots \times [m_j]
\end{equation*}
for each pair of integers $0 \leq i \leq j \leq n$. 

\begin{figure} 
\begin{tabular}{cccc}
$\cd{
\bullet
}$ & $\cd{
\bullet \ar[r] & \bullet
}$
& $\cd{
\bullet \rtwocell  & \bullet 
}$ & 
$\cd{
\bullet \ar[r] & \bullet \ar[r] & \bullet
}$ \\
$(0)$ & $(1;0)$ & $(1;1)$ & $(2;0,0)$ \\
\end{tabular}
\begin{tabular}{cccc}
$\cd{
\bullet \ruppertwocell \rlowertwocell \ar[r] & \bullet \\
}$
&
$\cd{
\bullet \rtwocell  & \bullet  \ar[r] & \bullet
}$
&
$\cd{
\bullet \ar[r] & \bullet \rtwocell  & \bullet 
}$
&
$\cd{
\bullet \ar[r] & \bullet \ar[r] &\bullet \ar[r] &\bullet
}$ \\
$(1;2)$ & $(2;1,0)$ & $(2;0,1)$ & $(3;0,0,0)$ \\
\end{tabular}
\caption{$2$-dimensional globular pasting diagrams of degree $\leq 3$.} \label{globs}
\end{figure}

\subsection{Joyal's category $\Theta_2$}
We define the category $\Theta_2$ to be the full subcategory of $\twocat$ consisting of the $2$-categories $[n;\bm{m}] = [n;m_1,\ldots,m_n]$ for every $n \geq 0$ and $m_1,\ldots,m_n \geq 0$. 
By the adjunction (\ref{fuadj}), morphisms $(\phi;\bm{f}) \colon [n;\bm{m}] \lra [q;\bm{p}]$ in $\Theta_2$ are in natural bijection with morphisms of $2$-graphs $(n;\bm{m}) \lra U[q;\bm{p}]$ from the generating $2$-graph of $[n;\bm{m}]$ to the underlying $2$-graph of $[q;\bm{p}]$; such a morphism thus consists of a morphism $\phi \colon [n] \lra [q]$ in $\Delta$ and, for each integer $1 \leq i \leq n$, a functor 
\begin{equation*}
f_i \colon [m_i] \lra [p_{\phi(i-1) + 1}] \times \cdots \times [p_{\phi(i)}], 
\end{equation*}
i.e.\ a morphism $f_{ij} \colon [m_i] \lra [p_j]$ in $\Delta$ for each integer $\phi(i-1) < j \leq \phi(i)$ (cf.\ Berger's definition of $\Theta_2$ as the wreath product $\Delta \wr \Delta$ \cite[Definitions 3.1 and 3.3]{MR2331244}). (Note that we will denote the elementary face and degeneracy operators in $\Delta$ by the letters $\delta$ and $\sigma$ with superscripts.)

\subsection{$\Theta_2$ is an elegant Reedy category} \label{reedyrecall}
The \emph{degree} of an object $[n;m_1,\ldots,m_n] \in \Theta_2$ is defined to be the sum $n + m_1 + \cdots + m_n$. This is part of an EZ-Reedy category structure on $\Theta_2$ (see \cite{MR3054341}, cf.\ \cite[Lemma 2.4]{MR1916373}), 
 in which the ``degree-raising'' and ``degree-lowering'' morphisms are the monomorphisms and the split epimorphisms in $\Theta_2$ respectively. 
Thus $\Theta_2$ is a \emph{cat{\'e}gorie squelletique r{\'e}guli{\`e}re} in the sense of \cite[\S8.2]{MR2294028}, that is, an EZ-Reedy category whose degree-raising morphisms are the monomorphisms. (Note that, but for the convenience of citing results from \cite{MR2294028}, for our purposes it would suffice to know that $\Theta_2$ is an elegant Reedy category \cite[Corollary 4.4]{MR3054341}.)

\subsection{$\Theta_2$-sets} \label{onetwoobs}
Objects of the presheaf category $[\Theta_2^\mathrm{op},\Set]$ are called \emph{$\Theta_2$-sets} (or \emph{$2$-cellular sets}). We denote the value of a $\Theta_2$-set $X \colon\Theta_2^\mathrm{op} \lra \Set$ at an object $[n;\bm{m}] \in \Theta_2$ by $X_{n(\bm{m})}$, the elements of which we call the $[n;\bm{m}]$-elements of $X$; we  denote by $(\phi;\bm{f})^* \colon X_{q(\bm{p})} \lra X_{n(\bm{m})}$ the function induced by a morphism $(\phi;\bm{f}) \colon [n;\bm{m}] \lra [q;\bm{p}]$ in $\Theta_2$, which sends a $[q;\bm{p}]$-element $x$ of $X$ to the $[n;\bm{m}]$-element $x \cdot (\phi;\bm{f})$ of $X$. We say that an $[n;\bm{m}]$-element of a $\Theta_2$-set $X$ has \emph{dimension} $d$ when the object $[n;\bm{m}]$ has degree $d$ (see \S\ref{reedyrecall}). 

\begin{observation}[a full embedding of simplicial sets into $\Theta_2$-sets] \label{afullemb}
Restriction along the functor $\pi \colon \Theta_2 \lra \Delta$, defined by $\pi([n;\bm{m}]) = [n]$ and $\pi(\phi;\bm{f}) = \phi$, gives a fully faithful functor $\pi^* \colon [\Delta^\mathrm{op},\Set] \lra [\Theta_2^\mathrm{op},\Set]$ from the category of simplicial sets to the category of $\Theta_2$-sets. This functor has a right adjoint $\tau^* \colon [\Theta_2^\mathrm{op},\Set] \lra [\Delta^\mathrm{op},\Set]$, given by restriction along the full inclusion $\tau \colon \Delta \lra \Theta_2$, which sends $[n]$ to $[n;0,\ldots,0]$. We say that this right adjoint sends a $\Theta_2$-set $X$ to its \emph{underlying simplicial set} $\tau^*(X)$.
\end{observation}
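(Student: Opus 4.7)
The observation makes two claims: that $\pi^{*}$ is fully faithful, and that it admits $\tau^{*}$ as a right adjoint. I would deduce both from a single underlying adjunction $\pi \dashv \tau$ between the small categories $\Theta_2$ and $\Delta$ themselves, with $\pi$ left adjoint to $\tau$.

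First, I would verify this adjunction directly. Using the description of morphisms in $\Theta_2$ recalled just before the observation, a morphism $[k;\bm{l}] \lra [n;0,\ldots,0] = \tau[n]$ in $\Theta_2$ consists of a morphism $\phi \colon [k] \lra [n]$ in $\Delta$ together with, for each $1 \leq i \leq k$, a functor $f_i \colon [l_i] \lra [0] \times \cdots \times [0]$, where the product is indexed by those $j$ with $\phi(i-1) < j \leq \phi(i)$. Whether that index set is empty or non-empty, the product is equal to the terminal category $[0]$, so each $f_i$ is forced to be the unique such functor. Thus the assignment $(\phi;\bm{f}) \mapsto \phi$ gives a natural bijection
\[
\Theta_2([k;\bm{l}], \tau[n]) \cong \Delta([k], [n]) = \Delta(\pi[k;\bm{l}], [n]),
\]
exhibiting $\pi \dashv \tau$. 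It is also immediate from the definitions that $\pi\tau = \mathrm{id}_\Delta$, so in particular $\tau$ is fully faithful, which justifies the ``full inclusion'' language in the statement.

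The two claims then follow formally. For the right adjoint, the bijection above identifies the representable $\pi^{*} y[n]$ with the representable $y(\tau[n])$ on $\Theta_2$, so by the Yoneda lemma
\[
\Hom(\pi^{*} y[n], X) \cong X(\tau[n]) = (\tau^{*} X)[n]
\]
for every $\Theta_2$-set $X$; extending along colimits in $Y$ (which $\pi^{*}$ preserves, being defined levelwise) yields the adjunction $\pi^{*} \dashv \tau^{*}$ in general. For the full faithfulness, since $\tau^{*}\pi^{*} = (\pi\tau)^{*} = \mathrm{id}$, the unit of $\pi^{*} \dashv \tau^{*}$ is already the identity, and hence is an isomorphism.

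The main (and essentially only) step with any content is the computation in the second paragraph; its one subtlety is recognising the empty product of categories as the terminal category $[0]$, and everything else is standard manipulation of presheaf adjunctions. I do not anticipate any genuine obstacle.
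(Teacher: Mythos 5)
Your argument is correct. The paper offers no proof at all here — this is stated as an unproved Observation — so there is nothing to compare against; your route, establishing the adjunction $\pi \dashv \tau$ between the index categories themselves via the wreath-product description of morphisms into $[n;0,\ldots,0]$ (with the empty product of copies of $[0]$ correctly read as the terminal category) and then transporting it to presheaf categories, is exactly the standard way to justify the claim, and it is complete. The only step stated a little breezily is the final one: from $\tau^{*}\pi^{*} = \mathrm{id}$ you assert that the unit of $\pi^{*} \dashv \tau^{*}$ ``is already the identity.'' This is true for the adjunction as you constructed it (on representables the unit corresponds under your bijection to the identity of $\pi^{*}\Delta[n]$, and colimit-extension propagates this), but if one wants a choice-independent justification, note that the unit is then a natural endotransformation $\eta$ of the identity functor of $[\Delta^{\mathrm{op}},\Set]$ satisfying the monad unit law $\mu \circ \eta = \mathrm{id}$, and naturality of $\mu$ applied to the morphism $\eta_{X}$ gives $\eta_{X}\mu_{X} = \mu_{X}\eta_{X} = \mathrm{id}$, so $\eta$ is invertible in any case; either remark closes the gap, and full faithfulness of $\pi^{*}$ follows.
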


We conclude this section with some further examples of (morphisms of) $\Theta_2$-sets, which we will need in our study of the model structure for $2$-quasi-categories in \S\S\ref{modstrsec}--\ref{mainthmsec}.

\subsection{Representable $\Theta_2$-sets and their boundaries} \label{bdyrecall}
For each object $[n;\bm{m}] \in \Theta_2$, let $\Theta_2[n;\bm{m}]$ denote the representable $\Theta_2$-set $\Theta_2(-,[n;\bm{m}])$; by definition, the elements of $\Theta_2[n;\bm{m}]$ are the morphisms of $\Theta_2$ with codomain $[n;\bm{m}]$. 
We define the \emph{boundary} $\partial\Theta_2[n;\bm{m}]$ to be the 
sub-$\Theta_2$-set of $\Theta_2[n;\bm{m}]$ 
whose elements are those morphisms in $\Theta_2$ with codomain $[n;\bm{m}]$ that factor through an object of $\Theta_2$ of degree strictly less than the degree of $[n;\bm{m}]$; 
we denote the \emph{boundary inclusion} by $\delta_{n(\bm{m})} \colon \partial\Theta_2[n;\bm{m}] \lra \Theta_2[n;\bm{m}]$. 
Every monomorphism in $[\Theta_2^\mathrm{op},\Set]$ can be expressed as a countable composite of pushouts of coproducts of boundary inclusions $\delta_{n(\bm{m})} \colon\partial\Theta_2[n;\bm{m}] \lra \Theta_2[n;\bm{m}]$ for $[n;\bm{m}] \in \Theta_2$ (cf.\ \cite[Proposition 8.1.37]{MR2294028}).

\subsection{Spines and spine inclusions} \label{spinerecall}
Let $g \colon \mathbb{G}_2 \lra \Theta_2$ denote the faithful functor whose image is the subcategory of $\Theta_2$ generated by the morphisms displayed below.
\begin{equation*}
\cd[@=3em]{
[0] \ar@<-1ex>[r]_{\delta^0} \ar@<1ex>[r]^{\delta^1} & [1;0] \ar@<-1ex>[r]_-{(\mathrm{id};\delta^0)} \ar@<1ex>[r]^-{(\mathrm{id};\delta^1)} & [1;1]
}
\end{equation*}
This functor induces an adjunction
\begin{equation} \label{jadj}
\xymatrix{
[\Theta_2^\mathrm{op},\Set] \ar@<-1.5ex>[rr]^-{\hdash}_-{g^{\ast}} && \ar@<-1.5ex>[ll]_-{g_!} [\mathbb{G}_2^\mathrm{op},\Set]
}
\end{equation}
between the categories of $\Theta_2$-sets and $2$-graphs
whose left and right adjoints are given by left Kan extension and restriction along $g^\mathrm{op}$ respectively. 

For each object $[n;\bm{m}] \in \Theta_2$, with generating $2$-graph $(n;\bm{m})$, we define the \emph{spine} $I[n;\bm{m}]$ to be the $\Theta_2$-set $g_!(n;\bm{m})$. The inclusion of generators $(n;\bm{m}) \lra U[n;\bm{m}] = g^*(\Theta_2[n;\bm{m}])$ corresponds under the adjunction $g_! \dashv g^*$ to a monomorphism $i_{n(\bm{m})}\colon I[n;\bm{m}] \lra \Theta_2[n;\bm{m}]$, which we call the \emph{spine inclusion}. The image of the spine inclusion $i_{n(\bm{m})}$ is the sub-$\Theta_2$-set of $\Theta_2[n;\bm{m}]$ generated by the inert morphisms in $\Theta_2$ with codomain $[n;\bm{m}]$ and domain either one of $[0]$, $[1;0]$, or $[1;1]$ (a morphism in $\Theta_2$ is said to be \emph{inert} if it is in the image of the ``free'' functor $F \colon [\mathbb{G}_2^\mathrm{op},\Set] \lra \twocat$).

\section{The coherent nerve functor} \label{nervefunsec}
In this section, we recall Leinster's $\Theta_2$-set-valued nerve construction for bicategories from \cite[Definition J]{MR1883478}, and prove that it defines a fully faithful functor from the category $\Bicat$ of bicategories and normal pseudofunctors to the category $[\Theta_2^\mathrm{op},\Set]$ of $\Theta_2$-sets, and moreover that the coherent nerve of a bicategory is determined by its restriction to a tractable subcategory $\Theta_b$ of $\Theta_2$. 
Our proof of these results follows the argument of \cite[\S3]{MR2366560}, where the corresponding results for the $2$-nerve for bicategories are proved. 

\begin{recall}[Kan's construction \cite{MR0131451}] \label{kanrecall}
 Let $K \colon \mathbf{A} \lra \mathcal{C}$ be a functor from a small category $\mathbf{A}$ to a locally small category $\mathcal{C}$. The \emph{singular functor} (or \emph{nerve functor}) induced by $K$ is the functor $\mathcal{C}(K,1) \colon \mathcal{C} \lra [\mathbf{A}^\mathrm{op},\Set]$ that sends an object $C\in\mathcal{C}$ to the presheaf $\mathcal{C}(K-,C) \colon \mathbf{A}^\mathrm{op} \lra \Set$, and sends a morphism $f \colon C \lra D$ in $\mathcal{C}$ to the natural transformation $\mathcal{C}(K-,f) \colon \mathcal{C}(K-,C) \lra  \mathcal{C}(K-,D)$. The functor $K$ is said to be \emph{dense} if the singular functor $\mathcal{C}(K,1) \colon \mathcal{C} \lra [\mathbf{A}^\mathrm{op},\Set]$ is fully faithful. If the category $\mathcal{C}$ is cocomplete, then the singular functor $\mathcal{C}(K,1)$ has a left adjoint
\begin{equation*}
\xymatrix{
\mathcal{C} \ar@<-1.5ex>[rr]^-{\hdash}_-{\mathcal{C}(K,1)} && \ar@<-1.5ex>[ll]_-{-\circledast K} [\mathbf{A}^\mathrm{op},\Set]
}
\end{equation*}
given by the left Kan extension of $K \colon \mathbf{A} \lra \mathcal{C}$ along the Yoneda embedding $\mathbf{A} \lra [\mathbf{A}^\mathrm{op},\Set]$, which sends a presheaf $F$ on $\mathbf{A}$ to the weighted colimit $F \circledast K \cong \int^A FA \times KA$ in $\mathcal{C}$. 
\end{recall}

\subsection{The strict nerve functor} \label{strnobs}
We define the \emph{strict \textup{(}$2$-cellular\textup{)} nerve functor} $$N_s \colon \twocat \lra [\Theta_2^\mathrm{op},\Set]$$ to be the singular functor induced by the 
 full inclusion $\Theta_2 \lra \twocat$. 
 This is the $2$-cellular nerve functor for $2$-categories studied in \cite{MR1916373}, where it is shown to be fully faithful (see also \cite{MR2331244,MR2925893}).
 Ara proved in \cite[Proposition 7.10]{MR3350089} that the strict nerve $N_sA$ of a $2$-category $A$ is a $2$-quasi-category (see \S\ref{secara}) if and only if the $2$-category $A$ is \emph{rigid}, i.e.\ has no non-identity invertible $2$-cells. 
 
 \begin{remark} \label{hcrmk}
 As observed in \cite[\S7]{MR3350089}, it is to be expected that the strict nerve functor does not send all $2$-categories to $2$-quasi-categories. For, as in quasi-category theory, one has the slogan that a morphism of $2$-quasi-categories $A \lra X$ is a ``weak functor from $A$ to $X$'', or a ``homotopy coherent diagram of shape $A$ in $X$''. But full fidelity of the strict nerve functor entails that morphisms of $\Theta_2$-sets $N_sC \lra N_sD$ correspond only to \emph{strict} $2$-functors $C \lra D$.
 
A special case of this slogan says that, for each $[n;\bm{m}] \in \Theta_2$, the $[n;\bm{m}]$-elements of a $2$-quasi-category $X$ correspond (via the Yoneda lemma) to ``weak functors from $\Theta_2[n;\bm{m}]$ to $X$''. This suggests how to define an alternative nerve construction, fit for the purpose of sending $2$-categories (and more generally bicategories) to $2$-quasi-categories:\ the $[n;\bm{m}]$-elements of the ``homotopy coherent'' nerve of a $2$-category (or bicategory) $B$ should be ``weak functors from $[n;\bm{m}]$ to $B$''. Taking ``weak functor'' to mean \emph{normal pseudofunctor}, this is precisely how Leinster's nerve construction is defined, as we now recall.
 \end{remark}

\subsection{Bicategories and normal pseudofunctors} \label{bicatrecall}
Let $\Bicat$ denote the category of bicategories and normal pseudofunctors (also called normal homomorphisms); for the definitions, see for instance \cite[\S9]{MR1421811} or the original \cite{MR0220789}. 
Recall that a normal pseudofunctor is a morphism of bicategories that preserves identities \emph{strictly} and preserves composition up to coherent isomorphism.  Thus the data of a normal pseudofunctor $F \colon A \lra B$ includes an invertible $2$-cell $\varphi_{g,f} \colon Fg.Ff \lra F(gf)$ in the bicategory $B$ for each composable pair of morphisms $f,g$ in the bicategory $A$; if each of these \emph{composition constraints} $\varphi_{g,f}$ is an identity $2$-cell in $B$, then the normal pseudofunctor $F$ is said to be a \emph{strict morphism} of bicategories. A strict morphism of bicategories between $2$-categories is precisely a $2$-functor.

Note that a normal pseudofunctor between $2$-categories whose codomain is rigid (see \S\ref{strnobs}) is necessarily a $2$-functor. Since the objects of $\Theta_2$ are rigid $2$-categories, this implies that the composite inclusion functor $\Theta_2 \lra \twocat\lra \Bicat$ is full.

\begin{definition}[the coherent nerve functor] \label{cohndef}
The \emph{coherent \textup{(}$2$-cellular\textup{)} nerve functor} $$N \colon \Bicat \lra [\Theta_2^\mathrm{op},\Set]$$ is defined to be the singular functor induced by the full inclusion $\Theta_2 \lra \Bicat$.
\end{definition}

The \emph{coherent nerve} $NB$ of a bicategory $B$ is precisely the nerve of $B$ defined by Leinster in \cite[Definition J]{MR1883478}. Note that the coherent nerve and the strict nerve of a rigid $2$-category coincide; in particular, for each $[n;\bm{m}] \in \Theta_2$, the representable $\Theta_2$-set $\Theta_2[n;\bm{m}]$ is both the strict nerve and the coherent nerve of the (rigid) $2$-category $[n;\bm{m}]$. 

\begin{observation}[simplicial and cellular nerves of categories] \label{nervecatrecall}
The fully faithful functor \linebreak$\pi^* \colon [\Delta^\mathrm{op},\Set] \allowbreak \lra [\Theta_2^\mathrm{op},\Set]$ (see \S\ref{onetwoobs}) sends the standard simplicial nerve of any category $C$ to the strict $2$-cellular nerve of $C$ seen as a locally discrete $2$-category, which is equal to its coherent $2$-cellular nerve (since any such $2$-category is rigid);  we will call this $\Theta_2$-set the \emph{$2$-cellular nerve} of $C$.
\end{observation}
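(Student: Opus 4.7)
The plan is to verify the two asserted equalities by unpacking the relevant nerve definitions and exhibiting natural bijections on $[n;\bm{m}]$-elements. Write $\overline{C}$ for $C$ viewed as a locally discrete $2$-category. Then, directly from the definitions,
\begin{align*}
\pi^*(NC)_{n(\bm{m})} &= (NC)_n = \Cat([n],C), \\
(N_s\overline{C})_{n(\bm{m})} &= \twocat([n;\bm{m}],\overline{C}), \\
(N\overline{C})_{n(\bm{m})} &= \Bicat([n;\bm{m}],\overline{C}),
\end{align*}
so the task reduces to producing natural bijections between these three sets.

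For the first identification, I would exhibit a natural bijection between $2$-functors $[n;\bm{m}] \to \overline{C}$ and functors $[n] \to C$. Using the free--forgetful adjunction $F \dashv U$ of \S\ref{2graphs}, a $2$-functor $[n;\bm{m}] \to \overline{C}$ is equivalent data to a morphism of $2$-graphs $(n;\bm{m}) \to U\overline{C}$; since $\overline{C}$ has only identity $2$-cells, every generating $2$-cell of the hom-graph $(m_i)$ is sent to an identity, forcing adjacent parallel $1$-cells to have equal image, and iterating this along the chain $0 \to 1 \to \cdots \to m_i$ collapses all $m_i + 1$ objects of $[n;\bm{m}](i-1,i)$ to a common image in $\overline{C}$. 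Equivalently, the $2$-functor factors uniquely through the $2$-functor $p_{n;\bm{m}} \colon [n;\bm{m}] \to [n]$ which is the identity on objects and terminal on each hom-category; this $p_{n;\bm{m}}$ is precisely the component at $[n;\bm{m}]$ of the unit of the adjunction $\pi \dashv \tau$ (using the equality $\tau[n] = [n;0,\ldots,0] = [n]$ in $\twocat$), so naturality in morphisms $(\phi;\bm{f})$ of $\Theta_2$ follows from naturality of the unit.

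For the second identification, I would invoke the observation recorded in \S\ref{bicatrecall} that every normal pseudofunctor with rigid codomain is automatically a strict $2$-functor. Since $\overline{C}$ is locally discrete it is trivially rigid---its only invertible $2$-cells are identities---so each composition constraint $\varphi_{g,f}$ of a normal pseudofunctor into $\overline{C}$ is forced to be an identity, whence $\twocat([n;\bm{m}],\overline{C}) = \Bicat([n;\bm{m}],\overline{C})$. I do not anticipate any substantive obstacle: the content is essentially a routine unpacking of the nerve definitions together with the universal property of $[n;\bm{m}]$ as a free $2$-category on the $2$-graph $(n;\bm{m})$.
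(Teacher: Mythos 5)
Your proposal is correct and is exactly the routine verification that the paper leaves implicit by phrasing this as an Observation: the identification $\twocat([n;\bm{m}],\overline{C})\cong\Cat([n],C)$ via the collapse of the hom-graphs (equivalently, factorisation through the canonical map $[n;\bm{m}]\lra[n;0,\ldots,0]$, whose naturality gives compatibility with the $\Theta_2$-operators), together with the rigidity argument from \S\ref{bicatrecall} showing $\Bicat([n;\bm{m}],\overline{C})=\twocat([n;\bm{m}],\overline{C})$. No gaps; this matches the paper's intended reasoning.
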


The goal of this section is to prove that the coherent nerve functor of Definition \ref{cohndef} is fully faithful. First, let us describe the low-dimensional structure of the coherent nerve of a bicategory.

\subsection{The coherent nerve of a bicategory in low dimensions}
Let $A$ be a bicategory. For each object $[n;\bm{m}] \in \Theta_2$, the $[n;\bm{m}]$-elements of the coherent nerve $NA$ of $A$ (i.e.\ the elements of the set $(NA)_{n(\bm{m})}$) are by definition the normal pseudofunctors $[n;\bm{m}] \lra A$. With the aid of Figure \ref{pasts}, where such elements are illustrated, we see that the diagram of sets
\begin{equation*}
\cd[@=3.5em]{
(NA)_{1(1)} \ar@<-1.5ex>[r]_{(\mathrm{id};\delta^0)^*} \ar@<1.5ex>[r]^{(\mathrm{id};\delta^1)^*} & (NA)_{1(0)} \ar[l]|-{(\mathrm{id};\sigma^0)^*} \ar@<-1.5ex>[r]_-{(\delta^0)^*} \ar@<1.5ex>[r]^-{(\delta^1)^*} & (NA)_0 \ar[l]|-{(\sigma^0)^*} 
}
\end{equation*}
is precisely the underlying reflexive $2$-graph of $A$ (which consists of the sets of objects, morphisms, and $2$-cells of $A$, together with the functions assigning  sources and targets to the morphisms and $2$-cells of $A$, and the functions assigning identity morphisms and identity $2$-cells to the objects and morphisms of $A$).

Furthermore, we see that $(NA)_{2(0,0)}$ is the set of ``invertible $2$-simplices'' in $A$, that the three functions on the left below
\begin{equation*}
\cd[@=3em]{
(NA)_{2(0,0)}  \ar[r]|-{(\delta^1)^*} \ar@<1.5ex>[r]^-{(\delta^2)^*} \ar@<-1.5ex>[r]_-{(\delta^0)^*} & (NA)_{1(0)}
}
\qquad
\qquad
\cd[@=3em]{
(NA)_{2(0,0)} & (NA)_{1(0)} \ar@<1ex>[l]^-{(\sigma^0)^*} \ar@<-1ex>[l]_-{(\sigma^1)^*},
}
\end{equation*}
assign to an invertible $2$-simplex $\sigma$ as displayed in Figure \ref{pasts} its boundary morphisms $\sigma\cdot\delta^2 = f$, $\sigma\cdot\delta^1 = h$, and $\sigma\cdot \delta^0 = g$, and that the two functions on the right above send each morphism $f \colon a \lra b$ in $A$ to the invertible $2$-simplices in $A$ given by its left and right unit constraints in the bicategory $A$, as displayed below.
\begin{equation*}
f\cdot\sigma^1 = \cd{
& b \ar[dr]^-{1_b} \mydbldtwocell[0.6]{d}{\bm{l}}{\cong}\\
a \ar[ur]^-f \ar[rr]_-f &{} & b
}
\qquad
\qquad
f\cdot\sigma^0 = 
\cd{
& a \ar[dr]^-f \mydbldtwocell[0.6]{d}{\bm{r}}{\cong}\\
a \ar[ur]^-{1_a} \ar[rr]_-f &{} & b
}
\end{equation*}

As illustrated in Figure \ref{pasts}, the elements of $NA$ of dimension $3$ are determined by their boundary faces, and amount to certain pasting equations in the bicategory $A$.  Note that the description of the elements of $(NA)_{3(0,0,0)}$ involves the associativity constraints $\bm{a} \colon (hg)f \cong h(gf)$ of the bicategory $A$.

\begin{figure} 
\begin{tabular}{| c | c | c |}
\hline
$[n;\bm{m}]$ & \multicolumn{2}{c|}{Normal pseudofunctor $[n;\bm{m}] \lra A$} \\
\hline \hline
$[0]$ & $a$ &  \\ \hline
$[1;0]$ & $\cd{a \ar[r]^-f & b}$ & $f$ \\
\hline
$[1;1]$ & $\cd{a \rtwocell^f_g{\alpha} & b}$ & $\cd{f \ar[r]^-{\alpha} & g}$ \\ \hline
$[2;0,0]$ &  $\cd{
& b \ar[dr]^-g \mydbldtwocell[0.6]{d}{\sigma}{\cong}\\
a \ar[ur]^-f \ar[rr]_-h &{} & c
}$ & $\cd{
gf \ar[r]^-{\sigma}_-{\cong} & h
}$ \\ \hline
$[1;2]$ & $\cd[@=3em]{
a \ruppertwocell^f{\alpha} \ar[r]|-g \rlowertwocell_h{\beta} & b
} =
\cd[@=3em]{
a \rtwocell^f_h{\gamma} & b
}$ & $\cd{
& g \ar[dr]^-{\beta} \ar@{}[d]|(0.6){\veq} \\
f \ar[ur]^-{\alpha} \ar[rr]_-{\gamma} &{}& h
}$ \\ \hline
$[2;1,0]$ & $\cd[@=3em]{
& b \ar[dr]^-h \mydbldtwocell[0.6]{d}{\tau}{\cong}  \\
a \uruppertwocell^f{\alpha} \ar[ur]_g \ar[rr]_-m && c
}
=
\cd[@=3em]{
& b \ar[dr]^-h \mydbldtwocell[0.45]{d}{\sigma}{\cong} \\
a \ar[ur]^-f \rrtwocell^l_m{\gamma} & & c
}$ & $\cd[@=3em]{
hf \ar[r]^-{h\alpha} \ar[d]_{\sigma}^{\cong} \ar@{}[dr]|-{\veq} & hg \ar[d]^-{\tau}_-{\cong} \\
l \ar[r]_-{\gamma} & m
}$ \\ \hline
$[2;0,1]$ &$\cd[@=3em]{
& b \ar[dr]_-{k} \druppertwocell^h{\beta}  \mydbldtwocell[0.6]{d}{\tau}{\cong}  \\
a \ar[ur]^-f  \ar[rr]_-m && c
}
=
\cd[@=3em]{
& b \ar[dr]^-h \mydbldtwocell[0.45]{d}{\sigma}{\cong} \\
a \ar[ur]^-f \rrtwocell^l_m{\gamma} & & c
}$ & $\cd[@=3em]{
hf \ar[r]^-{\beta f} \ar[d]_{\sigma}^{\cong} \ar@{}[dr]|-{\veq} & kf \ar[d]^-{\tau}_-{\cong} \\
l \ar[r]_-{\gamma} & m
}$  \\ \hline
$[3;0,0,0]$ &  $\cd[@=2em]{
b  \ar[ddrr]|-{k} \ar[rr]^-g  && c \ar[dd]^-h \\
\\
a \ar[uu]^-f \ar[rr]_-m \mydbldtwocell[0.3]{uurr}{\beta}{\cong} \mydbldtwocell[0.7]{uurr}{\alpha}{\cong}&& d
}
=
\cd[@=2em]{
b \ar[rr]^-g \mydbldtwocell[0.3]{ddrr}{\gamma}{\cong} \mydbldtwocell[0.7]{ddrr}{\delta}{\cong}&& c \ar[dd]^-h \\
\\
a \ar[uu]^-f \ar[rr]_-m \ar[uurr]|-l && d
}$ & $\cd[@R=1.5em]{
(hg)f \ar[r]^-{\alpha f}_{\cong} \ar[dd]_-{\bm{a}}^-{\cong} & kf \ar[dr]^-{\beta}_-{\cong} \\
{} \ar@{}[rr]|-{\veq}&& m\\
h(gf) \ar[r]_-{h\gamma}^-{\cong} & hl \ar[ur]_-{\delta}^-{\cong}
}$ \\ \hline
\end{tabular}
\caption{Elements of dimension $\leq 3$ of the coherent nerve of a bicategory, displayed both as pasting diagrams and as commutative diagrams. \label{pasts}} 
\end{figure}

\subsection{The coherent nerve of a normal pseudofunctor} \label{npsobs}
Let $F \colon A \lra B$ be a normal pseudofunctor between bicategories. 
The induced morphism of $\Theta_2$-sets $NF \colon NA \lra NB$ is defined on the $[0]$-elements, $[1;0]$-elements, and $[1;1]$-elements of $NA$ by the action of the normal pseudofunctor $F$ on the objects, morphisms, and $2$-cells of $A$ respectively, and sends a $[2;0,0]$-element of $NA$ as on the left below
 \begin{equation} \label{2simpmap}
 \cd[@=3em]{
& b \ar[dr]^-g \mydbldtwocell[0.6]{d}{\sigma}{\cong}\\
a \ar[ur]^-f \ar[rr]_-h &{} & c
}
\qquad
\longmapsto
\qquad
\cd[@=3em]{
& Fb \ar[dr]^-{Fg} \mydbldtwocell[0.4]{d}{\varphi}{\cong} \\
Fa \ar[ur]^-{Ff} \ar@/^.8pc/[rr]^-{F(gf)} \ar@/_.8pc/[rr]_-{Fh} \mydbldtwocell{rr}{F\sigma}{\cong} & & Fc
}
\end{equation}
to the $[2;0,0]$-element of $NB$ determined by the pasting composite in $B$ displayed on the right above, which is the vertical composite $F\sigma \circ \varphi_{g,f} \colon Fg.Ff \lra Fh$ (where $\varphi_{g,f} \colon Fg.Ff \lra F(gf)$ denotes the composition constraint of the normal pseudofunctor $F$). Since the elements of dimension $3$ in $NA$ and $NB$ are determined by their boundaries, the components of $NF$ at the objects of $\Theta_2$ of degree $3$ involve no additional data, but merely assert the preservation  of certain pasting equations by the normal pseudofunctor $F$. 

\subsection{The proof of full fidelity} \label{thetab}
To prove that the coherent nerve functor $N \colon \Bicat \lra [\Theta_2^\mathrm{op},\Set]$ is fully faithful is to prove that the full inclusion $\Theta_2 \lra \Bicat$ is dense, for which it suffices to prove that the inclusion of some subcategory of $\Theta_2$ into $\Bicat$ is dense, by the following standard fact about dense functors (which is the $\Set$-enriched case of \cite[Proposition 1.1]{MR2366560}). 

\begin{lemma} \label{denselemma}
Let $F \colon \mathbf{A} \lra \mathbf{B}$ and $G\colon \mathbf{B} \lra \mathcal{C}$ be functors, where $\mathbf{A}$ and $\mathbf{B}$ are small categories and $\mathcal{C}$ is a locally small category. If the functor $G \colon \mathbf{B} \lra \mathcal{C}$ is fully faithful and the composite functor $GF \colon \mathbf{A} \lra \mathcal{C}$ is dense, then the functor $G \colon \mathbf{B} \lra \mathcal{C}$ is dense; furthermore, the singular functor $\mathcal{C}(G,1) \colon \mathcal{C} \lra [\mathbf{B}^\mathrm{op},\Set]$ is  isomorphic to the composite
\begin{equation*}
\cd[@C=3.5em]{
\mathcal{C} \ar[r]^-{\mathcal{C}(GF,1)} & [\mathbf{A}^\mathrm{op},\Set] \ar[r]^-{F_{\ast}} & [\mathbf{B}^\mathrm{op},\Set]
}
\end{equation*}
of the singular functor induced by $GF$ and the functor $F_{\ast}$ defined by right Kan extension along $F^\mathrm{op} \colon \mathbf{A}^\mathrm{op} \lra \mathbf{B}^\mathrm{op}$. \qed
\end{lemma}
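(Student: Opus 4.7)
The plan is to first establish the claimed isomorphism of functors $\mathcal{C}(G,1) \cong F_* \circ \mathcal{C}(GF,1)$, and then to deduce density of $G$ as a formal consequence of this isomorphism, using the adjunction $F^* \dashv F_*$ between restriction and right Kan extension along $F^\mathrm{op}$.

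For the isomorphism, I would first observe that restriction along $F^\mathrm{op}$ carries the presheaf $\mathcal{C}(G-,C)$ to $\mathcal{C}(GF-,C)$, so that the identity on $\mathcal{C}(GF-,C)$ corresponds under the adjunction $F^* \dashv F_*$ to a natural transformation $\alpha_C \colon \mathcal{C}(G-,C) \lra F_* \mathcal{C}(GF-,C)$, manifestly natural in $C$. To check that each component of $\alpha_C$ is a bijection, I would evaluate at an object $B \in \mathbf{B}$ via the end formula $(F_* \mathcal{C}(GF-,C))(B) \cong \int_{A \in \mathbf{A}} \Set(\mathbf{B}(F(A),B),\mathcal{C}(GF(A),C))$. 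The full fidelity of $G$ supplies the bijection $\mathbf{B}(F(A),B) \cong \mathcal{C}(GF(A),G(B))$; substituting, this end becomes the set of natural transformations $\mathcal{C}(GF-,G(B)) \lra \mathcal{C}(GF-,C)$, which by the density of $GF$ is in bijection with $\mathcal{C}(G(B),C)$. A direct verification then identifies this composite bijection with $(\alpha_C)_B$.

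To deduce density of $G$, I must show the singular functor $\mathcal{C}(G,1)$ is fully faithful. For any $C, D \in \mathcal{C}$, the isomorphism just established identifies natural transformations $\mathcal{C}(G-,C) \lra \mathcal{C}(G-,D)$ with natural transformations $\mathcal{C}(G-,C) \lra F_* \mathcal{C}(GF-,D)$, which under the adjunction $F^* \dashv F_*$ are in bijection with natural transformations $\mathcal{C}(GF-,C) \lra \mathcal{C}(GF-,D)$, and these by the density of $GF$ correspond to morphisms $C \lra D$ in $\mathcal{C}$. Tracing through the chain of bijections confirms that the composite is precisely the canonical hom-set map induced by $\mathcal{C}(G,1)$.

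The hard part will not be conceptual but bookkeeping: verifying that the various natural bijections assembled from the end formula, the full fidelity of $G$, the density of $GF$, and the adjunction $F^* \dashv F_*$ really do agree with the canonical maps induced by the singular functors. Once these identifications are nailed down, both conclusions of the lemma follow immediately.
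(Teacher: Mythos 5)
Your proof is correct, and it is essentially the standard argument: the paper itself gives no proof of this lemma, stating it with a \qed as the $\Set$-enriched case of Lack--Paoli's Proposition 1.1, and your route (exhibit the unit $\mathcal{C}(G-,C) \lra F_*F^*\mathcal{C}(G-,C) = F_*\mathcal{C}(GF-,C)$ as an isomorphism via the end formula, full fidelity of $G$ and density of $GF$, then deduce full fidelity of $\mathcal{C}(G,1)$ from the adjunction $F^* \dashv F_*$) is exactly how that cited result is proved. The "bookkeeping" you defer is genuinely routine: the adjunct of $\alpha_D \circ \phi$ is $F^*\phi$ by a triangle identity, so the canonical map $\mathcal{C}(C,D) \lra \mathrm{Nat}(\mathcal{C}(G-,C),\mathcal{C}(G-,D))$ is a section of your composite bijection and hence is itself bijective.
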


\begin{remark}
At this point, we could invoke a result from Watson's thesis \cite{Wat13}, which, as we now explain, implies that the full inclusion $(\Theta_2)_{\leq 3} \lra \Bicat$ is dense, where $(\Theta_2)_{\leq 3}$ denotes the full subcategory of $\Theta_2$ consisting of the objects of degree $\leq 3$. In \cite[Chapter 6]{Wat13}, Watson defines a nerve functor $\mathbf{fBicat} \lra [\Theta_2^\mathrm{op},\Set]$ from the category of ``fancy bicategories'', which contains $\Bicat$ as a full subcategory,  to the category of $\Theta_2$-sets. Watson proves that this functor gives an equivalence of categories between $\mathbf{fBicat}$ and the full subcategory of $[\Theta_2^\mathrm{op},\Set]$ consisting of the ``$2$-reduced inner-Kan'' $\Theta_2$-sets; see \cite[Remark 6.7.2]{Wat13}.

By inspection, one can see that the restriction of Watson's nerve functor to the full subcategory $\Bicat$ of $\mathbf{fBicat}$ is (essentially) the composite
\begin{equation*} \label{watnerve}
\cd[@C=3.5em]{
\Bicat \ar[r]^-{N_3} & [(\Theta_2)_{\leq 3}^\mathrm{op},\Set] \ar[r]^-{(i_3)_*} & [\Theta_2^\mathrm{op},\Set]
}
\end{equation*}
of the singular functor $N_3$ induced by the inclusion $(\Theta_2)_{\leq 3} \lra \Bicat$, and the functor  $(i_3)_*$ defined by right Kan extension along (the opposite of) the inclusion $i_3 \colon (\Theta_2)_{\leq 3} \lra \Theta_2$. Hence the equivalence of categories proved by Watson implies that the functor $N_3$ is fully faithful, i.e.\ that the inclusion $(\Theta_2)_{\leq 3} \lra \Bicat$ is dense. Lemma \ref{denselemma}  then implies that the coherent nerve functor of Definition \ref{cohndef} is fully faithful, and moreover that it is isomorphic to the composite functor displayed above.

However, Watson's proof of this equivalence of categories is (of necessity) considerably longer and more complicated than a direct proof of the density of the inclusion $(\Theta_2)_{\leq 3} \lra \Bicat$ alone would be. (Watson proves this equivalence by showing that the nerve of a fancy bicategory is a $2$-reduced inner-Kan $\Theta_2$-set, and by constructing a pseudo-inverse to the nerve functor from the full subcategory of these $\Theta_2$-sets to $\mathbf{fBicat}$.) Therefore, to make clear that Theorem \ref{ffthm} below does not depend on the full extent of Watson's proof, we give in Proposition \ref{ffprop}
a simpler and more direct (and significantly shorter) proof of a slightly sharper density result.
 \end{remark}
 
 \subsection{A tractable subcategory of $\Theta_2$}
Let $\Theta_b$ denote the (non-full) subcategory of $\Theta_2$ generated by all morphisms between objects of degree $\leq 2$, and all monomorphisms from objects of degree $2$ to objects of degree $3$; the objects of $\Theta_b$ are therefore the objects of degree $\leq 3$ of $\Theta_2$. Let $N_b \colon \Bicat \lra [\Theta_b^\mathrm{op},\Set]$ denote the singular functor induced by the composite of the inclusions $i_b \colon \Theta_b \lra \Theta_2$ and $\Theta_2 \lra \Bicat$. This singular functor is equal to the composite of the coherent nerve functor $N \colon \Bicat \lra [\Theta_2^\mathrm{op},\Set]$ and the restriction functor $i_b^* \colon [\Theta_2^\mathrm{op},\Set] \lra [\Theta_b^\mathrm{op},\Set]$.

\begin{proposition} \label{ffprop}
The truncated coherent nerve functor $N_b \colon \Bicat \lra [\Theta_b^\mathrm{op},\Set]$ is fully faithful, i.e.\ the inclusion functor  $\Theta_b \lra \Bicat$ is dense. 
\end{proposition}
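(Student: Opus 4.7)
The plan is to verify full fidelity of $N_b$ directly: for any bicategories $A$ and $B$, I would construct a two-sided inverse to the map
\[
\Bicat(A,B) \lra [\Theta_b^\mathrm{op},\Set](N_bA, N_bB)
\]
by extracting a normal pseudofunctor $F \colon A \lra B$ from each natural transformation $\alpha \colon N_bA \lra N_bB$, following the strategy used for the Lack--Paoli $2$-nerve in \cite{MR2366560}. Density of $\Theta_b \hookrightarrow \Bicat$ is of course equivalent to full fidelity of the induced singular functor, so it suffices to set up this inverse and verify it is well-defined.

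First I would read off the underlying $2$-graph morphism: the components $\alpha_{[0]}$, $\alpha_{[1;0]}$, $\alpha_{[1;1]}$ give functions on objects, $1$-cells, and $2$-cells, and naturality with respect to the morphisms among $[0]$, $[1;0]$, $[1;1]$ in $\Theta_b$ ensures they respect sources, targets, identity $1$-cells (via $\sigma^0 \colon [0] \lra [1;0]$), and identity $2$-cells (via $\sigma^0 \colon [1;0] \lra [1;1]$); the action on $[1;2]$-elements yields preservation of vertical composition of $2$-cells. Next, to each composable pair $(f,g)$ in $A$ I associate the tautological $[2;0,0]$-element of $NA$ whose outer edge is $gf$ and whose $2$-cell is $1_{gf}$; applying $\alpha$ and using the three face inclusions $\delta^i \colon [1;0] \lra [2;0,0]$ reads off an invertible $2$-cell $\varphi_{g,f} \colon Fg.Ff \cong F(gf)$ in $B$. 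Compatibility with the $\Theta_b$-morphisms $[2;1,0] \lra [2;0,0]$ and $[2;0,1] \lra [2;0,0]$ then gives naturality of $\varphi$ in each argument (whiskering), while compatibility with the degeneracies $\sigma^0,\sigma^1 \colon [1;0] \lra [2;0,0]$ -- whose images under $\alpha$ encode the unitors of $B$, as recorded in Figure \ref{pasts} -- yields strict normality together with the unit coherences $\varphi_{f,1_a} = 1_{Ff}$ and $\varphi_{1_b, f} = 1_{Ff}$. Finally, the pentagon axiom for $\varphi$ is obtained from the action of $\alpha$ on the canonical $[3;0,0,0]$-element attached to a triple of composable $1$-cells, whose four boundary face inclusions $[2;0,0] \hookrightarrow [3;0,0,0]$ are monomorphisms and therefore lie in $\Theta_b$ by its very definition.

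Bijectivity is then essentially formal: $F \mapsto N_b F$ plainly recovers the extracted data, while conversely the description above shows that $\alpha$ is determined by its components at objects of degree $\leq 2$ (since the degree-$3$ data is uniquely determined by its boundary, as the unique pasting equation in Figure \ref{pasts} records). The main obstacle is the careful bookkeeping needed to match the naturality squares for $\alpha$ against the algebraic axioms of a normal pseudofunctor, especially the pentagon equation and the naturality of $\varphi$. The specific design of $\Theta_b$, which contains exactly the face-map monomorphisms into the degree-$3$ objects but no further morphisms involving objects of degree $3$, is tailored precisely to the coherence conditions one wishes to read off, and is what makes this direct proof cleaner than passing through Watson's more general machinery.
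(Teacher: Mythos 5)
Your overall strategy is the same as the paper's (extract a normal pseudofunctor from a transformation $\alpha \colon N_bA \lra N_bB$ and convert naturality in $\Theta_b$ into the axioms), and your extraction of $\varphi_{g,f}$ from the tautological $[2;0,0]$-element with $2$-cell $1_{gf}$ is exactly right. But there is a genuine gap: you never determine how $\alpha$ acts on a \emph{general} $[2;0,0]$-element, i.e.\ an invertible $2$-simplex $\sigma \colon gf \Rightarrow h$ whose $2$-cell is not an identity. The paper's proof turns on the identity $\overline{\sigma} = F\sigma \circ \varphi_{g,f}$ of \textup{(\ref{keylem})}, obtained by evaluating $\alpha$ at a specific $[2;1,0]$-element built from $\sigma$ and an identity $2$-cell. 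Without it, three of your steps do not go through. First, the unit axioms: naturality with respect to $\sigma^0,\sigma^1 \colon [2;0,0] \lra [1;0]$ only says that $\alpha$ carries the unitor $2$-simplices $f\cdot\sigma^i$ of $A$ to those of $Ff$ in $B$; since these carry the (generally non-identity) unitors of $A$, you cannot translate this into an equation in $\varphi$ unless you know $\alpha$'s value on such non-tautological $2$-simplices. (Relatedly, the axioms you state, $\varphi_{f,1_a}=1_{Ff}$ and $\varphi_{1_b,f}=1_{Ff}$, are not even well-typed in a general bicategory, where $F(1_b.f)\neq Ff$; the correct axiom is $F\bm{l}\circ\varphi_{1_b,f}=\bm{l}$ and its right-unitor companion.) Second, the associativity axiom: any $[3;0,0,0]$-element witnessing it must have at least one face carrying the associator $\bm{a}$ of $A$, hence a non-tautological face, and identifying its image under $\alpha$ again requires \textup{(\ref{keylem})}. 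Third, fullness itself: knowing that degree-$3$ components are determined by their boundaries reduces the check $\alpha = N_b\widetilde{F}$ to degree $\leq 2$, but the component at $[2;0,0]$ must then agree on \emph{all} invertible $2$-simplices, not just the tautological ones used in your extraction --- and that agreement is precisely \textup{(\ref{keylem})}, not a formality.

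A smaller, repairable slip: the morphisms you invoke for naturality of $\varphi$, written $[2;1,0]\lra[2;0,0]$ and $[2;0,1]\lra[2;0,0]$, go the wrong way and in any case do not lie in $\Theta_b$, which contains only monomorphisms from degree $2$ into degree $3$. The correct mechanism (as in the paper) is to evaluate $\alpha$ at particular $[2;1,0]$- and $[2;0,1]$-elements of $N_bA$ obtained by whiskering a $2$-cell with a $1$-cell, to use naturality with respect to the face inclusions $[2;0,0]\lra[2;1,0]$ and $[1;1]\lra[2;1,0]$ to pin down the boundary of the image element, and then to read the desired equation off the mere existence of that image (a degree-$3$ element being a pasting equation). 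With that correction and with \textup{(\ref{keylem})} established, your outline matches the paper's argument; as it stands, the unit, associativity, and final $[2;0,0]$-comparison steps are unsupported.
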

\begin{proof}
The functor $N_b$ is faithful since, by the observations in \S\ref{npsobs}, the data of a normal pseudofunctor $F \colon A \lra B$ can be recovered from the morphism of $\Theta_b$-sets $N_bF \colon N_bA \lra N_bB$. The action of $F$ on the objects, morphisms, and $2$-cells of $A$ is given precisely by the components of $N_bF$ at the objects $[0]$, $[1;0]$, and $[1;1]$ of $\Theta_b$. Moreover, for each composable pair of morphisms $f \colon a \lra b$, $g \colon b \lra c$ in $A$, the composition constraint $\varphi_{g,f} \colon Fg.Ff \cong F(gf)$ of the normal pseudofunctor $F$ can be recovered as the image of the commutative $2$-simplex displayed on the left below under the component of $N_bF$ at the object $[2;0,0] \in \Theta_b$. 
\begin{equation*}
\cd{
& b \ar[dr]^-g \ar@{}[d]|(0.6){\veq}\\
a \ar[ur]^-f \ar[rr]_-{gf} &{} & c
}
\qquad
\longmapsto
\qquad
\cd{
& Fb \ar[dr]^-{Fg} \mydbldtwocell[0.6]{d}{\varphi}{\cong}\\
Fa \ar[ur]^-{Ff} \ar[rr]_-{F(gf)} &{} & Fc
}
\end{equation*}

To prove that the functor $N_b$ is full, let $A$ and $B$ be bicategories, and let $F \colon N_bA \lra N_bB$ be a morphism in $[\Theta_b^\mathrm{op},\Set]$ (i.e.\ a natural transformation). As in the previous paragraph, we can determine from $F$ the data of a normal pseudofunctor from $A$ to $B$, namely functions from the sets of objects, morphisms, and $2$-cells of $A$ to those of $B$, and, for each composable pair of morphisms $f,g$ in $A$, an invertible $2$-cell $\varphi_{g,f} \colon Fg.Ff \lra F(gf)$ in $B$, whose source and target are as written by virtue of the naturality of $F$ with respect to the morphisms
\begin{equation} \label{2simpface}
\cd[@=3em]{
[1;0]  \ar[r]|-{\delta^1} \ar@<1.5ex>[r]^-{\delta^2} \ar@<-1.5ex>[r]_-{\delta^0} & [2;0,0]
}
\end{equation}
in $\Theta_b$. Furthermore, 
 naturality of $F$ with respect to the morphisms
\begin{equation*}
\cd[@=3em]{
[0] \ar@<-1.5ex>[r]_{\delta^0} \ar@<1.5ex>[r]^{\delta^1} & [1;0] \ar[l]|-{\sigma^0} \ar@<-1.5ex>[r]_-{(\mathrm{id};\delta^0)} \ar@<1.5ex>[r]^-{(\mathrm{id};\delta^1)} & [1;1] \ar[l]|-{(\mathrm{id};\sigma^0)} \ar[r]|-{(\mathrm{id};\delta^1)} \ar@<1.5ex>[r]^-{(\mathrm{id};\delta^2)} \ar@<-1.5ex>[r]_-{(\mathrm{id};\delta^0)} & [1;2]
}
\end{equation*}
implies that these functions respect the sources and targets of morphisms and $2$-cells, preserve identity morphisms and identity $2$-cells, and preserve vertical composition of $2$-cells, and thus define a morphism from the underlying reflexive $\Cat$-graph of $A$ to that of $B$. 

Now, let us show that the action of $F$ on general $[2;0,0]$-elements (``invertible $2$-simplices'') is given as in (\ref{2simpmap}) in terms of its action on $2$-cells, the above defined ``composition constraints'' $\varphi$, and vertical composition in the bicategory $B$. Let $\sigma \colon gf \lra h$ be an invertible $2$-simplex in $A$ as displayed on the left below, and let $\overline{\sigma} \colon Fg.Ff \lra Fh$ denote the invertible $2$-cell in $B$ corresponding to the $[2;0,0]$-element of $NB$ to which $\sigma$ is sent by $F$.
\begin{equation*}
\cd{
& b \ar[dr]^-g \mydbldtwocell[0.6]{d}{\sigma}{\cong}\\
a \ar[ur]^-f \ar[rr]_-h &{} & c
}
\qquad
\longmapsto
\qquad
\cd{
& Fb \ar[dr]^-{Fg} \mydbldtwocell[0.6]{d}{\overline{\sigma}}{\cong}\\
Fa \ar[ur]^-{Ff} \ar[rr]_-{Fh} &{} & Fc
}
\end{equation*}
Note that the boundary morphisms of $\overline{\sigma}$ are as displayed due to the naturality of $F$ with respect to the morphisms (\ref{2simpface}). The invertible $2$-cell $\sigma$ gives rise to the $[2;1,0]$-element of $NA$ displayed below, 
\begin{equation*}
\cd[@=3em]{
& b \ar[dr]^-g \mydbldtwocell[0.6]{d}{\sigma}{\cong}  \\
a \uruppertwocell^f{=} \ar[ur]_f \ar[rr]_-h && c
}
\qquad
=
\qquad
\cd[@=3em]{
& b \ar[dr]^-g \ar@{}[d]|(0.45){\veq} \\
a \ar[ur]^-f \rrtwocell^{gf}_h{\sigma} & & c
}
\end{equation*}
which is sent by $F$ to the $[2;1,0]$-element of $NB$ displayed below,
\begin{equation*}
\cd[@=3em]{
& Fb \ar[dr]^-{Fg} \mydbldtwocell[0.6]{d}{\overline{\sigma}}{\cong}  \\
Fa \uruppertwocell^{Ff\,}{=} \ar[ur]_{Ff} \ar[rr]_-{Fh} && Fc
}
\qquad
=
\qquad
\cd[@=3em]{
& Fb \ar[dr]^-{Fg} \mydbldtwocell[0.45]{d}{\varphi}{\cong} \\
Fa \ar[ur]^-{Ff} \rrtwocell^{F(gf)}_{Fh}{\ \ F\sigma} & & Fc
}
\end{equation*}
whose boundary faces are determined by the naturality of $F$ with respect to the morphisms
\begin{equation} \label{coneface}
\cd[@=3em]{
[1;1] \ar@<1ex>[r]^-{(\delta^2;\mathrm{id})} \ar@<-1ex>[r]_-{(\delta^1;\mathrm{id})} & [2;1,0] & [2;0,0]. \ar@<1ex>[l]^-{(\mathrm{id};\delta^0,\mathrm{id})} \ar@<-1ex>[l]_-{(\mathrm{id};\delta^1,\mathrm{id})} 
}
\end{equation}
This $[2;1,0]$-element of $NB$ asserts the desired equation in $B$:
\begin{equation} \label{keylem}
\overline{\sigma} = F\sigma \circ \varphi_{g,f}.
\end{equation}

It remains to verify the unit, naturality, and associativity axioms for the composition constraints $\varphi_{g,f}$ of the prospective normal pseudofunctor from $A$ to $B$.

By the identity (\ref{keylem}), the unit axioms are none other than the naturality of $F$ with respect to the morphisms
\begin{equation*}
\cd[@=3em]{
[1;0] & [2;0,0] \ar@<1ex>[l]^-{\sigma^0} \ar@<-1ex>[l]_-{\sigma^1},
}
\end{equation*}
which states that the following equations  hold for each morphism $f\colon a \lra b$ in $A$.
\begin{equation*}
\cd{
F1_b.Ff \ar@{}[dr]|-{\veq} \ar[r]^-{\varphi_{1_b,f}} \ar@{=}[d] & F(1_b.f) \ar[d]^-{F\bm{l}} \\
1_{Fb}.Ff \ar[r]_-{\bm{l}} & Ff
}
\qquad
\qquad
\cd{
Ff.F1_a \ar[r]^-{\varphi_{f,1_a}} \ar@{=}[d] \ar@{}[dr]|-{\veq} & F(f.1_a) \ar[d]^-{F\bm{r}} \\
Ff.1_{Fa} \ar[r]_-{\bm{r}} & Ff
}
\end{equation*}

It suffices to prove the naturality of the $2$-cells $\varphi_{g,f} \colon Fg.Ff \lra F(gf)$ in each variable separately. To prove naturality in the variable $f$, consider a diagram in $A$ as displayed below.
\begin{equation*}
\cd[@=3em]{
a \rtwocell^f_{f'}{\alpha}& b \ar[r]^-g & c
}
\end{equation*}
This diagram defines the $[2;1,0]$-element of $NA$ displayed below,
\begin{equation*}
\cd[@=3em]{
& b \ar[dr]^-g \ar@{}[d]|(0.6){\veq}  \\
a \uruppertwocell^f{\alpha} \ar[ur]_{f'} \ar[rr]_-{gf'} && c
}
\qquad
=
\qquad
\cd[@=3em]{
& b \ar[dr]^-g \ar@{}[d]|(0.45){\veq} \\
a \ar[ur]^-{f} \rrtwocell^{gf}_{gf'}{\ g\alpha} & & c
}
\end{equation*}
which is sent by $F$ to the $[2;1,0]$-element of $NB$ displayed below,
\begin{equation*}
\cd[@=3em]{
& Fb \ar[dr]^-{Fg} \mydbldtwocell[0.6]{d}{\varphi}{\cong}  \\
Fa \uruppertwocell^{Ff}{\ \ F\alpha} \ar[ur]_{Ff'} \ar[rr]_-{F(gf')} && Fc
}
\qquad
=
\qquad
\cd[@=3em]{
& Fb \ar[dr]^-{Fg} \mydbldtwocell[.45]{d}{\varphi}{\cong} \\
Fa \ar[ur]^-{Ff} \rrtwocell^{F(gf)}_{F(gf')}{\ \ \ \ F(g\alpha)} & & c
}
\end{equation*}
whose boundary faces are determined by the naturality of $F$ with respect to the morphisms (\ref{coneface}). This pasting equation in $B$ is precisely the statement of naturality of $\varphi_{g,f}$ in the variable $f$. The naturality of $\varphi_{g,f}$ in its other variable is proved similarly by the consideration of $[2;0,1]$-elements.

To verify the associativity axiom, let $f,g,h$ be a composable triple of morphisms in $A$. These define the $[3;0,0,0]$-element of $NA$ represented by the pasting equation displayed below,
\begin{equation*}
\cd[@=2em]{
b  \ar[ddrr]|-{hg} \ar[rr]^-g  && c \ar[dd]^-h \\
\\
a \ar[uu]^-f \ar[rr]_-{h(gf)} \mydbldtwocell[0.3]{uurr}{\bm{a}}{\cong} \ar@{}[uurr]|(.7){\veq} && d
}
\qquad
=
\qquad
\cd[@=2em]{
b \ar[rr]^-g \ar@{}[ddrr]|(.3){\veq} \ar@{}[ddrr]|(0.7){\veq}&& c \ar[dd]^-h \\
\\
a \ar[uu]^-f \ar[rr]_-{h(gf)} \ar[uurr]|-{gf} && d
}
\end{equation*}
in which $\bm{a} \colon (hg)f \lra h(gf)$ denotes the associativity constraint of the bicategory $A$, and which is to be read as the equation displayed below.
\begin{equation*}
\cd[@R=1.5em]{
(hg)f \ar@{=}[r] \ar[dd]_-{\bm{a}} & (hg)f \ar[dr]^-{\bm{a}} \\
{} \ar@{}[rr]|-{\veq}&& h(gf)\\
h(gf) \ar@{=}[r] & h(gf) \ar@{=}[ur]
}
\end{equation*}
This $[3;0,0,0]$-element of $NA$ is sent by $F$ to the $[3;0,0,0]$-element of $NB$ represented by the pasting equation displayed below,
 \begin{equation*}
\cd[@=2em]{
Fb  \ar[ddrr]|-{F(hg)} \ar[rr]^-{Fg}  && Fc \ar[dd]^-{Fh} \\
\\
Fa \ar[uu]^-{Ff} \ar[rr]_-{F(h(gf))} \mydbldtwocell[0.3]{uurr}{\overline{\bm{a}}}{\cong} \mydbldtwocell[0.7]{uurr}{\varphi}{\cong}&& d
}
\qquad
=
\qquad
\cd[@=2em]{
Fb \ar[rr]^-{Fg} \mydbldtwocell[0.3]{ddrr}{\varphi}{\cong} \mydbldtwocell[0.7]{ddrr}{\varphi}{\cong}&& Fc \ar[dd]^-{Fh} \\
\\
Fa \ar[uu]^-{Ff} \ar[rr]_-{F(h(gf))} \ar[uurr]|-{F(gf)} && Fd
}
\end{equation*}
whose faces are determined by the naturality of $F$ with respect to the morphisms
\begin{equation*}
\cd{
[2;0,0] \ar@<3ex>[r]^-{\delta^3} \ar@<1ex>[r]|-{\delta^2} \ar@<-1ex>[r]|-{\delta^1} \ar@<-3ex>[r]_-{\delta^0} & [3;0,0,0], 
}
\end{equation*}
and which is to be read  as the equation displayed below,
\begin{equation*}
\cd[@=3em]{
(Fh.Fg).Ff \ar[r]^-{\varphi.Ff} \ar[d]_-{\bm{a}} & F(hg).Ff \ar@{}[d]|-{\veq} \ar[r]^-{\varphi} & F((hg)f) \ar[d]^-{F\bm{a}} \\
Fh.(Fg.Ff) \ar[r]_-{Fh.\varphi} & Fh.F(gf) \ar[r]_-{\varphi} & F(h(gf))
}
\end{equation*}
where we have used the identity $\overline{\bm{a}} = F\bm{a}\circ \varphi$ (\ref{keylem}).
This equation is precisely the associativity axiom.

We have thus defined a  normal pseudofunctor $\widetilde{F}$ (say) from  $A$ to $B$. Since the $3$-dimensional elements of $NA$ and $NB$ are determined by their boundaries, to show that $F = N_b(\widetilde{F})$ it suffices to show that their components at the objects of degree $\leq 2$ of $\Theta_2$ are equal, which is immediate from the definition of $\widetilde{F}$ for the objects $[0$], $[1;0]$, and $[1;1]$, and follows from the definition of the composition constraints $\varphi$ and the identity (\ref{keylem}) for the object $[2;0,0]$. Hence the functor $N_b$ is full.
\end{proof}

We may now apply Lemma \ref{denselemma} to deduce the main result of this section.

\begin{theorem} \label{ffthm}
The coherent nerve functor $N \colon \Bicat \lra [\Theta_2^\mathrm{op},\Set]$ is fully faithful, and is isomorphic to the composite
\begin{equation*}
\cd[@C=3.5em]{
\Bicat \ar[r]^-{N_b} & [\Theta_b^\mathrm{op},\Set] \ar[r]^-{(i_b)_*} & [\Theta_2^\mathrm{op},\Set]
}
\end{equation*}
of the truncated coherent nerve functor $N_b$ of \textup{Proposition \ref{ffprop}} and the functor $(i_b)_*$ defined by right Kan extension along the opposite of the inclusion $i_b \colon \Theta_b \lra \Theta_2$.
\end{theorem}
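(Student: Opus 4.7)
The plan is to derive the theorem as an immediate application of Lemma \ref{denselemma}, taking $\mathbf{A} = \Theta_b$, $\mathbf{B} = \Theta_2$, $\mathcal{C} = \Bicat$, $F = i_b \colon \Theta_b \lra \Theta_2$ the inclusion subcategory, and $G \colon \Theta_2 \lra \Bicat$ the full inclusion. Under this substitution, the conclusion of Lemma \ref{denselemma} is precisely the content of Theorem \ref{ffthm}: density of $G$ expresses that the singular functor $\Bicat(G,1) = N$ is fully faithful, and the asserted isomorphism with the composite $(i_b)_* \circ N_b$ is exactly the formula provided by the lemma.

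There are two hypotheses to check. First, that $G \colon \Theta_2 \lra \Bicat$ is fully faithful: this was already noted in \S\ref{bicatrecall}, where it is observed that because each object of $\Theta_2$ is a rigid $2$-category, every normal pseudofunctor between them is forced to be a $2$-functor, so the composite $\Theta_2 \lra \twocat \lra \Bicat$ is full (and it is plainly faithful, being an inclusion of subcategories). Second, that the composite $GF = G \circ i_b \colon \Theta_b \lra \Bicat$ is dense: this composite is by definition the inclusion $\Theta_b \lra \Bicat$, and its singular functor is by construction the truncated coherent nerve functor $N_b$ of Proposition \ref{ffprop}, which was shown there to be fully faithful. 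Hence both hypotheses of Lemma \ref{denselemma} are satisfied.

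Neither step requires any new work beyond what has already been done; the only subtlety worth flagging is to confirm that the functor $F_\ast = (i_b)_\ast$ appearing in the lemma is indeed right Kan extension along $i_b^\mathrm{op}$, which is exactly how $(i_b)_\ast$ is defined in the statement of Theorem \ref{ffthm}. The main conceptual obstacle was already surmounted in Proposition \ref{ffprop}: there the delicate verification that a natural transformation between truncated coherent nerves assembles into a normal pseudofunctor required unpacking the unit, naturality, and associativity axioms from $[2;0,0]$-, $[2;1,0]$-, $[2;0,1]$-, and $[3;0,0,0]$-elements. Once $N_b$ is known to be fully faithful, the passage to $N$ is purely formal.
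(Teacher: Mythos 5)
Your proposal is correct and is essentially identical to the paper's own proof: the paper likewise deduces Theorem \ref{ffthm} from Lemma \ref{denselemma}, using the fullness of $\Theta_2 \lra \Bicat$ (via rigidity, \S\ref{bicatrecall}) and the density of $\Theta_b \lra \Bicat$ established in Proposition \ref{ffprop}. No gaps; the verification of the hypotheses and the identification of $(i_b)_*$ with right Kan extension along $i_b^\mathrm{op}$ are exactly as in the paper.
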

\begin{proof}
Since the inclusion $\Theta_2 \lra \Bicat$ is full, and the inclusion $\Theta_b \lra \Bicat$ is dense by Proposition \ref{ffprop}, the result follows by Lemma \ref{denselemma}.
\end{proof}

\section{Two model structures} \label{modstrsec}
In this section, we recall the two model structures which will feature in the first main theorem of this paper (Theorem \ref{firstqadjthm}), namely Lack's model structure for bicategories on the category $\Bicat_\mathrm{s}$ of bicategories and strict morphisms (\S\ref{lackmodstrsubsect}) and Ara's model structure for $2$-quasi-categories on the category $[\Theta_2^\mathrm{op},\Set]$ of $\Theta_2$-sets (\S\ref{secara}), and prove a recognition principle (Proposition \ref{welemma}) for left Quillen functors from the latter.

We begin with Lack's model structure for bicategories, established in \cite{MR2138540}.

\begin{definition} \label{lackdefs}
A (normal) pseudofunctor  between bicategories $F \colon A \lra B$ is said to be:
\begin{enumerate}
\item a \emph{biequivalence} if:
\begin{enumerate}[(i)]
\item for each object $b \in B$, there exists an object $a \in A$ and an equivalence $Fa \simeq b$ in $B$, and
\item for each pair of objects $a,b \in A$, the functor $F \colon A(a,b) \lra B(Fa,Fb)$ is an equivalence of categories;
\end{enumerate}
\item an \emph{equifibration} if:
\begin{enumerate}[(i)]
\item for each object $a \in A$ and each equivalence $g \colon Fa \lra b$ in $B$, there exists an equivalence $f \colon a \lra a'$ in $A$ such that $Ff = g$, and
\item for each pair of objects $a,b \in A$, the functor $F \colon A(a,b) \lra B(Fa,Fb)$ is an isofibration of categories;
\end{enumerate}
\item a \emph{trivial fibration} if:
\begin{enumerate}[(i)]
\item for each object $b \in B$, there exists an object $a \in A$ such that $Fa = b$, and
\item for each pair of objects $a,b \in A$, the functor $F \colon A(a,b) \lra B(Fa,Fb)$ is a surjective-on-objects equivalence of categories.
\end{enumerate}
\end{enumerate}
It is easily shown that a (normal) pseudofunctor is a trivial fibration if and only if it is both a biequivalence and an equifibration.
\end{definition}

\begin{observation}[free-living isomorphisms and equifibrations] \label{equifibobs}
Let $\mathbb{I}$ denote the ``free-living isomorphism'', i.e.\ the contractible groupoid with two objects $0,1$. Let $\mathbb{I}_2$ denote the ``free-living invertible $2$-cell'', i.e.\ the 2-category with two objects $\bot,\top$ and hom-categories $\mathbb{I}_2(\bot,\bot) = \{\mathrm{id}_{\bot}\}$, $\mathbb{I}_2(\top,\top) = \{\mathrm{id}_{\top}\}$, $\mathbb{I}_2(\bot,\top) = \mathbb{I}$, and $\mathbb{I}_2(\top,\bot) = \emptyset$. 
For any bicategory $B$, a normal pseudofunctor $\mathbb{I} \lra B$ amounts precisely to an adjoint equivalence in $B$; a normal pseudofunctor $\mathbb{I}_2 \lra B$ is necessarily strict, and amounts to an invertible $2$-cell in $B$.

The proof of \cite[Proposition 6]{MR2138540} shows that a normal pseudofunctor is an equifibration if and only if it has the right lifting property in the category $\Bicat$ with respect to the functor  $\mathbf{1} \lra \mathbb{I}$ that picks out the object $0$ of $\mathbb{I}$ and the $2$-functor $\mathbf{2} \lra \mathbb{I}_2$ that picks out the morphism $0 \colon \bot \lra \top$ in $\mathbb{I}_2$.  
\end{observation}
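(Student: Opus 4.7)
The plan is to translate the two clauses in the definition of equifibration into the two lifting properties by unpacking normal pseudofunctors out of $\mathbf{1}$, $\mathbf{2}$, $\mathbb{I}$, and $\mathbb{I}_2$, as already partially indicated in the first paragraph of the observation. Under those identifications, $\mathbf{1} \lra \mathbb{I}$ picks out one of the two objects of the adjoint equivalence it classifies, and $\mathbf{2} \lra \mathbb{I}_2$ picks out one of the two parallel $1$-cells related by the invertible $2$-cell.

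I would first dispatch the (easier) ``if'' direction. Given an object $a \in A$ and an equivalence $g \colon Fa \lra b$ in $B$, I would extend $g$ to an adjoint equivalence in $B$ using the standard bicategorical fact that any equivalence can be so extended; this datum is a normal pseudofunctor $\mathbb{I} \lra B$ which, together with the object $a \colon \mathbf{1} \lra A$, forms a commutative square over $F$. Any diagonal filler is an adjoint equivalence $f \colon a \simeq a'$ in $A$ with $Ff = g$, which yields clause (i). For clause (ii), given $f \in A(a,b)$ and an invertible $2$-cell $\alpha \colon Ff \cong g$ in $B(Fa,Fb)$, the evident square with top edge $f \colon \mathbf{2} \lra A$ and bottom edge the normal pseudofunctor $\mathbb{I}_2 \lra B$ classifying $\alpha$ admits a lift, which is an invertible $2$-cell $\beta \colon f \cong f'$ in $A$ with $F\beta = \alpha$, exactly saying that $F$ restricts to an isofibration on hom-categories.

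The converse is where the main work lies. The lifting property for $\mathbf{2} \lra \mathbb{I}_2$ is again immediate from clause (ii) by reversing the translation above. The lifting property for $\mathbf{1} \lra \mathbb{I}$ is the main obstacle, since the given lifting problem supplies a complete adjoint equivalence datum $(g, g', \eta, \epsilon)$ in $B$ and demands a lift to an adjoint equivalence in $A$ mapping \emph{onto this entire datum}, whereas clause (i) only guarantees an equivalence $f$ in $A$ with $Ff = g$. To bridge the gap, I would apply clause (i) to produce $f$, extend $f$ to some adjoint equivalence $(f, \widetilde{f}', \widetilde\eta, \widetilde\epsilon)$ in $A$, and then compare: the images $(g, F\widetilde{f}', F\widetilde\eta, F\widetilde\epsilon)$ and the given $(g, g', \eta, \epsilon)$ are two adjoint equivalences in $B$ sharing the left adjoint $g$, and any two such are related by a canonical invertible $2$-cell $F\widetilde{f}' \cong g'$ in $B(b, Fa)$ (the mate of the identity). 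Clause (ii), applied to $F \colon A(a', a) \lra B(b, Fa)$, lifts this $2$-cell to an isomorphism $\widetilde{f}' \cong f'$ in $A$; conjugating $\widetilde\eta$ and $\widetilde\epsilon$ by this lifted isomorphism produces an adjoint equivalence $(f, f', \eta', \epsilon')$ in $A$ whose image under $F$ is precisely $(g, g', \eta, \epsilon)$, completing the lift.
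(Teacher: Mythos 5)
Your argument is correct, and it is essentially the argument of Lack's Proposition 6 that the paper cites rather than reproves: clause (ii) is exactly the lifting property against $\mathbf{2} \lra \mathbb{I}_2$, and for $\mathbf{1} \lra \mathbb{I}$ one lifts the $1$-cell by clause (i), extends to an adjoint equivalence, and then corrects the right adjoint and unit/counit by lifting the canonical comparison isomorphism through the local isofibration of clause (ii). The only point you leave implicit is that ``the image under $F$'' of an adjoint equivalence (i.e.\ the composite normal pseudofunctor $\mathbb{I} \lra A \lra B$) has unit and counit twisted by the composition constraints $\varphi$ of $F$, so the final on-the-nose identification with $(g,g',\eta,\epsilon)$ uses the naturality of these constraints -- a routine verification that completes your transport argument.
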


\subsection{The category of bicategories and strict morphisms} \label{qrecall}
Lack's model structure for bicategories is defined not on the category $\Bicat$ of bicategories and normal pseudofunctors -- which is neither complete nor cocomplete (cf.\ \cite[Example 4.5]{MR1931220}) -- but on its subcategory $\Bicat_\mathrm{s}$ of bicategories and strict morphisms (see \S\ref{bicatrecall}). 
It can be proved (for instance using two-dimensional monad theory \cite{MR1007911}) that 
the category $\Bicat_\mathrm{s}$ is locally finitely presentable, that the inclusion functor $\Bicat_\mathrm{s} \lra \Bicat$ has a left adjoint
\begin{equation} \label{prestrictadj}
\xymatrix{
\Bicat_\mathrm{s} \ar@<-1.5ex>[rr]^-{\hdash}_-{} && \ar@<-1.5ex>[ll]_-{Q} \Bicat
}
\end{equation}
which sends a bicategory $A$ to its \emph{normal pseudofunctor classifier} $QA$, and that each component of the unit and counit of this adjunction is a (bijective-on-objects) biequivalence. It follows that the functor $Q \colon \Bicat \lra \Bicat_\mathrm{s}$ preserves and reflects biequivalences.

\subsection{The model structure for bicategories} \label{lackmodstrsubsect}
In \cite{MR2138540}, Lack constructs  a model structure on the category $\Bicat_\mathrm{s}$ in which a strict morphism of bicategories is a weak equivalence, fibration, or trivial fibration precisely when it a biequivalence, equifibration, or trivial fibration  (in the sense of Definition \ref{lackdefs}) respectively. 
Every bicategory is fibrant in this model structure, but not every bicategory is cofibrant; for each bicategory $A$, the counit morphism $QA \lra A$ of the adjunction (\ref{prestrictadj}) is a cofibrant replacement of $A$. 

\subsection{Biequivalences between cofibrant bicategories} \label{unitobs}
In \cite[\S1]{MR2138540}, Lack describes an adjunction
\begin{equation} \label{lackqequiv}
\xymatrix{
\twocat \ar@<-1.5ex>[rr]^-{\hdash}_-{} && \ar@<-1.5ex>[ll]_-L \Bicat_\mathrm{s}
}
\end{equation}
whose right adjoint is the full inclusion of the category $\twocat$ of $2$-categories and $2$-functors into  $\Bicat_\mathrm{s}$. It is proved in \cite[Lemma 10]{MR2138540} that, for every cofibrant bicategory $A$, the unit morphism $A \lra LA$ is a biequivalence. It follows that a strict morphism between cofibrant bicategories is a biequivalence if and only if it sent by the functor $L \colon \Bicat_\mathrm{s} \lra \twocat$ to a biequivalence.
Moreover, by \cite[Theorems 4 and 11]{MR2138540}, the category $\twocat$ admits a model structure (right-induced from the model structure for bicategories along the inclusion $\twocat \lra \Bicat_\mathrm{s}$), with respect to which the adjunction (\ref{lackqequiv}) is a Quillen equivalence. 

\subsection{Normal strictification} \label{nstrecall}
Let 
\begin{equation} \label{nstadj}
\xymatrix{
\twocat \ar@<-1.5ex>[rr]^-{\hdash}_-{} && \ar@<-1.5ex>[ll]_-{\st} \Bicat
}
\end{equation}
denote the composite of the adjunctions (\ref{lackqequiv}) and (\ref{prestrictadj}). The left adjoint of this adjunction sends a bicategory $A$ to its \emph{normal strictification} $\st A$. It follows from the previous paragraphs that each component of the unit and counit of this adjunction is a biequivalence.
\medskip

 We now turn to Ara's model structure for $2$-quasi-categories, which was constructed in \cite{MR3350089} using Cisinski's theory of localisers \cite{MR2294028}.
 
 \subsection{Localisers and Cisinski model structures} \label{cisinski}
 Let $\mathbf{A}$ be a small category. A morphism in the presheaf category $[\mathbf{A}^\mathrm{op},\Set]$ is said to be a \emph{trivial fibration} if it has the right lifting property with respect to all monomorphisms in $[\mathbf{A}^\mathrm{op},\Set]$. An \emph{$\mathbf{A}$-localiser} \cite[D\'efinition 1.4.1]{MR2294028} is a class $\mathsf{W}$ of morphisms in $[\mathbf{A}^\mathrm{op},\Set]$ such that:
 \begin{enumerate}
 \item $\mathsf{W}$ satisfies the two-out-of-three property,
 \item every trivial fibration belongs to $\mathsf{W}$,
 \item the class of monomorphisms belonging to $\mathsf{W}$ is stable under pushout and transfinite composition.
 \end{enumerate}
Any intersection of $\mathbf{A}$-localisers is an $\mathbf{A}$-localiser. An $\mathbf{A}$-localiser $\mathsf{W}$ is said to be \emph{accessible} if there exists a \emph{small} set of morphisms $S$ in $[\mathbf{A}^\mathrm{op},\Set]$ such that $\mathsf{W}$ is the smallest $\mathbf{A}$-localiser containing $S$ (i.e.\ the intersection of all $\mathbf{A}$-localisers containing $S$).
 
 Let $S$ be a small set of morphisms in $[\mathbf{A}^\mathrm{op},\Set]$. Cisinski proves \cite[Th\'eor\`eme 1.4.3]{MR2294028} that there exists a (necessarily unique) model structure on $[\mathbf{A}^\mathrm{op},\Set]$ whose cofibrations are the monomorphisms, and whose class of weak equivalences is the smallest $\mathbf{A}$-localiser containing $S$. We will call this model structure the \emph{Cisinski model structure generated by $S$}. (Conversely, Cisinski shows that every cofibrantly generated model structure on $[\mathbf{A}^\mathrm{op},\Set]$ whose cofibrations are the monomorphisms arises in this way.)

\begin{notation}[free-living isomorphisms] \label{freeliving}
Let $J$ denote the $2$-cellular nerve of the free-living isomorphism $\mathbb{I}$, and let $\delta_J \colon \partial J \lra J$ denote the nerve of the inclusion $\{0,1\} \lra \mathbb{I}$. Furthermore, let $J_2$ denote the strict $2$-cellular nerve of the free-living invertible $2$-cell $\mathbb{I}_2$ (which we note is not equal to the coherent $2$-cellular nerve of $\mathbb{I}_2$). Let $s_1 \colon \Theta_2[1;0] \lra J_2$ denote the strict nerve of the $2$-functor $\mathbf{2} \lra \mathbb{I}_2$ that picks out the morphism $0 \colon \bot \lra \top$ in $\mathbb{I}_2$, and let $j_2 \colon J_2 \lra \Theta_2[1;0]$ denote the strict nerve of the unique bijective-on-objects $2$-functor $\mathbb{I}_2 \lra \mathbf{2}$. 
\end{notation}

\subsection{The model structure for $2$-quasi-categories} \label{secara}
In \cite[\S5.17]{MR3350089}, Ara defines the \emph{model structure for $2$-quasi-categories}  to be the Cisinski model structure (see \S\ref{cisinski}) on the category $[\Theta_2^\mathrm{op},\Set]$ generated by the spine inclusions $i_{n(\bm{m})} \colon I[n;\bm{m}] \lra \Theta_2[n;\bm{m}]$ for all $[n;\bm{m}] \in \Theta_2$ (see \S\ref{spinerecall}) and the morphism $j_2 \colon J_2 \lra \Theta_2[1;0]$ (see \S\ref{freeliving}). The fibrant objects of this model structure are called \emph{$2$-quasi-categories}. 

By definition, the cofibrations in the model structure for $2$-quasi-categories are the monomorphisms in $[\Theta_2^\mathrm{op},\Set]$. Hence every $\Theta_2$-set is cofibrant, and (by \S\ref{bdyrecall}) a morphism of $\Theta_2$-sets is a trivial fibration if and only if it has the right lifting property with respect to the boundary inclusions $\partial\Theta_2[n;\bm{m}] \lra \Theta_2[n;\bm{m}]$ for every $[n;\bm{m}] \in\Theta_2$.

\medskip
We now deduce from Cisinski's theory of localisers the following recognition principle for left Quillen functors from the model category of $2$-quasi-categories, which we will use in \S\ref{mainthmsec} to prove our first main theorem.

\begin{proposition} \label{welemma}
Let $\mathcal{M}$ be a model category and let  $F \colon [\Theta_2^\mathrm{op},\Set] \lra \mathcal{M}$ be a cocontinuous functor that sends monomorphisms to cofibrations. Then $F$ sends the weak equivalences in the model structure for $2$-quasi-categories to weak equivalences in $\mathcal{M}$ if and only if it sends  the following morphisms to weak equivalences in $\mathcal{M}$:
\begin{enumerate}[font=\normalfont,  label=(\roman*)]
\item for each $[n;\bm{m}] \in \Theta_2$, the projection $\mathrm{pr}_2 \colon J \times \Theta_2[n;\bm{m}] \lra \Theta_2[n;\bm{m}]$,
\item for each $[n;\bm{m}] \in \Theta_2$, the spine inclusion $i_{n(\bm{m})} \colon I[n;\bm{m}] \lra \Theta_2[n;\bm{m}]$, and
\item the morphism  $j_2 \colon J_2 \lra \Theta_2[1;0]$.
\end{enumerate}
\end{proposition}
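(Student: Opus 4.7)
The ``only if'' direction is essentially by definition: the spine inclusions and $j_2$ are weak equivalences in the model structure for $2$-quasi-categories by the very definition of this Cisinski model structure (\S\ref{secara}), and the projection $J \times \Theta_2[n;\bm{m}] \to \Theta_2[n;\bm{m}]$ is a weak equivalence because the endpoint inclusion $\Theta_2[0] \to J$ is a trivial cofibration in Ara's model structure and that model structure is cartesian monoidal.

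For the substantive ``if'' direction, let $\mathsf{W}$ denote the preimage under $F$ of the weak equivalences of $\mathcal{M}$. By the definition of Ara's model structure as the Cisinski model structure generated by the spine inclusions and $j_2$, it suffices to prove that $\mathsf{W}$ is a $\Theta_2$-localiser (\S\ref{cisinski}) containing those generators. The containment is given by (ii) and (iii); the two-out-of-three axiom is immediate from two-out-of-three in $\mathcal{M}$; and the stability of the monomorphisms in $\mathsf{W}$ under pushouts and transfinite composition follows from the cocontinuity of $F$ and the hypothesis that $F$ sends monomorphisms to cofibrations, since trivial cofibrations in $\mathcal{M}$ are themselves closed under these operations. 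The remaining (nontrivial) axiom is that every trivial fibration of $\Theta_2$-sets lies in $\mathsf{W}$, which I would establish via a $J$-cylinder retract argument.

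The crucial preliminary step is to extend hypothesis (i) from representables to arbitrary $\Theta_2$-sets, i.e.\ to show that $F(\mathrm{pr}_2 \colon J \times X \to X)$ is a weak equivalence in $\mathcal{M}$ for \emph{every} $\Theta_2$-set $X$. Let $\mathsf{V}$ denote the class of such $X$; hypothesis (i) places every representable in $\mathsf{V}$. Using that $F$ sends every $\Theta_2$-set to a cofibrant object in $\mathcal{M}$ (since every $\Theta_2$-set is cofibrant in Ara's model structure and $F$ sends cofibrations to cofibrations), that $F$ sends monomorphisms to cofibrations, and that $J \times (-)$ preserves both monomorphisms and colimits, the cube lemma (valid in any model category under suitable cofibrancy hypotheses) together with its standard analogue for transfinite composites along cofibrations shows that $\mathsf{V}$ is closed under coproducts, pushouts along monomorphisms, and transfinite composition of monomorphisms. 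Since every $\Theta_2$-set is built from representables by such operations (\S\ref{bdyrecall}), this forces $\mathsf{V} = [\Theta_2^\mathrm{op},\Set]$.

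Given the extension of (i), for any trivial fibration $p \colon X \to Y$ the right lifting property of $p$ against the monomorphism $\emptyset \to Y$ yields a section $s \colon Y \to X$, and the right lifting property against the monomorphism $\{0,1\} \times X \hookrightarrow J \times X$ (applied to the pair $(\mathrm{id}_X, sp)$ over the constant map $p \circ \mathrm{pr}_2$) yields a $J$-homotopy $H \colon J \times X \to X$ from $\mathrm{id}_X$ to $sp$. By the extended form of (i) together with two-out-of-three, the endpoint inclusions $i_0, i_1 \colon X \to J \times X$ lie in $\mathsf{W}$; the identities $F(H) \circ F(i_0) = \mathrm{id}$ and $F(H) \circ F(i_1) = F(sp)$ therefore force $F(sp)$ to be a weak equivalence, and since $F(ps) = \mathrm{id}$ trivially is one, the two-out-of-six property of weak equivalences in $\mathcal{M}$ yields $F(p) \in \mathsf{W}$. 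The main obstacle is the bookkeeping in the preceding paragraph's extension of (i): one must check that every application of the cube lemma in the argument verifies the cofibration hypotheses needed to validate its use in a general, not assumed left proper, model category $\mathcal{M}$.
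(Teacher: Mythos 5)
Your argument is correct in substance, but it takes a genuinely different route through the hard direction. Both proofs reduce sufficiency to showing that the class $\mathsf{W} = F^{-1}(\text{weak equivalences of }\mathcal{M})$ is a $\Theta_2$-localiser containing the spine inclusions and $j_2$; and for the two easy localiser axioms (two-out-of-three, and stability of the monomorphisms in $\mathsf{W}$ under pushout and transfinite composition via cocontinuity and preservation of cofibrations) you argue exactly as the paper implicitly does. The difference is in the remaining axiom, that every trivial fibration of $\Theta_2$-sets lies in $\mathsf{W}$: the paper disposes of this by citing Cisinski's machinery, namely \cite[Proposition 8.2.15]{MR2294028} applied to the regular skeletal category $\Theta_2$, the elementary homotopy datum given by $J$ with its endpoints, and the observation that $\mathsf{W}$ is a pr\'elocalisateur; you instead re-derive the relevant special case by hand, first extending hypothesis (i) from representables to all $\Theta_2$-sets by a cellular induction using the cube lemma, and then running the section-plus-$J$-homotopy argument to conclude that trivial fibrations land in $\mathsf{W}$. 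What the paper's route buys is brevity and a clean separation of the formal input (regularity of $\Theta_2$, the interval $J$); what yours buys is self-containedness, using only the cell decomposition of monomorphisms recorded in \S\ref{bdyrecall} and standard model-category lemmas, with no properness assumptions.

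Two small points. First, in your extension of (i), the phrase ``every $\Theta_2$-set is built from representables by such operations'' hides the fact that cells are attached along \emph{boundaries} $\partial\Theta_2[n;\bm{m}]$, which are not representable; the closure properties you state do suffice, but only after an induction on degree (empty set, then $d$-skeleta via pushouts whose span objects are the $(d-1)$-skeleton, a coproduct of boundaries of dimension $<d$, and a coproduct of representables, then the sequential colimit of skeleta), so this nested induction should be made explicit. Second, for the easy (necessity) direction your appeal to cartesianness of Ara's model structure presupposes that $\{0\} \colon \Theta_2[0] \lra J$ is a trivial cofibration, which itself rests on the injectivity of $J$; once one knows $J \lra \Theta_2[0]$ is a trivial fibration, the projections in (i) are trivial fibrations directly by pullback-stability of the right lifting property, which is the paper's (simpler) justification. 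Finally, your use of two-out-of-six is legitimate (weak equivalences of any model category are saturated), but it can be avoided: the identities $F(p)F(s)=\mathrm{id}$ and $F(s)F(p)=F(sp)$ exhibit $F(p)$ as a retract of the weak equivalence $F(sp)$.
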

\begin{proof}
We first show that the morphisms (i)--(iii) listed above are weak equivalences in the model structure for $2$-quasi-categories. For the morphisms (ii)--(iii), this is immediate from the definition of  this model structure. For the morphisms (i), injectivity of $J$ (see \cite[Corollary 6.7]{MR3350089} or Proposition \ref{trivfibprop} below) implies that the projections $\mathrm{pr}_2 \colon J \times \Theta_2[n;\bm{m}] \lra \Theta_2[n;\bm{m}]$ are trivial fibrations, and hence weak equivalences. This proves necessity.

To prove sufficiency, let $\mathsf{W}$ denote the class of morphisms in $[\Theta_2^\mathrm{op},\Set]$ that are sent by the functor $F$ to weak equivalences in $\mathcal{M}$. Since $\mathsf{W}$ contains the morphisms (ii)-(iii) listed above, it suffices by the definition of the weak equivalences in the model structure for $2$-quasi-categories to show that $\mathsf{W}$ is a $\Theta_2$-localiser. It thus suffices to show that $\mathsf{W}$ satisfies the hypotheses of \cite[Proposition 8.2.15]{MR2294028}, which we do as follows:\ by \S\ref{reedyrecall}, $\Theta_2$ is a \emph{cat{\'e}gorie squelletique r{\'e}guli{\`e}re}; by \cite[Exemple 1.3.8]{MR2294028}, the object $J$ (together with its endpoints $\{0\},\{1\} \colon \Theta_2[0] \lra J$) define a \emph{donn\'ee homotopique \'el\'ementaire} on $\Theta_2$; since $F$ is cocontinuous and sends monomorphisms to cofibrations, $\mathsf{W}$ is a $\Theta_2$-\emph{pr\'elocalisateur} \cite[D\'efinition 8.2.10]{MR2294028}; by hypothesis, the class $\mathsf{W}$ contains the projections $J \times \Theta_2[n;\bm{m}] \lra \Theta_2[n;\bm{m}]$.
\end{proof}

We conclude this section with a couple of remarks which, though they will not be used in this paper, may be of interest to the reader.

\begin{remark}
It follows from the results of \cite{yukihorn} that, in the statement of Proposition \ref{welemma}, the set of morphisms (i)  can be replaced by the single morphism $J \lra \Theta_2[0]$.
\end{remark}

\begin{remark} \label{elcatex}
Cisinski's theory of localisers implies the following characterisation of $2$-quasi-categories in terms of a lifting property \cite[Theorem 2.14]{MR3350089}. A $\Theta_2$-set is a $2$-quasi-category if and only if it has the right lifting property with respect to the following morphisms  (where $\delta_{J^k} \colon \partial J^k \lra J^k$ denotes the $k$-fold Leibniz product (i.e.\ pushout-product) of the morphism $\delta_J \colon \partial J \lra J$ with itself):
\begin{enumerate}[(i)]
\item for each $[n;\bm{m}] \in \Theta_2$ and $\varepsilon \in \{0,1\}$, the Leibniz product 
\begin{equation*}
\{\varepsilon\} \widehat{\times} \delta_{n(\bm{m})} \colon (\Theta_2[0]\times \Theta_2[n;\bm{m}]) \cup (J \times \partial \Theta_2[n;\bm{m}]) \lra J \times \Theta_2[n;\bm{m}],
\end{equation*}
\item for each $k \geq 0$ and $[n;\bm{m}] \in \Theta_2$, the Leibniz product
\begin{equation*}
\delta_{J^k} \widehat{\times} i_{n(\bm{m})} \colon (\partial J^k \times \Theta_2[n;\bm{m}]) \cup (J^k \times I[n;\bm{m}]) \lra J^k \times \Theta_2[n;\bm{m}],
\end{equation*}
\item for each $k \geq 0$, the Leibniz product
\begin{equation*}
\delta_{J^k} \widehat{\times} s_1 \colon (\partial J^k \times J_2) \cup (J^k \times \Theta_2[1;0]) \lra J^k \times J_2.
\end{equation*}
\end{enumerate}
Alternative characterisations of $2$-quasi-categories in terms of lifting properties may be found in \cite{yukihorn}. 
\end{remark}

\section{From bicategories to $2$-quasi-categories} \label{mainthmsec}
In this section, we prove the first main theorem of this paper:\ that the coherent nerve functor studied in \S\ref{nervefunsec} restricts to the right part of a Quillen adjunction between Lack's model structure on $\Bicat_\mathrm{s}$ and Ara's model structure for $2$-quasi-categories on $[\Theta_2^\mathrm{op},\Set]$ (both recalled in \S\ref{modstrsec}),  
and hence that the coherent nerve of a bicategory is a $2$-quasi-category.

\subsection{A coherent nerve adjunction}
It follows from the adjunction (\ref{prestrictadj}) that the restriction of the coherent nerve functor $N \colon \Bicat \lra [\Theta_2^\mathrm{op},\Set]$ along the inclusion $\Bicat_\mathrm{s} \lra \Bicat$ is naturally isomorphic to the singular functor induced by the composite functor 
\begin{equation*}
\cd{
\Theta_2 \ar[r] & \Bicat \ar[r]^-{Q} & \Bicat_\mathrm{s}, 
}
\end{equation*}
which sends each object of $\Theta_2$ to its normal pseudofunctor classifier (\emph{qua} bicategory). Since the category $\Bicat_\mathrm{s}$ is cocomplete, it then follows by Kan's construction (Recollection \ref{kanrecall}) that this restricted coherent nerve functor has a left adjoint, as displayed below.
\begin{equation} \label{firstlookqadj}
\xymatrix{
\Bicat_\mathrm{s} \ar@<-1.5ex>[rr]^-{\hdash}_-{N} && \ar@<-1.5ex>[ll]_-{\tau_b} [\Theta_2^\mathrm{op},\Set]
}
\end{equation}

\begin{remark}
The coherent nerve functor $N \colon \Bicat \lra [\Theta_2^\mathrm{op},\Set]$ does not have a left adjoint. For the value of such a left adjoint at the spine $I[2;0,0]$ would represent the functor $\Bicat \lra \Set$ that sends a bicategory $A$ to the set of composable pairs of morphisms in $A$; but this functor is not representable (cf.\ \cite[Example 4.5]{MR1931220}). Nevertheless,  we will show in \S\ref{hobicatsec} that the coherent nerve functor does have a partial left adjoint, defined on the full subcategory of $2$-quasi-categories. 
\end{remark}

\begin{observation} \label{coobs} 
Theorem \ref{ffthm} (together with the Yoneda lemma) implies that the composite functor 
\begin{equation*}
\xymatrix{
\Bicat \ar[r]^-N & [\Theta_2^\mathrm{op},\Set] \ar[r]^-{\tau_b} & \Bicat_\mathrm{s}
}
\end{equation*}
is naturally isomorphic to the normal pseudofunctor classifier functor $Q \colon \Bicat \lra \Bicat_\mathrm{s}$ (see \S\ref{qrecall}). 
Furthermore, let 
\begin{equation} \label{2catqadj}
\xymatrix{
\twocat \ar@<-1.5ex>[rr]^-{\hdash}_-{N} && \ar@<-1.5ex>[ll]_-{\tau_2} [\Theta_2^\mathrm{op},\Set]
}
\end{equation}
denote the composite of the adjunctions (\ref{lackqequiv}) and (\ref{firstlookqadj}). Then, as above, the composite functor
\begin{equation*}
\xymatrix{
\Bicat \ar[r]^-N & [\Theta_2^\mathrm{op},\Set] \ar[r]^-{\tau_2} & \twocat
}
\end{equation*}
is naturally isomorphic to the normal strictification functor $\st \colon \Bicat \lra \twocat$ (see \S\ref{nstrecall}). Hence the components of the counits of the adjunctions (\ref{firstlookqadj}) and (\ref{2catqadj}) are biequivalences. 
\end{observation}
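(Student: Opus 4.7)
The plan is to establish the natural isomorphisms $\tau_b N \cong Q$ and $\tau_2 N \cong \st$ by a Yoneda argument, and then to identify the counits by tracking an identity morphism through the same chain of natural bijections.

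For the first isomorphism, I would fix bicategories $A \in \Bicat$ and $B \in \Bicat_\mathrm{s}$ and assemble the natural chain
\begin{equation*}
\Bicat_\mathrm{s}(\tau_b NA, B) \cong [\Theta_2^\mathrm{op},\Set](NA, NB) \cong \Bicat(A, B) \cong \Bicat_\mathrm{s}(QA, B),
\end{equation*}
whose three constituents are the adjunction (\ref{firstlookqadj}), the full fidelity of $N \colon \Bicat \lra [\Theta_2^\mathrm{op},\Set]$ from Theorem \ref{ffthm}, and the adjunction (\ref{prestrictadj}) respectively. (The second step uses that the restriction of the coherent nerve to $\Bicat_\mathrm{s}$ agrees with its value on $B$ seen as an object of $\Bicat$.) The Yoneda lemma then produces a natural isomorphism $\phi_A \colon \tau_b NA \cong QA$ in $\Bicat_\mathrm{s}$. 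For the second isomorphism, since (\ref{2catqadj}) is by construction the composite of (\ref{lackqequiv}) and (\ref{firstlookqadj}), we have $\tau_2 = L \circ \tau_b$, while $\st = L \circ Q$ by the definition in \S\ref{nstrecall}; applying $L$ to $\phi_A$ then yields the natural isomorphism $\tau_2 NA \cong \st A$.

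For the counit claim, I would trace the counit of (\ref{firstlookqadj}) at a fixed $B \in \Bicat_\mathrm{s}$ through the chain above, taking $A = B$. By definition, this counit $\epsilon^{(1)}_B \colon \tau_b NB \lra B$ corresponds under the first bijection to $\mathrm{id}_{NB}$, under the second (by full fidelity of $N$) to $\mathrm{id}_B \in \Bicat(B,B)$, and under the third (by the triangle identity for $Q \dashv {}$incl) to the counit $\epsilon^Q_B \colon QB \lra B$ of (\ref{prestrictadj}). The isomorphism $\phi_B$ therefore identifies $\epsilon^{(1)}_B$ with $\epsilon^Q_B$, and since $\epsilon^Q_B$ is a biequivalence by \S\ref{qrecall}, so is $\epsilon^{(1)}_B$. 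The counit of (\ref{2catqadj}) at $C \in \twocat$ is handled identically:\ running the same Yoneda chain over $C, D \in \twocat$ with $\st \dashv {}$incl in place of $Q \dashv {}$incl identifies this counit with the counit $\st C \lra C$ of (\ref{nstadj}), which is a biequivalence by \S\ref{nstrecall}.

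I expect no serious obstacle here:\ the argument is a bookkeeping exercise in the Yoneda lemma and the characterisation of a counit as the adjunct of an identity morphism. The only mild point requiring care is to verify that each of the three bijections in the chain is natural in both variables so that Yoneda produces an honest natural isomorphism of functors, but this is immediate from the fact that each bijection is a component of a natural isomorphism of $\Hom$-bifunctors.
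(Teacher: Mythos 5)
Your proposal is correct and follows exactly the argument the paper intends (the Observation is stated as a consequence of Theorem \ref{ffthm} plus the Yoneda lemma): the chain $\Bicat_\mathrm{s}(\tau_b NA,B)\cong[\Theta_2^\mathrm{op},\Set](NA,NB)\cong\Bicat(A,B)\cong\Bicat_\mathrm{s}(QA,B)$, Yoneda, the identification $\tau_2=L\tau_b$, $\st=LQ$, and tracing identities to match the counits with those of (\ref{prestrictadj}) and (\ref{nstadj}), which are biequivalences by \S\ref{qrecall} and \S\ref{nstrecall}.
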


The goal of this section is to prove that the adjunction (\ref{firstlookqadj}) is a Quillen adjunction between Lack's model structure for bicategories and Ara's model structure for $2$-quasi-categories. To achieve this goal, it will suffice to prove that the left adjoint functor $\tau_b \colon [\Theta_2^\mathrm{op},\Set] \lra \Bicat_{\mathrm{s}}$ satisfies the hypotheses of Proposition \ref{welemma}.

\begin{proposition} \label{trivfibprop}
The coherent nerve functor $N \colon \Bicat \lra [\Theta_2^\mathrm{op},\Set]$ preserves and reflects trivial fibrations. Hence the functor $\tau_b \colon [\Theta_2^\mathrm{op},\Set] \lra \Bicat_\mathrm{s}$ sends monomorphisms to cofibrations.
\end{proposition}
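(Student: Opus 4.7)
The plan is to establish the biconditional ``$NF$ is a trivial fibration of $\Theta_2$-sets if and only if $F$ is a trivial fibration in $\Bicat$'' by interpreting, at each object $[n;\bm{m}] \in \Theta_2$, the right lifting property of $NF$ against the boundary inclusion $\delta_{n(\bm{m})}$ as a condition on $F$. Recall from \S\ref{secara} that trivial fibrations of $\Theta_2$-sets are exactly the morphisms with the right lifting property against all boundary inclusions, and from Definition \ref{lackdefs} that a normal pseudofunctor $F$ is a trivial fibration in $\Bicat$ if and only if it is surjective on objects and each hom-functor $F \colon A(a,b) \lra B(Fa,Fb)$ is a surjective-on-objects equivalence of categories. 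For the reflection direction, I would unpack the lifting properties against the low-degree boundaries using the explicit description in Figure \ref{pasts}: $\delta_0$ gives surjectivity on objects, $\delta_{1(0)}$ local surjectivity on morphisms, $\delta_{1(1)}$ local fullness on $2$-cells, and $\delta_{1(2)}$ local faithfulness on $2$-cells (since a $[1;2]$-element of the nerve records a vertical composition equality, which a normal pseudofunctor preserves strictly). Together these extract precisely the defining conditions of a trivial fibration in $\Bicat$.

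For the preservation direction, I would verify the right lifting property against each $\delta_{n(\bm{m})}$ case by case, assuming $F$ is a trivial fibration in $\Bicat$. The lifts against $\delta_0$, $\delta_{1(0)}$, and $\delta_{1(1)}$ are immediate from the hypotheses. The case $\delta_{2(0,0)}$ is the unique one requiring genuinely new $2$-cell data: given $f,g,h$ in $A$ and an invertible $\sigma \colon Fg \cdot Ff \cong Fh$ in $B$, I would apply local fullness to $\sigma$ and to $\sigma^{-1}$ to obtain $2$-cells $\tau \colon g\cdot f \lra h$ and $\tau' \colon h \lra g\cdot f$ in $A$ lifting them, and then use local faithfulness to conclude $\tau'\tau = 1$ and $\tau\tau' = 1$ from the corresponding identities in $B$, so that $\tau$ is the desired invertible lift. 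For every remaining $[n;\bm{m}]$---namely $[1;2]$ together with all objects of degree $\geq 3$---the key observation is that $\partial\Theta_2[n;\bm{m}]$ already contains every $[2;0,0]$- and $[1;1]$-subelement of $\Theta_2[n;\bm{m}]$, since these objects have degree at most $2$, strictly less than $n + m_1 + \cdots + m_n$. Consequently, an $[n;\bm{m}]$-element of $NA$ extending a given boundary element consists only of equations between composites of $2$-cells already recorded in the boundary (pentagon axioms, naturality squares, interchange compatibilities, and the like); since such equations hold in $B$ after applying $F$, they hold in $A$ by local faithfulness, so the required lift exists and is in fact unique.

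The second statement then follows formally by adjunction: if $m$ is a monomorphism of $\Theta_2$-sets and $F$ is a trivial fibration in $\Bicat_\mathrm{s}$ (equivalently, in $\Bicat$), then $NF$ is a trivial fibration of $\Theta_2$-sets by the first part, so any square from $m$ to $NF$ admits a diagonal filler, which transposes under $\tau_b \dashv N$ to a diagonal filler of the corresponding square from $\tau_b(m)$ to $F$; hence $\tau_b(m)$ has the left lifting property against every trivial fibration in $\Bicat_\mathrm{s}$ and is therefore a cofibration. The main obstacle will be making the ``higher-degree cases are automatic'' argument in the preservation direction fully rigorous: one must verify that for each $[n;\bm{m}]$ of degree $\geq 3$ the non-equational structure of an $[n;\bm{m}]$-element of $NA$ is indeed contained in its boundary, which will require direct inspection of the degree-$3$ shapes $[3;0,0,0]$, $[2;1,0]$, $[2;0,1]$, and $[1;2]$ before the general pattern can be asserted.
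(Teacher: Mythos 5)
Your low-degree analysis is essentially the paper's: lifting against $\delta_{0}$, $\delta_{1(0)}$, $\delta_{1(1)}$, $\delta_{1(2)}$ recovers exactly the conditions of a trivial fibration in $\Bicat$, the $[2;0,0]$ case is handled by local fullness plus faithfulness (``full and conservative on $2$-cells''), and the adjointness argument for the second statement is the standard transposition the paper also uses. The genuine gap is in how you dispose of the remaining boundary inclusions. The paper never verifies lifts at objects of degree $\geq 4$ at all: it invokes Theorem \ref{ffthm}, i.e.\ $N \cong (i_b)_* \circ N_b$, so that by adjointness the lifting problem for $NF$ against $\delta_{n(\bm{m})}$ transposes to a lifting problem for $N_bF$ against $i_b^*(\delta_{n(\bm{m})})$, and the latter is an \emph{isomorphism} whenever $[n;\bm{m}]$ has degree $>3$ (every map into $[n;\bm{m}]$ from an object of degree $\leq 3$ factors through an object of strictly smaller degree). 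Your plan instead asserts that for every shape of degree $\geq 3$ the boundary already determines the would-be filler and that only equations remain, and that ``the general pattern can be asserted'' after inspecting the degree-$3$ shapes. That assertion is precisely the unproved content of the density result: for a shape such as $[2;2,1]$ or $[4;0,0,0,0]$ you must show that a compatible family of elements over the boundary assembles into a normal pseudofunctor $[n;\bm{m}] \lra A$ (all data, functoriality on homs, naturality, unit and associativity axioms), and this does not follow from checking $[1;2]$, $[2;1,0]$, $[2;0,1]$, $[3;0,0,0]$; it is Proposition \ref{ffprop}/Theorem \ref{ffthm}. So either cite Theorem \ref{ffthm} (after which your proposal collapses onto the paper's proof, with only the degree-$\leq 3$ cases to check) or supply a genuine argument for arbitrary shapes, which is substantially more than the ``direct inspection'' you envisage.

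A smaller point on the equational step: from the given filler in $NB$ you do not literally get that ``the equations hold in $B$ after applying $F$''. The $B$-filler's equations are stated in terms of the twisted $2$-simplices $\overline{\sigma} = F\sigma \circ \varphi_{g,f}$, whereas faithfulness needs $F(\mathrm{LHS}) = F(\mathrm{RHS})$ for the composites formed in $A$, and $F$ preserves whiskering only up to its composition constraints. One must therefore use naturality (and, for $[3;0,0,0]$, the associativity coherence) of $\varphi$ together with invertibility of the constraints to cancel them before applying faithfulness --- the same manipulations carried out in the proof of Proposition \ref{ffprop}. This is routine and the paper compresses it to ``because $F$ is faithful on $2$-cells'', but as written your step skips the cancellation that makes faithfulness applicable.
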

\begin{proof}
Let $F\colon A \lra B$ be a normal pseudofunctor between bicategories. By \S\ref{secara}, the morphism $NF \colon NA\lra NB$ in $[\Theta_2^\mathrm{op},\Set]$ is a trivial fibration if and only if it has the right lifting property with respect to the boundary inclusion $\delta_{n(\bm{m})} \colon \partial\Theta_2[n;\bm{m}] \lra \Theta_2[n;\bm{m}]$ for every $[n;\bm{m}] \in \Theta_2$. By adjointness and Theorem \ref{ffthm}, this is so if and only if the morphism $N_bF \colon N_bA \lra N_bB$ in $[\Theta_b^\mathrm{op},\Set]$ has the right lifting property with respect to the morphism $i_b^*(\delta_{n(\bm{m})})$  for every $[n;\bm{m}] \in \Theta_2$ of degree $\leq 3$.

First, observe (with the aid of Figure \ref{pasts} in \S\ref{nervefunsec}) that the morphism $N_bF$ has the right lifting property with respect to the morphisms  $i_b^*(\delta_{n(\bm{m})})$ for $[n;\bm{m}] = [0]$, $[1;0]$, $[1;1]$, and $[1;2]$ precisely when the normal pseudofunctor $F$ is respectively surjective on objects, full on morphisms, full on $2$-cells, and faithful on $2$-cells, that is, precisely when $F$ is a trivial fibration in $\Bicat$. 

Therefore, to prove the first statement of the proposition, it remains to show that if $F$ is a trivial fibration in $\Bicat$, then $N_bF$ has the right lifting property in $[\Theta_b^\mathrm{op},\Set]$ with respect to the morphisms  $i_b^*(\delta_{n(\bm{m})})$ for $[n;\bm{m}] = [2;0,0]$, $[2;1,0]$, $[2;0,1]$, and $[3;0,0,0]$. For $[n;\bm{m}] = [2;0,0]$, the lifting property is satisfied because $F$ is full and conservative on $2$-cells. For the remaining objects of degree $3$, the lifting property is satisfied because $F$ is faithful on $2$-cells.

Hence the functor $N \colon \Bicat_\mathrm{s} \lra [\Theta_2^\mathrm{op},\Set]$ preserves trivial fibrations, and so by adjointness, its left adjoint $\tau_b \colon [\Theta_2^\mathrm{op},\Set] \lra \Bicat_\mathrm{s}$ sends monomorphisms to cofibrations.
\end{proof}

\begin{observation} \label{spineobs}
The free--forgetful adjunction (\ref{fuadj}) between the categories of $2$-categories and $2$-graphs is isomorphic to the composite
\begin{equation*}
\xymatrix{
\twocat  \ar@<-1.5ex>[rr]^-{\hdash}_-{N} && \ar@<-1.5ex>[ll]_-{\tau_2} [\Theta_2^\mathrm{op},\Set] \ar@<-1.5ex>[rr]^-{\hdash}_-{g^{\ast}} && \ar@<-1.5ex>[ll]_-{g_!} [\mathbb{G}_2^\mathrm{op},\Set]
}
\end{equation*}
of the adjunctions (\ref{2catqadj}) and (\ref{jadj}), as may be confirmed by comparing right adjoints.
A simple calculation (of universal properties) then shows that, for each $[n;\bm{m}] \in \Theta_2$, the spine inclusion $I[n;\bm{m}] \lra \Theta_2[n;\bm{m}]$ is (isomorphic to) the component of the unit of the adjunction $\tau_2 \dashv N$ at the $\Theta_2$-set $I[n;\bm{m}]$.
\end{observation}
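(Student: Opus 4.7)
The plan is to verify both claims by comparing universal properties. The isomorphism of composite adjunctions can be established at the level of right adjoints, and then the spine inclusion claim will fall out as the mate under $g_! \dashv g^*$ of an easily identified unit.

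For the first claim I would compute the composite right adjoint $g^* \circ N$ directly. For a $2$-category $A$ and $c \in \mathbb{G}_2$, we have $(g^*NA)_c = (NA)_{g(c)}$, so we must identify the sets of normal pseudofunctors $[0] \to A$, $[1;0] \to A$, and $[1;1] \to A$. Each of the $2$-categories $[0]$, $[1;0]$, $[1;1]$ is rigid and has no non-trivial composable pairs, so every normal pseudofunctor out of it is a $2$-functor whose composition constraints are all identities; these sets are therefore the sets of objects, morphisms, and $2$-cells of $A$ respectively. The generating arrows $\sigma,\tau$ of $\mathbb{G}_2$ are sent by $g$ to $\delta^0,\delta^1$ in $\Theta_2$, which act as the expected source and target maps, so $g^*(NA) \cong UA$ naturally in $A$. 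By uniqueness of adjoints, the composite left adjoint satisfies $\tau_2 \circ g_! \cong F$, which proves the first claim.

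For the second claim, recall from \S\ref{spinerecall} that the spine inclusion $i_{n(\bm{m})} \colon I[n;\bm{m}] = g_!(n;\bm{m}) \to \Theta_2[n;\bm{m}]$ is defined as the transpose under $g_! \dashv g^*$ of the inclusion of generators $(n;\bm{m}) \to U[n;\bm{m}]$, which is precisely the unit of $F \dashv U$ at $(n;\bm{m})$. Under the isomorphism of adjunctions $\tau_2 \circ g_! \dashv g^* \circ N \cong F \dashv U$ established above, this corresponds to the unit of the composite at $(n;\bm{m})$. By the standard factorisation of the unit of a composite adjunction, this is the composite
\begin{equation*}
(n;\bm{m}) \xrightarrow{\eta^{g_! \dashv g^*}_{(n;\bm{m})}} g^*(I[n;\bm{m}]) \xrightarrow{g^*(\eta^{\tau_2 \dashv N}_{I[n;\bm{m}]})} g^*(N(\tau_2(I[n;\bm{m}]))) \cong g^*(\Theta_2[n;\bm{m}]),
\end{equation*}
whose transpose under $g_! \dashv g^*$ is exactly $\eta^{\tau_2 \dashv N}_{I[n;\bm{m}]}$. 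Hence $i_{n(\bm{m})}$ coincides with this unit. The only detail needing care is the identification $\tau_2(I[n;\bm{m}]) \cong [n;\bm{m}]$, which follows from the first part applied to the $2$-graph $(n;\bm{m})$ together with the defining property of $[n;\bm{m}]$ as $F(n;\bm{m})$. There is no substantive obstacle; the observation is a direct consequence of the constructions involved.
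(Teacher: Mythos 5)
Your proof is correct and follows essentially the same route the paper indicates: identify $g^{*}\circ N\cong U$ on right adjoints (using that normal pseudofunctors out of $[0]$, $[1;0]$, $[1;1]$ into a $2$-category are $2$-functors), deduce $\tau_{2}\circ g_{!}\cong F$, and then recognise the spine inclusion as the transpose of the composite unit, which forces it to be the unit of $\tau_{2}\dashv N$ at $I[n;\bm{m}]$ via $\tau_{2}(I[n;\bm{m}])\cong[n;\bm{m}]$. The only blemish is the immaterial swap of which generator of $\mathbb{G}_{2}$ goes to $\delta^{0}$ versus $\delta^{1}$.
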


\begin{lemma} \label{unitlem}
The functor $\tau_b \colon [\Theta_2^\mathrm{op},\Set] \lra \Bicat_\mathrm{s}$ sends each component of the unit of the adjunction $\tau_2 \dashv N \colon \twocat \lra [\Theta_2^\mathrm{op},\Set]$ to a biequivalence.
\end{lemma}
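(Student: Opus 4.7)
The plan is to factor the map $\tau_b(\eta_X)$ through the counit of the adjunction (\ref{firstlookqadj}) and to recognise the resulting composite as a unit of the adjunction (\ref{lackqequiv}). Write $\eta'$ and $\varepsilon$ for the unit and counit of $\tau_b \dashv N$, and $\eta^L$ for the unit of $L \dashv (\twocat \hookrightarrow \Bicat_\mathrm{s})$. Since (\ref{2catqadj}) is the composite adjunction, its unit factors as
\[
\eta_X \;=\; N(\eta^L_{\tau_b X}) \circ \eta'_X \colon X \lra N\tau_b X \lra N\tau_2 X.
\]
Applying $\tau_b$ and postcomposing with $\varepsilon_{\tau_2 X}$, the resulting composite simplifies:
\[
\varepsilon_{\tau_2 X} \circ \tau_b(\eta_X) \;=\; \varepsilon_{\tau_2 X} \circ \tau_b N(\eta^L_{\tau_b X}) \circ \tau_b(\eta'_X) \;=\; \eta^L_{\tau_b X} \circ \varepsilon_{\tau_b X} \circ \tau_b(\eta'_X) \;=\; \eta^L_{\tau_b X},
\]
where the second equality uses naturality of $\varepsilon$ and the third uses the triangle identity for $\tau_b \dashv N$. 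Since biequivalences are the weak equivalences in Lack's model structure on $\Bicat_\mathrm{s}$, two-out-of-three then reduces the problem to showing that both $\varepsilon_{\tau_2 X}$ and $\eta^L_{\tau_b X}$ are biequivalences.

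I next observe that $\tau_b X$ is cofibrant in Lack's model structure:\ by Proposition \ref{trivfibprop}, $\tau_b$ sends monomorphisms to cofibrations, and as a left adjoint it preserves initial objects, so applying $\tau_b$ to the monomorphism $\emptyset \lra X$ shows that $\emptyset \lra \tau_b X$ is a cofibration. Hence by \S\ref{unitobs}, the unit $\eta^L_{\tau_b X} \colon \tau_b X \lra L\tau_b X = \tau_2 X$ is a biequivalence. For the counit $\varepsilon_{\tau_2 X}$, Observation \ref{coobs} records that the components of the counit of the adjunction (\ref{firstlookqadj}) are always biequivalences. Combining these facts with the factorisation above gives the result.

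The only step requiring any real work is the factorisation identity, which is a routine triangle-identity manipulation for a composite adjunction; everything else is an immediate invocation of results already established in this section and the previous one.
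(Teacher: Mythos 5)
Your proof is correct, and it uses the same essential ingredients as the paper's: cofibrancy of $\tau_b X$ via Proposition \ref{trivfibprop}, Lack's result (recalled in \S\ref{unitobs}) on the unit of $L$ at cofibrant bicategories, the counit biequivalences of Observation \ref{coobs}, and two-out-of-three. The arrangement differs slightly. The paper first applies $L$, invoking the consequence of \S\ref{unitobs} that a strict morphism between cofibrant bicategories is a biequivalence if and only if $L$ sends it to one, and then kills $\tau_2(\eta_X)$ with the triangle identity $\varepsilon_{\tau_2 X}\circ\tau_2(\eta_X)=1$ of the \emph{composite} adjunction (\ref{2catqadj}) together with the biequivalence of its counit. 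You instead stay in $\Bicat_\mathrm{s}$: composing with the counit of (\ref{firstlookqadj}) and using naturality plus the triangle identity of $\tau_b\dashv N$, you identify $\varepsilon_{\tau_2 X}\circ\tau_b(\eta_X)$ with the unit $\eta^L_{\tau_b X}$, and conclude by two-out-of-three from the biequivalences $\eta^L_{\tau_b X}$ (Lack's Lemma 10 at the cofibrant $\tau_b X$) and $\varepsilon_{\tau_2 X}$ (Observation \ref{coobs}). Your route avoids the ``$L$ reflects biequivalences between cofibrant objects'' reformulation at the cost of an explicit (but routine) computation with the composite-adjunction unit; the paper's route avoids that computation at the cost of quoting the reflection statement, whose proof is the same naturality trick. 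Either way the argument is complete, and your cofibrancy justification ($\tau_b$ preserves the initial object and sends the monomorphism $\emptyset\lra X$ to a cofibration) is exactly what the paper leaves implicit.
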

\begin{proof}
Proposition \ref{trivfibprop} implies that every bicategory in the image of the functor $\tau_b$ is cofibrant. Hence it suffices by \S\ref{unitobs} to show that the composite functor $\tau_2 = L\tau_b \colon [\Theta_2^\mathrm{op},\Set] \lra \twocat$ sends each component of the unit of the adjunction $\tau_2 \dashv N \colon \twocat \lra [\Theta_2^\mathrm{op},\Set]$ to a biequivalence. This follows from the two-out-of-three property, the triangle identity $\varepsilon_{\tau_2 X} \circ \tau_2(\eta_X) = 1_{\tau_2 X}$,
and the fact that the counit morphism $\varepsilon_A \colon \tau_2NA \lra A$ is a biequivalence for every $2$-category $A$  (see Observation \ref{coobs}).%
\end{proof}

\begin{proposition} \label{weprop}
The functor $\tau_b \colon [\Theta_2^\mathrm{op},\Set] \lra \Bicat_\mathrm{s}$ sends the weak equivalences in the model structure for $2$-quasi-categories to biequivalences. 
\end{proposition}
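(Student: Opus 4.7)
The strategy is to invoke the recognition principle of Proposition \ref{welemma}. By Proposition \ref{trivfibprop}, $\tau_b$ already sends monomorphisms to cofibrations, so the task reduces to checking that $\tau_b$ carries each of the three classes of morphisms (i)--(iii) in Proposition \ref{welemma} to biequivalences in $\Bicat_\mathrm{s}$.

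Class (ii) is handled immediately by combining Observation \ref{spineobs}, which identifies each spine inclusion $i_{n(\bm{m})}$ as the component at $I[n;\bm{m}]$ of the unit of the adjunction $\tau_2 \dashv N$, with Lemma \ref{unitlem}, which asserts that $\tau_b$ sends such unit components to biequivalences. For class (i), the key remark is that $N \colon \Bicat_\mathrm{s} \to [\Theta_2^{\mathrm{op}},\Set]$ preserves products (being a right adjoint), and that $J = N\mathbb{I}$ and $\Theta_2[n;\bm{m}] = N[n;\bm{m}]$ are coherent nerves of rigid $2$-categories. Hence $J \times \Theta_2[n;\bm{m}] \cong N(\mathbb{I} \times [n;\bm{m}])$; applying $\tau_b$ and invoking the isomorphism $\tau_b \circ N \cong Q$ of Observation \ref{coobs}, we identify $\tau_b(\mathrm{pr}_2)$ with $Q(\mathrm{pr}_2) \colon Q(\mathbb{I} \times [n;\bm{m}]) \to Q[n;\bm{m}]$. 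Since $\mathbb{I}$ is biequivalent to the terminal $2$-category, the $2$-functor $\mathrm{pr}_2 \colon \mathbb{I} \times [n;\bm{m}] \to [n;\bm{m}]$ is a biequivalence, and $Q$ preserves biequivalences by \S\ref{qrecall}.

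The main obstacle is class (iii): showing that $\tau_b(j_2) \colon \tau_b(J_2) \to Q\mathbf{2}$ is a biequivalence. My plan is to factor $j_2$ as
\[
J_2 \xrightarrow{c} N\mathbb{I}_2 \xrightarrow{N(q)} N\mathbf{2} = \Theta_2[1;0],
\]
where $c$ is the canonical comparison of the strict nerve into the coherent nerve, and $q \colon \mathbb{I}_2 \to \mathbf{2}$ is the bijective-on-objects $2$-functor of Notation \ref{freeliving}. The $2$-functor $q$ is a biequivalence, since it is bijective on objects and the one nontrivial hom-functor $\mathbb{I} \to \mathbf{1}$ is an equivalence of categories ($\mathbb{I}$ being contractible); hence $\tau_b(N(q)) = Q(q)$ is a biequivalence by Observation \ref{coobs} and \S\ref{qrecall}. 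The remaining and principal step is to show that $\tau_b(c) \colon \tau_b(J_2) \to Q\mathbb{I}_2$ is a biequivalence. My proposed approach is to read off $\tau_b(J_2)$ directly from its universal property: a strict morphism $\tau_b(J_2) \to A$ is a natural transformation $J_2 \to NA$, and by a low-dimensional analysis in the style of Proposition \ref{ffprop}, such a transformation should assemble from exactly the data of a normal pseudofunctor $\mathbb{I}_2 \to A$---the crucial point being that every nontrivial composable pair of morphisms in $\mathbb{I}_2$ involves an identity, so the composition constraints of such a pseudofunctor are fixed by normality together with the unit constraints of $A$, and the remaining free data lives in dimensions $\leq 2$ where $J_2$ and $N\mathbb{I}_2$ agree. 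This would give an isomorphism $\tau_b(J_2) \cong Q\mathbb{I}_2$ under which $\tau_b(c)$ becomes the identity, completing case (iii).
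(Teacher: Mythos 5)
Your proposal is correct and takes essentially the same route as the paper: the paper likewise reduces the statement to Proposition \ref{welemma} via Proposition \ref{trivfibprop}, handles (i) by identifying the projection with the coherent nerve of the trivial fibration $\mathbb{I}\times[n;\bm{m}]\lra[n;\bm{m}]$ and using $\tau_b N\cong Q$ (Observation \ref{coobs}), and handles (ii) exactly by Observation \ref{spineobs} together with Lemma \ref{unitlem}. For (iii) the paper only asserts that ``a direct calculation'' shows $\tau_b(j_2)$ is the bijective-on-objects biequivalence $\mathbb{I}_2\lra\mathbf{2}$, and your factorisation $J_2\lra N\mathbb{I}_2\lra\Theta_2[1;0]$ is a valid way to carry this out; the one imprecision is that $J_2$ and $N\mathbb{I}_2$ agree only on the underlying $2$-graph rather than on all elements of degree $\leq 2$ (their $[2;0,0]$-elements differ), but since every $[2;0,0]$-element of $J_2$ is degenerate its image under any morphism to a coherent nerve is forced, so your identification $\tau_b(J_2)\cong Q\mathbb{I}_2\cong\mathbb{I}_2$ goes through.
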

\begin{proof}
Since the left adjoint functor $\tau_b$ is cocontinuous and sends monomorphisms to cofibrations by Proposition \ref{trivfibprop}, it suffices to show that it sends the morphisms (i)--(iii) listed in Proposition \ref{welemma} to biequivalences.

(i) For each $[n;\bm{m}] \in \Theta_2$, the morphism $J \times \Theta_2[n;\bm{m}] \lra \Theta_2[n;\bm{m}]$ is (isomorphic to) the coherent nerve of the trivial fibration $\mathbb{I} \times [n;\bm{m}] \lra [n;\bm{m}]$, and is therefore (by Observation \ref{coobs})  sent by $\tau_b$ to  the morphism $Q(\mathbb{I} \times [n;\bm{m}] \lra [n;\bm{m}])$, which is a biequivalence by \S\ref{qrecall}. 

(ii) For each $[n;\bm{m}] \in \Theta_2$, the spine inclusion $I[n;\bm{m}] \lra \Theta_2[n;\bm{m}]$ is (isomorphic to) the component of the unit of the composite adjunction $L\tau_b \dashv NI$ at the object $I[n;\bm{m}]$ by Observation \ref{spineobs}, and hence, by Lemma \ref{unitlem}, is
sent by the functor $\tau_b$ to a biequivalence.

(iii) A direct calculation shows that the morphism $j_2 \colon J_2 \lra \Theta_2[1;0]$ is sent by the functor $\tau_b$ to the bijective-on-objects biequivalence $\mathbb{I}_2 \lra \mathbf{2}$.
\end{proof}

\begin{theorem} \label{firstqadjthm}
The adjunction
\begin{equation*} 
\xymatrix{
\Bicat_\mathrm{s} \ar@<-1.5ex>[rr]^-{\hdash}_-N && \ar@<-1.5ex>[ll]_-{\tau_b} [\Theta_2^\mathrm{op},\Set]}
\end{equation*}
 is a Quillen adjunction between Lack's model structure for bicategories and Ara's model structure for $2$-quasi-categories. The derived right adjoint of this Quillen adjunction is fully faithful.
\end{theorem}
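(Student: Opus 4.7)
The Quillen adjunction statement will follow directly by packaging the two preceding propositions. The standard recognition criterion for a Quillen adjunction requires the left adjoint $\tau_b$ to preserve both cofibrations and trivial cofibrations. Proposition \ref{trivfibprop} already tells us that $\tau_b$ sends monomorphisms to cofibrations in $\Bicat_\mathrm{s}$ (i.e., sends the generating cofibrations of Ara's model structure to cofibrations), while Proposition \ref{weprop} tells us that $\tau_b$ sends weak equivalences in the model structure for $2$-quasi-categories to biequivalences (i.e., weak equivalences in Lack's model structure). A trivial cofibration in Ara's model structure is in particular a monomorphism that is a weak equivalence, so combining the two propositions yields that $\tau_b$ sends it to a cofibration that is also a biequivalence, namely a trivial cofibration in $\Bicat_\mathrm{s}$. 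This establishes the Quillen adjunction.

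For the second assertion, I would use the characterisation that the derived right adjoint is fully faithful if and only if the derived counit at every fibrant object is a weak equivalence. Since every bicategory is fibrant in Lack's model structure (see \S\ref{lackmodstrsubsect}) and every $\Theta_2$-set is cofibrant (see \S\ref{secara}), no fibrant or cofibrant replacements are required, and the derived counit at a bicategory $A \in \Bicat_\mathrm{s}$ is just the ordinary counit morphism $\tau_b(NA) \lra A$. By Observation \ref{coobs}, the composite $\tau_b \circ N$ (restricted along $\Bicat_\mathrm{s} \hookrightarrow \Bicat$) is naturally isomorphic to the restriction of the normal pseudofunctor classifier functor $Q \colon \Bicat \lra \Bicat_\mathrm{s}$, and under this isomorphism the counit of $\tau_b \dashv N$ corresponds to the counit of the adjunction $Q \dashv \mathrm{incl}$ recalled in \S\ref{qrecall}. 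Since each component of the latter counit is a (bijective-on-objects) biequivalence, the derived counit is a weak equivalence in Lack's model structure, completing the proof.

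I do not anticipate any real obstacle here, since all the genuine work has been done in Propositions \ref{trivfibprop} and \ref{weprop} and in Observation \ref{coobs}; the proof is essentially an assembly of these ingredients together with standard facts about Quillen adjunctions.
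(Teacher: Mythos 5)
Your proposal is correct and follows essentially the same route as the paper: the Quillen adjunction is assembled from Propositions \ref{trivfibprop} and \ref{weprop}, and full fidelity of the derived right adjoint comes from the counit components being biequivalences (Observation \ref{coobs}) together with all bicategories being fibrant and all $\Theta_2$-sets cofibrant. Your extra spelling-out that cofibration-plus-weak-equivalence preservation gives trivial-cofibration preservation, and that the counit of $\tau_b \dashv N$ identifies with that of $Q \dashv \mathrm{incl}$, is just a slightly more explicit rendering of the same argument.
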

\begin{proof}
By Propositions \ref{trivfibprop} and \ref{weprop}, the left adjoint of this adjunction preserves cofibrations and weak equivalences. Hence the adjunction is a Quillen adjunction. 

To prove the second statement, recall from Observation \ref{coobs} that each component of the counit of this adjunction is a biequivalence.  
Since every object in the model structure for bicategories is fibrant and every object in the model structure for $2$-quasi-categories is cofibrant, this implies that each component of the derived counit of this Quillen adjunction is an isomorphism, and hence that its derived right adjoint is fully faithful. 
\end{proof}

\begin{corollary} \label{fibcor}
The coherent nerve of a bicategory is a $2$-quasi-category. 
\end{corollary}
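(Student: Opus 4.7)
The statement is an immediate consequence of Theorem \ref{firstqadjthm} together with two facts already recorded earlier in the paper, so my proof proposal is essentially to assemble these ingredients with no further computation required.

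The plan is as follows. By Theorem \ref{firstqadjthm}, the coherent nerve functor $N \colon \Bicat_\mathrm{s} \lra [\Theta_2^\mathrm{op},\Set]$ is a right Quillen functor from Lack's model structure for bicategories to Ara's model structure for $2$-quasi-categories. Right Quillen functors preserve fibrant objects, so it suffices to observe that every bicategory is a fibrant object of Lack's model structure (this was recalled in \S\ref{lackmodstrsubsect}) and that, by definition (see \S\ref{secara}), the fibrant objects of Ara's model structure are precisely the $2$-quasi-categories. Given an arbitrary bicategory $B$, I would first replace it (if desired) by an isomorphic object of $\Bicat_\mathrm{s}$ via the inclusion $\Bicat \supset \Bicat_\mathrm{s}$, but in any case the coherent nerve $NB$ depends only on the underlying bicategory, not on the class of morphisms one chooses to work with.

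The only small subtlety is that Theorem \ref{firstqadjthm} is stated on $\Bicat_\mathrm{s}$, whereas the corollary concerns arbitrary bicategories (with normal pseudofunctors). This is harmless:\ the restriction of the coherent nerve functor $N \colon \Bicat \lra [\Theta_2^\mathrm{op},\Set]$ along the inclusion $\Bicat_\mathrm{s} \lra \Bicat$ agrees with the right Quillen functor of Theorem \ref{firstqadjthm}, and the object $NB$ of $[\Theta_2^\mathrm{op},\Set]$ is unchanged whether $B$ is viewed as an object of $\Bicat$ or as (isomorphic to) an object of $\Bicat_\mathrm{s}$. Hence $NB$ is fibrant in the model structure for $2$-quasi-categories, i.e.\ is a $2$-quasi-category.

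There is no hard part:\ all the work has already been done in establishing Theorem \ref{firstqadjthm} (and, upstream of it, Propositions \ref{trivfibprop} and \ref{weprop}). The corollary is merely the extraction of the statement on fibrant objects from the Quillen adjunction.
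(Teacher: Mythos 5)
Your proposal is correct and follows exactly the paper's own argument: Theorem \ref{firstqadjthm} makes $N$ a right Quillen functor, right Quillen functors preserve fibrant objects, every bicategory is fibrant in Lack's model structure, and the fibrant objects of Ara's model structure are by definition the $2$-quasi-categories. Your remark that the passage from $\Bicat_\mathrm{s}$ to $\Bicat$ is harmless (the objects are the same, and $NB$ does not depend on the choice of morphisms) is a correct, if implicit, point in the paper's one-line proof.
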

\begin{proof}
By Theorem \ref{firstqadjthm}, the functor $N \colon \Bicat_{\mathrm{s}} \lra [\Theta_2^\mathrm{op},\Set]$ is a right Quillen functor, and therefore preserves fibrant objects. 
\end{proof}

\subsection{The full embedding of bicategories into $2$-quasi-categories}
Let $\twoqcat$ denote the full subcategory of $[\Theta_2^\mathrm{op},\Set]$ consisting of the $2$-quasi-categories. The following result is a combination of (the first part of) Theorem \ref{ffthm} and Corollary \ref{fibcor}.
\begin{corollary} \label{lastcor}
The coherent nerve construction defines a fully faithful functor $$N \colon \Bicat \lra \twoqcat$$ from the category of bicategories and normal pseudofunctors to the category of $2$-quasi-categories.
\end{corollary}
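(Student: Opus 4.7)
The statement is essentially a packaging of two results already established in the excerpt, so the plan is really to spell out how they combine. The two ingredients are (a) Theorem \ref{ffthm}, which asserts that the coherent nerve functor $N \colon \Bicat \lra [\Theta_2^\mathrm{op},\Set]$ is fully faithful, and (b) Corollary \ref{fibcor}, which asserts that for every bicategory $B$, the $\Theta_2$-set $NB$ is a $2$-quasi-category.

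The plan is as follows. First, I would invoke Corollary \ref{fibcor} to observe that the image of $N$ is contained in the class of $2$-quasi-categories. Since $\twoqcat$ is defined as a \emph{full} subcategory of $[\Theta_2^\mathrm{op},\Set]$, the functor $N$ therefore factors uniquely through the inclusion $\twoqcat \hookrightarrow [\Theta_2^\mathrm{op},\Set]$ as a functor $N \colon \Bicat \lra \twoqcat$. Second, for any pair of bicategories $A, B$, fullness of the inclusion $\twoqcat \hookrightarrow [\Theta_2^\mathrm{op},\Set]$ gives the equality of hom-sets
\begin{equation*}
\twoqcat(NA, NB) \;=\; [\Theta_2^\mathrm{op},\Set](NA, NB),
\end{equation*}
and the action of the factored functor on morphisms from $A$ to $B$ coincides with that of the unfactored functor. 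Applying Theorem \ref{ffthm}, which says that $\Bicat(A,B) \lra [\Theta_2^\mathrm{op},\Set](NA,NB)$ is a bijection, we conclude that $\Bicat(A,B) \lra \twoqcat(NA,NB)$ is a bijection as well, which is precisely full faithfulness of the corestricted nerve.

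There is no substantive obstacle here since all the real work has been done already: the density argument of Proposition \ref{ffprop} (which powers Theorem \ref{ffthm} via Lemma \ref{denselemma}) supplies full faithfulness, and the Quillen adjunction of Theorem \ref{firstqadjthm} (yielding Corollary \ref{fibcor}) supplies fibrancy. The only thing to check is the essentially formal fact that full faithfulness is preserved by corestriction along a fully faithful inclusion, which I would dispatch in a single sentence.
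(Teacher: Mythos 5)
Your argument is exactly the paper's: the corollary is stated there as the combination of Theorem \ref{ffthm} (full fidelity of $N$ into $[\Theta_2^\mathrm{op},\Set]$) and Corollary \ref{fibcor} (the nerve of a bicategory is a $2$-quasi-category), with the corestriction along the full inclusion $\twoqcat \hookrightarrow [\Theta_2^\mathrm{op},\Set]$ being the same formal step you spell out. Your proposal is correct and takes essentially the same approach.
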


\begin{remark}
Every bicategory is a retract of a $2$-category (e.g.\ its normal strictification)  in the category $\Bicat$. Hence, by application of the coherent nerve functor $N \colon \Bicat \lra [\Theta_2^\mathrm{op},\Set]$, the coherent nerve of any bicategory is a retract of the coherent nerve of a $2$-category. Therefore, to prove Corollary \ref{fibcor}, it would have sufficed to prove that the adjunction (\ref{2catqadj}) is a Quillen adjunction between Lack's model structure for $2$-categories and Ara's model structure for $2$-quasi-categories. 
\end{remark}

\begin{observation} \label{transferobs}
Since every bicategory is fibrant in Lack's model structure on $\Bicat_\mathrm{s}$, Theorem \ref{firstqadjthm} implies that the functor $N \colon \Bicat_\mathrm{s} \lra [\Theta_2^\mathrm{op},\Set]$ preserves fibrations and preserves and reflects weak equivalences. Since $N$ sends the morphisms $\mathbf{1} \lra \mathbb{I}$ and $\mathbf{2} \lra \mathbb{I}_2$ mentioned in Observation \ref{equifibobs} to the trivial cofibrations $\{0\} \lra J$ and $s_1 \colon \Theta_2[1;0] \lra J_2$ (see Notation \ref{freeliving}) in the model structure for $2$-quasi-categories, Theorem \ref{ffthm} and Observation \ref{equifibobs} imply that the functor $N \colon \Bicat_\mathrm{s} \lra [\Theta_2^\mathrm{op},\Set]$ also reflects fibrations. This shows that Lack's model structure for bicategories is right-induced from Ara's model structure for $2$-quasi-categories along the functor $N \colon \Bicat_\mathrm{s} \lra [\Theta_2^\mathrm{op},\Set]$.
\end{observation}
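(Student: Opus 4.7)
The plan is to verify the four conditions characterizing Lack's model structure on $\Bicat_\mathrm{s}$ as right-induced from Ara's model structure for $2$-quasi-categories along $N$: that $N$ preserves and reflects both fibrations and weak equivalences. Preservation of fibrations is immediate since, by Theorem \ref{firstqadjthm}, $N$ is a right Quillen functor. Preservation of weak equivalences then follows by Ken Brown's lemma, using that every bicategory is fibrant in Lack's model structure.

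For reflection of weak equivalences, I unpack the fully faithful derived right adjoint statement of Theorem \ref{firstqadjthm}. Given a strict morphism $F \colon A \lra B$ with $NF$ a weak equivalence, Proposition \ref{weprop} yields that $\tau_b NF$ is a biequivalence; Observation \ref{coobs} ensures each counit component $\tau_b N A \lra A$ and $\tau_b N B \lra B$ is a biequivalence; the naturality square for the counit together with two-out-of-three then yields that $F$ is itself a biequivalence.

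The main step is reflection of fibrations. Given a strict morphism $F \colon A \lra B$ such that $NF$ is a fibration, by Observation \ref{equifibobs} it suffices to show that $F$ has the right lifting property in $\Bicat$ against $\mathbf{1} \lra \mathbb{I}$ and $\mathbf{2} \lra \mathbb{I}_2$. By full fidelity of $N \colon \Bicat \lra [\Theta_2^\mathrm{op},\Set]$ (Corollary \ref{lastcor}), this is equivalent to $NF$ having the right lifting property in $[\Theta_2^\mathrm{op},\Set]$ against the coherent nerves of these two maps. The coherent nerve of $\mathbf{1} \lra \mathbb{I}$ is identified with $\{0\} \lra J$ by Observation \ref{nervecatrecall}, and the coherent nerve of $\mathbf{2} \lra \mathbb{I}_2$ is the map $\Theta_2[1;0] \lra N\mathbb{I}_2$. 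Each is manifestly a monomorphism, and each is a weak equivalence in Ara's model structure---the first by two-out-of-three applied to the trivial fibration $J \lra \Theta_2[0]$ (cf.\ Proposition \ref{welemma}), and the second because $\mathbf{2} \lra \mathbb{I}_2$ is a biequivalence and $N$ preserves weak equivalences by the above. Hence each is a trivial cofibration, and since $NF$ is a fibration the required lifts exist.

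The main subtlety is that, as noted in Notation \ref{freeliving}, the coherent and strict $2$-cellular nerves of the non-rigid $2$-category $\mathbb{I}_2$ do not coincide, so one cannot directly identify $N(\mathbf{2} \lra \mathbb{I}_2)$ with the map $s_1 \colon \Theta_2[1;0] \lra J_2$; the workaround above bypasses this by appealing to the already-established preservation of weak equivalences by $N$, applied to the biequivalence $\mathbf{2} \lra \mathbb{I}_2$. Alternatively, one can factor $N(\mathbf{2} \lra \mathbb{I}_2)$ as $s_1$ followed by the canonical inclusion $J_2 = N_s\mathbb{I}_2 \lra N\mathbb{I}_2$, which is again a weak equivalence by two-out-of-three.
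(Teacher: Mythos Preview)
Your proof is correct and follows essentially the same outline as the paper. Your treatment of the reflection of fibrations is in fact more careful than the paper's own wording: the paper asserts that $N$ sends $\mathbf{2} \lra \mathbb{I}_2$ to $s_1 \colon \Theta_2[1;0] \lra J_2$, but as you rightly observe (and as the paper itself notes in Notation \ref{freeliving}), $J_2 = N_s\mathbb{I}_2 \neq N\mathbb{I}_2$, so this identification is not literally correct. Your workaround---using full fidelity of $N$ to reduce to lifting $NF$ against $\Theta_2[1;0] \lra N\mathbb{I}_2$, and then showing that this map is a trivial cofibration because $\mathbf{2} \lra \mathbb{I}_2$ is a biequivalence and $N$ preserves weak equivalences---is clean and valid.

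The paper's intended argument can be made precise via the suspension--hom adjunction rather than Theorem \ref{ffthm} alone: since $J_2 = \Sigma(N\mathbb{I})$ (cf.\ the proof of Proposition \ref{susphomqadjthefirst}) and $\Hom_{NB}(a,b) \cong N(B(a,b))$ (Example \ref{nbichoms}), morphisms $J_2 \to NB$ correspond bijectively to invertible $2$-cells in $B$, just as normal pseudofunctors $\mathbb{I}_2 \to B$ do (Observation \ref{equifibobs}); one checks these bijections are compatible with restriction along $s_1$ and along $\mathbf{2}\to\mathbb{I}_2$, so the two lifting problems genuinely coincide. Both routes reach the same conclusion; yours is the one that follows directly from the reference the paper actually cites.
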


\begin{remark} \label{part1endremark}
Since the adjunction (\ref{lackqequiv}) is a Quillen equivalence between Lack's model structures for $2$-categories and bicategories, the conclusions of Theorem \ref{firstqadjthm} and Observation \ref{transferobs} hold also for the adjunction (\ref{2catqadj}) and Lack's model structure for $2$-categories in the places of the adjunction (\ref{firstlookqadj}) and Lack's model structure for bicategories. 
\end{remark}

\section{The homotopy bicategory of a $2$-quasi-category} \label{hobicatsec}
In this section, we construct the \emph{homotopy bicategory} of a $2$-quasi-category (\S\ref{hobicatsubsec}), which we prove (Theorem \ref{honadjthm}) defines a left adjoint $\Ho \colon \twoqcat \lra \Bicat$ to the coherent nerve functor $N \colon \Bicat \lra \twoqcat$ studied in the previous sections. We perform this construction in three steps: we first define the \emph{underlying bisimplicial set} of a $2$-quasi-category (\S\ref{undbisimp}), which we prove to be a \emph{quasi-category-enriched Segal category} (Theorem \ref{segalthm}); we then take the \emph{homotopy Tamsamani $2$-category} (\S\ref{changeofbase}) of this quasi-category-enriched Segal category (i.e.\ its change of base along the homotopy category functor $\ho \colon \qcat \lra \Cat$); finally, we take the \emph{bicategory reflection} (as constructed by Tamsamani \cite{MR1673923}) of this Tamsamani $2$-category (\S\ref{bicatreflsec}). We deduce the universal property of this homotopy bicategory construction from the universal property (as formulated and proved by Lack and Paoli \cite{MR2366560}) of the bicategory reflection construction. 

\begin{remark}
Alternatively, one can construct the homotopy bicategory $\Ho(X)$ of a $2$-quasi-category $X$ in a similar fashion to the standard construction of the homotopy category of a quasi-category (see \cite[\S4.2]{MR0420609} or \cite[\S1.2.3]{MR2522659}). One defines the objects, morphisms, and $2$-cells of $\Ho(X)$ to be the $[0]$-elements, $[1;0]$-elements, and ``homotopy classes'' of $[1;1]$-elements of $X$, and defines their sources, targets, and identities using the face and degeneracy operations in $X$. Next, by lifting $X$ against suitable trivial cofibrations in the model structure for $2$-quasi-categories, and by considering suitable degenerate elements, one can define all the remaining data and prove all the axioms required of a bicategory:\ the operations of composition of morphisms and of  vertical and horizontal composition of $2$-cells, the interchange law, the associativity and unit constraints, and the coherence and naturality of the latter. Furthermore, one can similarly construct a unit morphism $X \lra N(\Ho X)$, and prove its universal property.

However, actually carrying out these constructions \emph{by hand}, as one does for the homotopy category of a quasi-category, is a long and tedious process (though not an impossible one). 
This is due not only to the length of the definition of a bicategory, but also to the complicated combinatorics of $\Theta_2$-sets, and to the higher dimensional nature of these structures.

It is therefore fortunate that a more elegant proof is possible, which relies not on ad hoc constructions and calculations but on existing theory. In the proof we present in this section, we use Reedy category theory to produce our ``suitable'' trivial cofibrations as relative latching maps 
derived from certain ``horizontal spine inclusions'' (see \S\ref{horspsubsect}), which are easily proven to be trivial cofibrations. Using these trivial cofibrations, we prove that the underlying bisimplicial set of a $2$-quasi-category $X$ is a quasi-category-enriched Segal category (which result will have further applications in \S\S\ref{araconsec}--\ref{sec1to2}), and hence extract from $X$ a ``Tamsamani $2$-category''; we are then able to use the constructions and results of \cite{MR1673923} and \cite{MR2366560} to construct both  the homotopy bicategory $\Ho(X)$ and the unit morphism $X \lra N(\Ho X)$, and to prove their universal property.
\end{remark}

We begin with the definition of the hom-quasi-categories of a $2$-quasi-category $X$, whose homotopy categories will be the hom-categories of the homotopy bicategory of $X$.

\subsection{The suspension-hom adjunction} \label{susphomadjsubsect}
Let $\sigma \colon \Delta \lra \partial\Theta_2[1;0]/[\Theta_2^\mathrm{op},\Set]$ denote the functor that sends each $[m] \in \Delta$ to the bijective-on-$[0]$-elements inclusion $\partial\Theta_2[1;0] \lra \Theta_2[1;m]$.  By Kan's construction (Recollection \ref{kanrecall}), this functor induces an adjunction
\begin{equation}  \label{susphomadjlastmin}
\xymatrix{
\partial\Theta_2[1]/[\Theta_2^\mathrm{op},\Set] \ar@<-1.5ex>[rr]^-{\hdash}_-{\Hom} && \ar@<-1.5ex>[ll]_-{\Sigma} [\Delta^\mathrm{op},\Set]
}
\end{equation}
between the category of bi-pointed $\Theta_2$-sets (note that $\partial\Theta_2[1;0] \cong \Theta_2[0] + \Theta_2[0]$) and the category of simplicial sets. The left adjoint of this adjunction sends a simplicial set $S$ to the bi-pointed $\Theta_2$-set $(\bot,\top) \colon \partial\Theta_2[1;0] \lra \Sigma(S)$, which we call the \emph{\textup{(}$\Theta_2$-\textup{)}suspension} of $S$. The right adjoint sends a bi-pointed $\Theta_2$-set $(x,y) \colon \partial\Theta_2[1;0] \lra X$ to the simplicial set $\Hom_X(x,y)$, whose set of $m$-simplices is given by the following pullback (cf.\ \cite[\S4.11]{MR2578310}).
\begin{equation*}
\cd{
\Hom_X(x,y)_m \ar[r] \ar[d] \fatpullbackcorner & X_{1(m)} \ar[d] \\
1 \ar[r]_-{(x,y)} & X_0 \times X_0
}
\end{equation*}
If $X$ is a $2$-quasi-category, then Proposition \ref{susphomqadjthefirst} below  implies that the simplicial sets $\Hom_X(x,y)$ are quasi-categories; we will call them the \emph{hom-quasi-categories} of $X$.

\begin{example}[the hom-quasi-categories of the coherent nerve of a bicategory] \label{nbichoms}
Let $A$ be a bicategory. For each pair of objects $a,b \in A$, the hom-quasi-category $\Hom_{NA}(a,b)$ of the coherent nerve of $A$ is isomorphic to the nerve of the hom-category $A(a,b)$ of $A$.
\end{example}

\begin{proposition} \label{susphomqadjthefirst}
The suspension-hom adjunction $\Sigma \dashv \Hom$ \textup{(\ref{susphomadjlastmin})} 
 is a Quillen adjunction between the model structure on the category of bi-pointed $\Theta_2$-sets induced from Ara's model structure for $2$-quasi-categories, and Joyal's model structure for quasi-categories.
\end{proposition}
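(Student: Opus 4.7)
The plan is to show that $\Sigma$ is a left Quillen functor by verifying it preserves cofibrations and weak equivalences. By construction of the induced model structure on the slice $\partial\Theta_2[1;0]/[\Theta_2^{\mathrm{op}},\Set]$, a map is a (co)fibration or weak equivalence exactly when its underlying morphism is such in Ara's model structure; in particular, its cofibrations are the monomorphisms. I would check that $\Sigma$ preserves monomorphisms by computing it on the generating cofibrations $\partial\Delta[m] \hookrightarrow \Delta[m]$: since $\Sigma(\Delta[m]) = \Theta_2[1;m]$ and $\Sigma$ is cocontinuous, $\Sigma(\partial\Delta[m])$ can be directly identified with the sub-$\Theta_2$-set of $\Theta_2[1;m]$ generated by the two basepoints together with the images of the face maps $(\mathrm{id};\delta^i) \colon \Theta_2[1;m-1] \to \Theta_2[1;m]$, whence the inclusion is evidently a monomorphism.

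For weak equivalence preservation I would use the localiser argument from the proof of Proposition \ref{welemma}: as $\Sigma$ is cocontinuous and sends monomorphisms to monomorphisms, the class $\mathsf{W}$ of simplicial maps whose image under $\Sigma$ is a weak equivalence in the induced model structure is a $\Delta$-localiser. It thus suffices to show $\mathsf{W}$ contains a generating set for Joyal's $\Delta$-localiser of categorical equivalences; a convenient such set -- combining Joyal's theorem that the spine inclusions generate the inner anodyne class with the Bousfield-localisation character of Joyal's model structure -- consists of the spine inclusions $I_\Delta[n] \hookrightarrow \Delta[n]$ for $n \geq 2$, together with the map $\{0\} \hookrightarrow J_\Delta$, where $J_\Delta$ is the simplicial nerve of the free-living isomorphism $\mathbb{I}$.

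The verification then amounts to two calculations. First, a computation analogous to Observation \ref{spineobs} identifies $\Sigma(I_\Delta[n] \hookrightarrow \Delta[n])$ with the spine inclusion $i_{1(n)} \colon I[1;n] \hookrightarrow \Theta_2[1;n]$, which is among Ara's generating trivial cofibrations (\S\ref{secara}). Second, one identifies $\Sigma(J_\Delta)$ with the coherent nerve $N(\Sigma_b\mathbb{I})$ of the $2$-category $\Sigma_b\mathbb{I}$ having two objects $\bot,\top$ and hom-category $\mathbb{I}$; under this identification $\Sigma(\{0\} \hookrightarrow J_\Delta)$ becomes the coherent nerve of the inclusion $\mathbf{2} \hookrightarrow \Sigma_b\mathbb{I}$, which is a biequivalence (a section of the evident biequivalence $\Sigma_b\mathbb{I} \to \mathbf{2}$), and so by Observation \ref{transferobs} (derived from Theorem \ref{firstqadjthm}) its coherent nerve is a weak equivalence in Ara's model structure. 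The principal obstacle I expect is the identification $\Sigma(J_\Delta) \cong N(\Sigma_b\mathbb{I})$ -- that is, the verification that $\Sigma$ is compatible with a $2$-categorical suspension -- which should follow from a Yoneda-style density argument exploiting the full fidelity of the coherent nerve (Theorem \ref{ffthm}).
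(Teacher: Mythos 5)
Your outline matches the paper's proof: preservation of cofibrations via the boundary inclusions, and preservation of weak equivalences via a Cisinski/Joyal--Tierney recognition principle applied to a generating set for the Joyal localiser (your generator $\{0\}\hookrightarrow J_\Delta$ is interchangeable with the paper's $J_\Delta \to \Delta[0]$ by two-out-of-three). But your second key calculation is wrong: $\Sigma(J_\Delta)$ is \emph{not} the coherent nerve of the suspension $2$-category $\Sigma_b\mathbb{I} = \mathbb{I}_2$; it is the \emph{strict} nerve $J_2 = N_s(\mathbb{I}_2)$, and Notation \ref{freeliving} explicitly records that $J_2$ differs from $N(\mathbb{I}_2)$. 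Concretely, since a morphism $[n;\bm{m}]\to[1;k]$ in $\Theta_2$ is either constant or determined by a single index $i$ together with a map $[m_i]\to[k]$, one computes $\Sigma(S)_{n(\bm{m})} \cong \{\bot,\top\} \sqcup \coprod_{i=1}^n S_{m_i}$ for any simplicial set $S$; for $S = J_\Delta$ this is exactly $J_2$, whereas $N(\mathbb{I}_2)$ has extra elements already in dimension $2$ (for instance the $[2;0,0]$-element with faces $\mathrm{id}_\bot$, $0$, $1$ given by the unique invertible $2$-cell $0\cong 1$), which cannot lie in the image of $\Sigma$. For the same reason your proposed remedy for the ``principal obstacle'' cannot succeed: Theorem \ref{ffthm} concerns the values of $N$, not colimits such as $\Sigma(J_\Delta)$, and no density argument will produce an isomorphism that does not exist.

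The gap is repairable, and the repair is simpler than your detour through Theorem \ref{firstqadjthm} and Observation \ref{transferobs}: with the correct identification, $\Sigma(\{0\}\hookrightarrow J_\Delta)$ is the morphism $s_1\colon\Theta_2[1;0]\to J_2$, which is a weak equivalence by two-out-of-three against the generating weak equivalence $j_2\colon J_2\to\Theta_2[1;0]$ (since $j_2 s_1 = \mathrm{id}$); the paper instead suspends $J_\Delta\to\Delta[0]$ and lands directly on $j_2$, so no appeal to the results of \S\ref{mainthmsec} is needed. Two smaller cautions: the identification $\Sigma(\partial\Delta[m]\to\Delta[m])\cong\delta_{1(m)}$ is true but not ``evident'' --- the paper justifies it by expressing both sides as colimits of their non-degenerate elements (Cisinski, Lemme 8.2.22); and your claim that $\mathsf{W}=\Sigma^{-1}(\text{weak equivalences})$ is a $\Delta$-localiser does not follow from cocontinuity and preservation of monomorphisms alone (a localiser must contain all trivial fibrations), so, as in the proof of Proposition \ref{welemma}, one must either verify an interval-compatibility condition (e.g.\ that the projections $J_\Delta\times\Delta[n]\to\Delta[n]$ lie in $\mathsf{W}$) or invoke the precise recognition principle from Joyal--Tierney, as the paper does.
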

\begin{proof}
For each $m \geq 0$, the suspension functor sends the boundary inclusion $\partial\Delta[m] \lra \Delta[m]$ to the boundary inclusion $\partial\Theta_2[1;m] \lra \Theta_2[1;m]$. (This can be proved by expressing these objects as colimits of their non-degenerate elements; see for instance \cite[Lemme 8.2.22]{MR2294028}.) Hence the suspension functor preserves cofibrations. 

Furthermore, the suspension functor sends, for each $m \geq 2$, the spine inclusion $I[m] \lra \Delta[m]$ to the spine inclusion $I[1;m] \lra \Theta_2[1;m]$, and sends the simplicial nerve of the functor $\mathbb{I} \lra \mathbf{1}$ to the morphism of $\Theta_2$-sets $J_2 \lra \Theta_2[1;0]$.  Since these morphisms are weak equivalences in the model structure for $2$-quasi-categories, the analogue of Proposition \ref{welemma} for Joyal's model structure for quasi-categories (due in this case to Joyal and Tierney \cite{MR2342834}) implies that the suspension functor also preserves weak equivalences.
\end{proof}

Thus every $2$-quasi-category $X$ has an underlying quasi-category-enriched graph, consisting of the set $X_0$ of objects of $X$, and the hom-quasi-categories $\Hom_X(x,y)$. While this quasi-category-enriched graph does not extend to a quasi-category-enriched category in general, we will show in Theorem \ref{segalthm} that it does extend to a quasi-category-enriched \emph{Segal category}.

\begin{notation}[bisimplicial sets]
Let  $[(\Delta\times\Delta)^\mathrm{op},\Set]$ denote the category of bisimplicial sets. We will frequently identify a bisimplicial set $X$ with  the functor $\Delta^\mathrm{op} \lra [\Delta^\mathrm{op},\Set]$ that sends each $[n] \in \Delta$ to the \emph{$n$th column} $X_n$ of $X$ (i.e.\ the simplicial set $[m] \mapsto X_{n,m}$). Thus, for $S$ a simplicial set, we will write $S\pitchfork X$ for the 
end $\int_{[n] \in \Delta} S_n \pitchfork X_n$ in $[\Delta^\mathrm{op},\Set]$.
\end{notation}

\subsection{The underlying bisimplicial set of a $\Theta_2$-set} \label{undbisimp}
Let $d \colon \Delta\times\Delta \lra \Theta_2$ denote the functor given by $d([n],[m]) =  [n;m,\ldots,m]$. Restriction and right Kan extension along (the opposite of) this functor defines an adjunction
\begin{equation}  \label{dadj}
\xymatrix{
[(\Delta\times\Delta)^\mathrm{op},\Set] \ar@<-1.5ex>[rr]^-{\hdash}_-{d_*} && \ar@<-1.5ex>[ll]_-{d^*} [\Theta_2^\mathrm{op},\Set]
}
\end{equation}
between the category of bisimplicial sets and the category of $\Theta_2$-sets. We say that the left adjoint of this adjunction sends a $\Theta_2$-set $X$
 to its \emph{underlying bisimplicial set} $d^*(X)$. Note that the $0$th and $1$st columns of $d^*(X)$ are the discrete set $X_0$ and the coproduct $\sum_{x,y \in X_0} \Hom_X(x,y)$ respectively.

\begin{definition}[Joyal- and quasi-category-enriched Segal categories] \label{joysegdef}
A bisimplicial set $X$ is said to be a \emph{Joyal-enriched Segal category} if:
\begin{enumerate}[(i)]
\item the simplicial set $X_0$ is discrete, and
\item for each $n \geq 2$, the ``Segal map'' 
\begin{equation*}
X_n \cong \Delta[n] \pitchfork X \lra I[n]\pitchfork X \cong X_1 \times_{X_0} \cdots \times_{X_0} X_1
\end{equation*} is a weak categorical equivalence (i.e.\ a weak equivalence in Joyal's model structure for quasi-categories). 
\end{enumerate}
A Joyal-enriched Segal category $X$ is said to be a  \emph{quasi-category-enriched Segal category} if, for each $n \geq 1$, the simplicial set $X_n$ is a quasi-category.
\end{definition}

\begin{observation}[precategories]
Let $X$ be a bisimplicial set whose $0$th column $X_0$ is discrete, i.e.\ $X$ is a \emph{precategory}.
Then, for each $n \geq 1$, the simplicial set $X_n$ admits the coproduct decomposition $$X_n \cong \sum_{x_0,\ldots,x_n \in X_0} X(x_0,\ldots,x_n),$$ where $X(x_0,\ldots,x_n)$ denotes the fibre over $(x_0,\ldots,x_n)$ of the morphism $X_n \lra X_0^{n+1}$ induced by the inclusion $\{0,\ldots,n\} \lra \Delta[n]$. Hence  a precategory $X$ is a Joyal-enriched Segal category if and only if, 
for each $n \geq 2$ and $x_0,\ldots,x_n \in X_0$, the Segal map $$X(x_0,\ldots,x_n) \lra X(x_0,x_1) \times \cdots X(x_{n-1},x_n)$$ is a weak categorical equivalence; such an $X$ is moreover a quasi-category-enriched Segal category if and only if, for each $n \geq 1$ and $x_0,\ldots,x_n \in X_0$, the simplicial set $X(x_0,\ldots,x_n)$ is a quasi-category.
\end{observation}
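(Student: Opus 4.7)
The plan is to exploit the fact that coproducts (i.e.\ colimits over a discrete set) interact well both with the definition of quasi-category and with weak categorical equivalences, reducing everything to the coproduct decomposition already stated.

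First I would verify the decomposition itself: since $X_0$ is discrete, any map from a simplicial set into $X_0^{n+1}$ factors through a single element, so the morphism $X_n \lra X_0^{n+1}$ (induced by the inclusion of vertices) is a coproduct of its fibres, giving $X_n \cong \sum_{x_0,\ldots,x_n \in X_0} X(x_0,\ldots,x_n)$. The same argument applied to the fibre product shows that $I[n]\pitchfork X \cong X_1 \times_{X_0} \cdots \times_{X_0} X_1$ similarly decomposes as $\sum_{x_0,\ldots,x_n \in X_0} X(x_0,x_1) \times \cdots \times X(x_{n-1},x_n)$, and that under these two decompositions the global Segal map $X_n \lra X_1 \times_{X_0} \cdots \times_{X_0} X_1$ is the coproduct (over $(x_0,\ldots,x_n) \in X_0^{n+1}$) of the local Segal maps $X(x_0,\ldots,x_n) \lra X(x_0,x_1) \times \cdots \times X(x_{n-1},x_n)$.

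From here the two equivalences become independent observations about coproducts of simplicial sets. For the Segal condition, I would use the fact that a set-indexed coproduct of maps of simplicial sets is a weak categorical equivalence if and only if each summand is: in one direction, every object is cofibrant in Joyal's model structure so set-indexed coproducts are homotopy coproducts, and therefore preserve weak categorical equivalences; in the other direction, each summand is a retract of the coproduct (coprojecting into and then restricting from the coproduct), and weak categorical equivalences are closed under retracts. For the fibrancy condition, the key observation is that every inner horn $\Lambda^k[n]$ is connected, so a map from it into a coproduct of simplicial sets factors through a single summand; consequently a coproduct of simplicial sets has the right lifting property with respect to the inner horn inclusions if and only if each summand does, whence $X_n$ is a quasi-category iff each $X(x_0,\ldots,x_n)$ is.

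The only point that requires any care is the stability of weak categorical equivalences under arbitrary set-indexed coproducts, which I expect to be the main (though still mild) obstacle, since one must either cite a general fact about cofibrantly generated model categories with all objects cofibrant, or argue directly via the retract trick as sketched above. With that in hand, the three claims of the observation follow as routine combinations of the coproduct decomposition, the definition of quasi-categories via inner horn fillers, and the definition of Joyal-enriched Segal categories via the Segal maps.
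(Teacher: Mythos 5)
The paper states this as an observation with no proof, and your overall strategy (decompose everything over the discrete object $X_0^{n+1}$, then note that both the Segal condition and the inner-horn lifting condition are checked summandwise) is exactly the intended, routine argument; the decomposition itself and the quasi-category part (inner horns and simplices are connected, so lifting problems against a coproduct localise to a single summand) are fine as you sketch them.

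There is, however, one step that fails as stated, and it is precisely the only non-trivial direction: your claim that each summand $f_i \colon A_i \lra B_i$ is a retract of the coproduct $\coprod_j f_j$. "Restricting from the coproduct" is not an operation: to build the retraction you must choose maps $A_j \lra A_i$ and $B_j \lra B_i$ for all $j \neq i$ making the evident square commute (e.g.\ constant maps at a chosen vertex of $A_i$ and its image), and this is impossible when $A_i = \emptyset$ while some $A_j \neq \emptyset$ --- there is no map to the empty simplicial set. This case genuinely arises here, since for a precategory a fibre $X(x_0,\ldots,x_n)$ may well be empty while $X(x_0,x_1)\times\cdots\times X(x_{n-1},x_n)$ is not. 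The patch is easy but must be said: either first rule out the bad case when the total Segal map is a weak categorical equivalence (apply $\tau_0$, which preserves coproducts and sends weak categorical equivalences to bijections, to see that an empty fibre forces an empty target, after which your vertex-collapsing retraction works), or argue the whole backward direction by fibrant replacement: a coproduct of quasi-categories is a quasi-category, so replacing each $f_i$ by a map of quasi-categories gives a categorical equivalence between coproducts that respects the decomposition, and essential surjectivity plus fully faithfulness then restrict to each summand. A smaller point of precision: a map into the discrete object $X_0^{n+1}$ factors through a single element only when its domain is connected; the decomposition of $X_n$ into fibres should instead be justified by extensivity (each simplex is assigned an element of $X_0^{n+1}$, compatibly with the simplicial operators), though this is clearly what you meant.
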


Our proof that the underlying bisimplicial set $d^*(X)$ of a $2$-quasi-category $X$ is a quasi-category-enriched Segal category will depend on the following class of trivial cofibrations in the model structure for $2$-quasi-categories. 

\subsection{Horizontal spine inclusions} \label{horspsubsect}
For each object $[n;\bm{m}] = [n;m_1,\ldots,m_n] \in \Theta_2$, we define the \emph{horizontal spine inclusion} $Sp[n;\bm{m}] \lra \Theta_2[n;\bm{m}]$ to be the pullback (cf.\ \cite[\S4.10]{MR2578310})
\begin{equation*}
\cd{
Sp[n;\bm{m}] \ar[r] \ar[d] \fatpullbackcorner & I[n;0,\ldots,0] \ar[d] \\
\Theta_2[n;\bm{m}] \ar[r] & \Theta_2[n;0,\ldots,0]
}
\end{equation*}
 of the spine inclusion $I[n;0,\ldots,0] \lra \Theta_2[n;0,\ldots,0]$ along the unique bijective-on-objects morphism $\Theta_2[n;\bm{m}] \lra \Theta_2[n;0,\ldots,0]$. The horizontal spine $Sp[n;\bm{m}]$ is thus the colimit of the diagram (cf.\ \cite[\S5.1]{MR2578310})
\begin{equation*}
\cd{
 \Theta_2[1;m_1] & \Theta_2[0] \ar[l]_-{\delta^1} \ar[r]^-{\delta^0} & \cdots & \Theta_2[0] \ar[l]_-{\delta^1} \ar[r]^-{\delta^0} & \Theta_2[1;m_n]
}
\end{equation*}
in $[\Theta_2^\mathrm{op},\Set]$.

\begin{lemma} \label{horsplemma} For each $[n;\bm{m}] \in \Theta_2$, the horizontal spine inclusion $Sp[n;\bm{m}] \lra \Theta_2[n;\bm{m}]$ is a weak equivalence in the model structure for $2$-quasi-categories.
\end{lemma}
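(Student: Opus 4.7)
My plan is to factor the horizontal spine inclusion as
\[
I[n;\bm{m}] \lra Sp[n;\bm{m}] \lra \Theta_2[n;\bm{m}],
\]
show that the first map is a trivial cofibration in the model structure for $2$-quasi-categories, and then apply the two-out-of-three property: the composite is precisely the ordinary spine inclusion $i_{n(\bm{m})}$, which is a generating trivial cofibration by the very definition of Ara's model structure (see \S\ref{secara}).

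To identify the first map as a trivial cofibration, I will compare two colimit presentations. The description of $Sp[n;\bm{m}]$ as a colimit is recorded just above the statement of the lemma:
\[
Sp[n;\bm{m}] \;\cong\; \Theta_2[1;m_1] \cup_{\Theta_2[0]} \cdots \cup_{\Theta_2[0]} \Theta_2[1;m_n].
\]
On the other side, the generating $2$-graph $(n;\bm{m})$ is the colimit (along the endpoint vertices $(0)$) of the sub-$2$-graphs $(1;m_i)$, and so cocontinuity of the left adjoint $g_! \colon [\mathbb{G}_2^\mathrm{op},\Set] \lra [\Theta_2^\mathrm{op},\Set]$ of (\ref{jadj}) yields
\[
I[n;\bm{m}] \;\cong\; I[1;m_1] \cup_{\Theta_2[0]} \cdots \cup_{\Theta_2[0]} I[1;m_n].
\]
Comparing these two colimits pointwise, the inclusion $I[n;\bm{m}] \lra Sp[n;\bm{m}]$ is built by $n$ successive pushouts, each along a vertical spine inclusion $i_{1(m_i)} \colon I[1;m_i] \lra \Theta_2[1;m_i]$ for $1 \leq i \leq n$.

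Each $i_{1(m_i)}$ lies among Ara's generating trivial cofibrations; since the class of trivial cofibrations is closed under pushout and finite composition, the inclusion $I[n;\bm{m}] \lra Sp[n;\bm{m}]$ is itself a trivial cofibration and in particular a weak equivalence, and two-out-of-three applied to the factorisation above finishes the proof. I do not anticipate any serious obstacle: the only slightly non-routine step is verifying the two colimit presentations, but both reduce to straightforward computations of universal properties, aided by the explicit pullback definition of $Sp[n;\bm{m}]$ in one case and by the cocontinuity of $g_!$ in the other.
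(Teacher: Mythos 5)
Your proposal is correct and follows essentially the same route as the paper: both factor the spine inclusion as $I[n;\bm{m}] \lra Sp[n;\bm{m}] \lra \Theta_2[n;\bm{m}]$, reduce to the first map by two-out-of-three, and use the same zigzag presentations of $I[n;\bm{m}]$ and $Sp[n;\bm{m}]$ glued along copies of $\Theta_2[0]$. The only difference is the final justification --- the paper applies the gluing lemma to the evident morphism of zigzag diagrams, whereas you filter the same map as a finite composite of pushouts of the vertical spine inclusions $I[1;m_i] \lra \Theta_2[1;m_i]$ (which, to be precise, are trivial cofibrations because they are monomorphisms lying in the generating set of the localiser, not literally ``generating trivial cofibrations'') --- and both versions work.
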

\begin{proof}
Since  the spine inclusion $I[n;\bm{m}] \lra \Theta_2[n;\bm{m}]$ is a weak equivalence, it suffices by the two-out-of-three property to prove that the  inclusion $I[n;\bm{m}] \lra Sp[n;\bm{m}]$ of the (ordinary) spine into the horizontal spine is a weak equivalence. But this latter inclusion is the colimit of the morphism of diagrams displayed below,
\begin{equation*}
\cd{
 I[1;m_1] \ar[d] & \Theta_2[0] \ar@{=}[d] \ar[l] \ar[r] & \cdots & \Theta_2[0] \ar@{=}[d] \ar[l] \ar[r] & I[1;m_n] \ar[d] \\
  \Theta_2[1;m_1] & \Theta_2[0] \ar[l]^-{\delta^1} \ar[r]_-{\delta^0} & \cdots & \Theta_2[0] \ar[l]^-{\delta^1} \ar[r]_-{\delta^0} & \Theta_2[1;m_n]
}
\end{equation*}
and is therefore a weak equivalence by the gluing lemma, since each vertical (resp.\ horizontal) morphism displayed above is a weak equivalence (resp.\ monomorphism). 
\end{proof}

\begin{remark}
A generalisation of Lemma \ref{horsplemma} is implicit in \cite[\S8.1]{MR3350089}, where it is stated that Ara's definition of the model structure for  Rezk $\Theta_n$-spaces agrees with Rezk's original definition.
\end{remark}

\begin{theorem} \label{segalthm}
The underlying bisimplicial set of a $2$-quasi-category is a quasi-category-enriched Segal category.
\end{theorem}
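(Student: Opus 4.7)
The plan is to verify the three defining conditions of Definition \ref{joysegdef} for the bisimplicial set $Y := d^*X$. Condition (i), that $Y_0$ is discrete, is immediate: $Y_{0,m} = X_{d([0],[m])} = X_0$ for every $m \geq 0$, so $Y_0$ is the constant simplicial set on $X_0$.

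For conditions (ii) (Segal maps are weak categorical equivalences) and (iii) (each column $Y_n$, $n \geq 1$, is a quasi-category), I would prove the stronger statement that, for every $n \geq 2$, the Segal map $Y_n \to Y_1 \times_{Y_0} \cdots \times_{Y_0} Y_1$ is a \emph{trivial Kan fibration} of simplicial sets. This suffices: trivial Kan fibrations are weak categorical equivalences, since Joyal's and Kan's model structures share the same cofibrations, giving (ii); for (iii), the target decomposes as $\coprod_{x_0,\ldots,x_n \in X_0} \Hom_X(x_0,x_1) \times \cdots \times \Hom_X(x_{n-1},x_n)$, a coproduct of products of hom-quasi-categories (which are quasi-categories by Proposition \ref{susphomqadjthefirst}), and is therefore a quasi-category; hence $Y_n$ inherits the property as the source of a trivial Kan fibration into a quasi-category. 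The case $n = 1$ of (iii) is handled directly by the same decomposition for $Y_1$.

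At simplicial level $m$, the Segal map is the map $X_{n;m,\ldots,m} \to X_{1;m} \times_{X_0} \cdots \times_{X_0} X_{1;m}$ induced via Yoneda by the horizontal spine inclusion $Sp[n;m,\ldots,m] \hookrightarrow \Theta_2[n;m,\ldots,m]$, and this is natural in $m \in \Delta$. Testing the right lifting property of the assembled map of simplicial sets against the boundary inclusion $\partial\Delta[k] \hookrightarrow \Delta[k]$, and translating back through the natural adjunctions, reduces the problem to showing that a certain Leibniz pushout-product in $\Theta_2$-sets, obtained by combining the cosimplicial map $[m] \mapsto \bigl(Sp[n;m,\ldots,m] \hookrightarrow \Theta_2[n;m,\ldots,m]\bigr)$ with $\partial\Delta[k] \hookrightarrow \Delta[k]$, is a trivial cofibration in Ara's model structure for $2$-quasi-categories.

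Proving this last claim is the main obstacle, and I would attack it by Reedy theory on $\Delta$: both cosimplicial $\Theta_2$-sets in question are Reedy-cofibrant, and a careful analysis of the relative latching objects exhibits the Leibniz map as a transfinite composite of pushouts of horizontal spine inclusions $Sp[n;m_1,\ldots,m_n] \hookrightarrow \Theta_2[n;m_1,\ldots,m_n]$ in which the widths $m_1,\ldots,m_n$ are now permitted to vary \emph{independently}. Each such inclusion is a trivial cofibration by Lemma \ref{horsplemma}, stated in its full generality, and the saturation of trivial cofibrations under pushouts and transfinite composition finishes the argument. The delicate part is the combinatorial bookkeeping of the latching decomposition, namely verifying that the relative latching squares really do yield horizontal spines with independently varying widths; conceptually the structure parallels Rezk's Segal-style analyses for $\Theta$-spaces, but the interplay between the fixed length $n$ and the independently varying widths requires careful set-up.
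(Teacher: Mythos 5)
Your reduction is sound up to its last step, and it coincides with the paper's: the Segal map is $\Hom(i,X)$ for the cosimplicial morphism $i$ whose component at $[m]$ is the horizontal spine inclusion, and by adjunction it is a trivial fibration for every $2$-quasi-category $X$ as soon as each Leibniz map of $i$ against $\partial\Delta[k] \lra \Delta[k]$ --- equivalently, each relative latching map of $i$ --- is a trivial cofibration in Ara's model structure. The gap is your claim that these maps are transfinite composites of pushouts of horizontal spine inclusions; this is false already for $n=2$ at level $k=1$, where the map in question is
\begin{equation*}
Sp[2;1,1] \cup_{Sp[2;0,0]+Sp[2;0,0]} \bigl(\Theta_2[2;0,0]+\Theta_2[2;0,0]\bigr) \lra \Theta_2[2;1,1],
\end{equation*}
the two triangles being embedded by $(\mathrm{id};\delta^1,\delta^1)$ and $(\mathrm{id};\delta^0,\delta^0)$. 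Suppose this were such a composite. Since horizontal spine inclusions are monomorphisms, each attachment adds the non-spine elements of the attached representable freely and all maps from the stages to the colimit are monomorphisms; hence every non-spine element of every attached cell maps to an element of $\Theta_2[2;1,1]$ that is \emph{not} in the image of the domain. The top element $\mathrm{id}_{[2;1,1]}$ is not in the image of the domain, so it must be the image of a non-spine element $e$ of some attached cell, whose Yoneda element $c \colon [n';\bm{m}'] \lra [2;1,1]$ then satisfies $c \circ e = \mathrm{id}$. The underlying simplicial operator of $e$ is therefore injective, so the element $e \circ (\mathrm{id};\delta^1,\delta^1)$ of the same cell has underlying operator with three-element image, hence does not factor through the horizontal spine and is again a non-spine element; but its image in $\Theta_2[2;1,1]$ is the triangle $(\mathrm{id};\delta^1,\delta^1)$, which \emph{does} lie in the image of the domain --- a contradiction. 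So these relative latching maps, though indeed trivial cofibrations, are not cellular in the horizontal spine inclusions, no matter how the lengths and widths are allowed to vary; the ``delicate bookkeeping'' you defer cannot be carried out.

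The paper obtains exactly the statement you need without any cell decomposition: it suffices that $i$ be a Reedy trivial cofibration of cosimplicial $\Theta_2$-sets. Since $i$ is a levelwise weak equivalence by Lemma \ref{horsplemma}, the standard Reedy lemma (a Reedy cofibration that is a levelwise weak equivalence is a Reedy trivial cofibration, and these are precisely the maps whose relative latching maps are trivial cofibrations) reduces everything to checking that $i$ is a Reedy cofibration, which the paper verifies by the criterion ``levelwise monomorphism together with an isomorphism on maximal augmentations'' (cf.\ Hirschhorn, Theorem 15.9.9); here both maximal augmentations are the discrete $\Theta_2$-set $\{0,\ldots,n\}$. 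With that replacement for your final step, the rest of your argument --- transposing against the fibrant $X$, the levelwise identification with horizontal spine inclusions, and deducing that the columns $d^*(X)_n$ are quasi-categories from the trivial fibration onto a coproduct of products of hom-quasi-categories --- is essentially the paper's proof.
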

\begin{proof}
Let $X$ be a $2$-quasi-category. By construction, the $0$th column of its underlying bisimplicial set $d^*(X)$ is discrete, and its $1$st column is the coproduct $\sum_{x,y \in X_0} \Hom_X(x,y)$, which is a quasi-category  by Proposition \ref{susphomqadjthefirst}. Hence it suffices to show that, for each $n \geq 2$, the Segal map 
\begin{equation} \label{segalmap}
d^*(X)_n \lra d^*(X)_1 \times_{X_0} \cdots \times_{X_0} d^*(X)_1
\end{equation}
 is a trivial fibration in $[\Delta^\mathrm{op},\Set]$.

To this end, let
\begin{equation*}
\Hom : [\Delta,[\Theta_2^\mathrm{op},\Set]]^\mathrm{op} \times [\Theta_2^\mathrm{op},\Set] \lra [\Delta^\mathrm{op},\Set]
\end{equation*}
denote the bifunctor that sends a pair $(E,U)$, consisting of a cosimplicial $\Theta_2$-set $E$ and a $\Theta_2$-set $U$, to the simplicial set $\Hom(E,U)$ given by $\Hom(E,U)_m = [\Theta_2^\mathrm{op},\Set](E^m,U)$. Observe that, for each $n \geq 2$, the Segal map (\ref{segalmap}) can be expressed in terms of this hom bifunctor as 
\begin{equation} \label{cosimpprecomp}
\Hom(i,X) : \Hom(T,X) \lra \Hom(S,X),
\end{equation} 
where $i \colon S \lra T$ denotes the morphism of cosimplicial objects in $[\Theta_2^\mathrm{op},\Set]$ whose component at $[m] \in \Delta$ is the horizontal spine inclusion $Sp[n;m\ldots,m] \lra \Theta_2[n;m,\ldots,m]$. 

Therefore, to show that the Segal map (\ref{segalmap}) is a trivial fibration, it suffices by a standard result of Reedy category theory (being, for instance, a dual of  \cite[Proposition 9.1]{MR3217884}), to show that the morphism $i \colon S \lra T$ is a Reedy trivial cofibration of cosimplicial $\Theta_2$-sets, with respect to the model structure for $2$-quasi-categories (since $X$ is fibrant by assumption).  Since each horizontal spine inclusion is a weak equivalence by Lemma \ref{horsplemma}, it remains (by, for instance, \cite[Lemma 7.1]{MR3217884}) to prove that $i \colon S \lra T$ is a Reedy cofibration. 

To do so, it suffices (cf.\ \cite[Theorem 15.9.9]{MR1944041}) to show that each component of $i \colon S \lra T$ is a monomorphism, which is evident, 
and to show that the induced morphism between the maximal augmentations of $S$ and $T$, i.e.\ the induced morphism between the equalisers displayed below, is an isomorphism.
 \begin{equation*}
 \cd{
 \{0,\ldots,n\} \ar@{..>}[d] \ar[r] & Sp[n;0,\ldots,0] \ar@<+1ex>[r]^{\delta^1} \ar@<-1ex>[r]_-{\delta^0} \ar[d] & Sp[n;1,\ldots,1] \ar[d] \\
  \{0,\ldots,n\} \ar[r] & \Theta_2[n;0,\ldots,0]  \ar@<+1ex>[r]^{\delta^1} \ar@<-1ex>[r]_-{\delta^0}  & \Theta_2[n;1,\ldots,1]
}
\end{equation*}
But both of these equalisers are easily seen to be the discrete $\Theta_2$-set $\{0,\ldots,n\}$, and the induced morphism between them to be the identity. This completes the proof. \end{proof}

\begin{remark}
Beyond the results of the present section, Theorem \ref{segalthm} will also play a crucial role in our proof of the Quillen equivalence between $2$-quasi-categories and Joyal-enriched Segal categories in Theorem \ref{qe10}.
\end{remark}

We have thus completed the first step of our construction of the homotopy bicategory of a $2$-quasi-category. Next, we associate to each Joyal-enriched Segal category a  Tamsamani $2$-category (i.e.\ a $\Cat$-enriched Segal category).

\subsection{Tamsamani $2$-categories}
A simplicial category $C \colon \Delta^\mathrm{op} \lra \Cat$ is said to be a \emph{Tamsamani $2$-category} \cite{MR1673923,MR2366560} if:
\begin{enumerate}[(i)]
\item the category $C_0$ is discrete, and
\item for each $n \geq 2$, the Segal map
\begin{equation*}
C_n \cong \Delta[n] \pitchfork C \lra I[n]\pitchfork C \cong C_1 \times_{C_0} \cdots \times_{C_0} C_1
\end{equation*} is an equivalence of categories. 
\end{enumerate}
Let $\mathbf{Tam}$ denote the full subcategory of $[\Delta^\mathrm{op},\Cat]$ consisting of the Tamsamani $2$-categories.

\subsection{The homotopy Tamsamani $2$-category of a $2$-quasi-category} \label{changeofbase}
Recall that the simplicial nerve functor $N \colon \Cat \lra [\Delta^\mathrm{op},\Set]$ has a left adjoint $\ho \colon [\Delta^\mathrm{op},\Set] \lra \Cat$, which sends a simplicial set to its homotopy category. Post-composition by these functors defines an adjunction
\begin{equation}  \label{cobadj}
\xymatrix{
[\Delta^\mathrm{op},\Cat] \ar@<-1.5ex>[rr]^-{\hdash}_-{[1,N]} && \ar@<-1.5ex>[ll]_-{[1,\ho]} [(\Delta\times\Delta)^\mathrm{op},\Set]
}
\end{equation}
between the categories of simplicial categories and bisimplicial sets 
(where we have identified the latter with the functor category $[\Delta^\mathrm{op},[\Delta^\mathrm{op},\Set]]$). 
Since the functor $\ho \colon [\Delta^\mathrm{op},\Set] \lra \Cat$ preserves pullbacks over discrete objects, and sends weak categorical equivalences to equivalences of categories, the left adjoint of this adjunction sends every Joyal-enriched Segal category $X$ to a Tamsamani $2$-category
\begin{equation*}
\cd[@C=2.5em]{
\Delta^\mathrm{op} \ar[r]^-{X} & [\Delta^\mathrm{op},\Set] \ar[r]^-{\ho} & \Cat,
}
\end{equation*}
which we denote by $\ho(X)$ (and which could be called the \emph{change of base} of $X$ along the functor $\ho \colon [\Delta^\mathrm{op},\Set] \lra \Cat$). In particular, Theorem \ref{segalthm} has the following corollary.

\begin{corollary} \label{tamcor}
Let $X$ be a $2$-quasi-category. Then the simplicial category $\ho(d^*(X))$ is a Tamsamani $2$-category.
\end{corollary}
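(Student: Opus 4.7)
The plan is to derive the corollary directly from Theorem \ref{segalthm} together with the general observation already recorded in \S\ref{changeofbase} that the functor $[1,\ho]$ sends Joyal-enriched Segal categories to Tamsamani $2$-categories. Thus the task reduces to unpacking why this general fact holds and applying it to $d^*(X)$.

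First, I would invoke Theorem \ref{segalthm} to note that $d^*(X)$ is a quasi-category-enriched Segal category, hence in particular a Joyal-enriched Segal category in the sense of Definition \ref{joysegdef}. Writing $C = \ho(d^*(X))$ for the simplicial category obtained by post-composing $d^*(X) \colon \Delta^\mathrm{op} \lra [\Delta^\mathrm{op},\Set]$ with $\ho \colon [\Delta^\mathrm{op},\Set] \lra \Cat$, I then need to verify the two defining conditions for a Tamsamani $2$-category. For condition (i), the $0$th column $d^*(X)_0 = X_0$ is a discrete simplicial set, and $\ho$ sends discrete simplicial sets to discrete categories, so $C_0$ is discrete.

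For condition (ii), fix $n \geq 2$ and consider the Segal map in $[\Delta^\mathrm{op},\Set]$
\begin{equation*}
d^*(X)_n \lra d^*(X)_1 \times_{d^*(X)_0} \cdots \times_{d^*(X)_0} d^*(X)_1,
\end{equation*}
which by Theorem \ref{segalthm} is a weak categorical equivalence. Applying $\ho$, and using that $\ho$ sends weak categorical equivalences to equivalences of categories (a standard property of Joyal's model structure) and preserves finite products and pullbacks over discrete simplicial sets (since these are computed fibrewise and $\ho$ preserves finite products of quasi-categories and discrete objects over discrete bases), we obtain an equivalence of categories
\begin{equation*}
C_n \lra C_1 \times_{C_0} \cdots \times_{C_0} C_1,
\end{equation*}
which is the Segal map of $C$. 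This verifies condition (ii).

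The only real subtlety is justifying that $\ho$ preserves the relevant pullbacks; but since $d^*(X)_0$ is discrete, the fibre product decomposes as a coproduct of products of the fibres, each factor being a quasi-category (by the second part of Theorem \ref{segalthm}), and $\ho$ preserves coproducts and finite products of quasi-categories. With this in hand, nothing further is required, and the corollary follows immediately.
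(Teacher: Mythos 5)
Your proposal is correct and follows essentially the same route as the paper: invoke Theorem \ref{segalthm} to see that $d^*(X)$ is a Joyal-enriched Segal category, then apply the observation of \S\ref{changeofbase} that change of base along $\ho$ (which sends weak categorical equivalences to equivalences of categories and preserves pullbacks over discrete objects) yields a Tamsamani $2$-category. The only difference is that you spell out the pullback-preservation via the coproduct-of-products decomposition, which the paper leaves implicit.
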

\begin{proof}
By Theorem \ref{segalthm}, the bisimplicial set $d^*(X)$ is a Joyal-enriched Segal category, and hence its change of base along $\ho \colon [\Delta^\mathrm{op},\Set] \lra \Cat$ is a Tamsamni $2$-category, as observed in \S\ref{changeofbase}. 
\end{proof}

For each $2$-quasi-category $X$, we call the simplicial category $\ho(d^*(X))$ the \emph{homotopy Tamsamani $2$-category} of $X$. By composing the adjunctions (\ref{cobadj}) and (\ref{dadj}), we obtain an adjunction 
\begin{equation}  \label{compadjho}
\xymatrix{
[\Delta^\mathrm{op},\Cat] \ar@<-1.5ex>[rr]^-{\hdash}_-{[1,N]} && \ar@<-1.5ex>[ll]_-{[1,\ho]} [(\Delta\times\Delta)^\mathrm{op},\Set]  \ar@<-1.5ex>[rr]^-{\hdash}_-{d_*} && \ar@<-1.5ex>[ll]_-{d^*} [\Theta_2^\mathrm{op},\Set]
}
\end{equation}
between the categories of simplicial categories and $\Theta_2$-sets. Corollary \ref{tamcor} implies that the left adjoint of this composite adjunction restricts to a functor
\begin{equation}\label{hotamfun}
\ho(d^*(-)) \colon \twoqcat \lra \mathbf{Tam}
\end{equation}
from the category of $2$-quasi-categories to the category of Tamsamani $2$-categories.

To complete our construction of the homotopy bicategory of a $2$-quasi-category, it remains to recall the construction of the ``bicategory reflection'' of a Tamsamani $2$-category.

\subsection{The bicategory reflection of a Tamsamani $2$-category} \label{bicatreflsec}
In \cite[Th\'eor\`eme 4.2]{MR1673923}, Tamsamani describes a construction which assigns to each Tamsamani $2$-category $C$ a bicategory $GC$, which we will call the \emph{bicategory reflection} of $C$. The objects of the bicategory $GC$ are the elements of the set $C_0$, and the hom-categories of $GC$ are the fibres of the functor $(d_1,d_0) \colon C_1 \lra C_0 \times C_0$.

This construction is reviewed by Lack and Paoli in \cite[\S7]{MR2366560}, where it is shown to define a functor $G \colon \mathbf{Tam} \lra \Bicat$. Moreover, Lack and Paoli prove a universal property of this construction, namely that the functor $G \colon \mathbf{Tam} \lra \Bicat$ is a partial left adjoint to the ``$2$-nerve'' functor $N_2 \colon \Bicat \lra [\Delta^\mathrm{op},\Cat]$. We refer the reader to \cite[\S3]{MR2366560} for the definition of the $2$-nerve functor (as the $\Cat$-enriched singular functor induced by the full inclusion of $\Delta$ into the $2$-category $\Bicat_2$ of bicategories, normal pseudofunctors, and icons). Suffice it to recall that the $2$-nerve functor is fully faithful \cite[Theorem 3.7]{MR2366560}, and that the $2$-nerve of a rigid $2$-category is (isomorphic to) its ``standard nerve'', in the sense of the following paragraph.

\begin{recall}[standard nerve for $2$-categories] \label{stnrecall}
Let $N_{st} \colon \twocat \lra [\Delta^\mathrm{op},\Cat]$ denote the (fully faithful) functor that sends a $2$-category $A$ to its \emph{standard nerve} $N_{st}A$, i.e.\ the simplicial category  whose category of $0$-simplices is the discrete set $A_0$ of objects of $A$, and whose category of $n$-simplices is the following coproduct of products of hom-categories.
\begin{equation*}
(N_{st}A)_n = \sum_{a_0,\ldots,a_n \in A_0} A(a_0,a_1) \times \cdots \times A(a_{n-1},a_n)
\end{equation*}
An easy calculation shows that the standard nerve functor $N_{st} \colon \twocat \lra [\Delta^\mathrm{op},\Cat]$ is naturally isomorphic to the composite functor
\begin{equation*}
\cd[@C=3em]{
\twocat \ar[r]^-{N_s} & [\Theta_2^\mathrm{op},\Set] \ar[r]^-{d^*} & [(\Delta\times\Delta)^\mathrm{op},\Set] \ar[r]^-{[1,\ho]} & [\Delta^\mathrm{op},\Cat],
}
\end{equation*}
where $N_s$ denotes the strict nerve functor (see \S\ref{strnobs}).
\end{recall}

We may now complete our construction of the homotopy bicategory of a $2$-quasi-category. 

\subsection{The homotopy bicategory of a $2$-quasi-category} \label{hobicatsubsec}
We define the \emph{homotopy bicategory} $\Ho(X)$ of a $2$-quasi-category $X$ to be the bicategory reflection (\S\ref{bicatreflsec}) of the homotopy Tamsamani $2$-category (Corollary \ref{tamcor}) of $X$, i.e.\ $\Ho(X) = G(\ho(d^*(X)))$. By construction, the objects of $\Ho(X)$ are the objects of $X$, and the hom-categories of $\Ho(X)$ are the homotopy categories of the hom-quasi-categories of $X$, i.e.\ $(\Ho X)(x,y) = \ho (\Hom_X(x,y))$. 
Furthermore, we define the \emph{homotopy bicategory functor} 
\begin{equation*}
\Ho \colon \twoqcat \lra \Bicat
\end{equation*}
 to be the composite 
 \begin{equation*}
 \cd{
 \twoqcat \ar[rr]^-{\ho(d^*(-))} && \mathbf{Tam} \ar[r]^-G & \Bicat
 }
 \end{equation*}
 of the homotopy Tamsamani $2$-category functor (\ref{hotamfun}) and the bicategory reflection functor (see \S\ref{bicatreflsec}).
 
 \medskip
 
We will use the following lemma in \S\ref{triequivsect}.
 
 \begin{lemma} \label{finprodlemma}
The homotopy bicategory functor $\Ho \colon \twoqcat \lra \Bicat$ preserves finite products.
\end{lemma}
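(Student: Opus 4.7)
The plan is to factor the homotopy bicategory functor as the composite $\Ho = G \circ [1,\ho] \circ d^*$ (restricted to $\twoqcat$) and show that each of the three functors preserves finite products.

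First, $d^* \colon [\Theta_2^\mathrm{op},\Set] \lra [(\Delta\times\Delta)^\mathrm{op},\Set]$ is restriction along $d^\mathrm{op}$, and hence preserves all limits (computed pointwise in presheaf categories), including finite products. A product of $2$-quasi-categories is again a $2$-quasi-category since products preserve fibrancy, so $d^*$ restricts to a product-preserving functor on $\twoqcat$.

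Next, I would use the classical fact that $\ho \colon [\Delta^\mathrm{op},\Set] \lra \Cat$ preserves finite products: the terminal simplicial set is sent to the terminal category, and the presentation of $\ho(S)$ with $1$-simplices as generators modulo relations coming from $2$-simplices is compatible with binary products, so the canonical map $\ho(S \times T) \lra \ho(S) \times \ho(T)$ is an isomorphism. Since products in $[\Delta^\mathrm{op},\Cat]$ are computed pointwise, the postcomposition functor $[1,\ho]$ then preserves products. Combined with the previous step, and together with Theorem \ref{segalthm} and the fact that a product of Tamsamani $2$-categories is a Tamsamani $2$-category (equivalences of categories being closed under products), this shows that $[1,\ho]\circ d^*$ restricts to a product-preserving functor $\ho(d^*(-))\colon \twoqcat \lra \mathbf{Tam}$.

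It remains to show that the bicategory reflection $G \colon \mathbf{Tam} \lra \Bicat$ preserves finite products, which I expect to be the main obstacle. For Tamsamani $2$-categories $C$ and $D$, the unit $C\times D \lra N_2(G(C)\times G(D)) \cong N_2(GC)\times N_2(GD)$, obtained by combining the units for $C$ and $D$ (using that the $\Cat$-enriched singular functor $N_2$ preserves products since $\Bicat_2(-,A)$ does), induces by adjointness a canonical comparison morphism $G(C\times D) \lra GC \times GD$. By the explicit description of $G$ recalled in \S\ref{bicatreflsec}, the objects of $G(C\times D)$ are $(C\times D)_0 = C_0\times D_0$, and its hom-categories are the fibres of $(d_1,d_0)\colon C_1 \times D_1 \lra (C_0 \times D_0)^2$, which decompose as products of the corresponding fibres of $C$ and $D$; hence the comparison is bijective on objects and an isomorphism on hom-categories. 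To finish, I would verify that the remaining bicategory structure -- composition, associator, and unitor -- matches under this comparison: these data are constructed in $G(C\times D)$ from a chosen pseudo-inverse to the Segal equivalence $(C\times D)_2 \lra (C\times D)_1 \times_{(C\times D)_0} (C\times D)_1$, and this Segal equivalence is itself the product of the Segal equivalences for $C$ and $D$; by choosing the pseudo-inverse as a product of chosen pseudo-inverses for $C$ and $D$, the resulting structure on $G(C\times D)$ coincides strictly with that on $GC\times GD$, yielding the required isomorphism.
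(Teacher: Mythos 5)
Your first two steps (product-preservation of $d^*$ and of $[1,\ho]$, via the classical fact that $\ho$ preserves finite products) are fine, and they capture the same computation the paper relies on. The problem is your finish for the bicategory reflection $G$. Having observed that the canonical comparison $G(C\times D) \lra GC \times GD$ is bijective on objects and an isomorphism on hom-categories, you propose to show that the composition, associator and unitor \emph{coincide strictly} by ``choosing the pseudo-inverse as a product of chosen pseudo-inverses''. But $G$ is a fixed functor: whatever pseudo-inverse to the Segal map Tamsamani's construction selects for the Tamsamani $2$-category $C\times D$, there is no reason it should be the product of the pseudo-inverses selected for $C$ and for $D$, so the composition on $G(C\times D)$ need not agree on the nose with that of $GC\times GD$, and you are not free to re-choose it. Fortunately this strict agreement is also not needed: preservation of finite products only requires the canonical comparison to be invertible in $\Bicat$, and a normal pseudofunctor that is bijective on objects and an isomorphism on hom-categories is automatically an isomorphism in $\Bicat$ (one transports the constraints to build an inverse normal pseudofunctor). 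That criterion closes your gap immediately; you should also record the nullary case, i.e.\ that the terminal object is preserved.

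For comparison, the paper does not factor through $\mathbf{Tam}$ at all: it applies exactly this criterion directly to the comparison $\Ho(X\times Y) \lra \Ho(X)\times\Ho(Y)$, using that by construction the objects of $\Ho(X)$ are those of $X$ and its hom-categories are $\ho(\Hom_X(x,y))$, so the comparison is bijective on objects and, on hom-categories, is the canonical map $\ho(\Hom_X(x,x')\times\Hom_Y(y,y')) \lra \ho(\Hom_X(x,x'))\times\ho(\Hom_Y(y,y'))$, an isomorphism since $\ho$ preserves finite products. Your factorised route buys nothing extra here and forces you to confront the choice-dependence of $G$; once you replace your last step by the isomorphism criterion, the two arguments are essentially the same.
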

\begin{proof}
First, observe that the homotopy bicategory of the terminal $\Theta_2$-set $\Theta_2[0]$ is the terminal bicategory $\mathbf{1}$.
Now, let $X$ and $Y$ be $2$-quasi-categories. By construction, the canonical normal pseudofunctor $$\Ho(X\times Y) \lra \Ho(X) \times \Ho(Y)$$ is bijective on objects, and is given on hom-categories by the canonical functor $$\ho(X(x,x')\times Y(y,y')) \lra \ho(X(x,x')) \times \ho(Y(y,y')),$$ which is an isomorphism since the homotopy category functor $\ho \colon [\Delta^\mathrm{op},\Set] \lra \Cat$ preserves finite products. Hence the result follows from the fact that a normal pseudofunctor is an isomorphism in the category $\Bicat$ if and only if it is bijective on objects and an isomorphism on hom-categories. 
\end{proof}
 
 We conclude this section by proving that the homotopy bicategory functor $\Ho \colon \twoqcat \lra \Bicat$ is left adjoint to the coherent nerve functor $N \colon \Bicat \lra \twoqcat$. We will deduce this from the universal property of the bicategory reflection construction (see \S\ref{bicatreflsec}), by way of the following factorisation of the coherent nerve functor through the $2$-nerve functor.

\begin{proposition} \label{2nerves}
The coherent nerve functor $N \colon \Bicat \lra [\Theta_2^\mathrm{op},\Set]$ is naturally isomorphic to the composite functor
\begin{equation*}
\cd{
\Bicat \ar[r]^-{N_2} & [\Delta^\mathrm{op},\Cat] \ar[r]^-{[1,N]} & [(\Delta\times\Delta)^\mathrm{op},\Set] \ar[r]^-{d_*} & [\Theta_2^\mathrm{op},\Set].
}
\end{equation*}
\end{proposition}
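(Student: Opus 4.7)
The plan is to show, naturally in $B \in \Bicat$ and $[n;\bm{m}] \in \Theta_2$, that the value $d_*([1,N]N_2 B)([n;\bm{m}])$ coincides with $(NB)([n;\bm{m}]) = \Bicat([n;\bm{m}], B)$, by a chain of adjunction bijections terminating in the full faithfulness of the $2$-nerve functor.

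First I would apply Yoneda's lemma together with the adjunction $d^* \dashv d_*$ of (\ref{dadj}) and the adjunction $[1,\ho] \dashv [1,N]$ of (\ref{cobadj}) to compute
\begin{align*}
d_*([1,N]N_2 B)([n;\bm{m}])
& \cong [\Theta_2^\mathrm{op},\Set](\Theta_2[n;\bm{m}],\, d_*[1,N]N_2 B) \\
& \cong [(\Delta\times\Delta)^\mathrm{op},\Set](d^*\Theta_2[n;\bm{m}],\, [1,N]N_2 B) \\
& \cong [\Delta^\mathrm{op},\Cat]([1,\ho](d^*\Theta_2[n;\bm{m}]),\, N_2 B).
\end{align*}
The key identification is then $[1,\ho](d^*\Theta_2[n;\bm{m}]) \cong N_2[n;\bm{m}]$. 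Since the objects of $\Theta_2$ are rigid $2$-categories (see \S\ref{bicatrecall}), the representable $\Theta_2[n;\bm{m}]$ coincides with the strict nerve $N_s[n;\bm{m}]$ (as observed after Definition \ref{cohndef}), so the natural isomorphism $N_{st} \cong [1,\ho] \circ d^* \circ N_s$ recorded in Recall \ref{stnrecall} yields
\[
[1,\ho](d^*\Theta_2[n;\bm{m}]) \;\cong\; N_{st}[n;\bm{m}].
\]
By the paragraph preceding Recall \ref{stnrecall}, the $2$-nerve of a rigid $2$-category agrees with its standard nerve, whence $N_{st}[n;\bm{m}] \cong N_2[n;\bm{m}]$. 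Finally, the full faithfulness of $N_2 \colon \Bicat \lra [\Delta^\mathrm{op},\Cat]$ (\cite[Theorem 3.7]{MR2366560}, recalled in \S\ref{bicatreflsec}) gives
\[
[\Delta^\mathrm{op},\Cat](N_2[n;\bm{m}],\, N_2 B) \;\cong\; \Bicat([n;\bm{m}], B).
\]

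The main obstacle will be the bookkeeping of naturality: one must verify that the composite of these bijections is natural simultaneously in $[n;\bm{m}]$ (so as to produce a morphism of $\Theta_2$-sets) and in $B$ (so as to produce a natural transformation of functors $\Bicat \lra [\Theta_2^\mathrm{op},\Set]$). Each ingredient --- Yoneda, the units and counits of the two adjunctions, the natural isomorphism of Recall \ref{stnrecall}, and the full faithfulness of $N_2$ --- is manifestly natural in the relevant variable, but threading both variables through the entire composite demands some care. In particular, naturality in $[n;\bm{m}]$ of the identification $[1,\ho](d^*\Theta_2[n;\bm{m}]) \cong N_2[n;\bm{m}]$ reduces, via Recall \ref{stnrecall}, to naturality of the isomorphism $\Theta_2[n;\bm{m}] \cong N_s[n;\bm{m}]$ in the variable $[n;\bm{m}] \in \Theta_2$, which is immediate from the construction.
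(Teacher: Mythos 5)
Your argument is correct and is essentially the paper's own proof: the same Yoneda reduction, the composite adjunction $[1,\ho]\circ d^* \dashv d_*\circ[1,N]$ (which you simply unpack as the two separate adjunctions), the identification $[1,\ho](d^*\Theta_2[n;\bm{m}])\cong N_{st}[n;\bm{m}]\cong N_2[n;\bm{m}]$ via Recollection \ref{stnrecall} and rigidity, and finally full fidelity of $N_2$. The extra naturality bookkeeping you flag is fine and is left implicit in the paper.
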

\begin{proof}
By the Yoneda lemma, the result follows from the following chain of natural isomorphisms, 
\begin{align*}
[\Theta_2^\mathrm{op},\Set](N[n;\bm{m}], d_*(N(N_2A))) 
&\cong [\Delta^\mathrm{op},\Cat](\ho(d^*(N[n;\bm{m}])),N_2A)  \\
&\cong [\Delta^\mathrm{op},\Cat](N_{st}[n;\bm{m}],N_2A)  \\
&\cong [\Delta^\mathrm{op},\Cat](N_2[n;\bm{m}],N_2A)  \\
&\cong \Bicat([n;\bm{m}],A)
\end{align*}
where we have used the composite adjunction (\ref{compadjho}), the factorisation of the standard nerve functor from Recollection \ref{stnrecall}, the fact that the $2$-categories $[n;\bm{m}]$ are rigid, and the full fidelity of the $2$-nerve functor \cite[Theorem 3.7]{MR2366560}.
\end{proof}

\begin{theorem} \label{honadjthm}
The homotopy bicategory  and coherent nerve functors are the left and right adjoints of an adjunction
\begin{equation*}
\xymatrix{
\Bicat \ar@<-1.5ex>[rr]^-{\hdash}_-N && \ar@<-1.5ex>[ll]_-{\Ho} \twoqcat
}
\end{equation*}
between the category of bicategories and normal pseudofunctors and the category of $2$-quasi-categories. 
\end{theorem}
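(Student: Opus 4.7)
The plan is to assemble the adjunction $\Ho \dashv N$ by chaining the partial adjunctions already in hand, using Proposition \ref{2nerves} as the bridge. Concretely, I would prove the claim by establishing, for every $2$-quasi-category $X$ and every bicategory $A$, a chain of natural bijections
\begin{align*}
\Bicat(\Ho(X), A)
&= \Bicat(G(\ho(d^*X)), A) \\
&\cong [\Delta^{\mathrm{op}},\Cat](\ho(d^*X), N_2 A) \\
&\cong [(\Delta\times\Delta)^{\mathrm{op}},\Set](d^*X, [1,N]N_2 A) \\
&\cong [\Theta_2^{\mathrm{op}},\Set](X, d_*[1,N]N_2 A) \\
&\cong [\Theta_2^{\mathrm{op}},\Set](X, NA) \;=\; \twoqcat(X, NA),
\end{align*}
with naturality in both variables ensuring that this chain of bijections comes from an adjunction.

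Step by step: the second bijection is the Lack--Paoli ``partial adjunction'' $G \dashv N_2$ between Tamsamani $2$-categories and bicategories (recalled in \S\ref{bicatreflsec}), whose validity requires that $\ho(d^*X)$ actually be a Tamsamani $2$-category; this is precisely what Corollary \ref{tamcor} guarantees, and it is the place where the hypothesis that $X$ is a $2$-quasi-category (and hence Theorem \ref{segalthm}) is used. The third and fourth bijections together are simply the composite adjunction (\ref{compadjho}) applied to $N_2 A \in [\Delta^{\mathrm{op}},\Cat]$, and the final bijection is Proposition \ref{2nerves}, which identifies $d_*[1,N]N_2 A$ with $NA$.

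Once these four bijections are shown natural in $X$ and $A$, they compose to a natural bijection exhibiting $\Ho \dashv N$ as an adjunction; by construction (\S\ref{hobicatsubsec}) the functor $\Ho$ agrees with the homotopy bicategory functor defined earlier, and the coherent nerve $N$ is the right adjoint whose full fidelity was already established (Corollary \ref{lastcor}). The main obstacle is not difficulty but bookkeeping: one must check that the Lack--Paoli adjunction unit/counit restricts appropriately to Tamsamani $2$-categories, and that the natural isomorphism of Proposition \ref{2nerves} is compatible with the other adjunction data — both of which are routine once the hom-set bijections are in place. No ad hoc construction of the unit $X \to N(\Ho X)$ is needed, since the adjunction may be obtained purely from the composite hom-set bijection; the unit and counit are then recovered formally.
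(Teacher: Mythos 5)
Your proposal is correct and is essentially the paper's own argument: the paper composes the unit of the adjunction (\ref{compadjho}) with the unit of the Lack--Paoli partial adjunction $G \dashv N_2$ (via Corollary \ref{tamcor} and the isomorphism of Proposition \ref{2nerves}) to build the unit $X \lra N(\Ho X)$ and verify its universal property, which is just the unit-level formulation of your chain of natural hom-set bijections built from the same four ingredients. The only difference is presentational (hom-isomorphism versus explicit unit), so there is nothing to add.
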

\begin{proof}
Let $\eta \colon \mathrm{id} \lra \Ho \circ N \colon \twoqcat \lra \twoqcat$ denote the natural transformation whose component at a $2$-quasi-category $X$ is the composite
\begin{equation*}
\cd{
X \ar[r]^-{\eta_1} & d_*(N(\ho(d^*(X)))) \ar[rr]^-{d_*(N(\eta_2))} && d_*(N(N_2(\Ho X))) \cong N(\Ho X),
}
\end{equation*}
where $\eta_1$ denotes the unit of the composite adjunction (\ref{compadjho}) and $\eta_2$ denotes the unit of the partial adjunction $G\dashv N_2$, and where we have used  the natural isomorphism of Proposition \ref{2nerves}. It then follows from the universal properties of the units $\eta_1$ and $\eta_2$ that the natural transformation $\eta$ defines the unit of an adjunction $\Ho \dashv N$, as desired. 
\end{proof}

\section{Equivalences of $2$-quasi-categories} \label{fundythmsec}
The goal of this section is to prove (Theorem \ref{2truncchar}) that a $2$-quasi-category is equivalent to the coherent nerve of a bicategory if and only if it is $2$-truncated (Definition \ref{2truncdef}).  To this end, we prove (Theorem \ref{fundythm}) that a morphism of $2$-quasi-categories is an equivalence (see \S\ref{equivsubsect}) if and only if it is essentially surjective on objects (Definition \ref{esodef})  and fully faithful (Definition \ref{ffdef}). 

\subsection{Equivalences of quasi-categories} \label{1dimfundy}
Recall that a morphism in a quasi-category $X$ is said to be an \emph{isomorphism} if its homotopy class is an isomorphism in the homotopy category $\ho(X)$. A morphism of quasi-categories $f \colon X \lra Y$ is said to be \emph{essentially surjective on objects} if, for each object $y \in Y$, there exists an object $x \in X$ and an isomorphism $f(x) \cong y$ in $Y$.

One model for the hom-spaces of a quasi-category is provided by the Quillen adjunction (see \cite[Proposition 4.5]{MR2764043})
\begin{equation*} \label{susphomadj}
 \xymatrix{
\partial\Delta[1]/[\Delta^\mathrm{op},\Set] \ar@<-1.5ex>[rr]^-{\hdash}_-{\Hom} && \ar@<-1.5ex>[ll]_-{\Sigma_1} [\Delta^\mathrm{op},\Set]
}
\end{equation*}
between the category of bi-pointed simplicial sets endowed with the model structure induced from the model structure for quasi-categories, and the category of simplicial sets endowed with the model structure for Kan complexes. The left adjoint of this adjunction sends a simplicial set $S$ to the bi-pointed simplicial set $\Sigma_1(S)$ defined by the pushout on the left below,
\begin{equation*} \label{susppo}
 \cd{
\partial\Delta[1] \times S  \ar[r]^-{\mathrm{pr}_1} \ar[d] & \partial\Delta[1] \ar[d] \\ 
\Delta[1] \times S  \ar[r] & \fatpushoutcorner \Sigma_1(S)
 }
 \qquad
 \qquad
  \cd{
 \Hom_X(x,y) \ar[r] \ar[d] \fatpullbackcorner & X^{\Delta[1]} \ar[d] \\
 \Delta[0] \ar[r]_-{(x,y)} & X^{\partial\Delta[1]}
 }
 \end{equation*}
and the right adjoint sends a bi-pointed simplicial set $(x,y) \colon \partial\Delta[1] \lra X$ to the simplicial set $\Hom_X(x,y)$ defined by the pullback on the right above. A morphism of quasi-categories $f \colon X \lra Y$ is said to be \emph{fully faithful} if, for each pair of objects $x,y \in X$, the induced morphism $f \colon \Hom_X(x,y) \lra \Hom_Y(fx,fy)$ is an equivalence of Kan complexes.

A fundamental theorem of quasi-category theory states that a morphism of quasi-categories is an equivalence (i.e.\ a weak equivalence in the model structure for quasi-categories) if and only if it is essentially surjective on objects and fully faithful. The goal of this section is to prove the corresponding theorem for $2$-quasi-categories.

\subsection{Equivalences of $2$-quasi-categories} \label{equivsubsect}
A morphism of $2$-quasi-categories $f \colon X \lra Y$ is said to be an \emph{equivalence \textup{(}of $2$-quasi-categories\textup{)}} if it is a weak equivalence in Ara's model structure for $2$-quasi-categories. 

\begin{observation}
Since the object $J$ (see Notation \ref{freeliving}) is an interval object in this model structure, a morphism of $2$-quasi-categories $f \colon X \lra Y$ is an equivalence if and only if there exists a morphism $g \colon Y \lra X$ and morphisms $h \colon J \times X \lra X$ and $k \colon J \times Y \lra Y$ such that $h\circ(\{0\}\times X) = 1_X$, $h\circ(\{1\}\times X) = gf$, $k\circ(\{0\}\times Y) = 1_Y$, and $k\circ(\{1\}\times X) = fg$.
\end{observation}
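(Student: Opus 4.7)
The plan is to interpret ``interval object'' concretely as giving a good cylinder functor, and then invoke the standard Quillen--Whitehead machinery.

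First I would verify that $J \times X$ is a good cylinder object for every $\Theta_2$-set $X$. The endpoints $\{0\}, \{1\} \colon \Theta_2[0] \lra J$ assemble into the monomorphism $\partial J \lra J$, and the projection $J \lra \Theta_2[0]$ is a trivial fibration (as was already used in the proof of Proposition \ref{welemma}, via the injectivity of $J$). Trivial fibrations are stable under pullback, so $J \times X \lra X$ is a trivial fibration for every $X$; and since cofibrations in Ara's model structure are simply the monomorphisms, the inclusion $\partial J \times X \lra J \times X$ is a cofibration. Thus $\partial J \times X \lra J \times X \lra X$ is a good cylinder factorisation of the fold map, so morphisms out of $J \times X$ satisfying the two boundary conditions in the statement are precisely left homotopies in Quillen's sense.

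For the \emph{if} direction, suppose $g, h, k$ are given as described. Then $h$ is a left homotopy from $1_X$ to $gf$ and $k$ a left homotopy from $1_Y$ to $fg$. Since every $\Theta_2$-set is cofibrant and $X,Y$ are fibrant, left-homotopic morphisms agree in the homotopy category of Ara's model structure. Therefore $[gf] = [1_X]$ and $[fg] = [1_Y]$, so $[f]$ has a two-sided inverse $[g]$ and is invertible; by the fundamental property of the homotopy category, $f$ is a weak equivalence.

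For the \emph{only if} direction, suppose $f \colon X \lra Y$ is an equivalence of $2$-quasi-categories. Then $X$ and $Y$ are both fibrant and cofibrant, so the Whitehead theorem in a model category with all objects cofibrant supplies a homotopy inverse $g \colon Y \lra X$ together with homotopies $gf \simeq 1_X$ and $fg \simeq 1_Y$ with respect to \emph{some} choice of good cylinder. The remaining step, which I expect to be the main (though routine) obstacle, is to realise these homotopies on the specific cylinder $J \times -$: since any two good cylinder objects on a cofibrant object induce the same left-homotopy relation with a fibrant target, one may rechoose the homotopies to have domains $J \times X$ and $J \times Y$ respectively, producing the desired $h$ and $k$ with the required boundary conditions.
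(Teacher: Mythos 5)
Your proof is correct and is essentially the argument the paper intends: the Observation is stated without proof as the standard consequence of $J$ being an interval object, i.e.\ $J\times X$ is a cylinder object (the projection being a trivial fibration by injectivity of $J$, the boundary inclusion a monomorphism), so that for the fibrant--cofibrant objects $X$ and $Y$ a weak equivalence is the same as a $J$-homotopy equivalence by the usual Whitehead/cylinder-independence facts. You have simply made this routine argument explicit, so there is nothing to add.
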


\subsection{The underlying quasi-category of a $2$-quasi-category} 
We will call the underlying simplicial set  $\tau^*(X)$ of a $2$-quasi-category $X$ its \emph{underlying quasi-category}. This name is justified by the following proposition.

\begin{proposition} \label{undqadj}
The adjunction \textup{(}see \textup{Observation} \textup{\ref{afullemb}}\textup{)}
\begin{equation*}
 \xymatrix{
[\Theta_2^\mathrm{op},\Set] \ar@<-1.5ex>[rr]^-{\hdash}_-{\tau^*} && \ar@<-1.5ex>[ll]_-{\pi^*} [\Delta^\mathrm{op},\Set]
}
\end{equation*}
is a Quillen adjunction between Ara's model structure for $2$-quasi-categories and Joyal's model structure for quasi-categories. Hence the underlying simplicial set of a $2$-quasi-category is a quasi-category.
\end{proposition}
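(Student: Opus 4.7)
The statement has two parts, the second of which follows formally from the first: in any Quillen adjunction the right adjoint preserves fibrant objects, so once $\pi^* \dashv \tau^*$ is known to be Quillen between Joyal's and Ara's model structures, $\tau^*$ sends $2$-quasi-categories to quasi-categories. The plan is therefore to show that $\pi^*$ is a left Quillen functor; since every simplicial set is cofibrant in Joyal's model structure, this reduces to showing that $\pi^*$ preserves monomorphisms and all weak equivalences.

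Preservation of monomorphisms is immediate: $\pi^*$ is defined by precomposition along $\pi^{\mathrm{op}}$, hence acts pointwise and sends injective maps of sets to injective maps of sets. For the weak equivalences, I will invoke the analogue of Proposition \ref{welemma} for Joyal's model structure due to Joyal and Tierney \cite{MR2342834}, which reduces the claim to verifying that $\pi^*$ sends three classes of generating morphisms in $[\Delta^{\mathrm{op}},\Set]$ to weak equivalences in Ara's model structure: the projections $\tau^*(J) \times \Delta[n] \to \Delta[n]$, the spine inclusions $I[n] \to \Delta[n]$, and the interval contraction $\tau^*(J) \to \Delta[0]$, where $\tau^*(J)$ denotes the standard simplicial nerve of $\mathbb{I}$.

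The key technical point is that $\pi^*$ preserves colimits (as a left adjoint to $\tau^*$) \emph{and} products (as a right adjoint to the left Kan extension $\pi_!$), and, together with the identifications $\pi^*(\Delta[n]) = \Theta_2[n;0,\ldots,0]$ and $\pi^*(\tau^*(J)) = J$ (by Observation \ref{nervecatrecall}, since $\mathbb{I}$ is rigid), this determines the action of $\pi^*$ on all three generators. The projection becomes $J \times \Theta_2[n;0,\ldots,0] \to \Theta_2[n;0,\ldots,0]$, a trivial fibration by injectivity of $J$ (as already used in the proof of Proposition \ref{welemma}); the spine inclusion becomes $I[n;0,\ldots,0] \to \Theta_2[n;0,\ldots,0]$, a weak equivalence by the very definition of Ara's model structure; and the interval contraction becomes $J \to \Theta_2[0]$, likewise a trivial fibration by injectivity of $J$.

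There is no substantial obstacle here: the argument is a routine application of a recognition principle, and the only point requiring care is identifying $\pi^*$ on the relevant generators, which follows from its dual adjoint character together with the observation that $\pi^*$ sends the simplicial nerve of a category to its $2$-cellular nerve.
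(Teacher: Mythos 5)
Your proposal is correct and takes essentially the same route as the paper: verify that the left adjoint $\pi^*$ preserves monomorphisms (it is a restriction functor), identify its values on the generators ($\pi^*$ sends $I[n]\to\Delta[n]$ to $I[n;0,\ldots,0]\to\Theta_2[n;0,\ldots,0]$ and the nerve of $\mathbb{I}\to\mathbf{1}$ to $J\to\Theta_2[0]$, a trivial fibration), and conclude by the Joyal--Tierney analogue of Proposition \ref{welemma}. The only difference is that you additionally verify the projections $J\times\Theta_2[n;0,\ldots,0]\to\Theta_2[n;0,\ldots,0]$, which the paper's invocation of the recognition principle leaves aside; this extra check is harmless and your justification of it (product-preservation of the restriction functor $\pi^*$ plus injectivity of $J$) is sound.
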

\begin{proof}
The left adjoint $\pi^*$ preserves monomorphisms, since it is a restriction functor. Moreover, for each $n \geq 2$, $\pi^*$ sends the spine inclusion $I[n] \lra \Delta[n]$ to the spine inclusion $I[n;0,\ldots,0] \lra \Theta_2[n;0,\ldots,0]$, which is a weak equivalence in the model structure for $2$-quasi-categories by the definition of the latter. Also,  $\pi^*$ sends the simplicial nerve of the functor $\mathbb{I} \lra \mathbf{1}$ to the morphism  of $\Theta_2$-sets $J \lra \Theta_2[0]$, which is a trivial fibration, and hence a weak equivalence. As in the proof of Proposition \ref{susphomqadjthefirst}, this proves the result.
\end{proof}

\begin{example}
The underlying quasi-category of the coherent nerve of a bicategory $A$ is the Roberts--Street--Duskin nerve \cite{MR1897816} of the locally groupoidal core of $A$ (which consists of the objects, morphisms, and invertible $2$-cells of $A$).
\end{example}

\subsection{Isomorphisms in $2$-quasi-categories}
A morphism (i.e.\ a $[1;0]$-element) in a $2$-quasi-category $X$ is said to be an \emph{isomorphism} if it satisfies the equivalent conditions of the following proposition.

\begin{proposition} \label{isoprop}
A morphism in a $2$-quasi-category $X$ is an isomorphism in the underlying quasi-category $\tau^*(X)$ if and only if it is an equivalence in the homotopy bicategory $\Ho(X)$. 
\end{proposition}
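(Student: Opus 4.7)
The plan is to prove both implications separately.

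For the forward direction, suppose $f \colon x \to y$ is an isomorphism in the underlying quasi-category $\tau^*(X)$. Joyal's characterisation of isomorphisms in quasi-categories then yields a $1$-simplex $g \colon y \to x$ of $\tau^*(X)$ and $2$-simplices witnessing $g \circ f \simeq 1_x$ and $f \circ g \simeq 1_y$. These $2$-simplices are $[2;0,0]$-elements $\sigma_1, \sigma_2$ of $X$, and by the construction of the homotopy bicategory (\S\ref{hobicatsubsec}), each $[2;0,0]$-element of $X$ induces an invertible $2$-cell in $\Ho(X)$ between the composite of its outer faces and its middle face. Hence $\sigma_1$ and $\sigma_2$ yield invertible $2$-cells $g \cdot f \cong 1_x$ and $f \cdot g \cong 1_y$ in $\Ho(X)$, exhibiting $f$ as an equivalence.

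For the backward direction, suppose $f$ is an equivalence in $\Ho(X)$ with quasi-inverse $g$ and invertible $2$-cells $\gamma \colon g \cdot f \cong 1_x$ and $\delta \colon f \cdot g \cong 1_y$. By the construction of composition in $\Ho(X)$, the composite $g \cdot f$ is realised by a $[2;0,0]$-element $\sigma_0$ of $X$ with $d_2 \sigma_0 = f$, $d_0 \sigma_0 = g$, and $d_1 \sigma_0 = h$ for some $h \in X_{1;0}(x,x)$, while the invertible $2$-cell $\gamma$ lifts to an isomorphism $\tilde\gamma \colon h \to 1_x$ in the hom-quasi-category $\Hom_X(x,x)$, i.e., an isomorphism $[1;1]$-element of $X$ in the sense of \S\ref{1dimfundy}. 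I would then produce the required $[2;0,0]$-element $\sigma_1$ with $d_2 \sigma_1 = f$, $d_0 \sigma_1 = g$, and $d_1 \sigma_1 = 1_x$ by filling a $[2;1,0]$-element of $X$ whose top $[2;0,0]$-face is $\sigma_0$, whose left-hand $[1;1]$-face is the identity $2$-cell $1_f$, and whose right-hand $[1;1]$-face is $\tilde\gamma$; the bottom $[2;0,0]$-face of this $[2;1,0]$-element is then the desired $\sigma_1$. A symmetric construction using $\delta$ yields $\sigma_2$ with boundary $(g,f,1_y)$, and Joyal's theorem then gives that $f$ is an isomorphism in $\tau^*(X)$.

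The main obstacle is the filling step in the backward direction: specifically, producing the $[2;1,0]$-element of $X$ extending the prescribed partial data. I would approach this by reducing it to a trivial-cofibration lifting against $X$, using the fibrancy of $X$ together with the fact that the prescribed face $\tilde\gamma$ is an isomorphism in the hom-quasi-category; the isomorphism ``marking'' allows the extension to be established through the marked lifting conditions of Remark \ref{elcatex}(iii) (via suitable Leibniz products involving the interval object $J$), rather than requiring a bare inner-horn filling.
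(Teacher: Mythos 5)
Your forward direction is essentially sound: choosing $2$-simplices with degenerate middle face and using the fact that a $[2;0,0]$-element of $X$ induces an invertible $2$-cell in $\Ho(X)$ from the composite of its outer faces to its middle face (a fact which does deserve a sentence of justification, since composition in $\Ho(X)$ is defined only via the Tamsamani/bicategory-reflection construction of \S\ref{hobicatsubsec}) does yield the required equivalence data. The genuine gap is the filling step in the backward direction, which is exactly the crux of the proposition and is not covered by what you cite. The inclusion into $\Theta_2[2;1,0]$ of the union of the three prescribed faces is \emph{not} a trivial cofibration: testing against the coherent nerve of a bicategory (fibrant by Corollary \ref{fibcor}), a filler with the prescribed $[2;0,0]$-face $\sigma$, degenerate hom$(0,1)$-face and long-edge $2$-cell $\gamma$ exists precisely when the forced second $[2;0,0]$-face $\gamma\circ\sigma$ is invertible, i.e.\ precisely when $\gamma$ is invertible. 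So any proof of your filler must use the invertibility of $\tilde\gamma$ in an essential way; what you need is a genuine ``special/equivalence horn'' filling theorem for $2$-quasi-categories (the kind of statement established in \cite{yukihorn}), which this paper neither proves nor assumes. Remark \ref{elcatex}(iii) does not supply it: those Leibniz products $\delta_{J^k}\widehat{\times}s_1$ concern extensions along $s_1\colon\Theta_2[1;0]\lra J_2$ relative to $J$-cube boundary data (they are part of the fibrancy characterisation), and you give no reduction of your $[2;1,0]$-shaped extension problem to lifts of that form; asserting that the ``marking'' makes it work is a hope, not an argument.

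For comparison, the paper avoids any such horn-filling by reducing the statement to: parallel morphisms $u,v\colon x\lra y$ of $X$ are isomorphic in the hom Kan complex $\Hom_{\tau^*(X)}(x,y)$ if and only if they are isomorphic in the hom-quasi-category $\Hom_X(x,y)$; both conditions are extension problems along the cofibrations $\partial\Theta_2[1;1]\cong\pi^*(\Sigma_1(\partial\Delta[1]))\lra\pi^*(\Sigma_1(\Delta[1]))$ and $\partial\Theta_2[1;1]\lra J_2$ respectively, and both targets are relative cylinder objects for $\partial\Theta_2[1;0]\lra\Theta_2[1;0]$ (cofibration--weak equivalence factorisations of the relative codiagonal), so the two homotopy relations agree on maps into the fibrant object $X$. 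You could repair your backward direction along these lines without proving any new filling lemma: since $\tilde\gamma$ is an isomorphism in $\Hom_X(x,x)$ it extends to a bi-pointed map $J_2\lra X$ joining $h$ and $1_x$; the cylinder comparison then shows $h$ and $1_x$ are homotopic as $1$-simplices of the quasi-category $\tau^*(X)$; and the desired $2$-simplex with boundary $(g,1_x,f)$ is then produced inside $\tau^*(X)$ by an ordinary inner-horn argument, using $\sigma_0$ and this homotopy.
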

\begin{proof}
Unwinding the definitions, it suffices to show that a parallel pair of morphisms $f,g \colon x \lra y$ in a $2$-quasi-category $X$ are isomorphic as objects of the hom Kan complex $\Hom_{\tau^*(X)}(x,y)$ if and only if they are isomorphic as objects of the hom-quasi-category $\Hom_X(x,y)$. The former (resp.\ latter) is equivalent to the existence of an extension as in the diagram on the left (resp.\ right) below (note that $\partial\Theta_2[1;1] \cong \pi^*(\Sigma_1(\partial\Delta[1]))$).
\begin{equation*}
\cd{
\pi^*(\Sigma_1(\partial\Delta[1])) \ar[r]^-{(f,g)} \ar[d] & X \\
\pi^*(\Sigma_1(\Delta[1])) \ar@{..>}[ur]
}
\qquad
\qquad
\cd{
\partial\Theta_2[1;1] \ar[r]^-{(f,g)} \ar[d] & X \\
J_2 \ar@{..>}[ur]
}
\end{equation*}
Hence the result follows from the observation that both $\pi^*(\Sigma_1(\Delta[1]))$ and $J_2$ define relative cylinder objects for the boundary inclusion $\partial\Theta_2[1;0] \lra \Theta_2[1;0]$ in the model structure for $2$-quasi-categories, via the evident  factorisations
\begin{equation*}
\pi^*(\Sigma_1(\partial\Delta[1])) \lra \pi^*(\Sigma_1(\Delta[1])) \lra \pi^*(\Sigma_1(\Delta[0]))
\end{equation*}
\begin{equation*}
\partial\Theta_2[1;1] \lra J_2 \lra \Theta_2[1;0]
\end{equation*}
of the relative codiagonal of the boundary inclusion $\partial\Theta_2[1;0] \lra \Theta_2[1;0]$
into a cofibration followed by a weak equivalence.
\end{proof}

\begin{definition}[essentially surjective on objects] \label{esodef}
A morphism of $2$-quasi-categories $f \colon X \lra Y$ is said to be \emph{essentially surjective on objects} if, for each object $y \in Y$, there exists an object $x \in X$ and an isomorphism $f(x) \cong y$ in $Y$.
\end{definition}

\begin{observation} \label{esoobs}
Let $f \colon X \lra Y$ be a morphism of $2$-quasi-categories. It follows from Proposition \ref{isoprop} that the following are equivalent:
\begin{enumerate}[(i)]
\item the morphism of $2$-quasi-categories $f \colon X \lra Y$ is essentially surjective on objects;
\item the morphism of quasi-categories $\tau^*(f) \colon \tau^*(X) \lra \tau^*(Y)$ is essentially surjective on objects;
\item the normal pseudofunctor $\Ho(f) \colon \Ho(X) \lra \Ho(Y)$ is biessentially surjective on objects.
\end{enumerate}
\end{observation}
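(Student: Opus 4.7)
The plan is to observe that all three conditions are statements about the same data — namely, the existence of a suitable $[1;0]$-element of $Y$ — distinguished only by which notion of ``invertibility'' is required, and that these three notions coincide by Proposition \ref{isoprop}.

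More precisely, I would first record the bookkeeping: for any $2$-quasi-category $Z$, the underlying set of objects of $Z$, of the underlying quasi-category $\tau^*(Z)$, and of the homotopy bicategory $\Ho(Z)$ is literally the set $Z_0$ of $[0]$-elements of $Z$, and any morphism $f \colon X \lra Y$ of $2$-quasi-categories induces the same function $f \colon X_0 \lra Y_0$ after applying $\tau^*$ or $\Ho$ (the former by the definition of $\tau^*$ in Observation \ref{afullemb}, the latter by the construction of $\Ho$ in \S\ref{hobicatsubsec}, where the objects of $\Ho(Z)$ are the objects of $Z$). Similarly, the $[1;0]$-elements of $Y$ coincide with the $1$-simplices of $\tau^*(Y)$ and with the $1$-cells of $\Ho(Y)$, with matching source and target data.

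Thus all three conditions (i), (ii), (iii) are of the form: for every $y \in Y_0$ there exist $x \in X_0$ and a $[1;0]$-element $h \colon f(x) \lra y$ in $Y$ which is \emph{invertible} in some sense. In case (i) this means $h$ is an isomorphism in the $2$-quasi-category $Y$; in case (ii) that the corresponding $1$-simplex is an isomorphism in the quasi-category $\tau^*(Y)$; in case (iii) that the corresponding $1$-cell is an equivalence in the bicategory $\Ho(Y)$. But these three notions of invertibility for a morphism of $Y$ are \emph{exactly} the notions whose equivalence is the content of Proposition \ref{isoprop} (indeed, the notion in (i) was defined as the common value in Proposition \ref{isoprop}, so (i) $\Leftrightarrow$ (ii) and (i) $\Leftrightarrow$ (iii) are immediate from that proposition). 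The equivalence of (i), (ii), (iii) follows at once.

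There is no real obstacle here: the proof is a direct unwinding of definitions, with Proposition \ref{isoprop} doing all the work. The only point to check carefully is that the set-theoretic identifications of objects and morphisms are compatible with the action of $f$, $\tau^*(f)$, and $\Ho(f)$, which is evident from the constructions. Since the argument is short and purely formal, in the actual write-up I would likely present it in one paragraph, citing Proposition \ref{isoprop} and noting the identification of the three underlying object/morphism sets.
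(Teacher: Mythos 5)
Your argument is correct and is exactly the intended one: the paper offers no separate proof of this Observation, precisely because it is the unwinding you describe — the objects and $[1;0]$-elements of $X$, $\tau^*(X)$, and $\Ho(X)$ coincide (compatibly with $f$), and the three notions of invertibility are identified by Proposition \ref{isoprop}. Nothing is missing.
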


\begin{definition}[(locally) fully faithful] \label{ffdef}
A morphism of $2$-quasi-categories $f \colon X \lra Y$ is said to be \emph{fully faithful} (resp.\ \emph{locally fully faithful}) if, for each pair of objects $x,y \in X$, the induced morphism of quasi-categories $f \colon \Hom_X(x,y) \lra \Hom_Y(fx,fy)$ is an equivalence (resp.\ fully faithful).
\end{definition}

\begin{lemma} \label{hlemma}
Let $F_i \dashv G_i \colon \mathcal{M} \lra [\Delta^\mathrm{op},\Set]$, for $i=1,2$, be Quillen adjunctions between a model category $\mathcal{M}$ and the category of simplicial sets endowed with the model structure for Kan complexes. Suppose that the objects $F_1(\Delta[0])$ and $F_2(\Delta[0])$ are weakly equivalent in $\mathcal{M}$. 
If $f \colon X \lra Y$ is a morphism between fibrant objects in $\mathcal{M}$, then $G_1(f)$ is an equivalence of Kan complexes if and only if $G_2(f)$ is.
\end{lemma}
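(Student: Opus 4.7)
The plan is to identify, for each $i = 1,2$ and each fibrant $X \in \mathcal{M}$, the Kan complex $G_i(X)$ with a homotopy function complex from $F_i(\Delta[0])$ to $X$, and then use the homotopy invariance of such function complexes. By the adjunction $F_i \dashv G_i$, we have $G_i(X)_n \cong \mathcal{M}(F_i\Delta[n], X)$, so that $G_i(X)$ is the ``enriched hom'' $\mathcal{M}(F_i\Delta[\bullet], X)$ into $X$ from the cosimplicial object $F_i\Delta[\bullet]$ in $\mathcal{M}$ obtained by applying $F_i$ to the Yoneda cosimplicial simplicial set $\Delta[\bullet]$.

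The first step is to verify that $F_i\Delta[\bullet]$ is a Reedy cofibrant cosimplicial resolution of $F_i\Delta[0]$ in $\mathcal{M}$. The cosimplicial simplicial set $\Delta[\bullet]$ is itself Reedy cofibrant (its latching maps are the inclusions of the degenerate part and so are monomorphisms), and its coface maps $\Delta[0] \lra \Delta[n]$ are weak equivalences in the Kan model structure, being maps between contractible Kan complexes. Since $F_i$ is a left Quillen functor for the Kan model structure, it is cocontinuous and preserves cofibrations, hence preserves Reedy cofibrations of cosimplicial objects; moreover, since every simplicial set is cofibrant in the Kan model structure, Ken Brown's lemma gives that $F_i$ preserves weak equivalences. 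It follows that $F_i\Delta[\bullet]$ is Reedy cofibrant with each coface map $F_i\Delta[0] \lra F_i\Delta[n]$ a weak equivalence, so the simplicial set $G_i(X) \cong \mathcal{M}(F_i\Delta[\bullet], X)$ is a homotopy function complex from $F_i\Delta[0]$ to $X$ (see e.g.\ \cite[Chapter 17]{MR1944041}).

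The second step is to compare the two homotopy function complexes. Given that $F_1\Delta[0]$ and $F_2\Delta[0]$ are weakly equivalent in $\mathcal{M}$, the standard theory of cosimplicial resolutions provides a cosimplicial resolution $A^\bullet$ in $\mathcal{M}$ together with Reedy weak equivalences $F_1\Delta[\bullet] \leftarrow A^\bullet \rightarrow F_2\Delta[\bullet]$ of Reedy cofibrant resolutions (by lifting the given weak equivalence or zigzag through the Reedy model structure on cosimplicial objects; see \cite[\S16.1]{MR1944041}). Applying $\mathcal{M}(-, X)$ for each fibrant $X$ then yields a zigzag of weak equivalences of Kan complexes
\begin{equation*}
G_1(X) \lra \mathcal{M}(A^\bullet, X) \longleftarrow G_2(X),
\end{equation*}
which, crucially, is natural in the fibrant variable $X$ once the zigzag of resolutions is fixed. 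Applying this naturality to $f \colon X \lra Y$ gives a commutative ladder relating $G_1(f)$ and $G_2(f)$ through parallel zigzags of weak equivalences, from which the two-out-of-three property yields the result.

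The main obstacle is the second step: producing a comparison $G_1(X) \simeq G_2(X)$ that is \emph{natural} in $X$, since the bare existence of a pointwise weak equivalence would not suffice. This is precisely what the machinery of cosimplicial resolutions and homotopy function complexes is designed to deliver, and once it is invoked the remainder of the argument is a routine application of the two-out-of-three property.
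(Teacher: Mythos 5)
Your proof is correct, but it takes a different route from the paper, which disposes of the lemma in one line by citing Dugger's universality result \cite[Lemma 9.7]{MR1870515}: since simplicial sets with the Kan model structure form the universal homotopy theory generated by a point, two left Quillen functors $F_1,F_2$ out of it whose values on $\Delta[0]$ are weakly equivalent have naturally isomorphic left derived functors, hence the right derived functors of $G_1,G_2$ are naturally isomorphic, and the statement follows by evaluating at fibrant objects. Your argument essentially unwinds the content behind that citation: the adjunction identity $G_i(X)_n \cong \mathcal{M}(F_i\Delta[n],X)$ exhibits $G_i(X)$ as $\mathcal{M}(F_i\Delta[\bullet],X)$ for the cosimplicial resolution $F_i\Delta[\bullet]$ of $F_i\Delta[0]$ (Reedy cofibrancy and the resolution property are preserved by the left Quillen $F_i$, as you check), and the comparison theory of cosimplicial resolutions of weakly equivalent objects \cite[Chapter 16]{MR1944041} supplies a zigzag of weak equivalences between $G_1(X)$ and $G_2(X)$ that is natural in the fibrant variable, after which two-out-of-three in the resulting ladder finishes the proof. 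The trade-off: the paper's proof is shorter and keeps the derived-functor bookkeeping implicit, while yours is self-contained at the level of Hirschhorn's machinery and makes explicit the naturality of the comparison, which is indeed the only delicate point. Two minor remarks: the latching object of the cosimplicial simplicial set $\Delta[\bullet]$ at level $n$ is the boundary $\partial\Delta[n]$ (the union of the images of the coface maps), not the ``degenerate part'', though your conclusion that the latching maps are monomorphisms is correct; and when the hypothesis only provides a zigzag of weak equivalences between $F_1(\Delta[0])$ and $F_2(\Delta[0])$, the comparison of resolutions may be a longer zigzag than a single cospan, but, as you note, the ladder argument is unaffected.
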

\begin{proof}
It follows from \cite[Lemma 9.7]{MR1870515} that the left derived functors of the left Quillen functors $F_1$ and $F_2$ are naturally isomorphic, and hence that the right derived functors of the right Quillen functors $G_1$ and $G_2$ are naturally isomorphic. The conclusion immediately follows.
\end{proof}

\begin{proposition} \label{undequiv}
Let $f \colon X \lra Y$ be a morphism of $2$-quasi-categories. If $f$ is essentially surjective on objects and fully faithful, then its underlying morphism of quasi-categories $\tau^*(f) \colon \tau^*(X) \lra \tau^*(Y)$ is an equivalence.
\end{proposition}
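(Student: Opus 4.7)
The plan is to invoke the fundamental theorem of quasi-category theory recalled in \S\ref{1dimfundy}, which reduces the claim to showing that $\tau^*(f)$ is both essentially surjective on objects and fully faithful as a morphism of quasi-categories. Essential surjectivity of $\tau^*(f)$ is immediate from Observation \ref{esoobs}, so the substantive work is in establishing full fidelity: for each pair of objects $x,y \in X_0$, the induced morphism of hom-Kan-complexes
\[
\Hom_{\tau^*(X)}(x,y) \lra \Hom_{\tau^*(Y)}(fx,fy)
\]
must be a weak equivalence of Kan complexes.

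To prove this, the plan is to apply Lemma \ref{hlemma} to the model category $\mathcal{M} = \partial\Theta_2[1;0]/[\Theta_2^\mathrm{op},\Set]$, equipped with the model structure induced from Ara's, by exhibiting two Quillen adjunctions $F_i \dashv G_i \colon \sSet_{\mathrm{Kan}} \rightleftarrows \mathcal{M}$ whose left adjoints send $\Delta[0]$ to weakly equivalent objects of $\mathcal{M}$. For Adjunction 1, I compose the suspension--hom adjunction $\Sigma_1 \dashv \Hom_1$ landing in $\sSet_{\mathrm{Kan}}$ from \S\ref{1dimfundy} with the slice Quillen adjunction induced by the left Quillen functor $\pi^* \dashv \tau^*$ from Proposition \ref{undqadj}; then $G_1(X,x,y) = \Hom_{\tau^*(X)}(x,y)$, and using the identity $\pi^*\Delta[1] = \Theta_2[1;0]$ (which follows since morphisms $[n;\bm{m}] \lra [1;0]$ in $\Theta_2$ are just functors $[n] \lra [1]$) one computes $F_1(\Delta[0])$ to be the canonical bi-pointed $\partial\Theta_2[1;0] \lra \Theta_2[1;0]$. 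For Adjunction 2, I take $G_2(X,x,y)$ to be the maximal Kan subcomplex (``core'') of the hom-quasi-category $\Hom_X(x,y)$ supplied by Proposition \ref{susphomqadjthefirst}; the $\Theta_2$-suspension of Proposition \ref{susphomqadjthefirst} also sends $\Delta[0]$ to $\Theta_2[1;0]$, so $F_2(\Delta[0]) = F_1(\Delta[0])$. Since $f$ is fully faithful as a morphism of $2$-quasi-categories, each hom-quasi-category map $\Hom_X(x,y) \lra \Hom_Y(fx,fy)$ is a categorical equivalence, and equivalences of quasi-categories restrict to equivalences of their cores, so $G_2(f)$ is a Kan equivalence. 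Lemma \ref{hlemma} then yields that $G_1(f)$ is a Kan equivalence, completing the proof.

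The main obstacle I anticipate is the formal construction of Adjunction 2 as a Quillen adjunction landing in $\sSet_{\mathrm{Kan}}$, since the core functor $(-)^{\simeq} \colon \sSet_{\mathrm{Joyal}} \to \sSet_{\mathrm{Kan}}$ does not arise transparently as the right adjoint in a Quillen adjunction between these two model structures on $\sSet$. If this proves awkward, an alternative route is to bypass Lemma \ref{hlemma} and instead prove directly, using the formula $\Hom_{\tau^*(X)}(x,y) = \tau^*(X^{\Theta_2[1;0]} \times_{X \times X} 1)$ (available because $\tau^*$ preserves limits and $\Theta_2[1;0] = \pi^*\Delta[1]$) together with Theorem \ref{segalthm}, that there is a natural Kan equivalence between $\Hom_{\tau^*(X)}(x,y)$ and the core of the hom-quasi-category $\Hom_X(x,y)$; the result then follows from the evident preservation of core equivalences under equivalences of quasi-categories.
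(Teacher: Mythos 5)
Your overall strategy coincides with the paper's: essential surjectivity via Observation \ref{esoobs}, and full fidelity via Lemma \ref{hlemma} applied in $\partial\Theta_2[1;0]/[\Theta_2^\mathrm{op},\Set]$, with your Adjunction 1 (the slice of $\pi^*\dashv\tau^*$ from Proposition \ref{undqadj} composed with $\Sigma_1\dashv\Hom$ from \S\ref{1dimfundy}) being exactly the paper's first composite Quillen adjunction, and with the same computation $F_1(\Delta[0])=(\partial\Theta_2[1;0]\lra\Theta_2[1;0])$. The genuine gap is the one you yourself flag: your Adjunction 2 does not exist as stated. The core functor is not the right adjoint of any Quillen adjunction from the Kan model structure to the Joyal model structure (it is not even a right adjoint on all of $[\Delta^\mathrm{op},\Set]$; the adjunction-like property of the maximal Kan subcomplex only holds against Kan complexes mapping into quasi-categories), and Lemma \ref{hlemma} requires honest Quillen adjunctions $F_i\dashv G_i$, so taking ``$G_2 = $ core of $\Hom_X(x,y)$'' leaves the hypothesis unverifiable. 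Your fallback is likewise not a proof: the asserted natural Kan equivalence between $\Hom_{\tau^*(X)}(x,y)$ and the core of $\Hom_X(x,y)$ is precisely the substantive content of the full-fidelity claim, and Theorem \ref{segalthm} (which concerns the Segal maps of $d^*(X)$, i.e.\ the objects $[n;m,\ldots,m]$) does not obviously yield a comparison between the $[n;0,\ldots,0]$-direction and the cores of the hom-quasi-categories; no argument is given.

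The paper closes exactly this gap by replacing the core functor with Joyal's Quillen adjunction $k_!\dashv k^!$ (\cite[Theorem 6.22]{joyalbarcelona}, with $k$ as in Example \ref{kexample}): $k^!$ is a genuine right Quillen functor from the Joyal to the Kan model structure, and for a quasi-category it computes a model of the core. Composing $\Sigma\dashv\Hom$ of Proposition \ref{susphomqadjthefirst} with $k_!\dashv k^!$ gives a second Quillen adjunction whose right adjoint sends $(X,x,y)$ to $k^!(\Hom_X(x,y))$; since $k_!(\Delta[0])=\Delta[0]$, its left adjoint also sends $\Delta[0]$ to $\partial\Theta_2[1;0]\lra\Theta_2[1;0]$, so Lemma \ref{hlemma} applies. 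Full fidelity of $f$ means each $\Hom_X(x,y)\lra\Hom_Y(fx,fy)$ is a weak equivalence between fibrant objects in the Joyal model structure, hence is sent by the right Quillen functor $k^!$ to an equivalence of Kan complexes, and the lemma then transfers this to $\Hom_{\tau^*(X)}(x,y)\lra\Hom_{\tau^*(Y)}(fx,fy)$, as you intended. So your plan is salvageable, but only after substituting $k^!$ for the core in Adjunction 2; as written, the key step is missing.
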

\begin{proof}
The morphism of quasi-categories $\tau^*(f)$ is essentially surjective on objects by Observation \ref{esoobs}. It remains to show that $\tau^*(f)$ is fully faithful, i.e.\ that for each pair of objects $x,y \in X$, the morphism of bi-pointed $2$-quasi-categories $f \colon (X,x,y) \lra (Y,fx,fy)$ is sent by the right adjoint of the composite Quillen adjunction 
\begin{equation*}
 \xymatrix{
\partial\Theta_2[1;0]/[\Theta_2^\mathrm{op},\Set] \ar@<-1.5ex>[rr]^-{\hdash}_-{\tau^*} && \ar@<-1.5ex>[ll]_-{\pi^*} \partial\Delta[1]/[\Delta^\mathrm{op},\Set] \ar@<-1.5ex>[rr]^-{\hdash}_-{\Hom} && \ar@<-1.5ex>[ll]_-{\Sigma_1} [\Delta^\mathrm{op},\Set]
}
\end{equation*}
to an equivalence of Kan complexes. But this morphism 
is sent by the right adjoint of the composite Quillen adjunction 
\begin{equation*}
 \xymatrix{
\partial\Theta_2[1;0]/[\Theta_2^\mathrm{op},\Set] \ar@<-1.5ex>[rr]^-{\hdash}_-{\Hom} && \ar@<-1.5ex>[ll]_-{\Sigma} [\Delta^\mathrm{op},\Set] \ar@<-1.5ex>[rr]^-{\hdash}_-{k^!} && \ar@<-1.5ex>[ll]_-{k_!} [\Delta^\mathrm{op},\Set]
}
\end{equation*}
to an equivalence of Kan complexes, since $f$ is fully faithful by assumption (where $k_! \dashv k^!$ denotes the Quillen adjunction of \cite[Theorem 6.22]{joyalbarcelona}). So the result follows by Lemma \ref{hlemma}, since both composite left adjoints send $\Delta[0]$ to the boundary inclusion $\partial\Theta_2[1;0] \lra \Theta_2[1;0]$.
\end{proof}

In the next few paragraphs, we will use a bifunctorial model for the derived hom-spaces in the model structure for $2$-quasi-categories, which is provided by Cisinski's theory of localisers.

\subsection{Derived hom complexes}
Let $\mathbf{A}$ be a small category, and let $\mathsf{W}$ be an $\mathbf{A}$-localiser. A cosimplicial object $D \colon \Delta \lra [\mathbf{A}^\mathrm{op},\Set]$ is said to be a \emph{cosimplicial $\mathsf{W}$-resolution} \cite[D\'efinition 2.3.12]{MR2294028} if:
\begin{enumerate}
\item the morphism $(d^1,d^0) \colon D^0 + D^0 \lra D^1$ is a monomorphism in $[\mathbf{A}^\mathrm{op},\Set]$, and
\item for every $X \in [\mathbf{A}^\mathrm{op},\Set]$ and every $n \geq 0$, the projection $X \times D^n \lra X$ belongs to $\mathsf{W}$.
\end{enumerate}
Given a cosimplicial $\mathsf{W}$-resolution $D$, we define the \emph{$D$-realisation functor} \cite[\S 2.3.8]{MR2294028} $$Real_D \colon [(\mathbf{A}\times\Delta)^\mathrm{op},\Set] \lra [\mathbf{A}^\mathrm{op},\Set]$$
to be the left Kan extension along the Yoneda embedding of the composite functor
\begin{equation*}
\cd[@C=3em]{
\mathbf{A} \times \Delta \ar[r]^-{Y \times D} & [\mathbf{A}^\mathrm{op},\Set] \times [\mathbf{A}^\mathrm{op},\Set] \ar[r]^-{- \times -} & [\mathbf{A}^\mathrm{op},\Set].
}
\end{equation*}
Furthermore, for each pair of objects $X, Y \in [\mathbf{A}^\mathrm{op},\Set]$,  we define the \emph{$D$-hom complex} $\bHom_D(X,Y)$ to be the simplicial set given by $\bHom_D(X,Y)_n = \Hom(X \times D^n,Y)$. This defines a bifunctor
\begin{equation*} \label{dhom}
\cd{
[\mathbf{A}^\mathrm{op},\Set]^\mathrm{op} \times [\mathbf{A}^\mathrm{op},\Set] \ar[rr]^-{\bHom_D(-,-)} && [\Delta^\mathrm{op},\Set],
}
\end{equation*}
which we call the \emph{$D$-hom complex bifunctor}.

\begin{proposition} \label{quillenbifun}
Let $\mathbf{A}$ be a small category, let  $\mathsf{W}$ be an accessible $\mathbf{A}$-localiser, and let $D$ be a cosimplicial $\mathsf{W}$-resolution.  Then the $D$-hom complex bifunctor $\bHom_D(-,-)$ 
is a right Quillen bifunctor with respect to the Cisinski model structure on $[\mathbf{A}^\mathrm{op},\Set]$ whose class of weak equivalences is $\mathsf{W}$, and the model structure for Kan complexes.
\end{proposition}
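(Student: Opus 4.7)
The plan is to identify the two-variable adjunction associated with $\bHom_D$ and verify the pushout-product axiom for its left adjoint. A direct coend computation, using the cocontinuity of $Real_D$ and the cartesian closure of $[\mathbf{A}^\mathrm{op},\Set]$, fits $\bHom_D(-,-)$ into a two-variable adjunction whose tensor is $X \otimes S := X \times Real_D(S)$ and whose cotensor is $\hom(S,Y) := Y^{Real_D(S)}$. By the standard pushout-product characterisation of two-variable Quillen adjunctions, it then suffices to show that for any monomorphisms $i \colon X \to X'$ in $[\mathbf{A}^\mathrm{op},\Set]$ and $j \colon S \to S'$ in $\sSet$, the Leibniz tensor $i \mathbin{\hat{\otimes}} j$ is a monomorphism, which moreover lies in $\mathsf{W}$ whenever $i$ is a monomorphism in $\mathsf{W}$ or $j$ is a trivial cofibration for the model structure for Kan complexes.

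For the cofibration part, I would first show that $Real_D$ preserves monomorphisms, via a skeletal filtration argument that reduces the claim to $D$ being Reedy cofibrant in $[\mathbf{A}^\mathrm{op},\Set]$; the latter follows inductively from condition (1) of the cosimplicial resolution using the Reedy structure on $\Delta$. Since the cartesian product in a presheaf topos preserves monomorphisms in each variable, the pushout-product of any two monomorphisms is a monomorphism, handling the cofibration case.

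The trivial-cofibration case is the technically substantive step and the main obstacle. For fixed $i$, the class $\mathsf{C}_i$ of monomorphisms $k$ in $\sSet$ for which $i \mathbin{\hat{\otimes}} k$ lies in $\mathsf{W}$ is saturated under pushout, transfinite composition, and retract, so it suffices to verify that $\mathsf{C}_i$ contains a generating set of trivial cofibrations, e.g.\ the horn inclusions. Here condition (2) of the cosimplicial resolution is crucial: the projections $X \times D^n \to X$ all lie in $\mathsf{W}$. Via the two-out-of-three property, the gluing lemma, and the stability of monomorphisms in $\mathsf{W}$ under pushout (condition (3) of an $\mathbf{A}$-localiser), one reduces the pushout-product with each horn inclusion to iterated instances of these projections. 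The symmetric subcase---$i$ a monomorphism in $\mathsf{W}$, $j$ any monomorphism---is handled analogously by reduction to generators of the accessible localiser $\mathsf{W}$. Although each individual step is elementary, orchestrating the saturation argument cleanly is delicate; this is the kind of reasoning systematised in the theory of localisers developed in \cite[\S 2.3]{MR2294028}, of which the proposition is essentially a specialisation.
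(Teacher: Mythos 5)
Your overall strategy is the same as the paper's: your tensor $X\otimes S = X\times\lvert S\rvert_D$, where $\lvert S\rvert_D = \int^{[n]\in\Delta} S_n\times D^n$, is exactly the paper's bifunctor $Real_D(X\boxtimes S)$, and the paper likewise reduces the proposition to the pushout-product conditions for this left adjoint — but it then discharges all three conditions by citing Cisinski (Lemme 2.3.2, Lemme 2.3.10, Proposition 2.3.15, Lemme 2.3.25 of \cite{MR2294028}), whereas you try to supply the arguments. The genuine gap is in your last case, where $i$ is a monomorphism in $\mathsf{W}$ and $j$ is an arbitrary monomorphism of simplicial sets: the proposed ``reduction to generators of the accessible localiser $\mathsf{W}$'' is not analogous to the horn reduction and does not work as stated. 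To run a saturation argument in the presheaf variable you would need the class $\{\,i : i\mathbin{\widehat{\otimes}}j\in\mathsf{W}\,\}$ to contain a \emph{generating set of trivial cofibrations} of the Cisinski model structure; but a generating set of the localiser $\mathsf{W}$ is not such a set (its members need not be monomorphisms, and the trivial cofibrations are not the saturation of these generators — the passage from generators of $\mathsf{W}$ to generating trivial cofibrations goes through Cisinski's anodyne-extension machinery and yields nothing explicit to check). The argument that actually closes this case is of a different shape: one first shows that $-\times\lvert S\rvert_D$ preserves $\mathsf{W}$ for \emph{every} simplicial set $S$ (true for $S=\Delta[n]$ by condition (2) and two-out-of-three via the projections, and propagated through cell attachments by the gluing lemma and through the skeletal colimit by closure of weak equivalences under transfinite composition along cofibrations); then, for $i\colon X\to X'$ a monomorphism in $\mathsf{W}$ and $j\colon S\to T$ any monomorphism, the map from $X\times\lvert T\rvert_D$ to the pushout is a monomorphism in $\mathsf{W}$ by pushout-stability (localiser axiom (3)), and two-out-of-three against $i\times\lvert T\rvert_D$ gives the Leibniz map. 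This is precisely the content of Cisinski's Lemme 2.3.25, which the paper invokes at exactly this point.

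Two further steps are under-justified, though their conclusions are correct. First, your claim that $\lvert-\rvert_D$ preserves monomorphisms because Reedy cofibrancy of $D$ ``follows inductively from condition (1)'' hides the real content: the latching maps of $D$ in degrees $\geq 2$ are not monomorphisms by any routine induction from $D^0+D^0\to D^1$ alone — the cosimplicial identities involving the codegeneracies are what force injectivity — and this is exactly what the cited Lemme 2.3.2 and Lemme 2.3.10 of \cite{MR2294028} establish. Second, in the Kan trivial-cofibration case, ``iterated instances of these projections'' needs to be made precise as the double induction over unions of faces (generalised horns) showing $X\times\lvert\Lambda^k[n]\rvert_D\to X$ lies in $\mathsf{W}$: arbitrary cellular stages of the horn are not $\mathsf{W}$-contractible after realisation, so one must glue face by face over face-intersections; with that filtration your outline for this case does go through, paralleling the paper's appeal to Proposition 2.3.15 of \cite{MR2294028}.
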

\begin{proof}
By adjointness, it is equivalent to show that the composite bifunctor
\begin{equation*}
\cd[@C=3em]{
[\mathbf{A}^\mathrm{op},\Set] \times [\Delta^\mathrm{op},\Set] \ar[r]^-{-\boxtimes -} & [(\mathbf{A}\times\Delta)^\mathrm{op},\Set] \ar[r]^-{Real_D} & [\mathbf{A}^\mathrm{op},\Set]
}
\end{equation*}
is a left Quillen bifunctor  (where $\boxtimes$ denotes the exterior product bifunctor). 

Let $u \colon A \lra B$ be a monomorphism in $[\mathbf{A}^\mathrm{op},\Set]$, and let $v \colon S \lra T$ be a monomorphism in $[\Delta^\mathrm{op},\Set]$. Let $Real_D(u \widehat{\boxtimes} v)$ denote the pushout-corner map of the commutative square
\begin{equation*}
\cd[@C=3em]{
Real_D(A \boxtimes S) \ar[rr]^-{Real_D(A \boxtimes v)} \ar[d]_-{Real_D(u \boxtimes S)} && Real_D(A \boxtimes T) \ar[d]^-{Real_D(u \boxtimes T)} \\
Real_D(B \boxtimes S) \ar[rr]_-{Real_D(B \boxtimes v)} && Real_D(B \boxtimes T)
}
\end{equation*}
in $[\mathbf{A}^\mathrm{op},\Set]$. Then $Real_D(u \widehat{\boxtimes} v)$ is a monomorphism by \cite[Lemme 2.3.2]{MR2294028} and \cite[Lemme 2.3.10]{MR2294028}. If $v$ is a weak homotopy equivalence, then $Real_D(u \widehat{\boxtimes} v)$ belongs to $\mathsf{W}$ by \cite[Proposition 2.3.15]{MR2294028} and the two-out-of-three property. On the other hand, if $u$ belongs to $\mathsf{W}$, then so does $Real_D(u \widehat{\boxtimes} v)$ by \cite[Lemme 2.3.25]{MR2294028} and the two-out-of-three property. This completes the proof.
\end{proof}

\begin{example} \label{kexample}
Let $k \colon \Delta \lra [\Delta^\mathrm{op},\Set]$ denote the functor that sends each $[n]$ to the nerve of the contractible groupoid with set of objects $\{0,\ldots,n\}$. By \cite[Theorem 6.22]{joyalbarcelona}, $k$ is a cosimplicial $\mathsf{W}_1$-resolution, where $\mathsf{W}_1$ denotes the $\Delta$-localiser consisting of the weak equivalences in Joyal's model structure for quasi-categories. 

By \cite[Proposition 8.7]{MR3350089}, the composite $K = \pi^*\circ k \colon \Delta \lra [\Theta_2^\mathrm{op},\Set]$ is a cosimplicial $\mathsf{W}_2$-resolution, where $\mathsf{W}_2$ denotes the $\Theta_2$-localiser consisting of the weak equivalences in Ara's model structure for $2$-quasi-categories. We will denote the $K$-hom complex bifunctor simply by $\bHom(-,-) \colon [\Theta_2^\mathrm{op},\Set]^\mathrm{op} \times [\Theta_2^\mathrm{op},\Set] \lra [\Delta^\mathrm{op},\Set]$. By Proposition \ref{quillenbifun}, this is a right Quillen bifunctor with respect to the model structures for $2$-quasi-categories and Kan complexes.

Note that there is a natural isomorphism 
\begin{equation} \label{enradj}
\bHom(\pi^*(S),X) \cong \bHom_k(S,\tau^*(X)) \end{equation}
for $S \in [\Delta^\mathrm{op},\Set]$ and $X \in [\Theta_2^\mathrm{op},\Set]$.
\end{example}

\subsection{A double suspension-hom adjunction}
For each simplicial set $S$, let $\Sigma^2(S)$ denote the $\Theta_2$-set $\Sigma^2(S)$ defined by the pushout square on the left below (where $K_!$ denotes the left Kan extension along the Yoneda embedding of the functor $K$ from Example \ref{kexample}).
\begin{equation*} \label{doublesushom}
 \cd{
\partial\Theta_2[1;1] \times K_!(S) \ar[r]^-{\mathrm{pr}_1} \ar[d] & \partial\Theta_2[1;1] \ar[d] \\
\Theta_2[1;1] \times K_!(S) \ar[r] & \fatpushoutcorner \Sigma^2(S)
 }
 \qquad
 \qquad
  \cd{
\Hom_X^2(u,v) \ar[r] \ar[d] \fatpullbackcorner & \bHom(\Theta_2[1;1],X) \ar[d] \\
 \Theta_2[0] \ar[r]_-{(u,v)} & \bHom(\partial\Theta_2[1;1],X)
 }
 \end{equation*}
For each parallel pair of $[1;0]$-elements $u,v \colon x\lra y$ in a $\Theta_2$-set $X$, let $\Hom_X^2(u,v)$ denote the simplicial set defined by the pullback square on the right above. These constructions define the left and right adjoints of an adjunction, as displayed below.
\begin{equation} \label{dblsusphomadj}
\xymatrix{
\partial\Theta_2[1;1]/[\Theta_2^\mathrm{op},\Set] \ar@<-1.5ex>[rr]^-{\hdash}_-{\Hom^2} && \ar@<-1.5ex>[ll]_-{\Sigma^2} [\Delta^\mathrm{op},\Set] 
}
\end{equation}

\begin{proposition}
The adjunction $\Sigma^2 \dashv \Hom^2$ \textup{(\ref{dblsusphomadj})} is a Quillen adjunction between the model structure on $\partial\Theta_2[1;1]/[\Theta_2^\mathrm{op},\Set]$ induced from Ara's model structure for $2$-quasi-categories, and the model structure for Kan complexes on $[\Delta^\mathrm{op},\Set]$.
\end{proposition}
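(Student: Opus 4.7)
The plan is to prove this Quillen adjunction by showing that the right adjoint $\Hom^2$ preserves fibrations and trivial fibrations. Since the model structure on $\partial\Theta_2[1;1]/[\Theta_2^\mathrm{op},\Set]$ is induced from Ara's model structure via the forgetful functor to $[\Theta_2^\mathrm{op},\Set]$, the (trivial) fibrations in the comma category are precisely those morphisms whose underlying morphism of $\Theta_2$-sets is a (trivial) fibration in the model structure for $2$-quasi-categories. Hence it suffices to show that for any (trivial) fibration $f \colon X \lra Y$ between $2$-quasi-categories and any $(u,v) \colon \partial\Theta_2[1;1] \lra X$, the induced morphism $\Hom^2_X(u,v) \lra \Hom^2_Y(fu,fv)$ is a (trivial) fibration of Kan complexes.

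The key input is the $K$-hom complex bifunctor $\bHom(-,-) \colon [\Theta_2^\mathrm{op},\Set]^\mathrm{op} \times [\Theta_2^\mathrm{op},\Set] \lra [\Delta^\mathrm{op},\Set]$, which by Example \ref{kexample} and Proposition \ref{quillenbifun} is a right Quillen bifunctor with respect to Ara's model structure and the model structure for Kan complexes. Since the boundary inclusion $\partial\Theta_2[1;1] \lra \Theta_2[1;1]$ is a cofibration in Ara's model structure, applying the Leibniz (pullback-corner) form of the right Quillen bifunctor property to it and to any (trivial) fibration $f \colon X \lra Y$ yields that the induced morphism
\begin{equation*}
\bHom(\Theta_2[1;1],X) \lra \bHom(\partial\Theta_2[1;1],X) \times_{\bHom(\partial\Theta_2[1;1],Y)} \bHom(\Theta_2[1;1],Y)
\end{equation*}
is a (trivial) fibration of Kan complexes.

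The final step is to pull back this morphism along $(u,v) \colon \Theta_2[0] \lra \bHom(\partial\Theta_2[1;1],X)$. Since (trivial) fibrations are stable under pullback, the resulting morphism is a (trivial) fibration; and a routine application of the pasting lemma for pullbacks, using the defining pullback squares of $\Hom^2_X(u,v)$ and $\Hom^2_Y(fu,fv)$, identifies this morphism with $\Hom^2(f) \colon \Hom^2_X(u,v) \lra \Hom^2_Y(fu,fv)$, completing the proof. The only genuinely substantive ingredient is the right Quillen bifunctor property of $\bHom(-,-)$; everything else is formal manipulation of pullbacks, so I expect no serious obstacle beyond ensuring that the pullback identification is carried out cleanly.
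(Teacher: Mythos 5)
Your proposal is correct and is essentially the paper's own argument: both identify $\Hom^2(f)$ as a pullback of the pullback-corner map of the square formed by applying $\bHom(-,-)$ to the cofibration $\partial\Theta_2[1;1] \lra \Theta_2[1;1]$ and to $f$, and then invoke the right Quillen bifunctor property of Proposition \ref{quillenbifun} together with stability of (trivial) fibrations under pullback. The only cosmetic difference is your restriction to (trivial) fibrations between $2$-quasi-categories, which is unnecessary -- the same argument applies verbatim to arbitrary (trivial) fibrations of $\Theta_2$-sets, which is what one wants for the right adjoint to be right Quillen.
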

\begin{proof}
For any morphism 
\begin{equation*}
\cd[@C=1em]{
& \partial\Theta_2[1;1] \ar[dl]_-{(u,v)} \ar[dr]^-{(fu,fv)} \\
X \ar[rr]_-{f} && Y
}
\end{equation*} in the category $\partial\Theta_2[1;1]/[\Theta_2^\mathrm{op},\Set]$, the induced morphism of simplicial sets $$\Hom^2(f) \colon \Hom_X^2(u,v) \lra \Hom_Y^2(fu,fv)$$ is a pullback of the pullback corner map of the commutative square
\begin{equation*}
\cd{
\bHom(\Theta_2[1;1],X) \ar[rr]^-{\bHom(1,f)} \ar[d] && \bHom(\Theta_2[1;1],Y) \ar[d] \\
\bHom(\partial\Theta_2[1;1],X) \ar[rr]_-{\bHom(1,f)} && \bHom(\partial\Theta_2[1;1],Y)
}
\end{equation*}
in $[\Delta^\mathrm{op},\Set]$.  Hence the result follows from Proposition \ref{quillenbifun}.
\end{proof}

\begin{proposition} \label{hcartprop}
Let $f \colon X \lra Y$ be a morphism of $2$-quasi-categories. Then $f$ is locally fully faithful if and only if the commutative square of Kan complexes
\begin{equation} \label{hcart}
\cd{
\bHom(\Theta_2[1;1],X) \ar[rr]^-{\bHom(1,f)} \ar[d] && \bHom(\Theta_2[1;1],Y) \ar[d] \\
\bHom(\partial\Theta_2[1;1],X) \ar[rr]_-{\bHom(1,f)} && \bHom(\partial\Theta_2[1;1],Y)
}
\end{equation}
is a homotopy pullback square.
\end{proposition}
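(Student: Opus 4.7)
The plan is to reduce the homotopy pullback condition to a fibrewise condition via the right Quillen bifunctor property, and then to compare the fibrewise condition with local full fidelity by a clean appeal to Lemma \ref{hlemma}.

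Since $X$ and $Y$ are $2$-quasi-categories and $\partial\Theta_2[1;1] \hookrightarrow \Theta_2[1;1]$ is a monomorphism, Proposition \ref{quillenbifun} implies that both vertical morphisms in (\ref{hcart}) are Kan fibrations between Kan complexes. The square is therefore a homotopy pullback if and only if, for every $0$-simplex $(u,v)$ of $\bHom(\partial\Theta_2[1;1],X)$---equivalently, every parallel pair of morphisms $u,v \colon x \lra y$ in $X$---the induced map on strict fibres is a weak equivalence of Kan complexes. By the definition of $\Hom^2_X(u,v)$ and $\Hom^2_Y(fu,fv)$ as pullbacks, these fibres are precisely $\Hom^2_X(u,v)$ and $\Hom^2_Y(fu,fv)$, and the induced map is $\Hom^2(f)$. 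Hence (\ref{hcart}) is a homotopy pullback if and only if $\Hom^2(f) \colon \Hom^2_X(u,v) \lra \Hom^2_Y(fu,fv)$ is a weak equivalence of Kan complexes for every parallel pair $u,v$ in $X$.

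To compare this with local full fidelity, consider the ``relative'' suspension-hom adjunction obtained from the adjunction of \S\ref{susphomadjsubsect} by bi-pointing both sides:
\[ \Sigma' \dashv \Hom' \colon \partial\Theta_2[1;1]/[\Theta_2^\mathrm{op},\Set] \longleftrightarrow \partial\Delta[1]/[\Delta^\mathrm{op},\Set], \]
whose right adjoint sends $(u,v) \colon \partial\Theta_2[1;1] \to X$, with underlying bi-pointing $(x,y)$ of $X_0$, to the hom-quasi-category $\Hom_X(x,y)$ equipped with the bi-pointing $(u,v) \colon \partial\Delta[1] \to \Hom_X(x,y)$. Since $\Sigma'$ agrees with $\Sigma$ on underlying $\Theta_2$-sets, it inherits left Quillenness from Proposition \ref{susphomqadjthefirst}. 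Composing with the bi-pointed simplicial suspension-hom adjunction $\Sigma_1 \dashv \Hom$ of \S\ref{1dimfundy} yields a Quillen adjunction from Kan simplicial sets to bi-pointed $2$-quasi-categories whose right adjoint sends $(u,v) \colon \partial\Theta_2[1;1] \to X$ to the hom Kan complex $\Hom_{\Hom_X(x,y)}(u,v)$. A direct calculation shows that the left adjoints of this composite adjunction and of $\Sigma^2 \dashv \Hom^2$ both send $\Delta[0]$ to $\Theta_2[1;1]$ with the boundary bi-pointing: indeed $\Sigma^2(\Delta[0]) \cong \Theta_2[1;1]$ because $K_!(\Delta[0]) \cong \Theta_2[0]$ is terminal, while $\Sigma'\Sigma_1(\Delta[0]) = \Sigma'(\Delta[1]) \cong \Theta_2[1;1]$ by the defining formula for $\Sigma$. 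Applying Lemma \ref{hlemma} in the bi-pointed model category to our morphism $f$---a morphism between fibrant objects---yields, for each $(u,v)$, that $\Hom^2(f) \colon \Hom^2_X(u,v) \lra \Hom^2_Y(fu,fv)$ is a weak equivalence of Kan complexes if and only if the induced map $\Hom_{\Hom_X(x,y)}(u,v) \lra \Hom_{\Hom_Y(fx,fy)}(fu,fv)$ is.

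Combining the two previous paragraphs, (\ref{hcart}) is a homotopy pullback if and only if, for every pair $x,y \in X$ and every parallel pair $u,v$ in the hom-quasi-category $\Hom_X(x,y)$, the induced map $\Hom_{\Hom_X(x,y)}(u,v) \lra \Hom_{\Hom_Y(fx,fy)}(fu,fv)$ is a weak equivalence of Kan complexes. By the intrinsic characterisation of fully faithful morphisms of quasi-categories recalled in \S\ref{1dimfundy}, this holds for all such $u,v$ if and only if each induced morphism of quasi-categories $f_\ast \colon \Hom_X(x,y) \lra \Hom_Y(fx,fy)$ is fully faithful---which is precisely the condition that $f$ be locally fully faithful. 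The main obstacle will be the verification that $\Sigma'$ is left Quillen and the computation of the two left adjoints on $\Delta[0]$; once these are in hand, Lemma \ref{hlemma} does the heavy lifting and the rest of the argument is a formal manipulation of Kan fibrations and strict fibres.
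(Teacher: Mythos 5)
Your proposal is correct and follows essentially the same route as the paper: reduce the homotopy pullback condition to a fibrewise statement about $\Hom^2(f)$ using the Kan fibrations supplied by Proposition \ref{quillenbifun}, then apply Lemma \ref{hlemma} to the Quillen adjunction $\Sigma^2 \dashv \Hom^2$ and the composite bi-pointed suspension--hom adjunction, both of whose left adjoints send $\Delta[0]$ to the boundary inclusion $\partial\Theta_2[1;1] \lra \Theta_2[1;1]$. The only difference is that you spell out the left Quillenness of the relative suspension and the computation of the left adjoints on $\Delta[0]$, details the paper leaves implicit.
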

\begin{proof}
Since the vertical morphisms are Kan fibrations by Proposition \ref{quillenbifun}, a standard result implies that the square is a homotopy pullback square if and only if, for each parallel pair of morphisms $u,v \colon x \lra y$ in $X$, the induced morphism between fibres, i.e. the morphism 
\begin{equation} \label{dblhommap}
\Hom^2(f) \colon \Hom_X^2(u,v) \lra \Hom_Y^2(fu,fv),
\end{equation}
is an equivalence of Kan complexes. But, by Lemma \ref{hlemma} applied the Quillen adjunction $\Sigma^2 \dashv \Hom^2$ (\ref{dblsusphomadj}) and the composite Quillen adjunction
\begin{equation*}
 \xymatrix{
\partial\Theta_2[1;1]/[\Theta_2^\mathrm{op},\Set] \ar@<-1.5ex>[rr]^-{\hdash}_-{\Hom} && \ar@<-1.5ex>[ll]_-{\Sigma} \partial\Delta[1]/[\Delta^\mathrm{op},\Set] \ar@<-1.5ex>[rr]^-{\hdash}_-{\Hom} && \ar@<-1.5ex>[ll]_-{\Sigma_1} [\Delta^\mathrm{op},\Set]
}
\end{equation*}
(note that both left adjoints send $\Delta[0]$ to the boundary inclusion $\partial\Theta_2[1;1] \lra \Theta_2[1;1]$),
the morphism (\ref{dblhommap}) is an equivalence for each parallel pair of morphisms $u,v \colon x \lra y$ in $X$ if and only if the morphism 
$$f \colon \Hom_{\Hom_X(x,y)}(u,v) \lra \Hom_{\Hom_Y(fx,fy)}(fu,fv)$$
is an equivalence for each parallel pair of morphisms $u,v \colon x \lra y$ in $X$, i.e.\ if and only if $f \colon X \lra Y$ is locally fully faithful. 
\end{proof}

\begin{proposition} \label{genthm}
Let $\mathsf{D}$ be a class of $\Theta_2$-sets with the following properties:
\begin{enumerate}[font=\normalfont, label=(\alph*)]
\item $\mathsf{D}$ is saturated by monomorphisms in $[\Theta_2^\mathrm{op},\Set]$ \textup{(}in the sense of \cite[D\'efinition 1.1.12]{MR2294028}\textup{)},
\item any $\Theta_2$-set weakly equivalent in the model structure for $2$-quasi-categories to an object of $\mathsf{D}$ belongs to $\mathsf{D}$, and
\item $\Theta_2[1;1] \in \mathsf{D}$.
\end{enumerate}
Then every $\Theta_2$-set belongs to $\mathsf{D}$.
\end{proposition}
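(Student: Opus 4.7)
The approach is to build every $\Theta_2$-set as a cellular construction from representables, first showing that every representable lies in $\mathsf{D}$ and then invoking the Reedy cell structure of $[\Theta_2^\mathrm{op},\Set]$.

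First I would handle the three smallest representables. Starting from $\Theta_2[1;1] \in \mathsf{D}$ given by (c), the face map $(\mathrm{id};\delta^0) \colon \Theta_2[1;0] \lra \Theta_2[1;1]$ is split by the degeneracy $(\mathrm{id};\sigma^0) \colon \Theta_2[1;1] \lra \Theta_2[1;0]$, exhibiting $\Theta_2[1;0]$ as a retract of $\Theta_2[1;1]$ in $[\Theta_2^\mathrm{op},\Set]$; likewise $\delta^0$ and $\sigma^0$ exhibit $\Theta_2[0]$ as a retract of $\Theta_2[1;0]$. Since closure under retracts of objects is part of saturation by monomorphisms (a), both $\Theta_2[1;0]$ and $\Theta_2[0]$ lie in $\mathsf{D}$.

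Next I would obtain all representables via their spines. For each $m \geq 0$, the spine $I[1;m]$ is an iterated pushout of $m$ copies of $\Theta_2[1;1]$ glued along monomorphic inclusions of $\Theta_2[1;0]$, so lies in $\mathsf{D}$ by (a) and the previous step; since the spine inclusion $I[1;m] \lra \Theta_2[1;m]$ is a weak equivalence by the very definition of the model structure for $2$-quasi-categories, (b) delivers $\Theta_2[1;m] \in \mathsf{D}$. For an arbitrary $[n;\bm{m}] \in \Theta_2$, the spine $I[n;\bm{m}]$ is likewise an iterated pushout of the objects $\Theta_2[1;m_i]$ along copies of $\Theta_2[0]$, so the same two-step argument yields $\Theta_2[n;\bm{m}] \in \mathsf{D}$.

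Finally, by \S\ref{bdyrecall}, every monomorphism in $[\Theta_2^\mathrm{op},\Set]$---in particular $\emptyset \lra X$ for any $\Theta_2$-set $X$---is a transfinite composite of pushouts of coproducts of boundary inclusions $\partial\Theta_2[n;\bm{m}] \lra \Theta_2[n;\bm{m}]$. An auxiliary induction on the degree of $[n;\bm{m}]$ shows that each boundary $\partial\Theta_2[n;\bm{m}]$ also lies in $\mathsf{D}$: its own skeletal filtration is a cellular construction out of boundary inclusions of strictly lower-degree representables, all already in $\mathsf{D}$ by induction. Combining this with the previous step and (a) gives $X \in \mathsf{D}$. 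The main obstacle is the retract argument of the first step: it depends on the reading that Cisinski's notion of saturation by monomorphisms \cite[D\'efinition 1.1.12]{MR2294028} includes closure of the ambient class of objects under retracts, which appears to be standard but should be checked against the precise formulation. Once secured, the rest of the argument is a routine transfinite induction over the Reedy skeletal filtration of $\Theta_2$-sets.
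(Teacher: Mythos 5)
Your proposal is sound and, at its core, follows the same strategy as the paper's proof: obtain $\Theta_2[0]$ and $\Theta_2[1;0]$ from $\Theta_2[1;1]$ by retracts, obtain the spines from these, pass to the representables via spine inclusions and hypothesis (b), and then propagate to all $\Theta_2$-sets by saturation. Your worry about retracts is unfounded: stability under retracts is one of the clauses of Cisinski's D\'efinition 1.1.12, and the paper's own first step is precisely this retract argument. Where you genuinely diverge is in replacing two citations by explicit cellular constructions: the paper obtains \emph{all} spines $I[n;\bm{m}] = g_!(n;\bm{m})$ at once by pulling the problem back along $g_!$ to $2$-graphs and applying Cisinski's Proposition 8.2.8 over the direct category $\mathbb{G}_2$, and it concludes by a second application of the same proposition to $\Theta_2$ (using that $\Theta_2$ is a cat\'egorie squelettique r\'eguli\`ere, \S\ref{reedyrecall}). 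Your hand-rolled versions are workable, but note that your closing ``routine transfinite induction'' is exactly the content of that cited proposition: it is where the EZ-Reedy structure of $\Theta_2$ is needed (to know that the skeletal filtration attaches nondegenerate cells along boundary monomorphisms of strictly lower degree, and that the cellular decomposition of \S\ref{bdyrecall} applies), and where the clauses of saturation covering sums and transfinite unions are invoked, starting from $\emptyset$. It is cleaner simply to cite Cisinski as the paper does, rather than to present this as routine.

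One step needs correcting. The iterated pushout of the representables $\Theta_2[1;m_i]$ along copies of $\Theta_2[0]$ is not the spine $I[n;\bm{m}]$ but the \emph{horizontal} spine $Sp[n;\bm{m}]$ of \S\ref{horspsubsect} (for $m_i \geq 2$ the representable $\Theta_2[1;m_i]$ strictly contains $I[1;m_i]$), and the inclusion $Sp[n;\bm{m}] \lra \Theta_2[n;\bm{m}]$ is a weak equivalence by Lemma \ref{horsplemma}, not ``by the very definition'' of the model structure. You can repair this either by invoking that lemma, or, staying closer to your own setup, by gluing the spines $I[1;m_i]$ (which your previous step already places in $\mathsf{D}$, as iterated pushouts of copies of $\Theta_2[1;1]$ along copies of $\Theta_2[1;0]$) along copies of $\Theta_2[0]$: this colimit \emph{is} $I[n;\bm{m}]$, and then the definitional weak equivalence $I[n;\bm{m}] \lra \Theta_2[n;\bm{m}]$ together with hypothesis (b) yields $\Theta_2[n;\bm{m}] \in \mathsf{D}$ as you intend.
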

\begin{proof}
By (a) and (c), the $\Theta_2$-sets $\Theta_2[0]$ and $\Theta_2[1;0]$ belong to $\mathsf{D}$, since they are retracts of $\Theta_2[1;1]$.

 Let $\mathsf{C}$ denote the class of $2$-graphs $X$ for which $g_!(X) \in \mathsf{D}$ (see \S\ref{spinerecall}). By (a) and \cite[Remarque 1.1.13]{MR2294028}, this class $\mathsf{C}$ is saturated by monomorphisms in $[\mathbb{G}_2^\mathrm{op},\Set]$. The class $\mathsf{C}$ contains the representable $2$-graphs by the previous paragraph, and hence  contains every $2$-graph by \cite[Proposition 8.2.8]{MR2294028}, since $\mathbb{G}_2$ is a direct Reedy category (in which every morphism is a monomorphism). Hence, for every object $[n;\bm{m}] \in \Theta_2$, the spine $I[n;\bm{m}] = g_!(n;\bm{m})$ belongs to the class $\mathsf{D}$. 

Property (b) now implies that, for every $[n;\bm{m}] \in \Theta_2$, the representable $\Theta_2$-set $\Theta_2[n;\bm{m}]$ belongs to $\mathsf{D}$, since it is weakly equivalent in the model structure for $2$-quasi-categories to the object $I[n;\bm{m}] \in \mathsf{D}$ via the spine inclusion $I[n;\bm{m}] \lra \Theta_2[n;\bm{m}]$. 

Finally, since $\Theta_2$ is a \emph{cat{\'e}gorie squelletique r{\'e}guli{\`e}re} (\S \ref{reedyrecall}), we may conclude by another application of \cite[Proposition 8.2.8]{MR2294028} that every $\Theta_2$-set belongs to the class $\mathsf{D}$. 
\end{proof}

\begin{theorem} \label{fundythm}
Let $f \colon X \lra Y$ be a morphism of $2$-quasi-categories. Then the following are equivalent. 
\begin{enumerate}[font=\normalfont, label=(\roman*)]
\item $f \colon X \lra Y$ is an equivalence of $2$-quasi-categories.
\item $f \colon X \lra Y$ is essentially surjective on objects and fully faithful.
\item The morphism 
\begin{equation} \label{gen}
\bHom(1,f) : \bHom(\Theta_2[1;1],X) \lra \bHom(\Theta_2[1;1],Y)
\end{equation}
is an equivalence of Kan complexes.
\end{enumerate}
\end{theorem}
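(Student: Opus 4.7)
The plan is to establish the cycle (i) $\Rightarrow$ (ii) $\Rightarrow$ (iii) $\Rightarrow$ (i). For (i) $\Rightarrow$ (ii), I would apply two right Quillen functors to the weak equivalence $f$ between fibrant objects. By Proposition \ref{undqadj}, the right Quillen functor $\tau^*$ sends $f$ to an equivalence of quasi-categories $\tau^*(f)$, which is in particular essentially surjective on objects; by Observation \ref{esoobs}, it follows that $f$ itself is essentially surjective on objects. Moreover, for each pair $x, y \in X$, the morphism of bi-pointed $\Theta_2$-sets $(X, x, y) \lra (Y, fx, fy)$ is a weak equivalence between fibrant objects in the induced model structure, so by Proposition \ref{susphomqadjthefirst} its image under the right Quillen functor $\Hom$ is an equivalence of quasi-categories $\Hom_X(x, y) \lra \Hom_Y(fx, fy)$, showing that $f$ is fully faithful.

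For (ii) $\Rightarrow$ (iii), I would first invoke Proposition \ref{undequiv} to deduce from (ii) that $\tau^*(f)$ is an equivalence of quasi-categories. Combining the natural isomorphism (\ref{enradj}) with the right Quillen bifunctor property of $\bHom_k$ for Joyal's model structure (i.e.\ Proposition \ref{quillenbifun} applied to the $k$-resolution of Example \ref{kexample}), this yields that $\bHom(\pi^*(S), f)$ is an equivalence of Kan complexes for every simplicial set $S$. Since $\pi^*$ preserves colimits, there is an isomorphism $\partial\Theta_2[1;1] \cong \pi^*(\Delta[1] \sqcup_{\partial\Delta[1]} \Delta[1])$, so $\bHom(\partial\Theta_2[1;1], f)$ is also an equivalence. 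Now, $f$ fully faithful is a fortiori locally fully faithful, so Proposition \ref{hcartprop} shows that the square (\ref{hcart}) is a homotopy pullback; combined with the equivalence on its bottom edge, this forces the top edge $\bHom(\Theta_2[1;1], f)$ to be an equivalence.

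For (iii) $\Rightarrow$ (i), I would apply Proposition \ref{genthm} to the class $\mathsf{D}$ of $\Theta_2$-sets $Z$ such that $\bHom(Z, f)$ is an equivalence of Kan complexes. The right Quillen bifunctor property of $\bHom$ (Proposition \ref{quillenbifun}, Example \ref{kexample}) ensures that for any monomorphism $A \hookrightarrow B$ and any fibrant $W$, the map $\bHom(B, W) \lra \bHom(A, W)$ is a Kan fibration; it follows that $\bHom(-, X)$ and $\bHom(-, Y)$ convert pushouts along monomorphisms, transfinite composites of monomorphisms, and retracts of objects in $\mathsf{D}$ into homotopy limits of equivalences of Kan complexes, whence $\mathsf{D}$ is saturated by monomorphisms in the sense of Proposition \ref{genthm}. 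Closure of $\mathsf{D}$ under weak equivalences in the model structure for $2$-quasi-categories is automatic because every $\Theta_2$-set is cofibrant and $X, Y$ are fibrant. Hypothesis (iii) gives $\Theta_2[1;1] \in \mathsf{D}$, so Proposition \ref{genthm} implies that every $\Theta_2$-set lies in $\mathsf{D}$. Taking $\pi_0$ then shows that $f$ induces a bijection on hom-sets in the homotopy category of the model structure for $2$-quasi-categories, so by the Yoneda lemma $f$ is an isomorphism in this homotopy category, hence a weak equivalence.

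The main obstacle will be the step (iii) $\Rightarrow$ (i): even after the generator argument reduces everything to the single representable $\Theta_2[1;1]$, one must still carefully verify that $\mathsf{D}$ satisfies Cisinski's precise closure axioms, and then deduce from ``$\bHom(-, f)$ is a pointwise equivalence'' the weak equivalence of $f$ itself---a deduction that depends on $\bHom$ modelling the derived mapping space, which in turn relies on the right Quillen bifunctor formalism developed in the preceding section.
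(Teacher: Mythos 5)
Your proposal is correct and follows essentially the same route as the paper: the cycle (i) $\Rightarrow$ (ii) $\Rightarrow$ (iii) $\Rightarrow$ (i), using the Quillen adjunctions $\pi^*\dashv\tau^*$ and $\Sigma\dashv\Hom$ together with Observation \ref{esoobs} for the first implication, Proposition \ref{undequiv}, the isomorphism (\ref{enradj}) and the homotopy pullback square of Proposition \ref{hcartprop} for the second, and the saturated class $\mathsf{D}$ fed into Proposition \ref{genthm} plus a Yoneda argument in the homotopy category for the third. The only cosmetic difference is that you verify saturation of $\mathsf{D}$ by the direct homotopy-limit argument where the paper cites Cisinski's general lemmas, which amounts to the same thing.
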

\begin{proof}
Suppose $f \colon X \lra Y$ is an equivalence of $2$-quasi-categories. Then, by the Quillen adjunction $\pi^* \dashv \tau^*$ of Proposition \ref{undqadj}, the morphism of underlying quasi-categories $\tau^*(f) \colon \tau^*(X) \lra \tau^*(Y)$ is an equivalence, and therefore is essentially surjective on objects. Hence $f$ is essentially surjective on objects by Observation \ref{esoobs}. Furthermore, by the Quillen adjunction $\Sigma \dashv \Hom$ of Proposition \ref{susphomqadjthefirst}, $f$ is fully faithful. This proves the implication (i) $\implies$ (ii).

Now, suppose $f \colon X \lra Y$ is essentially surjective on objects and fully faithful. Then, by Proposition \ref{undequiv}, the morphism of underlying quasi-categories $\tau^*(f)$ is an equivalence. Hence, by Proposition \ref{quillenbifun} and the natural isomorphism (\ref{enradj}),  the morphism of Kan complexes
$$\bHom(1,f) : \bHom(\pi^*(S),X) \lra \bHom(\pi^*(S),Y)$$
 is an equivalence for every  simplicial set $S$. In particular, since $\partial\Theta_2[1;1] \cong \pi^*(\Sigma_1(\partial\Delta[1]))$, the bottom morphism of the commutative square (\ref{hcart}) is an equivalence of Kan complexes. But $f$ is locally fully faithful by assumption, and so this square is a homotopy pullback square by Proposition \ref{hcartprop}, whence the morphism (\ref{gen})
is an equivalence of Kan complexes. This proves the implication (ii) $\implies$ (iii).

Finally, suppose the morphism (\ref{gen}) is an equivalence of Kan complexes. Let $\mathsf{D}$ denote the class of $\Theta_2$-sets $A$ for which the morphism
$$\bHom(1,f) : \bHom(A,X) \lra \bHom(A,Y)$$
is an equivalence of Kan complexes. The class $\mathsf{D}$ is saturated by monomorphisms by Proposition \ref{quillenbifun}, \cite[Remarque 1.4.16]{MR2294028}, and \cite[Lemme 1.1.15]{MR2294028}. Furthermore, by Proposition \ref{quillenbifun} and the two-out-of-three property, any $\Theta_2$-set weakly equivalent to an object of $\mathsf{D}$ belongs to $\mathsf{D}$. Since the class $\mathsf{D}$ contains the object $\Theta_2[1;1]$ by assumption, Proposition \ref{genthm} implies that every $\Theta_2$-set belongs to $\mathsf{D}$. The Yoneda lemma now implies that $f$ is an equivalence of $2$-quasi-categories. This proves the implication (iii) $\implies (i)$, which completes the proof of the theorem. 
\end{proof}

To conclude this section, we apply Theorem \ref{fundythm} and the homotopy bicategory construction of \S\ref{hobicatsec} to prove an intrinsic characterisation of those $2$-quasi-categories which are equivalent to the coherent nerve of a bicategory. Recall (see \cite[\S26]{joyalnotes} or \cite[\S3]{camplan}) that a quasi-category is said to be \emph{$1$-truncated} if it is equivalent to the nerve of a category (or, equivalently, if each of its hom-spaces is a $0$-type).

\begin{definition} \label{2truncdef}
A $2$-quasi-category $X$ is said to be \emph{$2$-truncated} if, for each pair of objects $x,y \in X$, the hom-quasi-category $\Hom_X(x,y)$ is $1$-truncated.
\end{definition}

\begin{theorem} \label{2truncchar}
Let $X$ be a $2$-quasi-category. Then the following are equivalent.
\begin{enumerate}[font=\normalfont, label=(\roman*)]
\item $X$ is $2$-truncated.
\item The unit morphism $X \lra N(\Ho X)$ of the adjunction of \textup{Theorem \ref{honadjthm}} is an equivalence of $2$-quasi-categories.
\item $X$ is equivalent to the coherent nerve of a bicategory.
\end{enumerate}
\end{theorem}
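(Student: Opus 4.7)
My plan is to prove the three implications in the cycle (iii) $\Rightarrow$ (i) $\Rightarrow$ (ii) $\Rightarrow$ (iii). The implication (ii) $\Rightarrow$ (iii) is immediate, since $N(\Ho X)$ is by construction the coherent nerve of a bicategory.

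For (iii) $\Rightarrow$ (i), I would suppose $f \colon X \to NA$ is an equivalence of $2$-quasi-categories for some bicategory $A$ (which we may assume by symmetry of the equivalence relation on fibrant-cofibrant objects). By Theorem \ref{fundythm}, $f$ is fully faithful, so for each pair of objects $x,y \in X$ the induced morphism $\Hom_X(x,y) \to \Hom_{NA}(fx,fy)$ is an equivalence of quasi-categories. By Example \ref{nbichoms}, the codomain is isomorphic to the nerve of the category $A(fx,fy)$, hence is $1$-truncated. Since $1$-truncatedness of quasi-categories is invariant under equivalence, $\Hom_X(x,y)$ is $1$-truncated, so $X$ is $2$-truncated.

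The main work lies in proving (i) $\Rightarrow$ (ii), for which I would apply the criterion of Theorem \ref{fundythm} to the unit morphism $\eta_X \colon X \to N(\Ho X)$ of the adjunction of Theorem \ref{honadjthm}. By the construction in \S\ref{hobicatsubsec}, the bicategory $\Ho X$ has the same set of objects as $X$, and $\eta_X$ acts as the identity on $[0]$-elements, so it is a fortiori essentially surjective on objects. For full fidelity, for each pair of objects $x,y \in X$, I would identify, using Example \ref{nbichoms} together with the fact that $(\Ho X)(x,y) = \ho(\Hom_X(x,y))$, the induced morphism on hom-quasi-categories with the unit
\[
\Hom_X(x,y) \lra N(\ho(\Hom_X(x,y)))
\]
of the adjunction $\ho \dashv N \colon \Cat \to [\Delta^{\mathrm{op}},\Set]$. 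By a standard characterization, this unit is a weak categorical equivalence if and only if $\Hom_X(x,y)$ is $1$-truncated. Under hypothesis (i) this holds for every pair $x,y$, so $\eta_X$ is fully faithful, and Theorem \ref{fundythm} yields that $\eta_X$ is an equivalence.

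The main technical obstacle I anticipate is verifying that the morphism induced by $\eta_X$ on hom-quasi-categories is indeed (naturally isomorphic to) the unit of $\ho \dashv N$ applied to $\Hom_X(x,y)$. This requires tracing through the composite definition of $\eta_X$ in the proof of Theorem \ref{honadjthm}, which is assembled from the unit of the composite adjunction (\ref{compadjho}) and the unit of the partial adjunction $G \dashv N_2$, and then comparing the resulting map on the $1$st column of the underlying bisimplicial set (where the hom-quasi-categories live, by \S\ref{undbisimp}) with the standard unit into the nerve of the homotopy category. Everything else in the argument is essentially formal once Theorem \ref{fundythm} is in hand.
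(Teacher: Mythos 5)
Your proposal is correct and follows essentially the same route as the paper: the paper likewise observes that the unit $X \lra N(\Ho X)$ is bijective on objects and is given on hom-quasi-categories by the unit $\Hom_X(x,y) \lra N(\ho(\Hom_X(x,y)))$, then applies Theorem \ref{fundythm} together with the standard characterisation of $1$-truncated quasi-categories, and handles (iii) via Example \ref{nbichoms} and invariance of $2$-truncation under equivalence. The only difference is organisational (you prove a cycle of implications where the paper proves (i)$\Leftrightarrow$(ii) directly), and the identification of the hom-level map with the unit of $\ho \dashv N$, which you flag as the main obstacle, is exactly what the paper disposes of with the phrase ``by construction.''
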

\begin{proof}
By construction, the unit morphism $X \lra N(\Ho X)$ is bijective on objects, and is given on hom-quasi-categories by the unit morphism $\Hom_X(x,y) \lra \ho(\Hom_X(x,y))$. Hence, by Theorem \ref{fundythm}, the unit morphism $X \lra N(\Ho X)$ is an equivalence of $2$-quasi-categories if and only if each hom-quasi-category $\Hom_X(x,y)$ of $X$ is $1$-truncated, i.e.\ if and only if $X$ is $2$-truncated. This proves the equivalence of (i) and (ii).

The implication (ii) $\implies$ (iii) is immediate. The implication (iii) $\implies$ (i) follows from the observations that the coherent nerve of a bicategory is a $2$-truncated $2$-quasi-category (by Theorem \ref{fibcor} and Example \ref{nbichoms}), and that any $2$-quasi-category equivalent to a $2$-truncated $2$-quasi-category is $2$-truncated.
\end{proof}

\section{Bicategories vs Rezk's weak $2$-categories} \label{vssect}
In this section, we prove (Theorem \ref{firstqequiv}) that the Quillen adjunction $\tau_b \dashv N \colon \Bicat_\mathrm{s} \lra [\Theta_2^\mathrm{op},\Set]$ of Theorem \ref{firstqadjthm} is moreover a Quillen equivalence between Lack's model structure for bicategories and the model structure for $2$-truncated $2$-quasi-categories, which we construct (Proposition \ref{modstrthm}) as the Bousfield localisation of Ara's model structure for $2$-quasi-categories with respect to the boundary inclusion $\partial\Theta_2[1;3] \lra \Theta_2[1;3]$. Moreover, we prove (Theorem \ref{prop2truncboth}) that the composite of this adjunction with an adjunction due to Ara is a Quillen equivalence between Lack's model structure for bicategories and Rezk's model structure for $(2,2)$-$\Theta$-spaces on the category of simplicial presheaves over $\Theta_2$. 

The one-dimensional analogues of these results were proved in \cite{camplan}, which was written (in part) to provide the necessary background for this section, and whose arguments we closely follow. 

\begin{recall}[local objects in model categories] \label{localrecall}
Let $f \colon A \lra B$ be a morphism in a model category $\mathcal{M}$. An object $X$ of $\mathcal{M}$ is said to be \emph{local with respect to $f$} (cf.\ \cite[Definition 3.1.4]{MR1944041}) if the induced morphism of derived hom-spaces
\begin{equation*}
\underline{\Ho \mathcal{M}}(f,X) : \underline{\Ho \mathcal{M}}(B,X) \lra \underline{\Ho \mathcal{M}}(A,X)
\end{equation*}
is an isomorphism in the homotopy category of Kan complexes.

It is a straightforward exercise to show that, for any cofibrant object $S$ of a model category $\mathcal{M}$ and any morphism
\begin{equation} \label{downtrianglepre}
\cd[@C=2em]{
& S \ar[dl]_-{a} \ar[dr]^-{b} \\
A \ar[rr]_-{f} && B
}
\end{equation}
in the category $S/\mathcal{M}$,
a fibrant object $X$ of $\mathcal{M}$ is local with respect to the morphism $f \colon A \lra B$ in $\mathcal{M}$ if and only if, for every morphism $x \colon S \lra X$ in $\mathcal{M}$, the object $x \colon S \lra X$ of $S/\mathcal{M}$ is local with respect to the morphism (\ref{downtrianglepre}) in the category $S/\mathcal{M}$ endowed with the induced model structure.
\end{recall}

Recall the suspension functor  $\Sigma \colon [\Delta^\mathrm{op},\Set] \lra [\Theta_2^\mathrm{op},\Set]$ from \S\ref{susphomadjsubsect}. 

\begin{lemma} \label{locprop}
Let $f \colon A \lra B$ be a morphism of simplicial sets. A $2$-quasi-category $X$ is local with respect to the morphism $\Sigma(f) \colon \Sigma A \lra \Sigma B$ in the model structure for $2$-quasi-categories if and only if, for each pair of objects $x,y \in X$, the hom-quasi-category $\Hom_X(x,y)$ is local with respect to the morphism $f \colon A \lra B$ in the model structure for quasi-categories.
\end{lemma}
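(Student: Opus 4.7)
The plan is to combine Recollection \ref{localrecall} with the Quillen adjunction $\Sigma \dashv \Hom$ of Proposition \ref{susphomqadjthefirst} in order to transport locality along the suspension--hom adjunction.

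First I would observe that $\Sigma f \colon \Sigma A \lra \Sigma B$ is canonically a morphism in the slice category $\partial\Theta_2[1;0]/[\Theta_2^\mathrm{op},\Set]$, since the suspension functor factors through this slice via the unit of the adjunction. The object $\partial\Theta_2[1;0] \cong \Theta_2[0] + \Theta_2[0]$ embeds as a monomorphism into $\Theta_2[1;0]$ and is therefore cofibrant in the model structure for $2$-quasi-categories. Recollection \ref{localrecall} then applies and yields the following reformulation: the $2$-quasi-category $X$ is local with respect to $\Sigma f$ if and only if, for every morphism $(x,y) \colon \partial\Theta_2[1;0] \lra X$ --- equivalently, for every ordered pair of objects $x,y$ of $X$ --- the bi-pointed object $(X,x,y)$ is local with respect to $\Sigma f$ in the induced model structure on $\partial\Theta_2[1;0]/[\Theta_2^\mathrm{op},\Set]$.

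Next I would invoke the Quillen adjunction $\Sigma \dashv \Hom$ of Proposition \ref{susphomqadjthefirst}. Any Quillen adjunction induces a corresponding adjoint equivalence of derived mapping spaces, so for each simplicial set $S$ we obtain a natural weak equivalence of Kan complexes
$$\underline{\Ho(\partial\Theta_2[1;0]/[\Theta_2^\mathrm{op},\Set])}(\Sigma S,(X,x,y)) \simeq \underline{\Ho([\Delta^\mathrm{op},\Set])}(S,\Hom_X(x,y)).$$
Applying naturality in the variable $S$ to the morphism $f \colon A \lra B$ shows that $(X,x,y)$ is local with respect to $\Sigma f$ in the slice if and only if $\Hom_X(x,y)$ is local with respect to $f$ in the model structure for quasi-categories (using that $\Hom_X(x,y)$ is fibrant, which is the content of Proposition \ref{susphomqadjthefirst}).

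Stringing the two equivalences together delivers the claimed biconditional. The argument presents no serious obstacle: its substantive content is simply the standard fact that a Quillen adjunction preserves and reflects locality along itself, applied to the suspension--hom adjunction. The two minor points to verify --- cofibrancy of $\partial\Theta_2[1;0]$ and the identification of morphisms $\partial\Theta_2[1;0] \lra X$ with ordered pairs of objects of $X$ --- are both immediate.
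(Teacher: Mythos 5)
Your proposal is correct and follows essentially the same route as the paper: reduce to the coslice $\partial\Theta_2[1;0]/[\Theta_2^\mathrm{op},\Set]$ via Recollection \ref{localrecall} (using that every $\Theta_2$-set is cofibrant and $X$ is fibrant), and then transport locality across the Quillen adjunction $\Sigma \dashv \Hom$ of Proposition \ref{susphomqadjthefirst}. The only difference is cosmetic: where you spell out the natural weak equivalence of derived mapping spaces induced by the Quillen adjunction, the paper simply cites Hirschhorn's Proposition 3.1.12, which encapsulates exactly that argument.
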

\begin{proof}
By Recollection \ref{localrecall}, it suffices to show that, for each pair of objects $x,y$ of a $2$-quasi-category $X$, the hom-quasi-category $\Hom_X(x,y)$ is local with respect to the morphism $f \colon A\lra B$ in the model structure for quasi-categories if and only if the bi-pointed $2$-quasi-category $(x,y) \colon \partial\Theta_2[1;0] \lra X$ is local with respect to the morphism
\begin{equation} \label{downtriangle}
\cd[@C=.5em]{
& \partial\Theta_2[1;0] \ar[dl]_-{(\bot,\top)} \ar[dr]^-{(\bot,\top)} \\
\Sigma A \ar[rr]_-{\Sigma(f)} && \Sigma B
}
\end{equation}
in the model structure for bi-pointed $2$-quasi-categories. But this follows by an application of \cite[Proposition 3.1.12]{MR1944041} to the suspension-hom Quillen adjunction $\Sigma \dashv \Hom$ of Proposition \ref{susphomqadjthefirst}.
\end{proof}

Using this lemma, we can characterise the $2$-truncated $2$-quasi-categories of Definition \ref{2truncdef} as a class of local fibrant objects.

\begin{proposition} \label{2qcatloc}
A $2$-quasi-category is $2$-truncated if and only if it is local with respect to the boundary inclusion $\partial\Theta_2[1;3] \lra \Theta_2[1;3]$ in the model structure for $2$-quasi-categories.
\end{proposition}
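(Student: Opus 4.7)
The plan is to reduce this statement to the one-dimensional analogue by invoking Lemma \ref{locprop}. The reduction rests on one calculation and one imported fact.

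First I would observe that, by the computation recorded in the proof of Proposition \ref{susphomqadjthefirst}, the suspension functor $\Sigma \colon [\Delta^\mathrm{op},\Set] \lra [\Theta_2^\mathrm{op},\Set]$ sends the boundary inclusion $\partial\Delta[3] \lra \Delta[3]$ to the boundary inclusion $\partial\Theta_2[1;3] \lra \Theta_2[1;3]$. (This is a special case of the general observation that $\Sigma$ sends $\partial\Delta[m] \lra \Delta[m]$ to $\partial\Theta_2[1;m] \lra \Theta_2[1;m]$, which is proved by writing both sides as colimits of their non-degenerate elements.)

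Next I would apply Lemma \ref{locprop} to the morphism $f \colon \partial\Delta[3] \lra \Delta[3]$. This yields that a $2$-quasi-category $X$ is local with respect to the boundary inclusion $\partial\Theta_2[1;3] \lra \Theta_2[1;3]$ in the model structure for $2$-quasi-categories if and only if, for each pair of objects $x,y \in X$, the hom-quasi-category $\Hom_X(x,y)$ is local with respect to $\partial\Delta[3] \lra \Delta[3]$ in Joyal's model structure for quasi-categories.

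Finally, I would invoke the one-dimensional analogue, already cited in the remarks preceding Definition \ref{2truncdef} (see \cite[\S26]{joyalnotes} or \cite[\S3]{camplan}):\ a quasi-category is $1$-truncated if and only if it is local with respect to the boundary inclusion $\partial\Delta[3] \lra \Delta[3]$ in the model structure for quasi-categories. Chaining together these three facts with Definition \ref{2truncdef} yields the desired equivalence. There is no real obstacle here beyond identifying the boundary inclusion $\partial\Theta_2[1;3] \lra \Theta_2[1;3]$ as a suspension; once that identification is made the result is a direct consequence of Lemma \ref{locprop} and the one-dimensional case.
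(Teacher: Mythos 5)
Your proposal is correct and follows essentially the same route as the paper: identify $\partial\Theta_2[1;3] \lra \Theta_2[1;3]$ as the suspension of $\partial\Delta[3] \lra \Delta[3]$ (as in the proof of Proposition \ref{susphomqadjthefirst}), apply Lemma \ref{locprop}, and conclude via the one-dimensional fact that a quasi-category is $1$-truncated if and only if it is local with respect to $\partial\Delta[3] \lra \Delta[3]$ (the paper cites \cite[Proposition 3.23]{camplan} for this). No gaps.
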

\begin{proof}
As in the proof of Proposition \ref{susphomqadjthefirst}, the boundary inclusion $\partial\Theta_2[1;3] \lra \Theta_2[1;3]$ is the suspension of the boundary inclusion $\partial\Delta[3] \lra \Delta[3]$. Hence the result follows, by Lemma \ref{locprop}, from the $n=1$ case of \cite[Proposition 3.23]{camplan}, which states that a quasi-category is $1$-truncated if and only if it is local with respect to the boundary inclusion $\partial\Delta[3] \lra \Delta[3]$ in the model structure for quasi-categories. 
\end{proof}

\begin{proposition} \label{modstrthm}
There exists a model structure on the category $[\Theta_2^\mathrm{op},\Set]$ of $\Theta_2$-sets whose cofibrations are the monomorphisms, and whose fibrant objects are the $2$-truncated $2$-quasi-categories. This model structure is the Bousfield localisation of Ara's model structure for $2$-quasi-categories with respect to the boundary inclusion $\partial\Theta_2[1;3] \lra \Theta_2[1;3]$.
\end{proposition}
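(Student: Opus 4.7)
The plan is to apply Cisinski's existence theorem (\cite[Th{\'e}or{\`e}me 1.4.3]{MR2294028}, recalled in \S\ref{cisinski}) to construct the desired model structure as the Cisinski model structure on $[\Theta_2^\mathrm{op},\Set]$ generated by the union of the generating set for Ara's model structure for $2$-quasi-categories (the spine inclusions $i_{n(\bm{m})}$ for $[n;\bm{m}] \in \Theta_2$ together with $j_2 \colon J_2 \lra \Theta_2[1;0]$) and the single boundary inclusion $\partial\Theta_2[1;3] \lra \Theta_2[1;3]$. This directly yields a model structure on $[\Theta_2^\mathrm{op},\Set]$ whose cofibrations are the monomorphisms and whose class of weak equivalences is the smallest accessible $\Theta_2$-localiser containing these morphisms.

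Since the class of weak equivalences of this new model structure contains that of Ara's model structure by construction, and the classes of cofibrations coincide (being in each case the monomorphisms), this new model structure is by definition a left Bousfield localisation of Ara's model structure for $2$-quasi-categories with respect to the morphism $\partial\Theta_2[1;3] \lra \Theta_2[1;3]$, as required for the second assertion of the proposition.

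To identify the fibrant objects, I would establish the standard characterisation that a $\Theta_2$-set is fibrant in this localised model structure if and only if it is both fibrant in Ara's model structure (i.e.\ a $2$-quasi-category) and local (in the sense of Recollection \ref{localrecall}) with respect to the boundary inclusion $\partial\Theta_2[1;3] \lra \Theta_2[1;3]$. Combined with Proposition \ref{2qcatloc}, which identifies such local $2$-quasi-categories with the $2$-truncated ones, this provides the desired description of the fibrant objects. The main (and only genuinely non-formal) task is to verify this standard characterisation of fibrant objects within Cisinski's framework; this can be carried out using the right Quillen bifunctor $\bHom(-,-)$ of Example \ref{kexample}, which computes derived hom-spaces in Ara's model structure, by showing that for a fibrant $X$ in Ara's model structure the morphism $\bHom(1,X) \colon \bHom(\Theta_2[1;3],X) \lra \bHom(\partial\Theta_2[1;3],X)$ is an equivalence of Kan complexes precisely when $X$ has the right lifting property with respect to the trivial cofibrations generating the enlarged localiser.
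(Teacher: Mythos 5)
There is a genuine gap at the step where you assert that your Cisinski model structure generated by Ara's set together with $\delta_{1(3)} \colon \partial\Theta_2[1;3] \lra \Theta_2[1;3]$ ``is by definition'' the Bousfield localisation of Ara's model structure with respect to this map. Having the same cofibrations and a larger class of weak equivalences only makes your structure \emph{a} left Bousfield localisation of Ara's; being \emph{the} localisation with respect to that particular morphism means that the weak equivalences are exactly the $\delta_{1(3)}$-local equivalences (equivalently, that the fibrant objects are exactly the $\delta_{1(3)}$-local $2$-quasi-categories). A priori the smallest localiser containing Ara's generators and $\delta_{1(3)}$ could be strictly smaller than the class of local equivalences, so this identification is not definitional --- it is precisely the non-formal content that your last paragraph defers. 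Moreover, the method you sketch for that content is imprecise: the trivial cofibrations of a Cisinski model structure are not ``generated'' by the defining set in any directly usable sense, so ``RLP with respect to the trivial cofibrations generating the enlarged localiser'' does not yet refer to anything explicit. To run your argument you would need Cisinski's anodyne-extension machinery to produce a pseudo-generating set of trivial cofibrations for the enlarged structure (of the shape $\delta_{J^k} \mathbin{\widehat{\times}} \delta_{1(3)}$, cf.\ Remark \ref{elcatex}), and then a Leibniz-adjunction argument comparing lifting against these with the locality condition expressed via $\bHom$ --- which itself requires reconciling the powers of $J$ with the cosimplicial resolution $K$ underlying $\bHom$. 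None of this is impossible, but it is substantial and is not supplied.

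By contrast, the paper's proof bypasses all of this: Ara's model structure is left proper (all objects are cofibrant) and combinatorial, so the Bousfield localisation at $\delta_{1(3)}$ exists by the standard machinery for left proper combinatorial model categories (e.g.\ \cite{MR1944041}); its fibrant objects are, by the standard characterisation, the $2$-quasi-categories local with respect to $\delta_{1(3)}$, which Proposition \ref{2qcatloc} identifies with the $2$-truncated $2$-quasi-categories. If you want to keep your Cisinski-generated structure, the cleanest way to close your gap is in fact to invoke this Hirschhorn-style localisation anyway and then observe that the two structures have the same cofibrations and the same fibrant objects (one inclusion by applying $\bHom$ and Proposition \ref{quillenbifun} to the enlarged localiser, the other because fewer weak equivalences means more fibrations), hence coincide --- at which point the Cisinski construction has become redundant.
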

\begin{proof}
The Bousfield localisation of Ara's model structure for $2$-quasi-categories with respect to the boundary inclusion $\partial\Theta_2[1;3] \lra \Theta_2[1;3]$ exists since Ara's model structure is left proper and combinatorial. Its fibrant objects are those $2$-quasi-categories that are local with respect to the boundary inclusion $\partial\Theta_2[1;3] \lra \Theta_2[1;3]$, which are precisely the $2$-truncated $2$-quasi-categories by  Proposition \ref{2qcatloc}.
\end{proof}

We will call the model structure of Proposition \ref{modstrthm} the \emph{model structure for $2$-truncated $2$-quasi-categories}. Equipped with the results of the preceding sections, we can prove that the coherent nerve adjunction (\ref{firstlookqadj}) is a Quillen equivalence between this model structure and Lack's model structure for bicategories. 

\begin{theorem} \label{firstqequiv}
The adjunction \begin{equation*} \label{adjadjthm}
\xymatrix{
\Bicat_\mathrm{s} \ar@<-1.5ex>[rr]^-{\hdash}_-N && \ar@<-1.5ex>[ll]_-{\tau_b} [\Theta_2^\mathrm{op},\Set]
}
\end{equation*}
is a Quillen equivalence between the model structure for bicategories and the model structure for $2$-truncated $2$-quasi-categories.
\end{theorem}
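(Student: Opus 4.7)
The plan is to descend the Quillen adjunction of Theorem \ref{firstqadjthm} to the $2$-truncated model structure, and then to verify that both the derived counit and the derived unit are weak equivalences.

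\emph{Quillen adjunction and derived counit.} Since cofibrations in both model structures are the monomorphisms, and the $2$-truncated model structure is the Bousfield localisation at the single map $\partial\Theta_2[1;3] \lra \Theta_2[1;3]$ (Proposition \ref{modstrthm}), the adjunction descends provided $\tau_b$ sends this map to a biequivalence. Using the identifications $\partial\Theta_2[1;3] \cong \Sigma\partial\Delta[3]$ and $\Theta_2[1;3] \cong \Sigma\Delta[3]$ from the proof of Proposition \ref{susphomqadjthefirst}, and an adjoint calculation combining $\tau_b \dashv N$, $\Sigma \dashv \Hom$, and Example \ref{nbichoms}, I expect to show that $\tau_b\Sigma S$ is the bicategorical suspension of the homotopy category $\ho S$ for every simplicial set $S$. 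Applied to the inclusion $\partial\Delta[3] \lra \Delta[3]$, this becomes the identity on the $2$-category $[1;3]$, since $\ho\partial\Delta[3] = \ho\Delta[3] = [3]$. The derived counit is then immediate: every $\Theta_2$-set is cofibrant, so at $A \in \Bicat_\mathrm{s}$ it is the ordinary counit $\tau_b NA \lra A$, which by Observation \ref{coobs} is the biequivalence $QA \lra A$ of \S\ref{qrecall}.

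\emph{Derived unit, reduction to $2$-quasi-categories.} It remains to show that the unit $X \lra N\tau_b X$ is a $2$-truncated weak equivalence for every $\Theta_2$-set $X$ (the target being automatically $2$-truncated fibrant by Corollary \ref{fibcor} and Example \ref{nbichoms}). Replacing $X$ by an Ara-fibrant replacement, and using that both $\tau_b$ (Proposition \ref{weprop}) and $N$ (Theorem \ref{firstqadjthm}) preserve Ara weak equivalences, I reduce to the case where $X$ is a $2$-quasi-category. For such $X$, a direct comparison of universal properties combining $\tau_b \dashv N$, $\Ho \dashv N$ (Theorem \ref{honadjthm}), and the classifier adjunction of \S\ref{qrecall} yields a natural isomorphism $\tau_b X \cong Q\Ho X$, so that $N\tau_b X \lra N\Ho X$ is an Ara weak equivalence; by two-out-of-three it then suffices to show that $X \lra N\Ho X$ is a $2$-truncated weak equivalence for every $2$-quasi-category $X$.

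\emph{Derived unit, final step, and main obstacle.} By the universal property of Bousfield localisation, this reduces to showing that for every $2$-truncated $2$-quasi-category $Z$, the induced map $\bHom(N\Ho X, Z) \lra \bHom(X, Z)$ is a weak equivalence of Kan complexes. Using $Z \simeq N\Ho Z$ (Theorem \ref{2truncchar}) together with the right Quillen bifunctor structure of $\bHom$ (Proposition \ref{quillenbifun}), I may replace $Z$ by $N\Ho Z$. Selecting a cosimplicial resolution of $\Theta_2[0]$ whose terms are themselves $2$-quasi-categories --- for instance, the coherent nerves of the contractible groupoids on $n+1$ objects, which are $2$-quasi-categories by Corollary \ref{fibcor} --- the adjunction $\Ho \dashv N$ applied levelwise, Lemma \ref{finprodlemma} (preservation of finite products by $\Ho$), and the canonical isomorphism $\Ho N \cong \mathrm{id}$ (from full fidelity of $N$, Corollary \ref{lastcor}) identify both $\bHom(X, N\Ho Z)_n$ and $\bHom(N\Ho X, N\Ho Z)_n$ with the same hom-set in $\Bicat$, under which the induced map becomes the identity. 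The main obstacle is precisely this final calculation: choosing a well-behaved cosimplicial resolution with $2$-quasi-category terms so that $\Ho \dashv N$ may be applied levelwise, and then threading the identifications $\tau_b X \cong Q\Ho X$ and $\Ho N \cong \mathrm{id}$ cleanly through the derived hom computation; the rest of the argument reuses results already established in the paper.
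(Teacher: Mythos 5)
Your proposal is correct in substance, but it takes a genuinely different --- and much longer --- route than the paper, whose proof is two lines: given the Quillen adjunction with fully faithful derived right adjoint of Theorem \ref{firstqadjthm}, the paper invokes an abstract localisation criterion (\cite[Theorem A.14]{camplan}) reducing the Quillen-equivalence claim for the Bousfield localisation to the statement that the local fibrant objects are exactly the $2$-quasi-categories equivalent to coherent nerves of bicategories, i.e.\ to Theorem \ref{2truncchar}. You instead unfold that criterion by hand: descent of the adjunction, the counit condition, and a direct proof that the unit $X \lra N(\Ho X)$ is a weak equivalence in the localised structure for \emph{every} $2$-quasi-category $X$, via the $\bHom$-computation using $\Ho \dashv N$, Lemma \ref{finprodlemma} and $\Ho N \cong \mathrm{id}$; note that your ``chosen'' cosimplicial resolution is exactly the $K = \pi^{*}k$ of Example \ref{kexample}, whose terms are already coherent nerves of contractible groupoids, and the computation does go through (it even yields an isomorphism of hom-complexes), so your announced ``main obstacle'' is not a real obstacle, provided you also record why $\bHom(-,Z)$ for local fibrant $Z$ detects the localised weak equivalences. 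What the paper's route buys is brevity; what yours buys is self-containedness and the explicit intermediate fact that $\eta_X$ is a $2$-truncated weak equivalence for all $2$-quasi-categories $X$, which is precisely what the abstract criterion encapsulates. Two small corrections: the descent step is obtained more cheaply from the right-adjoint form of the criterion --- $N$ sends every bicategory to a local fibrant object by Corollary \ref{fibcor}, Example \ref{nbichoms} and Proposition \ref{2qcatloc}, as you note in passing --- and $\tau_b\Theta_2[1;3]$ is the classifier $Q[1;3]$, not $[1;3]$ itself (a normal pseudofunctor out of $[1;3]$ into a bicategory need not be strict); what your adjoint calculation actually shows is that $\tau_b\Sigma S$ is the value at $\ho S$ of the left adjoint to $(B,x,y)\mapsto B(x,y)$, so since $\ho\partial\Delta[3] \lra \ho\Delta[3]$ is an isomorphism, $\tau_b(\delta_{1(3)})$ is an isomorphism and the conclusion you need survives.
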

\begin{proof}
Given Theorem \ref{firstqadjthm}, it remains to show (by, for instance, \cite[Theorem A.14]{camplan}) that a $2$-quasi-category is $2$-truncated if and only if it is equivalent to the coherent nerve of a bicategory. But this is precisely what was shown in Theorem \ref{2truncchar}.
\end{proof}


\subsection{$2$-quasi-categories vs Rezk's $\Theta_2$-spaces} \label{aravsrezk}
In \cite[\S11]{MR2578310}, Rezk defines an \emph{$(\infty,2)$-$\Theta$-space} -- though, following \cite{MR3350089}, we will use instead the term \emph{Rezk $\Theta_2$-space} -- to be a fibrant object of a certain model structure on the category $[(\Theta_2\times\Delta)^\mathrm{op},\Set]$ of simplicial presheaves over $\Theta_2$, which we will call the \emph{model structure for Rezk $\Theta_2$-spaces}. 

In \cite{MR3350089}, Ara proves two Quillen equivalences
\begin{equation*}
\xymatrix{
[\Theta_2^\mathrm{op},\Set] \ar@<-1.5ex>[rr]^-{\hdash}_-{t^!} && \ar@<-1.5ex>[ll]_-{t_!} [(\Theta_2\times\Delta)^\mathrm{op},\Set]
}
\qquad
\quad
\xymatrix{
[(\Theta_2\times\Delta)^\mathrm{op},\Set] \ar@<-1.5ex>[rr]^-{\hdash}_-{i^*} && \ar@<-1.5ex>[ll]_-{p^*} [\Theta_2^\mathrm{op},\Set]
}
\end{equation*}
between the model structure for $2$-quasi-categories and the model structure for Rezk  $\Theta_2$-spaces (note that Ara uses the notation $Real_{N_2\widetilde{\Delta_{\bullet}}} \dashv Sing_{N_2\widetilde{\Delta_{\bullet}}}$ for the adjunction we have denoted by $t_! \dashv t^!$). 
The functors $p^*$ and $i^*$ are defined by restriction along (the opposites of) the projection $p \colon \Theta_2 \times \Delta \lra \Theta_2$, and the functor $i \colon \Theta_2 \lra \Theta_2 \times \Delta$ that sends an object $[n;\bm{m}] \in \Theta_2$ to the pair $([n;\bm{m}],[0])$. The adjunction $t_! \dashv t^!$ is induced by Kan's construction (Recollection \ref{kanrecall}) from the functor $t \colon \Theta_2 \times \Delta \lra [\Theta_2^\mathrm{op},\Set]$ that sends a pair $([n;\bm{m}],[p])$ to the product $\Theta_2[n;\bm{m}] \times K([p])$, 
where $K$ is as in Example \ref{kexample}. Note that there is a natural isomorphism $i^*(t^!(X)) \cong X$ for $X \in [\Theta_2^\mathrm{op},\Set]$.

\subsection{Rezk's weak $2$-categories}
Let us say (in homage to the title of the paper \cite{MR2578310}) that a Rezk $\Theta_2$-space $Z$ is a \emph{weak $2$-category} if, for each pair of objects (i.e.\ $([0],[0])$-elements) $x,y \in Z$, the hom complete Segal space $M_Z(x,y)$ (see \cite[\S4.10]{MR2578310} and \cite[Proposition 8.3]{MR2578310})
 is $1$-truncated (i.e.\ each hom-space of $M_Z(x,y)$ is a $0$-type).  In \cite[\S11]{MR2578310}, Rezk shows that the weak $2$-categories (which he calls $(2,2)$-$\Theta$-spaces) are the fibrant objects of a Bousfield localisation of the model structure for Rezk $\Theta_2$-spaces. 
 We will call this Bousfield localisation \emph{Rezk's model structure for weak $2$-categories}.

\begin{proposition} \label{prop2trunc}
\begin{enumerate}[leftmargin=*, font=\normalfont]
\item A Rezk $\Theta_2$-space $Z$ is a weak $2$-category if and only if its underlying $2$-quasi-category $i^\ast (Z)$ is $2$-truncated.
\item A $2$-quasi-category $X$ is $2$-truncated if and only if the Rezk $\Theta_2$-space $t^!(X)$ is a weak $2$-category.
\end{enumerate}
\end{proposition}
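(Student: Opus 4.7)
The plan is first to show, using the Quillen equivalences of \S\ref{aravsrezk}, that parts (1) and (2) are equivalent; then to prove only (2), by reducing to the $1$-dimensional analogue via the Joyal--Tierney Quillen equivalence.

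For the reduction, I use that the natural isomorphism $i^* \circ t^! \cong 1$, combined with the fact that both $\mathbb{R}i^*$ and $\mathbb{R}t^!$ are equivalences of homotopy categories, forces these two derived equivalences to be mutually quasi-inverse; in particular $t^!(i^*(Z)) \simeq Z$ for every (fibrant) Rezk $\Theta_2$-space $Z$. Since weak $2$-categories form the fibrant objects of a Bousfield localisation of Rezk's model structure for $\Theta_2$-spaces, they are closed under weak equivalence between fibrant objects; so part (1) is equivalent to the assertion that $t^!(i^*(Z))$ is a weak $2$-category if and only if $i^*(Z)$ is $2$-truncated, which is precisely part (2) applied to $X = i^*(Z)$. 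Conversely, part (1) applied to $Z = t^!(X)$, together with $i^*(t^!(X)) \cong X$, directly yields part (2).

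To prove (2) directly, note that by Proposition \ref{2qcatloc} combined with Lemma \ref{locprop}, a $2$-quasi-category $X$ is $2$-truncated if and only if each hom-quasi-category $\Hom_X(x,y)$ is a $1$-truncated quasi-category (using that $\partial\Theta_2[1;3] \to \Theta_2[1;3]$ is the $\Theta_2$-suspension of $\partial\Delta[3] \to \Delta[3]$). On the other hand, by definition, $t^!(X)$ is a weak $2$-category if and only if each hom complete Segal space $M_{t^!(X)}(x,y)$ is $1$-truncated. The Joyal--Tierney Quillen equivalence between complete Segal spaces and quasi-categories preserves and reflects $1$-truncation (an analogue, one dimension lower, of what we are now proving; cf.\ \cite{camplan}); so it remains to identify $M_{t^!(X)}(x,y)$, under Joyal--Tierney, with $\Hom_X(x,y)$.

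The main obstacle is this identification of hom-objects across the two models. Both $\Hom_X(x,y)$ and $M_{t^!(X)}(x,y)$ can be constructed as fibres over the pair of basepoints $(x,y) \colon \partial\Theta_2[1;0] \to X$ of appropriate suspension--hom right adjoints: the one of \S\ref{susphomadjsubsect} on the $\Theta_2$-set side, and Rezk's analogous construction on $[(\Theta_2\times\Delta)^\mathrm{op},\Set]$ on the Rezk side. The comparison should proceed from the explicit formula $t([n;\bm{m}],[p]) = \Theta_2[n;\bm{m}] \times K([p])$: the functor $t^!$ intertwines the $\Theta_2$-suspension $\Sigma$ with the bisimplicial suspension up to Joyal--Tierney equivalence, so the two defining pullbacks correspond, and the result follows.
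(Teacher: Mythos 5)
Your proposal is correct in substance, but it runs the argument in the opposite direction to the paper and makes the easy step look like the hard one. The paper proves (1) first, by a one-line levelwise observation: since $i^*(Z)_{n(\bm{m})} = Z_{([n;\bm{m}],[0])}$, the underlying quasi-category (the $0$th row) of the hom complete Segal space $M_Z(x,y)$ is literally the hom-quasi-category $\Hom_{i^*(Z)}(x,y)$, and then one quotes the fact that a complete Segal space is $1$-truncated if and only if its underlying quasi-category is $1$-truncated \cite[Proposition 5.8]{camplan}; part (2) then follows immediately from the \emph{strict} natural isomorphism $i^*(t^!(X)) \cong X$, with no derived functors at all. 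You instead prove (2) directly and deduce (1) via the derived equivalence $t^! (i^*(Z)) \simeq Z$ together with the invariance of locality under weak equivalences between fibrant objects; this is valid (the composite of the derived right adjoints is the derived functor of $i^* t^! \cong \mathrm{id}$ because $t^!$ preserves fibrant objects), but it is machinery the paper avoids. More importantly, the crux of your direct proof of (2) --- identifying $M_{t^!(X)}(x,y)$ with $\Hom_X(x,y)$ --- is left as a sketch about $t^!$ intertwining suspension functors ``up to Joyal--Tierney equivalence''; you do not need any of that. Since $t^!(X)_{([1;m],[0])} = X_{1(m)}$, the $0$th row of $M_{t^!(X)}(x,y)$ is $\Hom_X(x,y)$ on the nose, and the same computation for an arbitrary fibrant $Z$ is exactly what makes (1) the easier statement to prove first. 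So your route works once that identification is made precise, but the levelwise computation both closes the only vague point in your write-up and shows that the simpler order of deduction is (1) $\Rightarrow$ (2).
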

\begin{proof}
(1) Let $Z$ be a Rezk $\Theta_2$-space. It is immediate from the definitions that, for each pair of objects $x,y\in Z$, the underlying quasi-category (i.e.\ the $0$th row) of the hom complete Segal space $M_Z(x,y)$ is the hom-quasi-category $\Hom_{i^*(Z)}(x,y)$. The result then follows from the fact (see \cite[Proposition 5.8]{camplan}) that a complete Segal space is $1$-truncated if and only if its underlying quasi-category is $1$-truncated.

(2) As noted above, for each $2$-quasi-category $X$, there is an isomorphism $X \cong i^\ast(t^!(X))$. Hence this result follows from (1).
\end{proof}

\begin{proposition} \label{prop2truncboth}
The adjunctions
\begin{equation*}
\xymatrix{
[\Theta_2^\mathrm{op},\Set] \ar@<-1.5ex>[rr]^-{\hdash}_-{t^!} && \ar@<-1.5ex>[ll]_-{t_!} [(\Theta_2\times\Delta)^\mathrm{op},\Set]
}
\qquad
\quad
\xymatrix{
[(\Theta_2\times\Delta)^\mathrm{op},\Set] \ar@<-1.5ex>[rr]^-{\hdash}_-{i^*} && \ar@<-1.5ex>[ll]_-{p^*} [\Theta_2^\mathrm{op},\Set]
}
\end{equation*}
are Quillen equivalences between the model structure for $2$-truncated $2$-quasi-categories and Rezk's model structure for weak $2$-categories.
\end{proposition}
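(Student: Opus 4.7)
The plan is to descend each of Ara's two Quillen equivalences along the Bousfield localisations on both ends, using Proposition \ref{prop2trunc} as the matching of fibrant objects between the localisations.

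Concretely, I would invoke the general descent criterion for Bousfield localisations (of the shape of \cite[Theorem A.14]{camplan}, which is the abstract tool underlying Theorem \ref{firstqequiv} above): given a Quillen equivalence $F \dashv G \colon \mathcal{M} \rightleftarrows \mathcal{N}$ and Bousfield localisations $L\mathcal{M}$ and $L'\mathcal{N}$ of the two sides, the adjunction descends to a Quillen equivalence between $L\mathcal{M}$ and $L'\mathcal{N}$ as soon as the right adjoint $G$ sends fibrant objects of $L'\mathcal{N}$ to fibrant objects of $L\mathcal{M}$ \emph{and} conversely every fibrant object of $L\mathcal{M}$ is in the essential image of the derived $G$ restricted to $L'\mathcal{N}$. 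Both hypotheses are packaged into the ``if and only if'' statements of Proposition \ref{prop2trunc}: part (2) says $t^!$ sends $2$-truncated $2$-quasi-categories to weak $2$-categories (and reflects this property), while part (1) says $i^*$ sends weak $2$-categories to $2$-truncated $2$-quasi-categories (and reflects this property).

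For the adjunction $t_! \dashv t^!$, Proposition \ref{prop2trunc}(2) gives the forward direction directly, and the reverse direction follows because every $2$-truncated $2$-quasi-category $X$ is isomorphic to $i^*(t^!(X))$ while $t^!(X)$ is a weak $2$-category by the ``if'' direction of Proposition \ref{prop2trunc}(2). For the adjunction $p^* \dashv i^*$, Proposition \ref{prop2trunc}(1) gives both directions at once. Since cofibrations do not change under Bousfield localisation, in each case the right adjoint automatically preserves trivial fibrations, and the preservation of fibrations between fibrant objects reduces to the preservation of local fibrant objects just checked. This verifies that each adjunction is a Quillen adjunction between the localised model structures.

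To upgrade to Quillen equivalence, I rely on the fact that the underlying adjunctions are already Quillen equivalences (Ara's theorems). The matching of local fibrant objects ensured by Proposition \ref{prop2trunc} guarantees that the derived equivalence restricts to an equivalence between the homotopy categories of the localised model structures, which is precisely the condition to be a Quillen equivalence between them. The main step — the identification of $2$-truncation on one side with the ``weak $2$-category'' condition on the other — has essentially already been done in Proposition \ref{prop2trunc}; the remaining argument is the formal descent of Quillen equivalences through compatible Bousfield localisations, for which no further obstacle is present.
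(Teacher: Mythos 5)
Your proposal is correct and follows essentially the same route as the paper: the paper likewise quotes Ara's two Quillen equivalences between the unlocalised model structures and then descends them through the Bousfield localisations using the ``if and only if'' statements of Proposition \ref{prop2trunc}, citing the abstract descent criterion \cite[Theorem A.15]{camplan} (the two-sided analogue of the Theorem A.14 you mention). The only difference is cosmetic: the paper leaves the verification of the fibrant-object matching entirely to Proposition \ref{prop2trunc} and the cited criterion, whereas you partially unpack it by hand.
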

\begin{proof}
By \cite[Theorem 8.4]{MR3350089} and \cite[Corollary 8.8]{MR3350089}, these adjunctions are Quillen equivalences between the model structures for $2$-quasi-categories and Rezk $\Theta_2$-spaces, of which the model structures in the statement are Bousfield localisations. The result then follows from Proposition \ref{prop2trunc} (by, for instance, \cite[Theorem A.15]{camplan}).  
\end{proof}

We may thus conclude that Rezk's weak $2$-categories are (Quillen) equivalent to the original weak $2$-categories, namely B\'enabou's bicategories.

\begin{theorem} \label{lackrezkequiv}
The composite adjunction
\begin{equation*} 
\xymatrix{
\Bicat_\mathrm{s} \ar@<-1.5ex>[rr]^-{\hdash}_-N && \ar@<-1.5ex>[ll]_-{\tau_b} [\Theta_2^\mathrm{op},\Set] \ar@<-1.5ex>[rr]^-{\hdash}_-{t^!} && \ar@<-1.5ex>[ll]_-{t_!} [(\Theta_2\times\Delta)^\mathrm{op},\Set]
}
\end{equation*}
 is a Quillen equivalence between Lack's model structure for bicategories and Rezk's model structure for weak $2$-categories.
\end{theorem}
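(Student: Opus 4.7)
The plan is to observe that this result is essentially immediate from the two Quillen equivalences already established in this section, combined with the standard fact that the composite of two Quillen equivalences is a Quillen equivalence.

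More precisely, Theorem \ref{firstqequiv} supplies a Quillen equivalence
\begin{equation*}
\xymatrix{
\Bicat_\mathrm{s} \ar@<-1.5ex>[rr]^-{\hdash}_-N && \ar@<-1.5ex>[ll]_-{\tau_b} [\Theta_2^\mathrm{op},\Set]
}
\end{equation*}
between Lack's model structure for bicategories and the model structure for $2$-truncated $2$-quasi-categories constructed in Proposition \ref{modstrthm}. Proposition \ref{prop2truncboth} then supplies a Quillen equivalence
\begin{equation*}
\xymatrix{
[\Theta_2^\mathrm{op},\Set] \ar@<-1.5ex>[rr]^-{\hdash}_-{t^!} && \ar@<-1.5ex>[ll]_-{t_!} [(\Theta_2\times\Delta)^\mathrm{op},\Set]
}
\end{equation*}
between the model structure for $2$-truncated $2$-quasi-categories and Rezk's model structure for weak $2$-categories. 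Composing these two Quillen equivalences yields the result.

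There is no real obstacle here; all the conceptual and technical work has already been carried out in the preceding sections (most crucially the construction of the homotopy bicategory in \S\ref{hobicatsec}, the characterisation of equivalences of $2$-quasi-categories in Theorem \ref{fundythm}, and the characterisation of $2$-truncated $2$-quasi-categories in Theorem \ref{2truncchar}, which together feed into Theorem \ref{firstqequiv}, and Ara's Quillen equivalence together with the local-object comparison of Proposition \ref{prop2trunc} which feed into Proposition \ref{prop2truncboth}). The only thing to note is that both constituent adjunctions go in the same direction (left adjoints pointing to the right), so the composite adjunction displayed in the statement makes sense as written, and both its left and right parts are the composites of the corresponding left and right parts of the factors.
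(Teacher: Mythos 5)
Your proposal is correct and matches the paper's own proof exactly: the theorem is obtained by composing the Quillen equivalence of Theorem \ref{firstqequiv} with the Quillen equivalence $t_! \dashv t^!$ of Proposition \ref{prop2truncboth}. Nothing further is needed.
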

\begin{proof}
This adjunction is the composite of the Quillen equivalence of Theorem \ref{firstqequiv} and one of the Quillen equivalences of Proposition \ref{prop2truncboth}, and is therefore a Quillen equivalence.
\end{proof}

\section{A triequivalence} \label{triequivsect} 
In this section, 
we prove (Theorem \ref{adjtriequivthm}) that the adjunction $\Ho \dashv N \colon \Bicat \lra \twoqcat_{2\text{-}\mathrm{tr}}$ (cf.\ Theorem \ref{honadjthm}) between the categories of bicategories and $2$-truncated $2$-quasi-categories underlies a triequivalence of categories enriched over the cartesian closed category $\Bicat$. 
We begin with a brief recollection of this (apparently little-known) cartesian closed structure. 

\subsection{The cartesian closed category of bicategories}
The category $\Bicat$ of bicategories and normal pseudofunctors is cartesian closed (see \cite{lackiconstalk} and \cite{algcof}). For each pair of bicategories $A$ and $B$, the objects of the internal hom bicategory $\underline{\Bicat}(A,B) = B^A$ are the normal pseudofunctors from $A$ to $B$, and its morphisms and $2$-cells are the ``enhanced pseudonatural transformations'' and modifications between them. Moreover, there is a biequivalence $\underline{\Bicat}(A,B) \lra \bHom(A,B)$ from the internal hom bicategory to the standard hom bicategory $\bHom(A,B)$  (see for instance \cite[\S1.3]{MR574662}), whose morphisms are the pseudonatural transformations in the usual sense (also called ``strong transformations'').

Note that the category $\twoqcat$ of $2$-quasi-categories is also cartesian closed. Indeed, \cite[Corollary 8.5]{MR3350089} implies that $\twoqcat$ is an exponential ideal in the category $[\Theta_2^\mathrm{op},\Set]$ of $\Theta_2$-sets.

\subsection{A $\Bicat$-enriched adjunction} \label{bicatenrsub}
Let $\underline{\Bicat}$ denote the canonical self-enrichment (see for instance \cite[\S1.6]{MR2177301}) of the cartesian closed category $\Bicat$, and let $\underline{\smash{\twoqcat}}$ denote the $\Bicat$-enriched category obtained by change of base (see \cite[Proposition II.6.3]{MR0225841}) of the canonical self-enrichment of the cartesian closed category $\twoqcat$ along the finite-product-preserving functor $\Ho \colon \twoqcat \lra \Bicat$ (see Lemma \ref{finprodlemma}); thus the hom-bicategories of $\underline{\smash{\twoqcat}}$ are given by the homotopy bicategories $\underline{\smash{\twoqcat}}(X,Y) = \Ho(Y^X)$.

\begin{proposition} 
The adjunction of  \textup{Theorem \ref{honadjthm}} underlies a $\Bicat$-enriched adjunction
\begin{equation} \label{honadjobsenr}
\xymatrix{
\underline{\Bicat} \ar@<-1.5ex>[rr]^-{\hdash}_-N && \ar@<-1.5ex>[ll]_-{\Ho} \underline{\smash{\twoqcat}}
}
\end{equation}
between the $\Bicat$-enriched categories of bicategories and $2$-quasi-categories.
\end{proposition}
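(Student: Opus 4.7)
The strategy is to exploit that $\Ho \dashv N$ is a cartesian monoidal adjunction: $\Ho$ preserves finite products by Lemma \ref{finprodlemma}, and $N$ does so as a right adjoint. To give a $\Bicat$-enriched adjunction, it suffices to produce, for each bicategory $A$ and $2$-quasi-category $Y$, an isomorphism of bicategories
\[
\underline{\smash{\twoqcat}}(NA, Y) \cong \underline{\Bicat}(A, \Ho Y),
\]
i.e.\ an isomorphism $\Ho(Y^{NA}) \cong (\Ho Y)^A$ in $\Bicat$, natural in $A$ and $Y$, together with $\Bicat$-functor structures on $\Ho$ and $N$ making the ordinary unit and counit of Theorem \ref{honadjthm} into $\Bicat$-natural transformations.

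First I would construct the required hom-bicategory isomorphism by a Yoneda calculation. For any bicategory $C$, the chain of natural bijections
\begin{align*}
\Bicat(C, \Ho(Y^{NA}))
&\cong \twoqcat(NC, Y^{NA}) \\
&\cong \twoqcat(NC \times NA, Y) \\
&\cong \twoqcat(N(C \times A), Y) \\
&\cong \Bicat(C \times A, \Ho Y) \\
&\cong \Bicat(C, (\Ho Y)^A)
\end{align*}
uses, in order, the adjunction $\Ho \dashv N$, the exponential in $\twoqcat$, the product-preservation of $N$, the adjunction $\Ho \dashv N$ again, and the exponential in $\Bicat$. By the Yoneda lemma this yields an isomorphism $\Ho(Y^{NA}) \cong (\Ho Y)^A$ in $\Bicat$, natural in $A$ and $Y$. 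Next, $N$ is exhibited as a $\Bicat$-functor $\underline{\Bicat} \to \underline{\smash{\twoqcat}}$ by defining its hom-action $B^A \to \Ho(NB^{NA})$ to be the mate under $\Ho \dashv N$ of the transpose of the composite $N(B^A) \times NA \cong N(B^A \times A) \xrightarrow{N(\mathrm{ev})} NB$; analogously, $\Ho$ acquires a $\Bicat$-functor structure via the transpose of $\Ho(Y^X) \times \Ho X \cong \Ho(Y^X \times X) \xrightarrow{\Ho(\mathrm{ev})} \Ho Y$. The counit of the enriched adjunction is invertible by the full fidelity of $N$ (Corollary \ref{lastcor}).

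The main obstacle is the verification of the $\Bicat$-functor axioms for $\Ho$ and $N$ (preservation of identity and composition) and the $\Bicat$-naturality of the unit and counit, including the enriched triangle identities. These reduce to the commutativity of diagrams in $\Bicat$ built from the exponential transpose, evaluation, and product-comparison maps of the two cartesian closed categories, and follow formally from the cartesian monoidality of the adjunction $\Ho \dashv N$. This is a concrete instance of the general principle that a cartesian monoidal adjunction between cartesian closed categories lifts through change of base to an adjunction of enriched categories; what is particular to our setting is the verification of this monoidality, which in turn rests on Lemma \ref{finprodlemma}.
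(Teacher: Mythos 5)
Your instinct that everything should flow from the cartesian monoidality of the adjunction (Lemma \ref{finprodlemma}) matches the paper's strategy, but your execution runs the adjunction in the wrong direction, and this is fatal. In Theorem \ref{honadjthm} the left adjoint is $\Ho$ and the right adjoint is $N$, so the $\Bicat$-enriched adjunction to be produced has hom-isomorphism $\underline{\Bicat}(\Ho X,B)\cong\underline{\smash{\twoqcat}}(X,NB)$, i.e.\ $B^{\Ho X}\cong\Ho\bigl((NB)^{X}\bigr)$. You instead set out to prove $\underline{\smash{\twoqcat}}(NA,Y)\cong\underline{\Bicat}(A,\Ho Y)$, i.e.\ $\Ho\bigl(Y^{NA}\bigr)\cong(\Ho Y)^{A}$, which would be the hom-isomorphism of a (nonexistent) enriched adjunction with $N$ as \emph{left} adjoint. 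Correspondingly, the first and fourth bijections in your Yoneda chain are not instances of $\Ho\dashv N$: a natural bijection $\Bicat(C,\Ho Z)\cong\twoqcat(NC,Z)$ would amount to $N\dashv\Ho$, which is false. Indeed the isomorphism you aim at fails already on objects: take $A=[1;1]$ and $Y$ a non-$2$-truncated $2$-quasi-category one of whose hom-quasi-categories contains two distinct homotopic parallel $1$-simplices (such $Y$ exist, e.g.\ $D_*(X)$ for a suitable quasi-category $X$, cf.\ \S\ref{sec1to2}); then the objects of $\Ho\bigl(Y^{N[1;1]}\bigr)$ are the $[1;1]$-elements of $Y$, whereas the objects of $(\Ho Y)^{[1;1]}$ are their homotopy classes, so the two bicategories are not isomorphic, and already the underlying bijection $\twoqcat(N[1;1],Y)\cong\Bicat([1;1],\Ho Y)$ fails.

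The repair is to run the Yoneda argument over $\twoqcat$ and with the correct variance: for $X\in\twoqcat$, $B\in\Bicat$ and $Z$ ranging over $\twoqcat$, one has $\twoqcat(Z,(NB)^{X})\cong\twoqcat(Z\times X,NB)\cong\Bicat(\Ho Z\times\Ho X,B)\cong\Bicat(\Ho Z,B^{\Ho X})\cong\twoqcat(Z,N(B^{\Ho X}))$, using Lemma \ref{finprodlemma} at the middle step; this gives $N(B^{\Ho X})\cong(NB)^{X}$, and applying $\Ho$ together with $\Ho N\cong\mathrm{id}$ (full fidelity of $N$) yields the required $B^{\Ho X}\cong\Ho\bigl((NB)^{X}\bigr)$. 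This is in substance the paper's proof: the general principle you invoke (Kelly's theorem on monoidal adjunctions) enriches the adjunction over the \emph{domain of the strong monoidal left adjoint}, i.e.\ over $\twoqcat$, producing a $\twoqcat$-enriched adjunction between the canonical self-enrichment of $\twoqcat$ and the change of base of $\underline{\Bicat}$ along $N$; the $\Bicat$-enriched adjunction of the statement is then obtained by a further change of base along the finite-product-preserving functor $\Ho$, using $\Ho N\cong\mathrm{id}$. Your closing appeal to the general principle omits exactly this last step, and the coherence verifications you defer to it cannot go through as stated, since the hom-isomorphism they would be coherences for does not exist.
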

\begin{proof}
Since the left adjoint of the ($\Set$-enriched) adjunction $\Ho \dashv N$ preserves finite products by Lemma \ref{finprodlemma}, it follows from \cite[\S5]{MR0255632} that this adjunction underlies a $\twoqcat$-enriched adjunction between the change of base of the $\Bicat$-category $\underline{\Bicat}$ along the functor $N \colon \Bicat \lra \twoqcat$ and the canonical self-enrichment of the cartesian closed category $\twoqcat$. The $\Bicat$-enriched adjunction of the statement is then obtained as the change of base of this $\twoqcat$-enriched adjunction along the functor $\Ho \colon \twoqcat \lra \Bicat$ (since $\Ho N \cong \mathrm{id}$).
\end{proof}

Let $\underline{\smash{\twoqcat}}_{2\text{-}\mathrm{tr}}$ denote the full $\Bicat$-enriched subcategory of $\underline{\smash{\twoqcat}}$ consisting of the $2$-truncated $2$-quasi-categories. Since the coherent nerve of a bicategory is a $2$-truncated $2$-quasi-category, the $\Bicat$-enriched adjunction (\ref{honadjobsenr})  restricts to one between $\underline{\Bicat}$ and $\underline{\smash{\twoqcat}}_{2\text{-}\mathrm{tr}}$. We will show that this latter $\Bicat$-enriched adjunction is moreover an adjoint triequivalence; but first, we must recall a few standard definitions and results from three-dimensional category theory. (Note that $\Bicat$-enriched categories may be thought of as a particular kind of strict tricategory \cite{MR1261589}, and so the general definitions of tricategory theory specialise to $\Bicat$-enriched categories.)

\begin{definition}[biequivalence in a $\Bicat$-enriched category] \label{biequivdef}
A morphism $f \colon X \lra Y$ in a $\Bicat$-enriched category $\mathcal{S}$ is said to be a \emph{biequivalence} if there exists a morphism $g \colon Y \lra X$ in $\mathcal{S}$ and equivalences $1_X \simeq gf$ and $1_Y \simeq fg$ in the hom-bicategories $\mathcal{S}(X,X)$ and $\mathcal{S}(Y,Y)$ respectively. 
\end{definition}

\begin{example}[biequivalences are biequivalences]
A standard result of bicategory theory implies that a normal pseudofunctor is a biequivalence in the $\Bicat$-category of bicategories if and only if it is a biequivalence in the sense of Definition \ref{lackdefs}. 
\end{example}

Using this definition, and the $\Bicat$-enrichment of the category of $2$-quasi-categories (see \S\ref{bicatenrsub}), we can give yet another characterisation of equivalences of $2$-quasi-categories. 

\begin{proposition} \label{biequivprop}
A morphism of $2$-quasi-categories is a biequivalence in the $\Bicat$-enriched category of $2$-quasi-categories  if and only if it is an equivalence of $2$-quasi-categories.
\end{proposition}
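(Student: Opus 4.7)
The plan is to translate between the two notions of equivalence by combining the adjunction $\Ho \dashv N$ of Theorem \ref{honadjthm}, the cartesian closed structure on $[\Theta_2^\mathrm{op},\Set]$ (which restricts to $\twoqcat$), and the fact that $J = N\mathbb{I}$ by Notation \ref{freeliving} and Observation \ref{nervecatrecall}.

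For the forward direction, suppose $f \colon X \to Y$ is an equivalence of $2$-quasi-categories. By the observation in \S\ref{equivsubsect}, there exist a morphism $g \colon Y \to X$ and morphisms $H \colon J \times X \to X$ and $K \colon J \times Y \to Y$ satisfying $H \circ (\{0\} \times X) = 1_X$, $H \circ (\{1\} \times X) = gf$, and analogously for $K$. By the exponential adjunction, $H$ transposes to a morphism $J = N\mathbb{I} \to X^X$ in $\twoqcat$; by the adjunction $\Ho \dashv N$, this corresponds in turn to a normal pseudofunctor $\mathbb{I} \to \Ho(X^X)$, which by Observation \ref{equifibobs} is precisely an adjoint equivalence in $\Ho(X^X)$. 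A routine trace through the endpoints $\{0\}, \{1\} \colon \Theta_2[0] \rightrightarrows J$ identifies this as an adjoint equivalence between $1_X$ and $gf$. Applying the same construction to $K$ produces an equivalence $1_Y \simeq fg$ in $\Ho(Y^Y)$, so $f$ is a biequivalence in $\underline{\smash{\twoqcat}}$.

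For the converse, assume $f$ is a biequivalence in $\underline{\smash{\twoqcat}}$, with witnesses $g \colon Y \to X$ and equivalences $1_X \simeq gf$ in $\Ho(X^X)$ and $1_Y \simeq fg$ in $\Ho(Y^Y)$. Since every equivalence in a bicategory refines to an adjoint equivalence, these data amount by Observation \ref{equifibobs} to normal pseudofunctors $\mathbb{I} \to \Ho(X^X)$ and $\mathbb{I} \to \Ho(Y^Y)$ sending $0,1 \in \mathbb{I}$ to $1_X, gf$ and $1_Y, fg$ respectively. Reversing the transposition above under the adjunctions $\Ho \dashv N$ and the exponential adjunction produces morphisms $H \colon J \times X \to X$ and $K \colon J \times Y \to Y$ with the required endpoint values, and the characterisation of equivalences recalled in \S\ref{equivsubsect} then exhibits $f$ as an equivalence of $2$-quasi-categories.

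The argument reduces to bookkeeping; the only moderately delicate point is verifying that the transposition across $\Ho \dashv N$ composed with the exponential adjunction is compatible with evaluation at the endpoints $\{0\}, \{1\} \colon \Theta_2[0] \rightrightarrows J$, ensuring that the prescribed data $(1_X, gf)$ and $(1_Y, fg)$ are transported faithfully in both directions.
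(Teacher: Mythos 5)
Your forward direction is essentially sound: a morphism $J \times X \to X$ transposes to a morphism $J = N\mathbb{I} \to X^X$, and applying the functor $\Ho$ together with the counit isomorphism $\Ho(N\mathbb{I}) \cong \mathbb{I}$ (or composing with the unit $X^X \to N(\Ho(X^X))$ and using full fidelity of $N$) yields a normal pseudofunctor $\mathbb{I} \to \Ho(X^X)$, i.e.\ an adjoint equivalence $1_X \simeq gf$ by Observation \ref{equifibobs}. But note this is not literally ``by the adjunction $\Ho \dashv N$'': that adjunction identifies morphisms $J \to NB$ with pseudofunctors $\mathbb{I} \to B$, i.e.\ it governs maps \emph{into nerves}. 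This matters, because your converse direction rests on ``reversing the transposition'', and the transposition you used is not a bijection. Starting from an adjoint equivalence $\mathbb{I} \to \Ho(X^X)$, what the adjunction $\Ho \dashv N$ produces is a morphism $J \to N(\Ho(X^X))$, not a morphism $J \to X^X$. To obtain the latter you would have to lift along the unit $X^X \to N(\Ho(X^X))$, and there is no formal reason this can be done: the unit is an equivalence only when $X^X$ is $2$-truncated (Theorem \ref{2truncchar}); in general it collapses each hom-quasi-category onto the nerve of its homotopy category. So the step you dismiss as bookkeeping is precisely the nontrivial content of the proposition.

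The missing ingredient is Proposition \ref{isoprop}, and this is exactly how the paper argues. Unwinding both conditions, one must show that there exists a morphism $J \to X^X$ with endpoints $1_X$ and $gf$ (equivalently, by transposing across $\pi^* \dashv \tau^*$ and using Joyal's extension property of isomorphisms along $\Delta[1] \to N_{\Delta}\mathbb{I}$, an isomorphism $1_X \cong gf$ in the underlying quasi-category $\tau^*(X^X)$) if and only if there exists an equivalence $1_X \simeq gf$ in $\Ho(X^X)$, and similarly for $1_Y$ and $fg$. Proposition \ref{isoprop} states precisely that a morphism of the $2$-quasi-category $X^X$ is an isomorphism in $\tau^*(X^X)$ if and only if it is an equivalence in $\Ho(X^X)$, which closes the gap; your argument as written never invokes this (or any substitute for it), and without it the implication from ``biequivalence in $\underline{\smash{\twoqcat}}$'' to ``equivalence of $2$-quasi-categories'' is unproved.
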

\begin{proof}
It suffices to show that, for a pair of morphisms of $2$-quasi-categories $f \colon X \lra Y$ and $g \colon Y \lra X$, there exist isomorphisms $1_X \cong gf$ and $1_Y \cong fg$ in the $2$-quasi-categories $X^X$ and $Y^Y$ if and only if there exist equivalences $1_X \simeq gf$ and $1_Y \simeq fg$ in the homotopy bicategories $\Ho(X^X)$ and $\Ho(Y^Y)$. The result thus follows from Proposition \ref{isoprop}.
\end{proof}

\begin{definition}[triequivalence]
A $\Bicat$-enriched functor $F \colon \mathcal{S} \lra \mathcal{T}$ is said to be a \emph{triequivalence} if it is:
\begin{enumerate}[font=\normalfont, label=(\roman*)]
\item \emph{triessentially surjective on objects}, i.e.\ for every object $T \in \mathcal{T}$, there exists an object $S \in \mathcal{S}$ and a biequivalence $FS \sim T$ in $\mathcal{T}$, and
\item \emph{a biequivalence on hom-bicategories}, i.e.\ for each pair of objects $S,T \in \mathcal{S}$, the normal pseudofunctor $F_{S,T} \colon \mathcal{S}(S,T) \lra \mathcal{T}(FS,FT)$ is a biequivalence of bicategories.
\end{enumerate}
\end{definition}

\begin{definition}[adjoint triequivalence] \label{adjtriequiv}
Let us say that a $\Bicat$-enriched adjunction $F \dashv G \colon \mathcal{T} \lra \mathcal{S}$ is an \emph{adjoint triequivalence} if each component of its unit is a biequivalence in $\mathcal{S}$ and each component of its counit is a biequivalence in $\mathcal{T}$.
\end{definition}

\begin{lemma} \label{triequivlem}
If a $\Bicat$-enriched adjunction $F \dashv G \colon \mathcal{T} \lra \mathcal{S}$ is an adjoint triequivalence, then the $\Bicat$-enriched functors $F \colon \mathcal{S} \lra \mathcal{T}$ and  $G \colon \mathcal{T} \lra \mathcal{S}$ are both triequivalences.
\end{lemma}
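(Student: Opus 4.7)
The plan is to verify the two conditions of a triequivalence directly, using only the defining data of the $\Bicat$-enriched adjunction $F \dashv G \colon \mathcal{T} \lra \mathcal{S}$, with unit $\eta$ and counit $\varepsilon$. Triessential surjectivity of $F$ is immediate: for each $T \in \mathcal{T}$, the hypothesis that $\varepsilon_T \colon FGT \lra T$ is a biequivalence in $\mathcal{T}$ exhibits $T$ as biequivalent to $F(GT)$; symmetrically, $\eta_S \colon S \lra GFS$ witnesses the triessential surjectivity of $G$.

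For the biequivalences on hom-bicategories, the key observation is that any $\Bicat$-enriched functor sends biequivalences to biequivalences, since the data of a biequivalence in the sense of Definition \ref{biequivdef} (a pseudo-inverse together with equivalences $1\simeq gf$ and $1 \simeq fg$ in the appropriate hom-bicategories) is preserved by any morphism of $\Bicat$-enriched categories. In particular, for each object $X$ of a $\Bicat$-enriched category $\mathcal{S}$, the represented $\Bicat$-functor $\mathcal{S}(X,-) \colon \mathcal{S} \lra \underline{\Bicat}$ sends each biequivalence $h \colon A \lra B$ in $\mathcal{S}$ to a biequivalence $h_* \colon \mathcal{S}(X,A) \lra \mathcal{S}(X,B)$ in $\underline{\Bicat}$, i.e.\ to a biequivalence of bicategories; likewise for precomposition $h^*$.

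Now, the $\Bicat$-enriched adjunction $F\dashv G$ provides for each pair of objects $S \in \mathcal{S}$ and $T \in \mathcal{T}$ an isomorphism of bicategories $\mathcal{T}(FS,T) \cong \mathcal{S}(S,GT)$, under which the action of $F$ on hom-bicategories is exhibited as the composite
\begin{equation*}
\mathcal{S}(S,T) \xrightarrow{\ (\eta_T)_*\ } \mathcal{S}(S,GFT) \cong \mathcal{T}(FS,FT),
\end{equation*}
as follows from the usual triangle-identity argument. Since $\eta_T$ is a biequivalence by hypothesis, the previous paragraph shows that $(\eta_T)_*$ is a biequivalence of bicategories, and hence so is $F_{S,T} \colon \mathcal{S}(S,T) \lra \mathcal{T}(FS,FT)$. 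By the same reasoning applied symmetrically, $G_{T_1,T_2} \colon \mathcal{T}(T_1,T_2) \lra \mathcal{S}(GT_1,GT_2)$ factors through precomposition with the biequivalence $\varepsilon_{T_1}$, so it too is a biequivalence of bicategories. This completes the plan.

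The only real point of care, and the potential obstacle, is the justification that $\Bicat$-enriched functors (in particular the represented ones $\mathcal{S}(X,-)$) preserve biequivalences; however, this is a formal consequence of the fact that biequivalences in a $\Bicat$-enriched category are precisely the morphisms admitting ``inverse data up to equivalence'', and any $\Bicat$-enriched functor transports such data along itself. The factorisation of $F_{S,T}$ through $(\eta_T)_*$ is the standard $\Bicat$-enriched version of the fact that the mate of the identity is the unit, and requires no more than a direct verification using the $\Bicat$-enriched naturality of the adjunction isomorphism.
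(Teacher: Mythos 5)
Your proposal is correct and follows essentially the same route as the paper: triessential surjectivity from the unit and counit being biequivalences, and the hom-bicategory condition from factoring $F_{S,T}$ and $G_{A,B}$ through postcomposition with $\eta$ (resp.\ precomposition with $\varepsilon$) followed by the adjunction isomorphism $\theta$. The only difference is that you spell out the step the paper leaves implicit, namely that represented $\Bicat$-enriched functors (hence pre/post-composition) send biequivalences to biequivalences of bicategories, which is a correct and welcome justification.
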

\begin{proof}
If $F \dashv G$ is an adjoint triequivalence, then each component of the unit $\eta_X \colon X \lra GFX$ is a biequivalence in $\mathcal{S}$, and each component of the counit $\varepsilon_A \colon FGA \lra A$ is a biequivalence in $\mathcal{T}$. Thus the $\Bicat$-enriched functors $G$ and $F$ are both triessentially surjective on objects. Moreover, they are both biequivalences on hom-bicategories, since, by standard properties of enriched adjunctions, they are given on hom-bicategories by the composite normal pseudofunctors
\begin{equation*}
\cd{
\mathcal{T}(A,B) \ar[rr]^-{\mathcal{T}(\varepsilon_A,B)}_-{\sim} && \mathcal{T}(FGA,B) \ar[r]^-{\theta}_-{\cong} & \mathcal{S}(GA,GB)
}
\end{equation*}
and
\begin{equation*}
\cd{\mathcal{S}(X,Y) \ar[rr]^-{\mathcal{S}(X,\eta_Y)}_-{\sim} && \mathcal{S}(X,GFY) \ar[r]^-{\theta^{-1}}_-{\cong} & \mathcal{T}(FX,FY)
}
\end{equation*}
respectively,
where $\theta$ denotes the hom-bicategory isomorphism defining the $\Bicat$-enriched adjunction $F \dashv G$. 
\end{proof}

\begin{remark}
One can show by a Yoneda-style argument that if either the left or the right adjoint of a $\Bicat$-enriched adjunction $F\dashv G$ is a triequivalence, then the $\Bicat$-enriched adjunction $F \dashv G$ is an adjoint triequivalence in the sense of Definition \ref{adjtriequiv}.
\end{remark}

We may thus conclude that the $\Bicat$-enriched categories of bicategories and $2$-truncated $2$-quasi-categories are triequivalent.

\begin{theorem}\label{adjtriequivthm}
The $\Bicat$-enriched adjunction \textup{(\ref{honadjobsenr})} restricts to an adjoint triequivalence 
\begin{equation*}
\xymatrix{
\underline{\Bicat} \ar@<-1.5ex>[rr]^-{\hdash}_-N && \ar@<-1.5ex>[ll]_-{\Ho} \underline{\smash{\twoqcat}}_{2\text{-}\mathrm{tr}}
}
\end{equation*}
between the  $\Bicat$-enriched categories of bicategories and $2$-truncated $2$-quasi-categories. Hence the $\Bicat$-enriched functors $N \colon \underline{\Bicat} \lra \underline{\smash{\twoqcat}}_{2\text{-}\mathrm{tr}}$ and $\Ho \colon \underline{\smash{\twoqcat}}_{2\text{-}\mathrm{tr}} \lra \underline{\Bicat}$ are both triequivalences of $\Bicat$-enriched categories. %
\end{theorem}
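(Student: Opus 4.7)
The plan is to verify that the $\Bicat$-enriched adjunction (\ref{honadjobsenr}) restricts to the indicated full $\Bicat$-enriched subcategory and is an adjoint triequivalence in the sense of Definition \ref{adjtriequiv}; the final assertion of the theorem will then follow formally from Lemma \ref{triequivlem}. The restriction is immediate: by Theorem \ref{fibcor} together with Example \ref{nbichoms}, the coherent nerve of any bicategory is a $2$-truncated $2$-quasi-category, so $N$ factors through $\underline{\smash{\twoqcat}}_{2\text{-}\mathrm{tr}}$, and since this subcategory is full, the hom-bicategory isomorphism defining the $\Bicat$-enriched adjunction structure transfers without alteration.

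For the counit condition, I would use that the coherent nerve functor $N \colon \Bicat \lra \twoqcat$ is fully faithful (Corollary \ref{lastcor}), so $\Ho \circ N$ is naturally isomorphic to the identity on $\Bicat$; hence each component of the counit $\varepsilon_B \colon \Ho(NB) \lra B$ is an isomorphism in $\Bicat$, and \emph{a fortiori} a biequivalence in $\underline{\Bicat}$. For the unit condition at a $2$-truncated $2$-quasi-category $X$, Theorem \ref{2truncchar} states exactly that the unit morphism $\eta_X \colon X \lra N(\Ho X)$ is an equivalence of $2$-quasi-categories, and by Proposition \ref{biequivprop} this coincides with $\eta_X$ being a biequivalence in $\underline{\smash{\twoqcat}}$, hence also in its full sub-$\Bicat$-category $\underline{\smash{\twoqcat}}_{2\text{-}\mathrm{tr}}$.

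Having verified both conditions of Definition \ref{adjtriequiv}, the final assertion follows by Lemma \ref{triequivlem}. All of the real content has been established in the preceding sections --- the classification of equivalences of $2$-quasi-categories in Theorem \ref{fundythm}, the construction of the adjunction $\Ho \dashv N$ in Theorem \ref{honadjthm}, and the characterisation of $2$-truncated $2$-quasi-categories in Theorem \ref{2truncchar} --- so this proof is a formal assembly of existing results. The only point that requires modest care, rather than being a genuine obstacle, is confirming that the unit and counit of the ordinary adjunction $\Ho \dashv N$ agree under base change with those of the $\Bicat$-enriched adjunction (\ref{honadjobsenr}); this is automatic from the construction of the enrichment via change of base along $\Ho$ in \S\ref{bicatenrsub}.
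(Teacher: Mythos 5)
Your proof is correct and follows essentially the same route as the paper: restrict the enriched adjunction using the $2$-truncatedness of coherent nerves, identify the counit components as isomorphisms via full fidelity of $N$, identify the unit components as biequivalences via Theorem \ref{2truncchar} and Proposition \ref{biequivprop}, and conclude with Lemma \ref{triequivlem}. The extra remark about compatibility of the ordinary and enriched unit/counit under change of base is a reasonable point of care, but it does not alter the argument.
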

\begin{proof}
Since the coherent nerve of a bicategory is a $2$-truncated $2$-quasi-category, the $\Bicat$-enriched adjunction \textup{(\ref{honadjobsenr})} restricts to one between $\underline{\Bicat}$ and $\underline{\smash{\twoqcat}}_{2\text{-}\mathrm{tr}}$.  
Each component of the counit of this restricted $\Bicat$-enriched adjunction (\ref{honadjobsenr}) is an isomorphism by Theorem \ref{ffthm}, and each component of the unit is a biequivalence by Theorem \ref{2truncchar} and Proposition \ref{biequivprop}. 
Hence the restricted $\Bicat$-enriched adjunction is an adjoint triequivalence. The second statement then follows from Lemma \ref{triequivlem}.
\end{proof}

\section{Ara's conjecture} \label{araconsec}
The primary goal of this section is to prove (Theorem \ref{fibreplthm}) that the coherent nerve $NA$ of a $2$-category $A$ is a fibrant replacement of its strict nerve $N_sA$ in the model structure for $2$-quasi-categories. To prove this result, we will first prove a Quillen equivalence (Theorem \ref{qe10}) between the Hirschowitz--Simpson--Pellissier model structure for Joyal-enriched Segal categories (see \S\ref{hspmodstr}) and Ara's model structure for $2$-quasi-categories, which is of interest in its own right. Using this Quillen equivalence, we will also prove Ara's conjecture (Theorem \ref{araconjthm}) that a $2$-functor is a biequivalence if and only if it is sent by the strict nerve functor $N_s \colon \twocat \lra [\Theta_2^\mathrm{op},\Set]$ to a weak equivalence in the model structure for $2$-quasi-categories. 

We begin by describing the Hirschowitz--Simpson--Pellissier model structure for Joyal-enriched Segal categories. This model structure is an instance of a class of model structures for enriched Segal categories constructed in Simpson's book \cite{MR2883823}, generalising earlier work of Hirschowitz and Simpson on Segal $n$-categories  \cite{hirschsimp} and Pellissier's thesis \cite{pellthesis}. The advantage of this model structure is the following simple description of the weak equivalences between a class of objects more general than the fibrant objects, namely the Joyal-enriched Segal categories. 

\subsection{Equivalences of Joyal-enriched Segal categories}
Let $\tau_0 \colon [\Delta^\mathrm{op},\Set] \lra \Set$ denote the functor that sends a simplicial set to the set of isomorphism classes of objects of its homotopy category. Since this functor preserves pullbacks over discrete objects, and sends weak categorical equivalences to bijections, for any Joyal-enriched Segal category $X$ (see Definition \ref{joysegdef}), the simplicial set
\begin{equation*}
\cd{
\Delta^\mathrm{op} \ar[r]^-{X} & [\Delta^\mathrm{op},\Set] \ar[r]^-{\tau_0} & \Set
}
\end{equation*}
 is isomorphic to the nerve of a category $h(X)$, which is called the \emph{homotopy category} of $X$. 
 
\begin{definition} \label{joysegequivdef}
Let us say that a morphism $f \colon X \lra Y$ of Joyal-enriched Segal categories is an \emph{equivalence \textup{(}of Joyal-enriched Segal categories\textup{)}} if:
\begin{enumerate}[(i)]
\item the functor $h(f) \colon h(X) \lra h(Y)$ is essentially surjective on objects, and
\item  for each pair of objects $x,y \in X$, the morphism $f \colon X(x,y) \lra Y(fx,fy)$ is a weak categorical equivalence.
\end{enumerate}
\end{definition}

\begin{example}[levelwise nerves of $2$-categories] \label{levelwiseex}
Let $N_l \colon \twocat \lra [(\Delta\times\Delta)^\mathrm{op},\Set]$ denote the functor that sends a $2$-category $A$ to its \emph{levelwise nerve} $N_lA$, i.e.\ to the composite
\begin{equation*}
\cd[@C=2.5em]{
\Delta^\mathrm{op} \ar[r]^-{N_{st}A} & \Cat \ar[r]^-N & [\Delta^\mathrm{op},\Set]
}
\end{equation*}
of its standard nerve (Recollection \ref{stnrecall}) and the simplicial nerve functor. It is immediate from the definitions that the levelwise nerve of a $2$-category is a Joyal-enriched Segal category, and  
that a $2$-functor $F \colon A \lra B$ is a biequivalence if and only if the morphism $N_l(F) \colon N_lA \lra N_lB$ is an equivalence of Joyal-enriched Segal categories.
\end{example}

\subsection{The model structure for Joyal-enriched Segal categories}  \label{hspmodstr}
Let $\PCat$ denote the full subcategory of the category of bisimplicial sets consisting of the \emph{precategories}, i.e.\ the bisimplicial sets $X$ whose first column $X_0$ is discrete.  By \cite[Theorem 19.2.1]{MR2883823} applied to Joyal's model structure for quasi-categories, the category $\PCat$ admits a model structure in which the cofibrations are the monomorphisms, and the fibrant objects are the Joyal-enriched Segal categories that are Reedy fibrant as simplicial objects in the model structure for quasi-categories. Importantly, a morphism of Joyal-enriched Segal categories is a weak equivalence in this model structure if and only if it is an equivalence of Joyal-enriched Segal categories (in the sense of Definition \ref{joysegequivdef}). 

We will call this model structure the \emph{\textup{(}Hirschowitz--Simpson--Pellissier\textup{)} model structure for Joyal-enriched Segal categories}. 
Note that any Reedy fibrant replacement (with respect to the model structure for quasi-categories) of a Joyal-enriched Segal category $X$ is moreover a fibrant replacement of $X$ in the model structure for Joyal-enriched Segal categories.

\medskip

Using the results of \S\S\ref{hobicatsec}--\ref{fundythmsec}, we can prove 
a Quillen equivalence between the model structures for Joyal-enriched Segal categories and $2$-quasi-categories. 
In this proof, we will use the following variant of Proposition \ref{welemma}, which involves the horizontal spine inclusions (\S\ref{horspsubsect}).

\begin{proposition} \label{welemma2}
Let $\mathcal{M}$ be a model category and let  $F \colon [\Theta_2^\mathrm{op},\Set] \lra \mathcal{M}$ be a cocontinuous functor that sends monomorphisms to cofibrations. Then $F$ sends the weak equivalences in the model structure for $2$-quasi-categories to weak equivalences in $\mathcal{M}$ if and only if it sends  the following morphisms to weak equivalences in $\mathcal{M}$:
\begin{enumerate}[font=\normalfont,  label=(\roman*)]
\item for each $[n;\bm{m}] \in \Theta_2$, the projection $\mathrm{pr}_2 \colon J \times \Theta_2[n;\bm{m}] \lra \Theta_2[n;\bm{m}]$,
\item for each $[n;\bm{m}] \in \Theta_2$, the horizontal spine inclusion $Sp[n;\bm{m}] \lra \Theta_2[n;\bm{m}]$,
\item for each $[m] \in \Delta$, the spine inclusion $I[1;m] \lra \Theta_2[1;m]$, and
\item the morphism  $j_2 \colon J_2 \lra \Theta_2[1;0]$.
\end{enumerate}
\end{proposition}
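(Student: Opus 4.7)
The plan is to deduce this from Proposition \ref{welemma} by observing that the only difference between the two recognition principles is the replacement in condition (ii) of the ordinary spine inclusions $I[n;\bm{m}] \lra \Theta_2[n;\bm{m}]$ by the horizontal spine inclusions $Sp[n;\bm{m}] \lra \Theta_2[n;\bm{m}]$, with the length-one spine inclusions retained as a separate hypothesis (iii).

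For necessity, each of the listed morphisms is a weak equivalence in Ara's model structure for $2$-quasi-categories: (i) and (iv) by Proposition \ref{welemma}, (iii) by the very definition of that model structure, and (ii) by Lemma \ref{horsplemma}. For sufficiency, by Proposition \ref{welemma} it suffices to show that $F$ sends every spine inclusion $I[n;\bm{m}] \lra \Theta_2[n;\bm{m}]$ to a weak equivalence. I would factor this as
\begin{equation*}
I[n;\bm{m}] \lra Sp[n;\bm{m}] \lra \Theta_2[n;\bm{m}],
\end{equation*}
so that hypothesis (ii) handles the second map and the two-out-of-three property reduces the problem to the first.

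Since the left adjoint $g_! \colon [\mathbb{G}_2^\mathrm{op},\Set] \lra [\Theta_2^\mathrm{op},\Set]$ is cocontinuous and the generating $2$-graph $(n;\bm{m})$ is the wide pushout of the hom-graphs $(1;m_1),\ldots,(1;m_n)$ glued along copies of the vertex $(0)$, the spine $I[n;\bm{m}] = g_!(n;\bm{m})$ is the wide pushout of $I[1;m_1],\ldots,I[1;m_n]$ glued along copies of $\Theta_2[0]$; by definition, $Sp[n;\bm{m}]$ admits the same presentation with each $I[1;m_i]$ replaced by $\Theta_2[1;m_i]$, and the comparison $I[n;\bm{m}] \lra Sp[n;\bm{m}]$ is induced from the length-one spine inclusions at each node. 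Applying the cocontinuous functor $F$ to this presentation, and incrementally replacing $F(I[1;m_k])$ by $F(\Theta_2[1;m_k])$ one index at a time, exhibits $F(I[n;\bm{m}] \lra Sp[n;\bm{m}])$ as a finite composite of pushouts of the morphisms $F(I[1;m_k] \lra \Theta_2[1;m_k])$. Each of these is a monomorphism sent to a cofibration by hypothesis on $F$, and a weak equivalence by hypothesis (iii), so is a trivial cofibration in $\mathcal{M}$; stability of trivial cofibrations under pushout and composition then forces the composite to be a trivial cofibration, and in particular a weak equivalence.

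The argument presents no real obstacle; the only step requiring any care is the iterated pushout decomposition of the comparison morphism, which is routine once the wide pushout presentations of $I[n;\bm{m}]$ and $Sp[n;\bm{m}]$ are in hand.
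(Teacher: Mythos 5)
Your proof is correct, and its overall route is the same as the paper's: reduce to Proposition \ref{welemma} by factoring each spine inclusion as $I[n;\bm{m}] \lra Sp[n;\bm{m}] \lra \Theta_2[n;\bm{m}]$, so that everything comes down to showing $F$ sends the first map to a weak equivalence. The difference lies in how that step is justified. The paper simply appeals to the proof of Lemma \ref{horsplemma}: there the comparison $I[n;\bm{m}] \lra Sp[n;\bm{m}]$ is the colimit of a levelwise weak equivalence between zigzag diagrams whose connecting maps are monomorphisms, and since $F$ is cocontinuous and sends monomorphisms to cofibrations, the same gluing-lemma argument runs after applying $F$ in $\mathcal{M}$. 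You instead refine the zigzag presentation into an iterated pushout: replacing $I[1;m_k]$ by $\Theta_2[1;m_k]$ one node at a time exhibits $I[n;\bm{m}] \lra Sp[n;\bm{m}]$ as a finite composite of cobase changes of the length-one spine inclusions (this is legitimate because each node is a sink of the zigzag index category, so the replacement is computed by a pushout against the old colimit), and after applying $F$ each of these is a trivial cofibration by the hypotheses on $F$ together with (iii). This buys a slightly more self-contained argument -- it uses only closure of trivial cofibrations under pushout and composition, with no gluing-lemma or cofibrancy input in $\mathcal{M}$ -- at the cost of spelling out the pushout decomposition that the paper leaves as ``evident''. Your necessity direction agrees with the paper's: the listed morphisms are weak equivalences in Ara's model structure by Lemma \ref{horsplemma}, the definition of the model structure, and (applying the necessity half of Proposition \ref{welemma}, e.g.\ to the identity functor) for the projections and $j_2$.
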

\begin{proof}
It is evident from the proof of Lemma \ref{horsplemma} that the functor $F$ sends all spine inclusions $I[n;\bm{m}] \lra \Theta_2[n;\bm{m}]$ to weak equivalences if and only if it sends all ``vertical'' spine inclusions $I[1;m] \lra \Theta_2[1;m]$ and all horizontal spine inclusions $Sp[n;\bm{m}] \lra \Theta_2[n;\bm{m}]$ to weak equivalences. The result then follows from Proposition \ref{welemma}.
\end{proof}

\begin{observation}[preservation of underlying enriched graphs] \label{graphpresobs}
Let $d^* \colon [\Theta_2^\mathrm{op},\Set] \lra \PCat$ denote the functor that sends a $\Theta_2$-set $U$ to its underlying bisimplicial set $d^*(U)$ (see \S\ref{undbisimp}), which is a precategory. It is immediate from the definitions that this functor preserves the underlying simplicially enriched graph of a $\Theta_2$-set $U$; that is, the $\Theta_2$-set $U$ and the precategory $d^*(U)$ have the same set of objects $U_0$, and for each pair of objects $x,y \in U_0$, the hom-simplicial-sets $\Hom_U(x,y)$ and $(d^*U)(x,y)$ coincide. 

The right adjoint functor $d_* \colon \PCat \lra [\Theta_2^\mathrm{op},\Set]$ (defined for precategories as it is for bisimplicial sets) also preserves underlying simplicially enriched graphs. This follows by adjointness from the observations that the objects of a precategory $X$ are in natural bijection with morphisms from the terminal precategory, i.e.\ $d^*(\Theta_2[0])$, to $X$, and that, for each pair of objects $x,y \in X$, the $m$-simplices of the hom-simplicial-set $X(x,y)$ are in natural bijection with the endpoint-preserving morphisms from the standard nerve of the two-object suspension of $\Delta[m]$, i.e.\ the precategory $d^*(\Theta_2[1;m])$, to $X$. 
\end{observation}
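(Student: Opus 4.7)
The observation comprises two claims. The plan is to verify the first by a direct calculation unpacking the definitions, and to derive the second formally from the first by means of the adjunction $d^{*}\dashv d_{*}$, applied to two specific representable $\Theta_{2}$-sets.

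For the $d^{*}$ claim, from the definition of $d\colon\Delta\times\Delta\to\Theta_{2}$ one reads off $d([0],[m])=[0]$ and $d([1],[m])=[1;m]$. Hence for any $\Theta_{2}$-set $U$ we have $d^{*}(U)_{0,m}=U_{0}$, a set independent of $m$, so the $0$th column $d^{*}(U)_{0}$ is the constant (hence discrete) simplicial set on $U_{0}$; thus $d^{*}(U)$ is a precategory whose object set is $U_{0}$. The $1$st column satisfies $d^{*}(U)_{1,m}=U_{1(m)}$, and its pair of boundary morphisms into $d^{*}(U)_{0}$ is induced by $(\delta^{0},\delta^{1})\colon[0]+[0]\to[1;m]$ in $\Theta_{2}$, i.e.\ by the source and target operators of $U$. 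Consequently, for each pair $x,y\in U_{0}$, the fibre of $d^{*}(U)_{1}\to U_{0}\times U_{0}$ over $(x,y)$ agrees in every simplicial degree with $\Hom_{U}(x,y)$ as defined in \S\ref{susphomadjsubsect}.

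For the $d_{*}$ claim, the adjunction $d^{*}\dashv d_{*}$ gives, for each precategory $X$, a natural isomorphism $(d_{*}X)_{n(\bm{m})}\cong\Hom_{\PCat}(d^{*}\Theta_{2}[n;\bm{m}], X)$. Setting $[n;\bm{m}]=[0]$, the first claim identifies $d^{*}\Theta_{2}[0]$ with the terminal precategory (its values in every bi-degree are singletons), so $(d_{*}X)_{0}\cong X_{0}$. For the hom-simplicial-sets, a pair of objects $x,y\in(d_{*}X)_{0}=X_{0}$ corresponds to a bi-pointed $\Theta_{2}$-set $(x,y)\colon\partial\Theta_{2}[1;0]\to d_{*}X$, and the $m$-simplices of $\Hom_{d_{*}X}(x,y)$ are by definition the fillers $\Theta_{2}[1;m]\to d_{*}X$ lying over $(x,y)$. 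By the adjunction, these correspond bijectively to morphisms $d^{*}\Theta_{2}[1;m]\to X$ in $\PCat$ preserving endpoints.

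The only substantive step, and the main obstacle, is thus the explicit identification of $d^{*}\Theta_{2}[1;m]$ with the standard nerve of the two-object suspension of $\Delta[m]$, namely the precategory with objects $\bot,\top$ and a single non-trivial hom-simplicial-set $\Delta[m]$ from $\bot$ to $\top$. Using the description of morphisms in $\Theta_{2}$ from \S2.3, a morphism $[n;k,\ldots,k]\to[1;m]$ amounts to a monotone map $\phi\colon[n]\to[1]$ in $\Delta$ together with, for each (at most one) index where $\phi$ strictly increases, a morphism $[k]\to[m]$ in $\Delta$. This is precisely the data of a monotone function $\{0,\ldots,n\}\to\{\bot,\top\}$ together with a $k$-simplex of $\Delta[m]$ labelling its unique jump (when present), which is the $(n,k)$-bi-simplex data of the standard nerve of the suspension. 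Naturality in $n$ and $k$ and preservation of endpoints are straightforward from this description, yielding the desired bijection between the $m$-simplices of $X(x,y)$ and the endpoint-preserving morphisms $d^{*}\Theta_{2}[1;m]\to X$, and thus completing the derivation.
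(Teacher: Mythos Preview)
Your proposal is correct and follows essentially the same approach as the paper: the observation's justification is already contained in its statement, and you have simply unpacked the two assertions (``immediate from the definitions'' and ``follows by adjointness'') with the explicit calculations the paper leaves implicit, including the identification of $d^{*}\Theta_{2}[1;m]$ with the nerve of the two-object suspension of $\Delta[m]$.
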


\begin{theorem} \label{qe10}
The adjunction
\begin{equation*}
\xymatrix{
\PCat \ar@<-1.5ex>[rr]^-{\hdash}_-{d_*} && \ar@<-1.5ex>[ll]_-{d^*} [\Theta_2^\mathrm{op},\Set]
}
\end{equation*}
is a Quillen equivalence between the Hirschowitz--Simpson--Pellissier model structure for Joyal-enriched Segal categories and Ara's model structure for $2$-quasi-categories.
\end{theorem}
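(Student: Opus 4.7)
The proof proceeds in two stages: first, establish that $d^{*} \dashv d_{*}$ is a Quillen adjunction via Proposition \ref{welemma2}; second, verify directly that the derived unit and counit are weak equivalences, crucially using Observation \ref{graphpresobs}, which says that both $d^{*}$ and $d_{*}$ preserve underlying simplicially enriched graphs.

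For the Quillen adjunction, $d^{*}$ is cocontinuous (as a left adjoint) and clearly preserves monomorphisms, so by Proposition \ref{welemma2} it suffices to check that $d^{*}$ sends each of the morphisms (i)--(iv) to a weak equivalence in HSP. A direct computation shows that $d^{*}$ carries the $\Theta_{2}$-suspension of a simplicial set $S$ to the $\PCat$-suspension of $S$ (the precategory with two objects whose only non-trivial hom is $S$, which is a Joyal-enriched Segal category). Thus (iii) and (iv) reduce to the statements that the $\PCat$-suspensions of the weak categorical equivalences $I[m] \to \Delta[m]$ and $N(\mathbb{I}) \to \Delta[0]$ are equivalences of Joyal-enriched Segal categories, which follow from Definition \ref{joysegequivdef}. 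For (i), one computes that $d^{*}(J)$ is the precategory with two objects and a single morphism between each pair (hence contractible in HSP), so the projection onto $d^{*}(\Theta_{2}[n;\bm{m}])$ is an equivalence of Joyal-enriched Segal categories. Case (ii) is the main obstacle: $d^{*}(Sp[n;\bm{m}])$ is generally \emph{not} a Joyal-enriched Segal category (as it lacks non-adjacent composable sequences of morphisms), so Definition \ref{joysegequivdef} does not apply directly; one must identify $d^{*}$ of the horizontal spine inclusion with a Segal-type trivial cofibration built into the HSP model structure by Bousfield localisation.

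For the Quillen equivalence, I first verify the derived counit. Let $Y$ be fibrant in HSP. Since $d^{*}$ is left Quillen, it sends generating trivial cofibrations for $2$-quasi-categories to trivial cofibrations in HSP, against which $Y$ has the right lifting property; by adjunction, $d_{*}Y$ has the right lifting property against the generating trivial cofibrations for $2$-quasi-categories, so $d_{*}Y$ is a $2$-quasi-category. Then $d^{*}d_{*}Y$ is a Joyal-enriched Segal category by Theorem \ref{segalthm}, and by Observation \ref{graphpresobs} the counit $d^{*}d_{*}Y \to Y$ is the identity on underlying simplicially enriched graphs, hence a weak equivalence of Joyal-enriched Segal categories by Definition \ref{joysegequivdef}.

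For the derived unit, let $X$ be a $2$-quasi-category (the case of arbitrary $X$ follows by the two-out-of-three property after fibrant replacement in the $2$-quasi-category model structure), and let $d^{*}X \hookrightarrow R$ be a fibrant replacement in HSP. The derived unit $X \to d_{*}R$ is adjunct to this trivial cofibration, and by Observation \ref{graphpresobs} it acts on underlying graphs as the canonical map $(X_{0}, \Hom_{X}(x,y)) \to (R_{0}, R(x,y))$. Since $d^{*}X \to R$ is an equivalence of Joyal-enriched Segal categories, this map is fully faithful (a weak categorical equivalence on each hom-quasi-category), and essentially surjective on objects -- applying Proposition \ref{isoprop} to interpret isomorphism classes in $h(R)$ as equivalence classes in $\Ho(d_{*}R)$. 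By Theorem \ref{fundythm}, it is therefore an equivalence of $2$-quasi-categories.
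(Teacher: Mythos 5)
Your proposal is correct in outline and follows essentially the same route as the paper: the Quillen adjunction is checked via Proposition \ref{welemma2}, and the derived counit and unit are handled using Theorem \ref{segalthm}, Observation \ref{graphpresobs} and Theorem \ref{fundythm}, with the characterisation of weak equivalences between Joyal-enriched Segal categories doing the rest. Your treatment of cases (i), (iii), (iv) by suspension computations is a harmless variant of the paper's observation that $d^{*}$ carries strict nerves of $2$-categories to levelwise nerves.

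Two steps, however, are thinner than what is actually needed. First, case (ii) is not resolved by your proposal: the codomain $d^{*}(\Theta_2[n;\bm{m}])$ is the levelwise nerve of $[n;\bm{m}]$, whose nontrivial homs are products $\Delta[m_{i+1}]\times\cdots\times\Delta[m_j]$, and this is not literally one of the localising objects of the Hirschowitz--Simpson--Pellissier construction, so "identify with a Segal-type generating trivial cofibration" does not come for free; the required input is Simpson's generators-and-relations theorem \cite[Theorem 16.1.2]{MR2883823}, which is exactly what the paper cites at this point. Second, in the derived unit you take an arbitrary fibrant replacement $d^{*}X \lra R$ and therefore must prove essential surjectivity of $X \lra d_{*}R$; passing from condition (i) of Definition \ref{joysegequivdef} (essential surjectivity of $h(d^{*}X)\lra h(R)$) to essential surjectivity in the sense of Definition \ref{esodef} requires identifying $h(R)$, composition included, with the classifying category of the homotopy bicategory $\Ho(d_{*}R)$ (via the counit $d^{*}d_{*}R \lra R$, which is bijective on objects and an isomorphism on homs); Proposition \ref{isoprop} alone does not supply this comparison, though the check is routine. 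The paper sidesteps this entirely by choosing the fibrant replacement of $d^{*}(X)$ to be bijective on objects, so that the composite $X \lra d_{*}(Y)$ is bijective on objects and only full faithfulness needs verifying before invoking Theorem \ref{fundythm}. A last small point: that the counit is a weak equivalence in the HSP model structure uses the characterisation of weak equivalences between Joyal-enriched Segal categories recorded in \S\ref{hspmodstr}, not Definition \ref{joysegequivdef} by itself.
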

\begin{proof}
The left adjoint $d^*$ evidently preserves monomorphisms, i.e.\ cofibrations. To prove that the adjunction is a Quillen adjunction, it therefore remains to verify the hypotheses of Proposition \ref{welemma2}. Since the functor $d^*$ sends the strict nerve of a $2$-category to its levelwise nerve, it follows from Example \ref{levelwiseex} that the functor $d^*$ sends the morphisms (i) and (iv) in Proposition \ref{welemma2} to equivalences of Joyal-enriched Segal categories. Also, for each $[m] \in \Delta$, the spine inclusion $I[1;m] \lra \Theta_2[1;m]$ is sent by $d^*$ to a bijective-on-objects equivalence of Joyal-enriched Segal categories (which is given on the only non-trivial hom-simplicial-set by the spine inclusion $I[m] \lra \Delta[m]$, which is a weak categorical equivalence). Furthermore, by \cite[Theorem 16.1.2]{MR2883823}, the horizontal spine inclusions are sent by $d^*$ to weak equivalences in the model structure for Joyal-enriched Segal categories. This proves that the adjunction $d^* \dashv d_*$ is a Quillen adjunction.

To prove that the adjunction is moreover a Quillen equivalence, it remains to show that its derived unit and derived counit are isomorphisms. Let $X$ be a fibrant object in the model structure for Joyal-enriched Segal categories.  Since $d^* \dashv d_*$ is a Quillen adjunction, the $\Theta_2$-set $d_*(X)$ is a $2$-quasi-category, whence by Theorem \ref{segalthm}, the precategory $d^*(d_*(X))$ is a Joyal-enriched Segal category.  Hence the counit morphism $d^*(d_*(X)) \lra X$ is a morphism of Joyal-enriched Segal categories, and is moreover an equivalence of such, since it is bijective on objects and an isomorphism on hom-simplicial-sets by Observation \ref{graphpresobs}, and hence is a weak equivalence. Thus the derived counit of the Quillen adjunction $d^* \dashv d_*$ is an isomorphism.

Now, let $X$ be a $2$-quasi-category, and let $r \colon d^*(X) \lra Y$ be a fibrant replacement of $d^*(X)$ in the model structure for Joyal-enriched Segal categories, which we may suppose to be bijective on objects. Since $d^*(X)$ is a Joyal-enriched Segal category by Theorem \ref{segalthm}, this morphism $r$ is an equivalence of Joyal-enriched Segal categories. It then follows by Observation \ref{graphpresobs} that the composite morphism
\begin{equation*}
\cd[@C=2.5em]{
X\ar[r]^-{\eta_X} & d_*(d^*(X)) \ar[r]^-{d_*(r)} & d_*(Y)
}
\end{equation*}
is a bijective-on-objects and fully faithful morphism of $2$-quasi-categories, which is therefore an equivalence of $2$-quasi-categories by Theorem \ref{fundythm}. Hence the derived unit of this Quillen adjunction $d^* \dashv d_*$ is an isomorphism. This completes the proof that the adjunction $d^* \dashv d_*$ is a Quillen equivalence.
\end{proof}

\begin{corollary} \label{createcor}
A morphism of $\Theta_2$-sets is a weak equivalence in the model structure for $2$-quasi-categories if and only if it is sent by the functor $d^* \colon [\Theta_2^\mathrm{op},\Set] \lra \PCat$ to a weak equivalence in the model structure for Joyal-enriched Segal categories.
\end{corollary}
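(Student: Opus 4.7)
The plan is to deduce Corollary \ref{createcor} from the Quillen equivalence of Theorem \ref{qe10} together with the intrinsic characterisation of equivalences of $2$-quasi-categories in Theorem \ref{fundythm}. First, since every $\Theta_2$-set is cofibrant in the model structure for $2$-quasi-categories, Ken Brown's lemma applied to the left Quillen functor $d^*$ from Theorem \ref{qe10} gives the forward implication immediately:\ $d^*$ preserves all weak equivalences.

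For the converse, suppose $d^*(f)$ is a weak equivalence in the model structure for Joyal-enriched Segal categories. I would first reduce to the case where $f$ is a morphism between $2$-quasi-categories. This is done by taking fibrant replacements $X \lra X'$ and $Y \lra Y'$ in the model structure for $2$-quasi-categories (which are weak equivalences of $\Theta_2$-sets), extending $f$ to a morphism $f' \colon X' \lra Y'$ between $2$-quasi-categories by the lifting property of trivial cofibrations against fibrant objects, and using the forward direction applied to the fibrant replacements together with two-out-of-three to reduce the problem to showing that $f'$ is an equivalence of $2$-quasi-categories.

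Next, I would use Theorem \ref{fundythm} to check that $f' \colon X' \lra Y'$ is essentially surjective on objects and fully faithful. Full fidelity follows cleanly from Observation \ref{graphpresobs}:\ the functor $d^*$ preserves underlying simplicially enriched graphs, so each induced morphism $f' \colon \Hom_{X'}(x,y) \lra \Hom_{Y'}(f'x, f'y)$ agrees with the corresponding morphism $d^*(f')(x,y)$ between hom-simplicial-sets, which is a weak categorical equivalence by the assumption that $d^*(f')$ is an equivalence of Joyal-enriched Segal categories.

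The main obstacle will be verifying essential surjectivity on objects for $f'$ from the assumption that the induced functor $h(d^*(f'))$ on homotopy categories is essentially surjective. The key lemma here is that, for a $2$-quasi-category $Z$, two objects $z, z' \in Z$ are isomorphic in the homotopy category $h(d^*(Z))$ if and only if they are equivalent in the homotopy bicategory $\Ho(Z)$, which by Proposition \ref{isoprop} coincides with being isomorphic as objects of the $2$-quasi-category $Z$. Unwinding the definition of $h$, an isomorphism from $z$ to $z'$ in $h(d^*(Z))$ amounts to a pair of morphisms $z \lra z'$ and $z' \lra z$ in $Z$ together with invertible $2$-cells in the hom-quasi-categories witnessing that their composites are isomorphic to the identities -- that is, precisely an equivalence in $\Ho(Z)$. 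Applying this observation to $X'$ and $Y'$ converts essential surjectivity of $h(d^*(f'))$ into essential surjectivity of $f'$ in the sense of Definition \ref{esodef}, which by Theorem \ref{fundythm} completes the proof.
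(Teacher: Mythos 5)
Your proof is correct, but it takes a noticeably longer route than the paper. The paper's own proof is a one-liner: since every $\Theta_2$-set is cofibrant, the corollary is an immediate consequence of Theorem \ref{qe10}, via the standard fact that the left adjoint of a Quillen equivalence preserves and reflects weak equivalences between cofibrant objects. Your forward direction (Ken Brown's lemma) coincides with this, but for the converse you re-prove the reflection property by hand: fibrant replacement, Theorem \ref{segalthm} to know $d^*(X')$ and $d^*(Y')$ are Joyal-enriched Segal categories, Observation \ref{graphpresobs} for full fidelity, and an identification of isomorphisms in $h(d^*(Z))$ with equivalences in $\Ho(Z)$ (hence, by Proposition \ref{isoprop} and Observation \ref{esoobs}, with isomorphisms in $Z$) for essential surjectivity, before concluding with Theorem \ref{fundythm}. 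This is essentially a re-run of the argument the paper already made inside the proof of Theorem \ref{qe10} when showing the derived unit is an isomorphism, so you are partially re-deriving the content of the theorem you cite rather than using it as a black box. What your version buys is an explicit, intrinsic explanation of why reflection holds (and your key lemma relating $h(d^*(Z))$ to the classifying category of $\Ho(Z)$ is a nice observation, though you should say a word about why the two composition laws agree, namely that $\tau_0$ inverts the Segal maps so the composite is well defined independently of the chosen filler); what the paper's version buys is brevity and no need for the fibrant-replacement reduction or the homotopy-category bookkeeping.
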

\begin{proof}
Since every $\Theta_2$-set is cofibrant in the model structure for $2$-quasi-categories, this is a consequence of Theorem \ref{qe10}.
\end{proof}

\begin{remark}
The Quillen equivalence of Theorem \ref{qe10} can be thought of as an ``unenriched'' analogue of the simplicially enriched Quillen equivalence of \cite[Corollary 7.1]{brcomp} between complete Segal objects in complete Segal spaces and Rezk $\Theta_2$-spaces. However, we have been unable to deduce the results of this section from the results of \cite{brcomp}; our proofs use not only the Quillen equivalence of Theorem \ref{qe10}, but also properties of the model structures for enriched Segal categories (in particular, the characterisation of weak equivalences between not necessarily fibrant enriched Segal categories), whose analogues for complete Segal objects are not present in \cite{brcomp}.
\end{remark}

%

In \S\ref{nervefunsec}, we motivated the coherent nerve construction as a solution for the problem that the strict nerve of a $2$-category is not in general fibrant in the model structure for $2$-quasi-categories. Using Corollary \ref{createcor}, we can prove that the coherent nerve of a $2$-category is in fact a fibrant replacement of its strict nerve in this model structure. Note that, since every $2$-functor is a normal pseudofunctor, there is a canonical inclusion $N_sA \lra NA$ for every $2$-category $A$.

\begin{theorem} \label{fibreplthm}
Let $A$ be a $2$-category.  Then the inclusion $N_sA \lra NA$ of the strict nerve of $A$ into the coherent nerve of $A$ is a weak equivalence in the model structure for $2$-quasi-categories. 
\end{theorem}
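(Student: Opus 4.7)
The plan is to invoke Corollary \ref{createcor}, which reduces the problem to showing that the induced morphism of precategories $d^*(N_sA) \lra d^*(NA)$ is a weak equivalence in the Hirschowitz--Simpson--Pellissier model structure for Joyal-enriched Segal categories. Both source and target are Joyal-enriched Segal categories: the source, being $N_lA$, by Example \ref{levelwiseex}; and the target by Theorem \ref{segalthm} applied to the $2$-quasi-category $NA$ (Corollary \ref{fibcor}). Hence, by Definition \ref{joysegequivdef}, it suffices to verify that $d^*(N_sA) \lra d^*(NA)$ is essentially surjective on homotopy categories and a weak categorical equivalence on each hom-simplicial-set.

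Essential surjectivity on homotopy categories is immediate, since the inclusion $N_sA \lra NA$ is the identity on $[0]$-elements, whence $d^*(N_sA) \lra d^*(NA)$ is bijective on objects. By Observation \ref{graphpresobs}, the functor $d^*$ preserves underlying simplicially enriched graphs, so for each pair of objects $a,b \in A$ the induced morphism on hom-simplicial-sets agrees with the inclusion $\Hom_{N_sA}(a,b) \lra \Hom_{NA}(a,b)$ whose domain consists of strict $2$-functors $[1;m] \lra A$ with $0 \mapsto a$, $1 \mapsto b$, and whose codomain consists of normal pseudofunctors $[1;m] \lra A$ with the same endpoints.

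The crux of the proof, and the step requiring most attention, is then the observation that this inclusion is an \emph{isomorphism}, i.e.\ that every normal pseudofunctor $F \colon [1;m] \lra A$ is already a strict $2$-functor. The argument is that in $[1;m]$ every composable pair of $1$-cells involves at least one identity, since the non-identity $1$-cells all run from $0$ to $1$ and so are pairwise non-composable. For such a pair, the unit axioms of a normal pseudofunctor, together with the fact that the left and right unit constraints in the $2$-category $A$ are identity $2$-cells, force the corresponding composition constraint $\varphi_{g,f}$ of $F$ to be an identity. Hence all composition constraints of $F$ are identities, and $F$ is strict. Combining this isomorphism with the preceding two paragraphs yields that $d^*(N_sA) \lra d^*(NA)$ is an equivalence of Joyal-enriched Segal categories, and Corollary \ref{createcor} concludes.
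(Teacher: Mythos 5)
Your proposal is correct and follows the paper's own argument: reduce via Corollary \ref{createcor} to showing $d^*(N_sA) \lra d^*(NA)$ is a weak equivalence of Joyal-enriched Segal categories, which holds because it is bijective on objects and an isomorphism on hom-simplicial-sets. Your extra step making the hom-isomorphism explicit (every normal pseudofunctor $[1;m] \lra A$ is strict, since all composable pairs in $[1;m]$ involve an identity and the unit axioms then kill the composition constraints) is exactly the verification the paper leaves implicit via Example \ref{nbichoms}.
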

\begin{proof}
The induced morphism of Joyal-enriched Segal categories $d^*(N_sA) \lra d^*(NA)$ is bijective on objects and an isomorphism on hom-quasi-categories, and is therefore a weak equivalence in the model structure for Joyal-enriched Segal categories. The result thus follows from Corollary \ref{createcor}.
\end{proof}


To conclude this section, we make another application of Corollary \ref{createcor} to prove Ara's conjecture (see \cite[\S7]{MR3350089}) that the strict nerve functor $N_s \colon \twocat \lra [\Theta_2^\mathrm{op},\Set]$ preserves and reflects weak equivalences. 

\begin{theorem}[Ara's conjecture] \label{araconjthm}
A $2$-functor is a biequivalence if and only if it is sent by the strict nerve functor $N_s \colon \twocat \lra [\Theta_2^\mathrm{op},\Set]$ to a weak equivalence in the model structure for $2$-quasi-categories.
\end{theorem}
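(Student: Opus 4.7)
The plan is to assemble three ingredients that are already in place. The first is Example \ref{levelwiseex}, which tells us two things at once: the levelwise nerve $N_l A$ of any $2$-category $A$ is a Joyal-enriched Segal category, and a $2$-functor $F \colon A \lra B$ is a biequivalence if and only if $N_l(F)$ is an equivalence of Joyal-enriched Segal categories in the sense of Definition \ref{joysegequivdef}. The second is an observation made in passing during the proof of Theorem \ref{qe10}, namely the identity $d^* \circ N_s \cong N_l$; this is evident from the construction of the standard nerve in Recollection \ref{stnrecall} together with the fact that $d^*(\Theta_2[n;\bm{m}])_m = \Theta_2([n;m,\ldots,m],[n;\bm{m}])$. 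The third is the characterisation of weak equivalences in \S\ref{hspmodstr}: the Hirschowitz--Simpson--Pellissier model structure was set up so that a morphism between Joyal-enriched Segal categories is a weak equivalence if and only if it is an equivalence of Joyal-enriched Segal categories.

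Putting these together, the proof will consist of the following chain of equivalences, for a $2$-functor $F \colon A \lra B$:
\begin{equation*}
\text{$F$ is a biequivalence} \iff \text{$N_l(F)$ is an equivalence of Joyal-enriched Segal categories}
\end{equation*}
(by Example \ref{levelwiseex}), which in turn holds if and only if $N_l(F) \cong d^*(N_s(F))$ is a weak equivalence in the model structure for Joyal-enriched Segal categories (using that both $N_lA$ and $N_lB$ are Joyal-enriched Segal categories), which by Corollary \ref{createcor} is equivalent to $N_s(F)$ being a weak equivalence in the model structure for $2$-quasi-categories.

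There is no real obstacle: the substantive work was absorbed into Theorem \ref{qe10} and its Corollary \ref{createcor}, so the conjecture falls out as a direct corollary. The only thing worth taking care to state explicitly is why the middle equivalence goes through in both directions, but this is immediate from the fact that the characterisation of weak equivalences between Joyal-enriched Segal categories in \S\ref{hspmodstr} applies to morphisms between (not necessarily fibrant) Joyal-enriched Segal categories, precisely so as to accommodate objects like $N_lA$ that need not be Reedy fibrant.
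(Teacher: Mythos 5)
Your proposal is correct and is essentially the paper's own proof: the paper likewise combines Example \ref{levelwiseex}, the factorisation $N_l \cong d^* \circ N_s$ (via Recollection \ref{stnrecall}), and Corollary \ref{createcor}, with your explicit appeal to the characterisation of weak equivalences between (not necessarily fibrant) Joyal-enriched Segal categories from \S\ref{hspmodstr} being exactly the point the paper leaves implicit.
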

\begin{proof}
It is immediate from the definitions that a $2$-functor is a biequivalence if and only if it is sent by the levelwise nerve functor $N_l \colon \twocat \lra \PCat$ to an equivalence of Joyal-enriched Segal categories. The result therefore follows from Corollary \ref{createcor} and the observation (cf.\ Recollection \ref{stnrecall}) that the levelwise nerve functor is naturally isomorphic to the composite
\begin{equation*}
\cd{
\twocat \ar[r]^-{N_s} & [\Theta_2^\mathrm{op},\Set] \ar[r]^-{d^*} & \PCat
}
\end{equation*}
of the strict nerve functor and the underlying bisimplicial set functor. 

Alternatively, this result can be deduced directly from Theorems \ref{firstqadjthm} and \ref{fibreplthm}.
\end{proof}

\section{From quasi-categories to $2$-quasi-categories} \label{sec1to2}
Recall (Observation \ref{nervecatrecall}) that the fully faithful functor $\pi^* \colon [\Delta^\mathrm{op},\Set] \lra [\Theta_2^\mathrm{op},\Set]$ sends the simplicial nerve of a category, which is a quasi-category, to its $2$-cellular nerve, which is a $2$-quasi-category. However, it is far from the case that this functor sends all quasi-categories to $2$-quasi-categories; in fact, as we prove in Proposition \ref{onlycats}, the nerves of categories turn out to be the only examples. 

The goal of this final section is to construct, for each quasi-category $X$, an explicit fibrant replacement of $\pi^*(X)$ in Ara's model structure for $2$-quasi-categories (see Theorem \ref{finalfinalthm}). (We note that the proof of this result will use, among other things, the results of \S\ref{mainthmsec}.) Along the way, we prove two Quillen equivalences (one in each direction; see Theorem \ref{finalqe1} and Corollary \ref{finalqe2}) between Joyal's model structure for quasi-categories and a model structure for ``locally Kan'' $2$-quasi-categories (Definition \ref{lockandef}), which we construct  as a Bousfield localisation of Ara's model structure for $2$-quasi-categories (Proposition \ref{lockanmodstr}).

\begin{proposition} \label{onlycats}
Let $X$ be a simplicial set. Then $\pi^{\ast} (X)$ is a $2$-quasi-category if and only if $X$ is isomorphic to the simplicial nerve of a category.
\end{proposition}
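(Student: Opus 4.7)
Proof proposal.

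The forward direction is essentially already in the paper: if $X \cong NC$ for a small category $C$, then by Observation \ref{nervecatrecall} we have $\pi^*(X) \cong \pi^*(NC) = N(C)$, where the right-hand side is the coherent nerve of $C$ regarded as a (rigid, locally discrete) $2$-category. By Corollary \ref{fibcor}, the coherent nerve of any bicategory is a $2$-quasi-category, so $\pi^*(X)$ is a $2$-quasi-category.

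For the converse, the plan is to feed $\pi^*(X)$ into Theorem \ref{segalthm} and read off the Segal condition for $X$. Assume $\pi^*(X)$ is a $2$-quasi-category. By Theorem \ref{segalthm}, its underlying bisimplicial set $d^*(\pi^*(X))$ is a quasi-category-enriched Segal category. I would first observe that this underlying bisimplicial set can be computed explicitly: since $d([n],[m]) = [n;m,\ldots,m]$ and $\pi^*(X)_{n(m,\ldots,m)} = X_n$ for every $[n],[m] \in \Delta$, the bisimplicial set $d^*(\pi^*(X))$ is the one whose $n$th column is the constant simplicial set on the set $X_n$. In particular, its $0$th column is the discrete simplicial set $X_0$.

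The Segal condition (Definition \ref{joysegdef}(ii)) therefore asserts that for each $n \geq 2$, the Segal map
\begin{equation*}
X_n \lra X_1 \times_{X_0} \cdots \times_{X_0} X_1
\end{equation*}
is a weak categorical equivalence \emph{between discrete simplicial sets}. Any such weak categorical equivalence is a bijection (for instance, because discrete simplicial sets are fibrant in Joyal's model structure, and a weak equivalence between fibrant objects that is also a map of discrete sets must be a bijection on $\pi_0$, i.e.\ a bijection). Thus the Segal maps of $X$ are bijections for all $n \geq 2$.

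A simplicial set satisfying this Segal condition is (canonically) isomorphic to the nerve of a category: one defines a category $C$ with object set $X_0$, morphism set $X_1$, sources and targets given by $d_1, d_0 \colon X_1 \to X_0$, identities given by $s_0 \colon X_0 \to X_1$, and composition defined using the isomorphism $X_2 \cong X_1 \times_{X_0} X_1$ followed by $d_1 \colon X_2 \to X_1$; the higher Segal bijections and the simplicial identities then force $X \cong NC$. This concludes the converse direction. The main ``work'' of the proof is really Theorem \ref{segalthm}; once it is applied to $\pi^*(X)$, no further obstacle arises, since the bisimplicial set $d^*(\pi^*(X))$ is constant in the second variable and the Segal condition collapses to the classical Grothendieck--Segal characterisation of nerves.
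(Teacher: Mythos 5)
Your proposal is correct, and the forward direction coincides with the paper's (Observation \ref{nervecatrecall} plus Corollary \ref{fibcor}), but your converse takes a genuinely different route. The paper argues directly by lifting properties and the adjunction $\pi_!\dashv\pi^*$: surjectivity of the maps $X_n \to X_1\times_{X_0}\cdots\times_{X_0}X_1$ comes from the right lifting property of $\pi^*(X)$ against the spine inclusions $I[n;0,\ldots,0]\to\Theta_2[n;0,\ldots,0]$ (which $\pi_!$ sends to $I[n]\to\Delta[n]$), and injectivity from its lifting property against the pushout-corner map of a square whose bottom edge is $\Theta_2[n;0,\ldots,0]+\Theta_2[n;0,\ldots,0]\to\Theta_2[n;1,\ldots,1]$, shown to be a trivial cofibration (borrowing only the monomorphism check from the last paragraph of the proof of Theorem \ref{segalthm}); $\pi_!$ carries it to the corresponding simplicial pushout-corner map. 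You instead invoke Theorem \ref{segalthm} wholesale: since $\pi\circ d$ is the first projection, $d^*(\pi^*(X))$ has constant columns, so the Segal maps of Definition \ref{joysegdef} are the classical Segal maps of $X$, and being weak categorical equivalences between discrete simplicial sets they are bijections (an equivalence between nerves of discrete categories is a bijection; your appeal to ``$\pi_0$'' should more precisely be to isomorphism classes in the homotopy category, but the fact is right). Both proofs then finish with the classical Grothendieck--Segal characterisation of nerves, and since Theorem \ref{segalthm} precedes this proposition there is no circularity. The trade-off: your argument is shorter and conceptually transparent but rests on the heavier machinery behind Theorem \ref{segalthm} (Reedy theory and the horizontal spine inclusions), whereas the paper's is more self-contained, using only the generating anodyne maps of Ara's model structure and adjointness, and it makes explicit the trivial cofibration that your route leaves hidden inside that theorem.
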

\begin{proof}
If $X$ is isomorphic to the simplicial nerve of a category $C$, then $\pi^*(X)$ is isomorphic to the $2$-cellular nerve of $C$, and so is a $2$-quasi-category by \cite[Proposition 7.10]{MR3350089} (or Corollary \ref{fibcor}). 

Conversely, suppose that $\pi^*(X)$ is a $2$-quasi-category. To prove that $X$ is isomorphic to the nerve of a category, it suffices to prove that the function
\begin{equation} \label{ineedalabel}
[\Delta^\mathrm{op},\Set](i_n,X) : [\Delta^\mathrm{op},\Set](\Delta[n],X) \lra [\Delta^\mathrm{op},\Set](I[n],X)
\end{equation}
is a bijection for each $n \geq 2$. Let $\pi_! \colon [\Theta_2^\mathrm{op},\Set] \lra [\Delta^\mathrm{op},\Set]$ denote the left adjoint of the functor $\pi^*$.

Let $n\geq 2$ be an integer.
The $2$-quasi-category  $\pi^*(X)$ has the right lifting property with respect to the spine inclusion $I[n;0,\ldots,0] \lra \Theta_2[n;0,\ldots,0]$, since the latter is a trivial cofibration in the model structure for $2$-quasi-categories. It follows, by adjointness, that $X$ has the right lifting property with respect to the induced morphism $\pi_!(I[n;0,\ldots,0]) \lra \pi_!(\Theta_2[n;0,\ldots,0])$, which is the spine inclusion $I[n] \lra \Delta[n]$. This proves that the function (\ref{ineedalabel}) is surjective. 

Now, observe that the function (\ref{ineedalabel}) is injective if and only if $X$ has the right lifting property with respect to the pushout-corner map of the commutative square on the left below,
\begin{equation*}
\cd{
I[n] + I[n] \ar[r] \ar[d] & I[n] \ar[d]\\
\Delta[n] + \Delta[n] \ar[r] & \Delta[n]
}
\qquad
\qquad
\cd{
I[n;0,\ldots,0] + I[n;0,\ldots,0] \ar[r] \ar[d] & I[n;1,\ldots,1] \ar[d]  \\
\Theta_2[n;0,\ldots,0] + \Theta_2[n;0,\ldots,0] \ar[r] & \Theta_2[n;1,\ldots,1]
}
\end{equation*}
which is the image under the left adjoint functor $\pi_!$ of the pushout-corner map of the commutative square on the right above (whose bottom morphism has the components $(\mathrm{id};\delta^1,\ldots,\delta^1)$ and $(\mathrm{id};\delta^0,\ldots,\delta^0)$). But this latter pushout-corner map is a trivial cofibration in the model structure for $2$-quasi-categories:\ it is a monomorphism by the final paragraph of the proof of Theorem \ref{segalthm}, and it is a weak equivalence by the two-out-of-three property (since the vertical morphisms in the square are trivial cofibrations). Hence the  $2$-quasi-category $\pi^*(X)$ has the right lifting property with respect to the pushout-corner map of the square on the right above, whence by adjointness, the simplicial set $X$ has the right lifting property with respect to the pushout-corner map of the square on the left above.  This proves that the function (\ref{ineedalabel}) is injective, and thus completes the proof. 
\end{proof}

Nevertheless, since quasi-categories are a model for $(\infty,1)$-categories and $2$-quasi-categories are a model for $(\infty,2)$-categories, there ought to be some construction which assigns to each quasi-category its corresponding ``locally $\infty$-groupoidal'' $2$-quasi-category. And since we have said that the right Quillen functor $\tau^* \colon [\Theta_2^\mathrm{op},\Set] \lra [\Delta^\mathrm{op},\Set]$ sends a $2$-quasi-category to its underlying quasi-category, it seems reasonable to expect that the functor induced by the weak-equivalence-preserving functor $\pi^*$ (Proposition \ref{undqadj})
\begin{equation} \label{locpi}
\Ho(\pi^*) \colon \Ho(\qcat) \lra \Ho(\twoqcat)
\end{equation}
between the homotopy categories of (the model structures for) quasi-categories and $2$-quasi-categories, which sends a quasi-category $X$ to a fibrant replacement of $\pi^*(X)$ in the model structure for $2$-quasi-categories, is such a construction.  Indeed, we will show in Corollary \ref{finalcor} that the functor (\ref{locpi}) is fully faithful, and that its essential image consists of the ``locally Kan'' $2$-quasi-categories, in the sense of the following definition.

\begin{definition} \label{lockandef}
A $2$-quasi-category $X$ is said to be \emph{locally Kan} if, for each pair of objects $x,y \in X$, the hom-quasi-category $\Hom_X(x,y)$ is a Kan complex.
\end{definition}

The following proposition contains a few equivalent characterisations of the locally Kan $2$-quasi-categories, one of which involves the left adjoint functor $\tau_2 \colon [\Theta_2^\mathrm{op},\Set] \lra \twocat$ (see Observation \ref{coobs}), which we say sends a $\Theta_2$-set to its \emph{$2$-category truncation}. We say that a bicategory $A$ is \emph{locally groupoidal} if each of its hom-categories $A(a,b)$ is a groupoid.

\begin{proposition} \label{lockanequivs}
Let $X$ be a $2$-quasi-category. Then the following are equivalent.
\begin{enumerate}[font=\normalfont, label=(\roman*)]
\item $X$ is locally Kan.
\item The homotopy bicategory $\Ho(X)$ of $X$ is locally groupoidal.
\item The $2$-category truncation $\tau_2(X)$ of $X$ is locally groupoidal.
\item $X$ is local with respect to the morphism $(\mathrm{id};\sigma^0) \colon \Theta_2[1;1] \lra \Theta_2[1;0]$ in the model structure for $2$-quasi-categories.
\end{enumerate}
\end{proposition}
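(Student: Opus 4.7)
The plan is to establish the chain (i)$\Leftrightarrow$(ii)$\Leftrightarrow$(iii) together with (i)$\Leftrightarrow$(iv), exploiting distinct structures attached to $X$ by the previous sections.

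For (i)$\Leftrightarrow$(ii), recall from the construction in \S\ref{hobicatsubsec} that the hom-category $\Ho(X)(x,y)$ is by definition $\ho(\Hom_X(x,y))$. Since it is a classical fact of quasi-category theory that a quasi-category is a Kan complex exactly when its homotopy category is a groupoid, this gives the equivalence directly by application to each hom-quasi-category of $X$.

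For (ii)$\Leftrightarrow$(iii), I would first show that $\tau_2(X) \cong \st(\Ho(X))$ for every $\Theta_2$-set $X$. Combining the adjunction $Q \dashv j \colon \Bicat_{\mathrm{s}} \to \Bicat$ from \textup{(\ref{prestrictadj})} with the adjunction $\Ho \dashv N$ from Theorem \ref{honadjthm} yields a left adjoint $Q \circ \Ho$ to the restricted coherent nerve $N \colon \Bicat_{\mathrm{s}} \to [\Theta_2^{\mathrm{op}},\Set]$; uniqueness of left adjoints then forces $\tau_b \cong Q \circ \Ho$. Since $\st = L \circ Q$ by the factorisation \textup{(\ref{nstadj})} of the normal strictification adjunction, I obtain $\tau_2 = L \circ \tau_b \cong L \circ Q \circ \Ho = \st \circ \Ho$. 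Each component of the unit $A \to \st(A)$ is a biequivalence by \S\ref{nstrecall}, so the hom-categories of $\Ho(X)$ and $\tau_2(X)$ are equivalent. The property of being a groupoid is preserved and reflected under equivalence of categories, since equivalences are fully faithful and hence reflect isomorphisms, so local groupoidalness is a biequivalence invariant.

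For (i)$\Leftrightarrow$(iv), I observe that the morphism $(\mathrm{id};\sigma^0) \colon \Theta_2[1;1] \to \Theta_2[1;0]$ is precisely the $\Theta_2$-suspension $\Sigma(\sigma^0)$ of the degeneracy $\sigma^0 \colon \Delta[1] \to \Delta[0]$. Hence by Lemma \ref{locprop}, $X$ is local with respect to $(\mathrm{id};\sigma^0)$ in the model structure for $2$-quasi-categories if and only if each hom-quasi-category $\Hom_X(x,y)$ is local with respect to $\sigma^0$ in Joyal's model structure for quasi-categories. A short direct argument then settles the quasi-categorical base case: if $Y$ is a Kan complex, the weak homotopy equivalence $\Delta[1] \to \Delta[0]$ makes $Y \to Y^{\Delta[1]}$ an equivalence of Kan complexes, hence a weak categorical equivalence; conversely, if $Y$ is local with respect to $\sigma^0$, then every arrow of $Y$ becomes equivalent in $Y^{\Delta[1]}$ to some identity, and the resulting invertible comparison square at source and target forces the arrow itself to be an isomorphism, so $Y$ is a Kan complex.

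The main obstacle will be the identification $\tau_2 \cong \st \circ \Ho$ appearing in step (ii)$\Leftrightarrow$(iii): though purely formal, it requires careful assembly of four interlocking adjunctions, and is what makes (iii) tractable even though $\tau_2(X)$ is not otherwise easy to compute for general $X$.
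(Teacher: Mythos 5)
Your proposal is correct and follows essentially the same route as the paper's own proof: (i)$\Leftrightarrow$(ii) by applying Joyal's criterion (a quasi-category is a Kan complex iff its homotopy category is a groupoid) to the hom-quasi-categories, (ii)$\Leftrightarrow$(iii) via the identification $\tau_2(X) \cong \st(\Ho(X))$ --- which the paper asserts ``by comparing universal properties'' and you spell out by composing the adjunctions --- together with biequivalence-invariance of local groupoidalness, and (i)$\Leftrightarrow$(iv) by recognising $(\mathrm{id};\sigma^0)$ as the suspension $\Sigma(\sigma^0)$ and invoking Lemma \ref{locprop}, where the paper cites \cite[Proposition 3.30]{camplan} for the one-dimensional fact that you instead prove directly (correctly). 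The only caveat is cosmetic: since $\Ho$ is defined only on $\twoqcat$, your identification $\tau_b \cong Q \circ \Ho$ holds (and is needed) only for the restriction of $\tau_b$ to $2$-quasi-categories, not ``for every $\Theta_2$-set''.
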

\begin{proof}
By construction, the hom-categories of the homotopy bicategory of $X$ are the homotopy categories of the hom-quasi-categories of $X$. Hence the equivalence of (i) and (ii) follows from Joyal's result  that a quasi-category is a Kan complex if and only if its homotopy category is a groupoid \cite[Corollary 1.4]{MR1935979}. 

By comparing their universal properties, we see that the  $2$-category truncation $\tau_2(X)$ of $X$ is isomorphic to the normal strictification (see \S\ref{nstrecall}) of the homotopy bicategory $\Ho(X)$ of $X$. Hence $\tau_2$ and $\Ho(X)$ are biequivalent, which implies the equivalence of (ii) and (iii).

The equivalence of (i) and (iv) follows by Proposition \ref{2qcatloc} from the fact (see, for instance, \cite[Proposition 3.30]{camplan}) that a quasi-category is a Kan complex if and only if it is local with respect to the morphism $\sigma^0 \colon \Delta[1] \lra \Delta[0]$ in the model structure for quasi-categories. 
\end{proof}

\begin{proposition} \label{lockanmodstr}
There exists a model structure on the category $[\Theta_2^\mathrm{op},\Set]$ of $\Theta_2$-sets whose cofibrations are the monomorphisms, and whose fibrant objects are the locally Kan $2$-quasi-categories. This model structure is the Bousfield localisation of Ara's model structure for $2$-quasi-categories with respect to the morphism $(\mathrm{id};\sigma^0) \colon \Theta_2[1;1] \lra \Theta_2[1;0]$. 
\end{proposition}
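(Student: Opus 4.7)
The plan is to mimic exactly the strategy used in the proof of Proposition \ref{modstrthm}. Ara's model structure for $2$-quasi-categories is cofibrantly generated (hence combinatorial) and left proper (as a Cisinski model structure on a presheaf category, by \cite{MR2294028}), so the left Bousfield localisation with respect to any small set of morphisms exists. I would therefore form the left Bousfield localisation with respect to the single morphism $(\mathrm{id};\sigma^0) \colon \Theta_2[1;1] \lra \Theta_2[1;0]$.

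Once that localisation is in hand, the cofibrations are unchanged, so they remain the monomorphisms of $[\Theta_2^\mathrm{op},\Set]$. By the general theory of Bousfield localisation, the fibrant objects of the localised model structure are precisely those fibrant objects of the original model structure -- i.e.\ $2$-quasi-categories -- which are local with respect to $(\mathrm{id};\sigma^0) \colon \Theta_2[1;1] \lra \Theta_2[1;0]$. By the equivalence of (i) and (iv) in Proposition \ref{lockanequivs}, these are exactly the locally Kan $2$-quasi-categories, which is what we wanted.

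There is no real obstacle here: all of the substantive content has already been dispatched in Proposition \ref{lockanequivs} (where the characterisation of locally Kan $2$-quasi-categories as $(\mathrm{id};\sigma^0)$-local objects was proved) and in Ara's construction \cite{MR3350089} (which guarantees that the model structure for $2$-quasi-categories is left proper and combinatorial, and hence admits all left Bousfield localisations at small sets of morphisms). Thus the proof can be written in just a few lines, entirely parallel to the proof of Proposition \ref{modstrthm}.
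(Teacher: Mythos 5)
Your proposal is correct and follows exactly the paper's own argument: the paper likewise invokes left properness and combinatoriality of Ara's model structure to obtain the Bousfield localisation at $(\mathrm{id};\sigma^0) \colon \Theta_2[1;1] \lra \Theta_2[1;0]$, and then identifies its fibrant objects via the equivalence (i) $\Leftrightarrow$ (iv) of Proposition \ref{lockanequivs}, just as in the proof of Proposition \ref{modstrthm}. No gaps.
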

\begin{proof}
As in the proof of Proposition \ref{modstrthm}, this follows from the equivalence (i) $\Leftrightarrow$ (iv) of Proposition \ref{lockanequivs}.
\end{proof}

We will call the model structure of Proposition \ref{lockanmodstr} the \emph{model structure for locally Kan $2$-quasi-categories}.

\begin{definition}
A $\Theta_2$-set $U$ is said to be \emph{locally groupoidal} if some (and hence every) fibrant replacement of $U$ in the model structure for $2$-quasi-categories is  locally Kan. In particular, a $2$-quasi-category is locally groupoidal if and only if it is locally Kan.
\end{definition}


\begin{lemma} \label{localwes}
A morphism of locally groupoidal $\Theta_2$-sets is a weak equivalence in the model structure for $2$-quasi-categories if and only if it is a weak equivalence in the model structure for locally Kan $2$-quasi-categories.
\end{lemma}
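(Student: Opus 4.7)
The plan is to proceed by a two-step reduction. One direction is immediate from the fact that the model structure for locally Kan $2$-quasi-categories is, by \textup{Proposition \ref{lockanmodstr}}, a (left) Bousfield localisation of Ara's model structure for $2$-quasi-categories, so every weak equivalence in the latter is automatically a weak equivalence in the former, and this holds for arbitrary morphisms of $\Theta_2$-sets (not just locally groupoidal ones). The work is therefore entirely in the converse direction.

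For the converse, suppose $f\colon U \lra V$ is a morphism of locally groupoidal $\Theta_2$-sets that is a weak equivalence in the model structure for locally Kan $2$-quasi-categories. First, I would reduce to the case of a morphism between fibrant objects by taking fibrant replacements in Ara's model structure for $2$-quasi-categories:\ choose a commutative square
\begin{equation*}
\cd{
U \ar[r]^-{f} \ar[d]_-{u} & V \ar[d]^-{v} \\
U' \ar[r]_-{f'} & V'
}
\end{equation*}
in which $u$ and $v$ are trivial cofibrations and $U'$ and $V'$ are $2$-quasi-categories. By hypothesis, $U$ and $V$ are locally groupoidal, so (by the definition of locally groupoidal) both $U'$ and $V'$ are locally Kan $2$-quasi-categories, i.e.\ fibrant in the localised model structure. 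Since $u$ and $v$ are weak equivalences in both model structures (using again that the localised model structure is a Bousfield localisation), the two-out-of-three property implies that $f$ is a weak equivalence in either model structure if and only if $f'$ is.

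It thus suffices to show that $f'$, now a morphism between objects that are fibrant in both model structures, is a weak equivalence in Ara's model structure. This is the standard fact about Bousfield localisations that a morphism between local (i.e.\ localised-fibrant) objects is a weak equivalence in the localised model structure if and only if it is a weak equivalence in the original model structure; see for instance \cite[Theorem 3.2.13]{MR1944041}. This completes the argument.

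There is no real obstacle here:\ the content of the lemma lies entirely in the general theory of Bousfield localisations, and the only thing one needs from the specific setting is that ``locally groupoidal'' has been defined precisely so that a fibrant replacement in Ara's model structure is already fibrant in the localised model structure, which ensures the reduction above goes through.
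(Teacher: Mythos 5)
Your proposal is correct and follows essentially the same route as the paper: the paper's proof also handles the case of morphisms between locally Kan $2$-quasi-categories via the standard Bousfield localisation fact, and then extends to arbitrary locally groupoidal $\Theta_2$-sets by fibrant replacement and the two-out-of-three property. Your write-up merely makes explicit the fibrant-replacement square and the citation to Hirschhorn that the paper leaves implicit.
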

\begin{proof}
Since the model structure for locally Kan $2$-quasi-categories is a Bousfield localisation of the model structure for $2$-quasi-categories, the statement holds for any morphism of locally Kan $2$-quasi-categories. The statement then extends to morphisms of arbitrary locally groupoidal $\Theta_2$-sets by the two-out-of-three property.
\end{proof}

To apply this lemma, we will use the following recognition principle for locally groupoidal $\Theta_2$-sets.

\begin{proposition} \label{locgrpprop}
A $\Theta_2$-set $U$ is locally groupoidal if and only if its $2$-category truncation $\tau_2(U)$ is locally groupoidal.
\end{proposition}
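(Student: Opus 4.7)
The plan is to leverage the equivalence (i)$\Leftrightarrow$(iii) of Proposition \ref{lockanequivs} together with the left Quillen functoriality of $\tau_2$ established in Remark \ref{part1endremark}. Let $U$ be a $\Theta_2$-set, and choose a fibrant replacement $j \colon U \lra X$ of $U$ in Ara's model structure for $2$-quasi-categories. By the definition of locally groupoidal, $U$ is locally groupoidal if and only if $X$ is locally Kan; and by Proposition \ref{lockanequivs}, $X$ is locally Kan if and only if the $2$-category $\tau_2(X)$ is locally groupoidal. The task thus reduces to showing that $\tau_2(U)$ is locally groupoidal if and only if $\tau_2(X)$ is.

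By Remark \ref{part1endremark}, the adjunction $\tau_2 \dashv N \colon \twocat \lra [\Theta_2^\mathrm{op},\Set]$ is a Quillen adjunction between Ara's model structure for $2$-quasi-categories and Lack's model structure for $2$-categories. Since every $\Theta_2$-set is cofibrant in Ara's model structure, Ken Brown's lemma ensures that $\tau_2$ preserves all weak equivalences; in particular, $\tau_2(j) \colon \tau_2(U) \lra \tau_2(X)$ is a biequivalence of $2$-categories.

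It remains to observe that biequivalences of $2$-categories (or bicategories) both preserve and reflect local groupoidality. Indeed, if $F \colon A \lra B$ is a biequivalence, then for any pair $b_1, b_2 \in B$ one can choose objects $a_i \in A$ equipped with equivalences $Fa_i \simeq b_i$, which, together with the local equivalence $F \colon A(a_1,a_2) \lra B(Fa_1,Fa_2)$, induce an equivalence of categories $A(a_1,a_2) \simeq B(b_1,b_2)$; as being a groupoid is both preserved and reflected under equivalences of categories (every morphism $f$ in one category is an isomorphism iff its image under a fully faithful functor is), this yields the claim. Applying this to the biequivalence $\tau_2(j)$ completes the proof. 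None of these steps presents a genuine obstacle: the substantive content has already been accumulated in Theorem \ref{firstqadjthm}, Remark \ref{part1endremark}, and Proposition \ref{lockanequivs}.
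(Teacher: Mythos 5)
Your proof is correct and follows essentially the same route as the paper: take a fibrant replacement $r \colon U \lra X$, use that $\tau_2$ is left Quillen (Remark \ref{part1endremark}) to see that $\tau_2(r)$ is a biequivalence, and conclude via Proposition \ref{lockanequivs}. Your only additions are the explicit appeal to Ken Brown's lemma and the spelled-out check that biequivalences preserve and reflect local groupoidality, both of which the paper leaves implicit.
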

\begin{proof}
Let $U$ be a $\Theta_2$-set, and let $r \colon U \lra X$ be a fibrant replacement of $U$ in the model structure for $2$-quasi-categories. Since the functor $\tau_2 \colon [\Theta_2^\mathrm{op},\Set] \lra \twocat$ is left Quillen (see Remark \ref{part1endremark}), the $2$-functor $\tau_2(r) \colon \tau_2(U) \lra \tau_2(X)$ is a biequivalence. Hence the $2$-category $\tau_2(U)$ is locally groupoidal if and only if $\tau_2(X)$ is locally groupoidal, which, by Proposition \ref{lockanequivs}, is so if and only if the $2$-quasi-category $X$ is locally Kan. 
\end{proof}

\begin{proposition} \label{locgrpprop2}
For any simplicial set  $S$, the $\Theta_2$-set $\pi^*(S)$ is locally groupoidal.
\end{proposition}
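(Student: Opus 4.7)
By Proposition \ref{locgrpprop}, it suffices to show that the $2$-category $\tau_2(\pi^*(S))$ is locally groupoidal for every simplicial set $S$. My plan is to exhibit the composite functor $\tau_2 \circ \pi^* \colon [\Delta^\mathrm{op},\Set] \to \twocat$ as landing in the full subcategory $\mathcal{L} \subset \twocat$ of locally groupoidal $2$-categories. Recall that the inclusion $i \colon \mathcal{L} \hookrightarrow \twocat$ admits a right adjoint $\mathrm{core}_2 \colon \twocat \to \mathcal{L}$ (sending a $2$-category to its maximal locally groupoidal sub-$2$-category, i.e., the one with the same objects and morphisms but only the invertible $2$-cells), and also a left adjoint $L \colon \twocat \to \mathcal{L}$ obtained by change of base along the groupoid reflection $\mathbf{Cat} \to \mathbf{Gph}$-groupoids.

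The first step will be to show that the right adjoint $\tau^* \circ N$ of the composite adjunction $\tau_2 \circ \pi^* \dashv \tau^* \circ N$ factors through $\mathrm{core}_2$: specifically, that the counit inclusion $i\mathrm{core}_2(A) \hookrightarrow A$ induces a bijection $\tau^*(N(i\mathrm{core}_2 A)) \cong \tau^*(NA)$ naturally in the $2$-category $A$. By definition, this amounts to showing that every normal pseudofunctor $F \colon [n;0,\ldots,0] \to A$ factors uniquely through $i\mathrm{core}_2(A)$. This follows by inspection of the data of a normal pseudofunctor (as recalled in \S\ref{bicatrecall} and illustrated in Figure \ref{pasts}): since $[n;0,\ldots,0]$ has no non-identity $2$-cells, the only $2$-cells of $A$ involved in $F$ are the invertible composition constraints $\varphi_{g,f}$ together with identity $2$-cells, all of which lie in $\mathrm{core}_2(A)$.

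Given this factorisation, the second step is purely formal. For every $2$-category $A$ we have the chain of natural bijections
\begin{equation*}
\twocat(\tau_2\pi^* S, A) \cong \twocat(\tau_2\pi^* S, i\mathrm{core}_2 A) \cong \mathcal{L}(L\tau_2\pi^* S, \mathrm{core}_2 A) \cong \twocat(i L \tau_2 \pi^* S, A),
\end{equation*}
using, in turn, the factorisation just established (applied via the adjunction $\tau_2 \pi^* \dashv \tau^* N$), the adjunction $L \dashv i$, and the adjunction $i \dashv \mathrm{core}_2$. The Yoneda lemma then yields $\tau_2\pi^*(S) \cong iL(\tau_2\pi^*(S))$, which is locally groupoidal by construction, completing the proof. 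The essential work lies in the first step; this is conceptually straightforward but requires careful verification that the axioms for a normal pseudofunctor out of a locally discrete $2$-category refer only to invertible $2$-cells of the codomain.
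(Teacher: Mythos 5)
Your proof is correct, but it takes a genuinely different route from the paper's one-paragraph argument. The paper exploits cocontinuity of the composite left adjoint $\tau_2\pi^*$: writing $S$ as a coend of representables gives $\tau_2(\pi^*(S)) \cong \int^{[n]\in\Delta} S_n \times \st([n])$, and since each $\st([n]) \cong \tau_2(\pi^*(\Delta[n]))$ is locally groupoidal and locally groupoidal $2$-categories are closed under colimits in $\twocat$, the conclusion follows; note that this closure under colimits is secretly the same coreflectivity $i \dashv \mathrm{core}_2$ that you invoke explicitly. You instead work on the right-adjoint side: the (easy but essential) observation that a normal pseudofunctor out of the locally discrete $2$-category $[n;0,\ldots,0]$ involves only invertible $2$-cells of its codomain shows that $\tau^*N$ inverts the core inclusions, and formal adjoint yoga then places $\tau_2\pi^*(S)$ in the essential image of $i$. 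The paper's route is shorter; yours avoids the colimit decomposition and isolates exactly where the invertibility of the composition constraints enters (the same point that, in the paper's proof, underlies the local groupoidalness of $\st([n])$). Two minor remarks: your second step can be streamlined by taking $A = \tau_2\pi^*(S)$ in the natural bijection $\twocat(\tau_2\pi^*S, A) \cong \twocat(\tau_2\pi^*S, i\,\mathrm{core}_2 A)$, which exhibits the injective inclusion $i\,\mathrm{core}_2(\tau_2\pi^*S) \lra \tau_2\pi^*S$ as a split epimorphism, hence an isomorphism --- this dispenses with the left adjoint $L$ altogether; and the phrase ``change of base along the groupoid reflection $\Cat \lra \mathbf{Gph}$-groupoids'' should read change of base along the groupoid reflection $\Cat \lra \mathbf{Gpd}$ (a finite-product-preserving functor, since $\mathbf{Gpd}$ is an exponential ideal in $\Cat$), though as $L$ enters only formally this slip is harmless.
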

\begin{proof}
By cocontinuity of left adjoints and the Yoneda lemma, 
the $2$-category $\tau_2(\pi^*(S))$ is a colimit $$\tau_2(\pi^*(S)) \cong \int^{[n]\in\Delta} S_n \times \st([n])$$ of locally groupoidal $2$-categories, and is therefore locally groupoidal. Hence the $\Theta_2$-set $\pi^*(S)$ is locally groupoidal by Proposition \ref{locgrpprop}. 
\end{proof}

In the following paragraphs, we will use Segal categories to obtain an explicit construction of a fibrant replacement of the $\Theta_2$-set $\pi^*(X)$ for each quasi-category $X$, and to prove that the model structures for quasi-categories and locally Kan $2$-quasi-categories are Quillen equivalent.

\subsection{The model structure for Segal categories}
Recall that a precategory $X$ is said to be a \emph{\textup{(}Kan-enriched\textup{)} Segal category} if, for each $n \geq 2$, the Segal map $X_n \lra X_1\times_{X_0} \cdots \times_{X_0} X_1$ is a weak homotopy equivalence. By \cite{hirschsimp,pellthesis,MR2341955} (or again by \cite{MR2883823}), the category $\PCat$ of precategories admits a model structure whose cofibrations are the monomorphisms, and whose fibrant objects are the Segal categories that are Reedy fibrant with respect to the model structure for Kan complexes.  
We will call this model structure the \emph{\textup{(}Hirschowitz--Simpson--Pellissier\textup{)} model structure for Segal categories}. The following lemma implies that this model structure is a Bousfield localisation of the model structure for Joyal-enriched Segal categories.

\begin{lemma} \label{lockanlemma}
A precategory $X$ is a fibrant object in the model structure for Segal categories if and only if it is fibrant in the model structure for Joyal-enriched Segal categories and, for each pair of objects $x,y \in X$, the hom-quasi-category $X(x,y)$ is a Kan complex.
\end{lemma}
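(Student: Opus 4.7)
The plan is to reduce the lemma to two standard facts about the interplay between Joyal's model structure for quasi-categories and the Kan--Quillen model structure (the latter being a left Bousfield localisation of the former): between Kan complexes, a quasi-category fibration (isofibration) is a Kan fibration, and a weak categorical equivalence is a weak homotopy equivalence, and conversely a weak homotopy equivalence between Kan complexes is a weak categorical equivalence. In both model structures on $\PCat$, the cofibrations are the monomorphisms and the first column $X_0$ of a fibrant object is discrete; the distinction between fibrant objects consists entirely in the replacement of ``Reedy fibrant with respect to quasi-categories'' by ``Reedy fibrant with respect to Kan complexes'', and of ``weak categorical equivalence'' by ``weak homotopy equivalence'' in the Segal condition.

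For the forward direction, suppose $X$ is fibrant in the model structure for Segal categories. Reedy fibrancy in the Kan model structure implies Reedy fibrancy in the Joyal model structure, since every Kan fibration is an isofibration; in particular each column $X_n$ is a Kan complex. Hence for each $x,y \in X_0$, the hom-quasi-category $X(x,y)$, being a fibre of the Kan fibration $X_1 \lra X_0 \times X_0$ over the discrete target $X_0 \times X_0$, is a Kan complex. For each $n \geq 2$ and each $x_0, \ldots, x_n \in X_0$, the product $X(x_0,x_1) \times \cdots \times X(x_{n-1},x_n)$ is then also a Kan complex, so the Segal map $X(x_0,\ldots,x_n) \lra X(x_0,x_1) \times \cdots \times X(x_{n-1},x_n)$ is a weak homotopy equivalence between Kan complexes, hence a weak categorical equivalence. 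Thus $X$ is fibrant in the model structure for Joyal-enriched Segal categories.

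For the converse, suppose $X$ is fibrant in the model structure for Joyal-enriched Segal categories and each hom-quasi-category $X(x,y)$ is a Kan complex. For each $n \geq 2$ and $x_0,\ldots,x_n \in X_0$, the Segal map exhibits $X(x_0,\ldots,x_n)$ as a quasi-category weakly categorically equivalent to a product of Kan complexes; since a quasi-category weakly categorically equivalent to a Kan complex is a Kan complex (e.g.\ by \cite[Corollary 1.4]{MR1935979}, applied to the equivalent homotopy categories), each such fibre is a Kan complex, and the coproduct decomposition $X_n \cong \sum_{x_0,\ldots,x_n} X(x_0,\ldots,x_n)$ shows that $X_n$ is a Kan complex. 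Reedy fibrancy of $X$ in the Joyal model structure means each matching map $X_n \lra M_n X$ is an isofibration; between Kan complexes an isofibration is a Kan fibration, so $X$ is Reedy fibrant in the Kan model structure. Finally, the Segal maps, being weak categorical equivalences between Kan complexes, are weak homotopy equivalences, and so $X$ is a fibrant Segal category.

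The core of the argument is the two-way comparison between weak categorical and weak homotopy equivalences (and correspondingly between isofibrations and Kan fibrations) on the subcategory of Kan complexes; there is no real obstacle, as these are standard consequences of the Bousfield localisation relationship between the two model structures on simplicial sets.
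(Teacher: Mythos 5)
Your proof is correct and takes essentially the same route as the paper, which disposes of the lemma in one line by citing the fact that the Kan--Quillen model structure is a Bousfield localisation of Joyal's, so that between Kan complexes isofibrations coincide with Kan fibrations and weak categorical equivalences with weak homotopy equivalences; you have simply spelled out the column-by-column details. The only step you gloss over (as does the paper) is that in the converse direction the matching objects $M_n X$ are themselves Kan complexes, so that the phrase ``isofibration between Kan complexes'' really applies to the relative matching maps $X_n \lra M_n X$ --- but this follows by a routine skeletal induction using the discreteness of $X_0$ and the fact that the already-established matching maps in lower degrees are isofibrations between Kan complexes, hence Kan fibrations, so it is a verification rather than a gap.
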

\begin{proof}
This follows from the fact that the model structure for Kan complexes is a Bousfield localisation of the model structure for quasi-categories, in particular that a 
morphism of Kan complexes is a Kan fibration (resp.\ weak homotopy equivalence) if and only if it is an isofibration (resp.\ weak categorical equivalence).
\end{proof}

\begin{theorem} \label{pcat2qcat}
The adjunction
\begin{equation*}
\xymatrix{
\PCat \ar@<-1.5ex>[rr]^-{\hdash}_-{d_*} && \ar@<-1.5ex>[ll]_-{d^*} [\Theta_2^\mathrm{op},\Set]
}
\end{equation*}
is a Quillen equivalence between the Hirschowitz--Simpson--Pellissier model structure for Segal categories and the model structure for locally Kan $2$-quasi-categories.
\end{theorem}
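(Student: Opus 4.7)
The strategy is to derive this Quillen equivalence from Theorem \ref{qe10} by presenting both model structures in the statement as compatible Bousfield localizations and invoking a standard descent principle. By Proposition \ref{lockanmodstr}, the model structure for locally Kan $2$-quasi-categories is the Bousfield localization of Ara's model structure at the morphism $(\mathrm{id};\sigma^0) \colon \Theta_2[1;1] \lra \Theta_2[1;0]$. I aim to identify the Hirschowitz--Simpson--Pellissier model structure for Segal categories as the Bousfield localization of the corresponding model structure for Joyal-enriched Segal categories at the image $d^*(\mathrm{id};\sigma^0)$ of this morphism; once this identification is established, the result will follow from the general descent principle for Bousfield localizations of Quillen equivalences (cf.\ \cite[Theorem 3.3.20]{MR1944041} or the arguments of \cite[Appendix A]{camplan}), applied to the Quillen equivalence of Theorem \ref{qe10} and to the morphism $(\mathrm{id};\sigma^0)$, whose domain and codomain are cofibrant.

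For the identification of the Bousfield localization, I would first unwind the construction of $d^*$: the morphism $d^*(\mathrm{id};\sigma^0)$ is the map of precategories with two objects whose only nontrivial hom-simplicial-set component is $\sigma^0 \colon \Delta[1] \lra \Delta[0]$. By an adjointness argument entirely parallel to Lemma \ref{locprop} (using the suspension-hom Quillen adjunction from precategories that picks out one hom-simplicial-set), a fibrant Joyal-enriched Segal category $Y$ is local with respect to $d^*(\mathrm{id};\sigma^0)$ if and only if, for each pair of objects $x,y \in Y$, the hom-quasi-category $Y(x,y)$ is local in Joyal's model structure with respect to $\sigma^0 \colon \Delta[1] \lra \Delta[0]$, which is well known to be equivalent to $Y(x,y)$ being a Kan complex (see, e.g., \cite[Proposition 3.30]{camplan}). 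By Lemma \ref{lockanlemma}, these are precisely the fibrant objects of the Segal category model structure. Since two model structures on $\PCat$ sharing the same cofibrations (monomorphisms) and the same fibrant objects must coincide, we obtain the desired identification.

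The main obstacle, as I foresee it, lies in this second paragraph: carefully verifying the identification of the Segal category model structure as the localization of the Joyal-enriched structure at $d^*(\mathrm{id};\sigma^0)$. This requires combining the hom-wise interpretation of locality (via a suspension-hom Quillen adjunction for precategories, which must be set up with some care) with the recognition Lemma \ref{lockanlemma}, and also relies on uniqueness of model structures with prescribed cofibrations and fibrant objects. Once this is in hand, the Quillen equivalence itself is a formal consequence of the descent principle and the already-established Quillen equivalence of Theorem \ref{qe10}, since $(\mathrm{id};\sigma^0)$ is a morphism between cofibrant objects, so no further derived-functor corrections are needed when transporting the localizing class across $d^*$.
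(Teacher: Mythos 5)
Your proposal is correct, and it reaches the theorem by a route that differs in mechanism from the paper's, though the overall strategy (descend the Quillen equivalence of Theorem \ref{qe10} across Bousfield localisations) is the same. The paper's proof is very short: it invokes the fibrancy-based descent criterion of \cite[Theorem A.15]{camplan} and matches up the fibrant objects of the two localisations directly, using the fact (Observation \ref{graphpresobs}) that $d_*$ preserves hom-quasi-categories, so that Lemma \ref{lockanlemma} says exactly that a fibrant Joyal-enriched Segal category $X$ is fibrant for Segal categories if and only if $d_*(X)$ is locally Kan; this is the same pattern already used in Proposition \ref{prop2truncboth}. You instead match up the \emph{localising maps} under the left adjoint: you identify the Segal-category model structure as the localisation of the Joyal-enriched structure at $d^*(\mathrm{id};\sigma^0)$ and then apply Hirschhorn's theorem on localising a Quillen equivalence at corresponding sets of maps (legitimate here since all objects are cofibrant). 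What your route buys is independence from the specific criterion of \cite{camplan}; what it costs is the extra overhead you correctly anticipate: you must set up a suspension--hom Quillen adjunction for precategories (this works exactly as in Proposition \ref{susphomqadjthefirst}, using that suspensions of arbitrary simplicial sets are Joyal-enriched Segal categories and that weak equivalences between such are detected hom-wise), run the locality argument of Lemma \ref{locprop} in $\PCat$, and then appeal to the determination of a model structure by its cofibrations and fibrant objects. For that last step you should note that the localisation of the Joyal-enriched structure at $d^*(\mathrm{id};\sigma^0)$ does exist before comparing it with the Segal-category model structure: this is automatic, since Simpson's model structure is combinatorial and, all of its objects being cofibrant, left proper. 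With that detail recorded, your argument is complete; the paper's use of Observation \ref{graphpresobs} simply sidesteps all of this by working on the right-adjoint side, where the hom-quasi-categories are literally preserved.
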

\begin{proof}
Given Theorem \ref{qe10}, it suffices (by, for instance, \cite[Theorem A.15]{camplan}) to prove that a fibrant object $X$ in the model structure for Joyal-enriched Segal categories is fibrant in the model structure for (Kan-enriched) Segal categories  if and only the $2$-quasi-category $d_*(X)$ is locally Kan. Since the hom-quasi-categories of $d_*(X)$ coincide with the hom-quasi-categories of $X$ (by Observation \ref{graphpresobs}), this is precisely what was shown in Lemma \ref{lockanlemma}.
\end{proof}

We may now compose this Quillen equivalence with a Quillen equivalence due to Joyal and Tierney to obtain a Quillen equivalence between quasi-categories and locally Kan $2$-quasi-categories. Let $D \colon \Delta \lra \Theta_2$ denote the functor given by $D([n]) = [n;n,\ldots,n]$. This functor induces an adjunction $D^* \dashv D_*$ between the categories of simplicial sets and $\Theta_2$-sets by restriction and right Kan extension.

\begin{theorem} \label{finalqe1}
The adjunction
\begin{equation*}
\xymatrix{
[\Delta^\mathrm{op},\Set] \ar@<-1.5ex>[rr]^-{\hdash}_-{D_*} && \ar@<-1.5ex>[ll]_-{D^*} [\Theta_2^\mathrm{op},\Set]
}
\end{equation*}
is a Quillen equivalence between Joyal's model structure for quasi-categories and the model structure for locally Kan $2$-quasi-categories.
\end{theorem}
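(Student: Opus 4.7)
The plan is to derive the Quillen equivalence by factoring the functor $D \colon \Delta \lra \Theta_2$ as $D = d \circ \delta$, where $\delta \colon \Delta \lra \Delta \times \Delta$ is the diagonal functor $\delta([n]) = ([n], [n])$ and $d$ is the functor from \S\ref{undbisimp}. Restriction along the opposite of this composite yields $D^* = \delta^* \circ d^*$, and by uniqueness of adjoints $D_* = d_* \circ \delta_*$; moreover, since $d^*$ lands in the full subcategory $\PCat$, the factorization realizes the adjunction $D^* \dashv D_*$ as the composite
\begin{equation*}
\cd{
[\Delta^{\mathrm{op}}, \Set] \ar@<-1.5ex>[rr]^-{\hdash}_-{\delta_*} && \ar@<-1.5ex>[ll]_-{\delta^*} \PCat \ar@<-1.5ex>[rr]^-{\hdash}_-{d_*} && \ar@<-1.5ex>[ll]_-{d^*} [\Theta_2^{\mathrm{op}}, \Set]
}
\end{equation*}
of the adjunctions $\delta^* \dashv \delta_*$ and $d^* \dashv d_*$.

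I would then recognize each of these constituent adjunctions as a Quillen equivalence. Theorem \ref{pcat2qcat} gives that $d^* \dashv d_*$ is a Quillen equivalence between the Hirschowitz--Simpson--Pellissier model structure for (Kan-enriched) Segal categories on $\PCat$ and the model structure for locally Kan $2$-quasi-categories. For the other half, I would appeal to the Quillen equivalence of Joyal and Tierney \cite{MR2342834}, which asserts that $\delta^* \dashv \delta_*$ is a Quillen equivalence between the HSP model structure for Segal categories on $\PCat$ and Joyal's model structure for quasi-categories, with the diagonal functor $\delta^*$ as the left Quillen equivalence. Composing these two Quillen equivalences yields the Quillen equivalence $D^* \dashv D_*$ of the statement.

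The hard part will be carefully applying the Joyal--Tierney Quillen equivalence in the form stated above: strictly speaking, the right Kan extension $\delta_*$ of a simplicial set does not a priori land in $\PCat$, since its $0$th column need not be discrete. This can be reconciled either by composing $\delta_*$ with the precategory reflection $[(\Delta \times \Delta)^{\mathrm{op}}, \Set] \lra \PCat$ to obtain a true right adjoint on $\PCat$, or by noting that the composite $D_* = d_* \circ \delta_*$ is unambiguously defined as a functor $[\Delta^{\mathrm{op}}, \Set] \lra [\Theta_2^{\mathrm{op}}, \Set]$ (since $d_*$ is defined on all bisimplicial sets) and that the composite of Quillen equivalences remains a Quillen equivalence.
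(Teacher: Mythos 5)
Your proposal is correct and is essentially the paper's own proof: the paper likewise factors the adjunction $D^* \dashv D_*$ through $\PCat$, as the composite of the Joyal--Tierney Quillen equivalence (whose left adjoint is the diagonal of a precategory) with the Quillen equivalence of Theorem \ref{pcat2qcat}, and concludes by composing Quillen equivalences. (One small slip: the right adjoint on $\PCat$ arises from $\delta_*$ via the \emph{coreflection}, i.e.\ the right adjoint of the inclusion $\PCat \lra [(\Delta\times\Delta)^\mathrm{op},\Set]$, not the reflection; your second fix --- identifying the composite left adjoint with $D^*$ and invoking uniqueness of adjoints --- is exactly how the paper sidesteps this.)
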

\begin{proof}
This follows from the observation that the adjunction $D^* \dashv D_*$ is equal to the composite
\begin{equation*}
\xymatrix{
[\Delta^\mathrm{op},\Set] \ar@<-1.5ex>[rr]^-{\hdash}_-{d_*} && \ar@<-1.5ex>[ll]_-{d^*} \PCat \ar@<-1.5ex>[rr]^-{\hdash}_-{d_*} && \ar@<-1.5ex>[ll]_-{d^*}  [\Theta_2^\mathrm{op},\Set]
}
\end{equation*}
of the Quillen equivalence of \cite[Theorem 5.7]{MR2342834} (whose left adjoint sends a precategory to its diagonal) between the model structures for quasi-categories and (Kan-enriched) Segal categories, and the Quillen equivalence of Theorem \ref{pcat2qcat}.
\end{proof}

\begin{observation}
Let $X$ be a quasi-category. Then the $2$-quasi-category $D_*(X)$ has the same objects as $X$, and its hom-quasi-categories are precisely the hom-spaces of $X$ (as defined in \S\ref{1dimfundy}).
\end{observation}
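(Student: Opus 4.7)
The plan is to unfold the definition of $D_*$ as right Kan extension along $D^\mathrm{op}$ and verify both assertions by direct computation. For any simplicial set $X$ and any $[n;\bm{m}] \in \Theta_2$, the standard formula for a right Kan extension gives $(D_*X)_{n(\bm{m})} \cong [\Delta^\mathrm{op},\Set](D^*\Theta_2[n;\bm{m}], X)$, where $D^*\Theta_2[n;\bm{m}]$ is the simplicial set whose $k$-simplices are the morphisms $[k;k,\ldots,k] \to [n;\bm{m}]$ in $\Theta_2$. Hence both parts of the observation reduce to identifying the simplicial sets $D^*\Theta_2[0]$ and $D^*\Theta_2[1;m]$.

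First I would observe that $[0]$ is terminal in $\Theta_2$, so $\Theta_2[0]$ is the terminal $\Theta_2$-set and $D^*\Theta_2[0] \cong \Delta[0]$. This gives $(D_*X)_0 \cong X_0$, settling the first assertion.

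The main step is to establish a natural isomorphism $D^*\Theta_2[1;m] \cong \Sigma_1(\Delta[m])$ of bi-pointed simplicial sets, where $\Sigma_1$ is the Joyal--Tierney suspension from \S\ref{1dimfundy}. Using the description of morphisms in $\Theta_2$ from \S\ref{theta2sec}, a $k$-simplex of $D^*\Theta_2[1;m]$ consists of a $\phi \colon [k] \to [1]$ in $\Delta$ together with, for each $i$ with $\phi(i-1) < \phi(i)$, a morphism $[k] \to [m]$ in $\Delta$; since $\phi$ takes values in $\{0,1\}$, at most one such $i$ exists, so the $k$-simplices split into three classes: the two constants $\bot, \top$ (with no further data) and the pairs $(\phi, f)$ with $\phi$ non-constant and $f \colon [k] \to [m]$ arbitrary. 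This is precisely the set of $k$-simplices of the pushout $\Sigma_1(\Delta[m]) = (\Delta[1] \times \Delta[m]) \cup_{\partial\Delta[1] \times \Delta[m]} \partial\Delta[1]$, and the bijection is manifestly natural in $[k]$ and respects the two distinguished endpoints (the pointing of $D^*\Theta_2[1;m]$ being the restriction to $D^*\partial\Theta_2[1;0] \cong \partial\Delta[1]$ of the inclusion $\partial\Theta_2[1;0] \to \Theta_2[1;m]$).

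Given this identification, the second assertion is immediate: the defining pullback of $\Hom_{D_*X}(x,y)_m$ identifies it with the set of endpoint-preserving morphisms $\Sigma_1(\Delta[m]) \to X$, which by the adjunction $\Sigma_1 \dashv \Hom$ of \S\ref{1dimfundy} is precisely $\Hom_X(x,y)_m$. I do not expect any real obstacle: the whole argument is an unpacking of definitions, and the only substantive step is the combinatorial matching of $k$-simplices carried out in the previous paragraph, which follows at once from the explicit description of morphisms in $\Theta_2$.
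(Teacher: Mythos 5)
Your proposal is correct, and since the paper records this statement as an unproved observation, your direct unpacking is essentially the intended justification: the computation $(D_*X)_{n(\bm{m})} \cong [\Delta^\mathrm{op},\Set](D^*\Theta_2[n;\bm{m}],X)$ together with the identifications $D^*\Theta_2[0]\cong\Delta[0]$ and $D^*\Theta_2[1;m]\cong\Sigma_1(\Delta[m])$ (as bi-pointed simplicial sets) gives exactly the two assertions via the adjunction $\Sigma_1\dashv\Hom$ of \S\ref{1dimfundy}. The only step you wave at --- naturality in $[k]$ of your simplex-level bijection, which amounts to unwinding the wreath-product composition in $\Theta_2$ --- is indeed routine, so there is no gap; the paper's own machinery (the factorisation of $D_*$ through $\PCat$ used in Theorem \ref{finalqe1} together with Observation \ref{graphpresobs}) would give an alternative, slightly less self-contained route to the same conclusion.
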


From Theorem \ref{finalqe1}, we may deduce a Quillen equivalence between quasi-categories and locally Kan $2$-quasi-categories in the opposite direction.

\begin{corollary} \label{finalqe2}
The adjunction 
\begin{equation*}
\xymatrix{
[\Theta_2^\mathrm{op},\Set] \ar@<-1.5ex>[rr]^-{\hdash}_-{\tau^*} && \ar@<-1.5ex>[ll]_-{\pi^*} [\Delta^\mathrm{op},\Set]
}
\end{equation*}
is a Quillen equivalence between the model structure for locally Kan $2$-quasi-categories and Joyal's model structure for quasi-categories.
\end{corollary}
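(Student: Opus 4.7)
The plan is to deduce the Quillen equivalence from Theorem \ref{finalqe1} by a two-out-of-three argument exploiting the identity $\pi \circ D = \mathrm{id}_\Delta$. First I would verify that $\pi^* \dashv \tau^*$ is a Quillen adjunction between Joyal's model structure for quasi-categories and the model structure for locally Kan $2$-quasi-categories. This is immediate from Proposition \ref{undqadj}, which establishes the Quillen adjunction between Joyal's model structure and Ara's (unlocalised) model structure for $2$-quasi-categories; since the locally Kan model structure is a Bousfield localisation of Ara's (with the same cofibrations and more weak equivalences), the left adjoint $\pi^*$ continues to preserve cofibrations and trivial cofibrations as a functor into the localised structure.

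Next, I would observe that $\pi \circ D = \mathrm{id}_\Delta$, since $D([n]) = [n;n,\ldots,n]$ and $\pi([n;n,\ldots,n]) = [n]$. Consequently the composite left adjoint $D^* \circ \pi^* = (\pi \circ D)^*$ is the identity functor on $[\Delta^\mathrm{op},\Set]$, and, taking right adjoints, the composite $\tau^* \circ D_*$ is also the identity. In particular, the composite Quillen adjunction obtained by following $\pi^* \dashv \tau^*$ (into the locally Kan model structure) with $D^* \dashv D_*$ (back into Joyal's model structure) is the identity adjunction on $[\Delta^\mathrm{op},\Set]$ equipped with Joyal's model structure for quasi-categories, which is trivially a Quillen equivalence.

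Since $D^* \dashv D_*$ is a Quillen equivalence by Theorem \ref{finalqe1}, the standard two-out-of-three property for Quillen equivalences (applied to the composable pair $\pi^* \dashv \tau^*$ followed by $D^* \dashv D_*$) then implies that $\pi^* \dashv \tau^*$ is itself a Quillen equivalence. The argument is entirely formal, with no substantive obstacle to overcome; the only nontrivial ingredient is the verification that $D$ is a section of $\pi$, which is immediate from the defining formulae.
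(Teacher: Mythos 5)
Your proposal is correct and follows essentially the same route as the paper: the paper likewise notes that $\pi^*\dashv\tau^*$ is a Quillen adjunction into the localised structure (via Proposition \ref{undqadj}), observes that $\pi\circ D=\mathrm{id}$ makes the composite with $D^*\dashv D_*$ the identity adjunction, and concludes by the two-out-of-three property for Quillen equivalences using Theorem \ref{finalqe1}. Your explicit remark about Bousfield localisation preserving left Quillen functors is merely a spelled-out version of the step the paper leaves implicit.
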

\begin{proof}
This adjunction is a Quillen adjunction between these model structures by Theorem \ref{undqadj}. Since $\pi\circ D = \mathrm{id}$, the composite
\begin{equation*}
\xymatrix{
[\Delta^\mathrm{op},\Set] \ar@<-1.5ex>[rr]^-{\hdash}_-{D_*} && \ar@<-1.5ex>[ll]_-{D^*} [\Theta_2^\mathrm{op},\Set] \ar@<-1.5ex>[rr]^-{\hdash}_-{\tau^*} && \ar@<-1.5ex>[ll]_-{\pi^*}  [\Delta^\mathrm{op},\Set]
}
\end{equation*}
of this adjunction with the Quillen equivalence of Theorem \ref{finalqe1} is the identity adjunction. Hence the result follows from the two-out-of-three property for Quillen equivalences.
\end{proof}

We now prove the main theorem of this section. Since, for each simplicial set $X$, the underlying simplicial set of the $\Theta_2$-set $D_*(X)$ is none other than $X$ itself, there is a canonical inclusion of $\Theta_2$-sets $\pi^*(X) \lra D_*(X)$.

\begin{theorem} \label{finalfinalthm}
Let $X$ be a quasi-category. Then the inclusion $\pi^*(X) \lra D_*(X)$ exhibits the $2$-quasi-category $D_*(X)$ as a fibrant replacement of $\pi^*(X)$ in the model structure for $2$-quasi-categories.
\end{theorem}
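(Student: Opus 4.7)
The plan is to use the Quillen equivalence $\pi^* \dashv \tau^*$ of Corollary \ref{finalqe2} to reduce the problem to a triviality in Joyal's model structure for quasi-categories, and then to transfer the conclusion from the Bousfield-localised model structure for locally Kan $2$-quasi-categories to Ara's model structure for $2$-quasi-categories via Lemma \ref{localwes}. First, as the right Quillen functor of the Quillen equivalence $D^* \dashv D_*$ of Theorem \ref{finalqe1}, $D_*$ sends the fibrant quasi-category $X$ to a fibrant object in the model structure for locally Kan $2$-quasi-categories; thus $D_*(X)$ is a locally Kan $2$-quasi-category, and in particular a $2$-quasi-category. On the other hand, $\pi^*(X)$ is locally groupoidal by Proposition \ref{locgrpprop2}.

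Since both $\pi^*(X)$ and $D_*(X)$ are locally groupoidal, Lemma \ref{localwes} reduces the claim that the canonical inclusion $\pi^*(X) \lra D_*(X)$ is a weak equivalence in the model structure for $2$-quasi-categories to the corresponding claim in the model structure for locally Kan $2$-quasi-categories. Noting that $X$ is cofibrant in Joyal's model structure (every simplicial set is) and that $D_*(X)$ is fibrant in the model structure for locally Kan $2$-quasi-categories, the standard characterisation of weak equivalences for a Quillen equivalence, applied to $\pi^* \dashv \tau^*$, further reduces the problem to showing that the adjoint transpose $X \lra \tau^*(D_*(X))$ of the canonical inclusion is a weak equivalence of quasi-categories.

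Finally, the observation immediately preceding the theorem identifies $\tau^*(D_*(X))$ with $X$, and the canonical inclusion $\pi^*(X) \lra D_*(X)$ is, by construction, the transpose under $\pi^* \dashv \tau^*$ of the identity $X \lra X = \tau^*(D_*(X))$; hence the transpose in question is the identity, which is trivially a weak equivalence. The heavy lifting having already been done in establishing the Quillen equivalences of Theorem \ref{finalqe1} and Corollary \ref{finalqe2} together with the localisation theory of this section, the present argument is one of assembly rather than of substantive new content, and there is no major obstacle within it beyond the (mild, essentially notational) task of unwinding the definition of the ``canonical'' inclusion to identify it with the transpose of $\mathrm{id}_X$.
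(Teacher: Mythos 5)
Your proof is correct, and it follows the paper's overall skeleton: establish that $D_*(X)$ is fibrant in the model structure for locally Kan $2$-quasi-categories via the right Quillen functor of Theorem \ref{finalqe1}, note that $\pi^*(X)$ is locally groupoidal by Proposition \ref{locgrpprop2}, and transfer the weak equivalence from the localised model structure to Ara's model structure by Lemma \ref{localwes}. The one step where you genuinely diverge is the middle one: the paper observes that, since $D^*\pi^* = \mathrm{id}$ and $X = D^*(\pi^*(X))$ is fibrant, the inclusion $\pi^*(X) \lra D_*(X)$ is a component of the derived unit of the Quillen equivalence $D^* \dashv D_*$ of Theorem \ref{finalqe1}, hence a weak equivalence in the locally Kan model structure; you instead apply the cofibrant--fibrant transpose criterion to the Quillen equivalence $\pi^* \dashv \tau^*$ of Corollary \ref{finalqe2}, using $\tau^*(D_*(X)) = X$ to see that the transpose of the canonical inclusion is the identity. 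The two manoeuvres are materially equivalent -- your identification of the canonical inclusion as the $\pi^* \dashv \tau^*$-transpose of $\mathrm{id}_X$ is the correct reading of the sentence preceding the theorem, and Corollary \ref{finalqe2} is itself deduced from Theorem \ref{finalqe1} by two-out-of-three, so no independence is gained -- but your version is a perfectly valid assembly of the same ingredients, trading the paper's explicit derived-unit recognition for the standard transpose characterisation of weak equivalences under a Quillen equivalence.
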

\begin{proof}
Since $X = D^*(\pi^*(X))$ is a quasi-category, the inclusion $\pi^*(X) \lra D_*(X)$ is a component of the derived unit of the Quillen equivalence $D^* \dashv D_*$ of Theorem \ref{finalqe1}, and is therefore a weak equivalence in the model structure for locally Kan $2$-quasi-categories. But $\pi^*(X)$ and $D_*(X)$ are locally groupoidal, the one by Proposition \ref{locgrpprop2}, and the other by Theorem \ref{finalqe1}. 
Hence the inclusion $\pi^*(X) \lra D_*(X)$ is a weak equivalence in the model structure for $2$-quasi-categories by Proposition \ref{localwes}. Since $D_*(X)$ is a $2$-quasi-category by Theorem \ref{finalqe1}, this completes the proof.
\end{proof}

We conclude this section by applying Theorem \ref{finalfinalthm} to prove that the functor (\ref{locpi}) defines a full embedding of the homotopy category of quasi-categories into the homotopy category of $2$-quasi-categories, with essential image the locally Kan $2$-quasi-categories.

\begin{corollary} \label{finalcor}
The functor $\Ho(\pi^*) \colon \Ho(\qcat) \lra \Ho(\twoqcat)$ is fully faithful, and its essential image consists of the locally Kan $2$-quasi-categories. 
\end{corollary}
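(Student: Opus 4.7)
The plan is to factor $\Ho(\pi^*)$ as a composition of two functors: an equivalence of homotopy categories furnished by \textup{Corollary \ref{finalqe2}}, and the canonical full embedding of the homotopy category of the locally Kan model structure into $\Ho(\twoqcat)$.

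First, by \textup{Corollary \ref{finalqe2}}, the Quillen equivalence $\pi^* \dashv \tau^*$ between the model structure for locally Kan $2$-quasi-categories and Joyal's model structure for quasi-categories induces an equivalence $\mathbb{L}\pi^*$ from $\Ho(\qcat)$ to the homotopy category of the model structure for locally Kan $2$-quasi-categories. Second, since this latter model structure is a Bousfield localisation of Ara's model structure for $2$-quasi-categories (\textup{Proposition \ref{lockanmodstr}}), its homotopy category embeds fully faithfully into $\Ho(\twoqcat)$ as the full subcategory spanned by the locally Kan $2$-quasi-categories.

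It then remains to identify $\Ho(\pi^*)$ with the composite of these two functors, which is the only non-formal step. For a quasi-category $X$, the value $\Ho(\pi^*)(X)$ is represented by any fibrant replacement of $\pi^*(X)$ in Ara's model structure, while the composite sends $X$ to a locally Kan fibrant replacement $Y$ of $\pi^*(X)$ in the localised model structure. But $\pi^*(X)$ is locally groupoidal by \textup{Proposition \ref{locgrpprop2}}, and $Y$, being locally Kan, is in particular a $2$-quasi-category and so locally groupoidal. By \textup{Lemma \ref{localwes}}, a weak equivalence between locally groupoidal $\Theta_2$-sets is a weak equivalence in Ara's model structure if and only if it is one in the localised model structure; so the comparison map $\pi^*(X) \lra Y$ is a weak equivalence in Ara's model structure as well, exhibiting $Y$ as a fibrant replacement there too. (An explicit such $Y$ is furnished by $D_*(X)$ of \textup{Theorem \ref{finalfinalthm}}.) Thus the two functors agree up to natural isomorphism, whence $\Ho(\pi^*)$ is fully faithful with essential image the locally Kan $2$-quasi-categories.

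The main obstacle is precisely this identification of functors; once it is in hand, the conclusion follows formally from the general theory of Bousfield localisations and of derived functors of Quillen equivalences.
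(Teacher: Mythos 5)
Your proof is correct, but it is packaged differently from the paper's. The paper's proof is the mirror image of yours through the right adjoint: by Theorem \ref{finalqe1}, the right derived functor $\mathbb{R}(D_*) \colon \Ho(\qcat) \lra \Ho(\twoqcat)$ (taken with respect to Ara's model structure on the target) is fully faithful with essential image the locally Kan $2$-quasi-categories, and then Theorem \ref{finalfinalthm} -- the statement that $\pi^*(X) \lra D_*(X)$ is an Ara weak equivalence -- identifies $\Ho(\pi^*)$ with $\mathbb{R}(D_*)$. You instead work with the left Quillen functor $\pi^*$ into the localised model structure (Corollary \ref{finalqe2}), compose with the reflective embedding of the localised homotopy category supplied by Proposition \ref{lockanmodstr}, and then carry out the identification abstractly via Proposition \ref{locgrpprop2} and Lemma \ref{localwes}; note that this identification step is essentially an inlined, model-independent rerun of the proof of Theorem \ref{finalfinalthm}, which uses exactly those two ingredients together with the fact that $D_*(X)$ is locally Kan. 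What your route buys is independence from the explicit model $D_*(X)$ (and hence from Theorems \ref{finalqe1} and \ref{finalfinalthm}) in the main line of argument, at the cost of leaning on Corollary \ref{finalqe2}, which the paper itself deduces from Theorem \ref{finalqe1}; what the paper's route buys is brevity, since the comparison of functors is already recorded as Theorem \ref{finalfinalthm}. One small point worth making explicit in your write-up: to promote the objectwise agreement to a natural isomorphism of functors into $\Ho(\twoqcat)$, you should either choose a functorial fibrant replacement in the localised model structure (available, as it is combinatorial) or observe that your comparison maps $\pi^*(X) \lra Y$ are the components of the derived unit of the identity Quillen adjunction, so that Lemma \ref{localwes} upgrades this natural transformation to a natural isomorphism after passing to $\Ho(\twoqcat)$.
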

\begin{proof}
By Theorem \ref{finalqe1}, the right derived functor $\mathbb{R}(D_*) \colon \Ho(\qcat) \lra \Ho(\twoqcat)$ of the right Quillen functor $D_* \colon [\Delta^\mathrm{op},\Set] \lra [\Theta_2^\mathrm{op},\Set]$ (with respect to the model structures for quasi-categories and $2$-quasi-categories) is fully faithful, and its essential image consists of the locally Kan $2$-quasi-categories. But 
Theorem \ref{finalfinalthm} implies that the functor $\Ho(\pi^*) \colon \Ho(\qcat) \lra \Ho(\twoqcat)$ is naturally isomorphic to this right derived functor, which proves the result.
\end{proof}

\end{document}